\def\mf#1{\mathfrak{#1}}
\def\mc#1{\mathcal{#1}}
\def\mb#1{\mathbb{#1}}
\def\tx#1{\textrm{#1}}
\def\tb#1{\textbf{#1}}
\def\R{\mathbb{R}}
\def\C{\mathbb{C}}
\def\Q{\mathbb{Q}}
\def\Z{\mathbb{Z}}
\def\N{\mathbb{N}}
\def\ol#1{\overline{#1}}
\def\ul#1{\underline{#1}}
\def\hat{\widehat}
\def\rw{\rightarrow}
\def\lrw{\longrightarrow}
\def\irw{\hookrightarrow}
\def\<{\langle}
\def\>{\rangle}
\newenvironment{mytitle}
{\begin{center}\large\sc}
{\end{center}}
\newtheorem{thm}{Theorem}[section]
\newtheorem{lem}[thm]{Lemma}
\newtheorem{pro}[thm]{Proposition}
\newtheorem{cor}[thm]{Corollary}
\newtheorem{fct}[thm]{Fact}
\numberwithin{equation}{section}
\newlength{\sumcorr}
\def\sprod#1{\setlength{\sumcorr}{(\widthof{$\displaystyle\prod_{#1}$}-\widthof{$\displaystyle\prod$})/2} \hspace{-\sumcorr}\prod_{#1}\hspace{-\sumcorr} }
\begin{document}

\begin{mytitle} Rigid inner forms of real and $p$-adic groups \end{mytitle}
\begin{center} Tasho Kaletha \end{center}
\begin{abstract}
We define a new cohomology set $H^1(u \rw W,Z \rw G)$ for an affine algebraic group $G$ and a finite central subgroup $Z$, both defined over a local field of characteristic zero, which is an enlargement of the usual first Galois cohomology set of $G$. We show how this set can be used to give a precise conjectural description of the internal structure and endoscopic transfer of tempered $L$-packets for arbitrary connected reductive groups that extends the well-known conjectural description for quasi-split groups. In the case of real groups, we show that this description is correct using Shelstad's work.
\end{abstract}
{\let\thefootnote\relax\footnotetext{This research is supported in part by NSF grant DMS-1161489.}}

\section{Introduction}
The principal goal of this paper is to give a precise conjectural description of the internal structure of tempered $L$-packets and the character identities satisfied by them for an arbitrary connected reductive group defined over a local field $F$ of characteristic zero, and then to prove that this description is correct when $F=\R$. For a quasi-split group $G$, such a description has been available for some time. Indeed, let $\Gamma$, $W_F$, and $W_F'$ be the absolute Galois, Weil, and Weil-Deligne groups of $F$, let $\hat G$ be the connected complex Langlands dual group of $G$, and let $^LG = \hat G \rtimes W_F$ be the Weil-form of its $L$-group. Given a tempered Langlands parameter $\varphi : W_F' \rw {^LG}$, one expects the existence of a finite set $\Pi_\varphi^G$ of irreducible admissible representations of the topological group $G(F)$. These finite sets, called $L$-packets, are supposed to satisfy a number of properties, some of which are listed in \cite[\S10]{Bo77}. Among the most important properties are their internal parameterization and the endoscopic character identities. These tie the $L$-packets to the stabilization of the spectral side of the Arthur-Selberg trace formula and lead to the multiplicity formula for discrete automorphic representations. The conjectural internal parameterization is the following. First, Shahidi's tempered $L$-packet conjecture \cite[\S9]{Sh90} states that for a fixed Whittaker datum $\mf{w}$ of $G$ the set $\Pi_\varphi^G$ should contain a unique $\mf{w}$-generic representation. Second, if we let $S_\varphi$ denote the centralizer in $\hat G$ of the image of $\varphi$, it is expected that there exists an injection (bijection if $F$ is $p$-adic)
\begin{equation} \label{eq:qb} \iota_\mf{w}: \Pi_\varphi^G \rw \tx{Irr}(\pi_0(S_\varphi/Z(\hat G)^\Gamma)), \end{equation}
where the right hand side is the set of isomorphism classes of irreducible representations of the finite group $\pi_0(S_\varphi/Z(\hat G)^\Gamma)$. This map should however not be arbitrary. It should send the unique $\mf{w}$-generic constituent of $\Pi_\varphi^G$ to the trivial representation and should, moreover, provide the correct relationship between the Harish-Chandra characters of the constituents of $\Pi_\varphi^G$ and the characters of the representations of $\pi_0(S_\varphi/Z(\hat G)^\Gamma)$ so that the endoscopic character identities hold. This conjecture has been established for $F=\R$ and general quasi-split connected reductive groups by Langlands and Shelstad \cite{Lan73}, \cite{She79a}, \cite{She79b}, \cite{She81}, \cite{She82}, \cite{SheTE3}
and for a finite extension $F/\Q_p$ and quasi\-split symplectic and orthogonal groups by Arthur \cite{Art11}. Important for both the statement and the proof of the conjecture is the fact that the same datum that is used to fix the bijection \eqref{eq:qb}, namely the Whittaker datum, also leads to a normalization of the endoscopic transfer factors that enter the formulation of the character identities.

The situation for groups $G'$ which are not quasi-split is more subtle. Let $G$ be the (unique up to isomorphism) quasi-split inner form of $G'$. If we fix an isomorphism $\xi : G \rw G'$ defined over $\ol{F}$ such that $\xi^{-1}\sigma(\xi)$ is an inner automorphism for all $\sigma \in \Gamma$, then $\xi$ can be used to identify the $L$-groups of $G$ and $G'$. The parameter $\varphi$ now becomes a parameter for $G'$ and we may ask for an analog of \eqref{eq:qb}. Such an analog has to depend not just on $G'$, but also on $\xi$. The tuple $(G',\xi)$ is called an inner twist of $G$ and the set of isomorphisms of inner twists is parameterized by $H^1(\Gamma,G/Z(G))$. It was shown by Kottwitz \cite{Kot86} that there is a canonical map from this set to the Pontryagin dual of the finite abelian group $Z(\hat G_\tx{sc})^\Gamma$ -- the $\Gamma$-fixed points of the center of the simply-connected cover of the derived subgroup of $\hat G$. One can now try to formulate a conjectural injection similar to \eqref{eq:qb} in terms of a variant of $S_\varphi$ involving $\hat G_\tx{sc}$, making a reference to the character of $Z(\hat G_\tx{sc})^\Gamma$ to which $\xi$ corresponds. However Vogan \cite{Vog93} and Arthur \cite{Art06} observe that such an attempt cannot be successful. Arthur's point of departure is the fact that on a non-quasi-split group $G'$ the endoscopic transfer factors have no natural normalization and this makes it impossible to state the endoscopic character identities. Since those are intimately tied with the internal structure of $L$-packets, one also cannot hope to parameterize that structure. He suggests \cite[\S3]{Art06} that to resolve this problem, one can conjecture the existence of two sets of functions -- the spectral transfer factors $\Delta(\varphi,\pi)$ and the mediating functions $\rho(\Delta,\tilde s)$. These functions take away the problem arising from the lack of a natural normalization of the endoscopic transfer factors by incorporating all possible normalizations. Shelstad \cite{SheTE3} has been able to show that such functions indeed exist when the ground field is $\R$. The existence of these functions for $p$-adic fields has thus far remained unknown. This was a serious problem that prevented the establishment of the endoscopic classification of representations of non-quasi-split symplectic and orthogonal groups. We refer the reader to \cite[Ch. 9]{Art11} for a discussion.

Studying this problem from a different perspective, Vogan \cite{Vog93} points out that the object $(G',\xi)$ has too many automorphisms, and these automorphisms can permute $\Pi_\varphi^{G'}$ without being detected in any way by $^LG$. This behavior is not at all pathological and already occurs for groups as simple as $\tx{SL}_2(\R)$. This indicates that the datum $(G',\xi)$ is by itself not sufficient to specify an injection as in \eqref{eq:qb} and that one needs to further enrich it by additional data. Vogan then proposes one such enrichment, which consists of adding to $(G',\xi)$ an element $z \in Z^1(\Gamma,G)$ with the property that $\xi^{-1}\sigma(\xi) = \tx{Ad}(z(\sigma))$. The triples $(G',\xi,z)$ are called pure inner twists and their isomorphism classes are parameterized by the set $H^1(\Gamma,G)$, which according to Kottwitz's result is related to the Pontryagin dual of $\pi_0(Z(\hat G)^\Gamma)$. The work of Adams, Barbasch, Kottwitz, Vogan, and others suggests the following variant of \eqref{eq:qb}: There should exist a finite set $\Pi_\varphi^\tx{pure}$ of isomorphism classes of quadruples $(G',\xi,z,\pi')$, where $(G',\xi,z)$ is a pure inner twist and $\pi'$ is an irreducible tempered representation of $G'(F)$, together with a commutative diagram
\begin{equation} \label{eq:pb} \xymatrix{
\Pi_\varphi^\tx{pure}\ar[r]^-{\iota_\mf{w}}\ar[d]&\tx{Irr}(\pi_0(S_\varphi))\ar[d]\\
H^1(\Gamma,G)\ar[r]&\pi_0(Z(\hat G)^\Gamma)^*
} \end{equation}
in which the bottom arrow is Kottwitz's map, the right arrow sends an irreducible representation to its central character, the left arrow sends a quadruple $(G',\xi,z,\pi')$ to the class of $z$, and the top arrow (the only conjectural arrow in the diagram) is a bijection that identifies the trivial representation on the right to the quadruple $(G,\tx{id},1,\pi)$ on the left, where $\pi$ is the unique $\mf{w}$-generic constituent of $\Pi_\varphi^G$, and furthermore provides the correct virtual characters necessary for the endoscopic character identities. Note that in this formulation the top arrow is expected to be bijective both in the real and in the $p$-adic case, making this formulation more uniform than that of \eqref{eq:qb}. The diagram \eqref{eq:pb} was constructed by DeBacker and Reeder \cite{DR09} for any unramified $p$-adic group $G$ and a class of depth-zero supercuspidal Langlands parameters $\varphi$. It was then shown by the author \cite{Kal11} that this construction satisfies the expected endoscopic character identities. Just as in the quasi-split case, it was important for both the statement and the proof of the conjecture that given a pure inner twist $(\xi,z) : G \rw G'$, the data $\mf{w}$ and $z$ lead to a natural normalization $\Delta[\mf{w},z]$ of the endoscopic transfer factors. The relationship of this point of view with that of Arthur is straightforward: The spectral transfer factor is given by the expression $\Delta(\varphi,\iota_{\mf{w}}(\rho))=\tx{tr}(\rho(s))$, where $s \in S_\varphi$ is part of the endoscopic datum to which $\Delta[\mf{w},z]$ is associated, and the mediating function $\rho(\Delta,\tilde s)$ is specified by $\rho(\Delta[\mf{w},z],\tilde s)=1$.

While pure inner twists seem to very elegantly resolve the problem, they have an essential drawback: The map $H^1(\Gamma,G) \rw H^1(\Gamma,G/Z(G))$ is usually not surjective, which means that not every inner twist $\xi : G \rw G'$ can be equipped with an element $z$. The worst case is when $F$ is $p$-adic and $G$ is simply\-connected. A theorem of Kneser \cite{Kne65} states that then $H^1(\Gamma,G)=\{1\}$ and the diagram \eqref{eq:pb} collapses to the quasi-split case \eqref{eq:qb}. Over the real numbers, the notion of a strong real form  was introduced in the work of Adams-Barbasch-Vogan, which resolves this problem and builds one of the foundations of the treatment of the local Langlands correspondence for real groups in \cite{ABV92}. Over $p$-adic fields however, the problem of finding a suitable analog of strong real forms has thus far remained open \cite[Problem 9.3]{Vog93}.

One attempt to alleviate this problem can be made using Kottwitz's theory of isocrystals with additional structure \cite{Kot85}, \cite{Kot97}. In this theory a different cohomology set for $G$ is studied -- if we let $L$ be the completion of the maximal unramified extension of $F$ and $\ol{L}$ an algebraic closure of $L$, Kottwitz studies the set $H^1(W_F,G(\ol{L}))$ and shows that the elements of a certain subset $B(G)_\tx{bas}$ of this cohomology set give inner forms of $G$ and have an interpretation in terms of $\hat G$ similar to that of $H^1(\Gamma,G)$. Using this theory, Kottwitz has suggested  a diagram similar to \eqref{eq:pb}, and a precise formulation of the resulting conjecture is presented in \cite[\S2.4]{Kal14}. It was then shown by the author \cite{Kal14} that the work \cite{DR09} of DeBacker and Reeder extends to this setting and that the endoscopic character identities hold. Furthermore, the same is true \cite{KalEpi} for a different class of supercuspidal Langlands parameters, for which the corresponding $L$-packets consist of epipelagic representations \cite{RY14}. The cohomology set $B(G)_\tx{bas}$ has a map to $H^1(\Gamma,G/Z(G))$ which is surjective when the center of $G$ is connected. In this case, the set $B(G)_\tx{bas}$ resolves the problem completely. The opposite case is that of a simply connected group $G$, where again one has $B(G)_\tx{bas}=\{1\}$. One also encounters other problems when using $B(G)_\tx{bas}$ to study local $L$-packets. For one, the relationship between $B(G)_\tx{bas}$ and the strong real forms of \cite{ABV92} is unclear. Furthermore, it is not clear how to relate local $L$-packets and the stable Arthur-Selberg trace formula when $G$ fails the Hasse principle, because this would entail passing from $G$ to its simply connected cover $G_\tx{sc}$, a step that is problematic due to $B(G_\tx{sc})_\tx{bas}=\{1\}$.

In the present paper we introduce a new cohomology set for affine algebraic groups by replacing the cohomology of the Galois group with the cohomology of a certain Galois gerb \cite{LR87} that is canonically associated to any local field of characteristic zero. This new cohomology set resolves the problems described above pertaining to the statement of the local Langlands correspondence and endoscopic character identities for $p$-adic groups. It provides a solution to \cite[Problem 9.3]{Vog93}. It also provides explicit formulas for Arthur's conjectural spectral transfer factors and mediating functions. In fact, since our goal is not just to give a good description of $L$-packets for $p$-adic groups, but to also make sure that this description interfaces well with the stabilized Arthur-Selberg trace formula so as to be suitable for global applications, we provide a construction that works uniformly for real and $p$-adic groups. The interplay between this construction and the trace formula is studied in \cite{KalGRI}, where it is shown that the local normalizations establish here fit perfectly with the stabilized trace formula. The new cohomology set is associated to any affine algebraic group $G$ and any finite central subgroup $Z \subset G$, both defined over a local field $F$ of characteristic zero, and is denoted by $H^1(u \rw W,Z \rw G)$. For applications to the local Langlands correspondence, $G$ will be connected and reductive, and $Z$ will be a finite central subgroup, which can often times be taken to be the center of the derived subgroup of $G$. The cohomology set $H^1(u \rw W,Z \rw G)$ has the following properties: There exists an injective map $H^1(\Gamma,G) \rw H^1(u \rw W,Z \rw G)$ and a surjective map $H^1(u \rw W,Z \rw G) \rw H^1(\Gamma,G/Z)$. Both of these maps are functorial in $G$. We show that, when $G$ is connected and reductive and $Z$ contains the center of the derived subgroup of $G$, the induced map $H^1(u \rw W,Z \rw G) \rw H^1(\Gamma,G/Z(G))$ is surjective, and thus every inner twist $\xi : G \rw G'$ can be equipped with an element of $H^1(u \rw W,Z \rw G)$. The set $H^1(u \rw W,Z \rw G)$ is efficient in the following sense: It is always finite, and when $G$ is split and $F$ is $p$-adic, the map $H^1(u \rw W,Z(G_\tx{der}) \rw G) \rw H^1(\Gamma,G/Z(G))$ is bijective, which means that for every inner twist $\xi : G \rw G'$ there is a unique element of $H^1(u \rw W,Z(G_\tx{der}) \rw G)$ belonging to that twist. A similar efficiency holds over the real numbers, but is slightly more complicated to state. For a general connected reductive $G$ and finite central $Z$, the set $H^1(u \rw W,Z \rw G)$ admits a functorial map to a certain finite abelian group which is constructed from $^LG$. This analog of the result of Kottwitz discussed above allows us to construct a normalization of the endoscopic transfer factors from an element of $H^1(u \rw W,Z \rw G)$  and this in turn allows to state a version of \eqref{eq:pb} and of the endoscopic character identities for all inner forms of a given quasi-split connected reductive group $G$, thus for any connected reductive group.

To elaborate on the last sentence, let $G$ be a connected reductive group defined and quasi-split over $F$, and $Z \subset G$ a finite central subgroup defined over $F$. Set $\bar G=G/Z$. The isogeny $G \rw \bar G$ dualizes to an isogeny $\hat{\bar G} \rw \hat G$ of the complex Langlands dual groups. We let $Z(\hat{\bar G})^+$ be the preimage under this isogeny of the diagonalizable group $Z(\hat G)^\Gamma$. Then we show that the set $H^1(u \rw W,Z \rw G)$ is equipped with a functorial map to the Pontryagin dual of $\pi_0(Z(\hat{\bar G})^+)$. This map is a bijection when $F$ is $p$-adic or when $G$ is a torus, and is compatible with Kottwitz's map for any $F$ and $G$. We define a rigid inner twist of $G$ to be an inner twist $\xi : G \rw G'$ equipped with an element $z \in Z^1(u \rw W,Z \rw G)$ (for some $Z$) that lifts the element $\xi^{-1}\sigma(\xi) \in Z^1(\Gamma,G/Z(G))$.  Given a tempered Langlands parameter $\varphi$, we let $S_\varphi^+$ denote the preimage in $\hat{\bar G}$ of $S_\varphi$. When $F=\R$, the finite group $\pi_0(S_\varphi^+)$ is always abelian, but may fail to be a 2-group. When $F$ is $p$-adic, $\pi_0(S_\varphi^+)$ can be non-abelian -- this already happens for $\tx{SL}_2$ and we discuss an example in Section \ref{sec:lpack}. We expect to have a finite set $\Pi_\varphi$ of isomorphism classes of quadruples $(G',\xi,z,\pi')$ and a commutative diagram
\begin{equation} \label{eq:rb} \xymatrix{
\Pi_\varphi\ar[d]\ar[rr]^-{\iota_\mf{w}}&&\tx{Irr}(\pi_0(S_\varphi^+))\ar[d]\\
H^1(u \rw W,Z \rw G)\ar[rr]&&\pi_0(Z(\hat{\bar G})^+)^*
} \end{equation}
with the same properties as \eqref{eq:pb}. In fact, each term in diagram \eqref{eq:pb} is a subset of the corresponding term here, and we expect that this diagram is an enlargement of \eqref{eq:pb} in the obvious sense. Furthermore, we show that the data $\mf{w}$ and $z$ lead to a normalization $\Delta[\mf{w},z]$ for the endoscopic transfer factor and this allows us to state the conjectural endoscopic character identities. In order to state these identities, we must work with a slight refinement of the notion of endoscopic datum. This refinement resolves another problem observed by Arthur in \cite{Art06} which pertains to the invariance of the transfer factor under automorphisms of endoscopic data. We refer the reader to Section \ref{sec:lpack} for more details. The relationship between our statement of the local Langlands conjecture and the endoscopic character identities and that of Arthur is again straightforward. When $G$ is simply connected, the conjectural spectral transfer factor of Arthur is given by the expression $\Delta(\varphi,\iota_{\mf{w}}(\rho))=\tx{tr}(\rho(\tilde s))$, where $\tilde s$ is part of the refined endoscopic datum to which $\Delta[\mf{w},z]$ is associated, and the mediating function $\rho(\Delta,\tilde s)$ is specified by $\rho(\Delta[\mf{w},z],\tilde s)=1$. When $G$ is not simply connected, the situation is almost as simple but requires a bit more notation. We refer the reader to \cite[\S4.6]{KalGRI}. For any $G$, our results provide a construction of Arthur's spectral transfer factors and mediating functions. As a result, these objects are now known for $p$-adic groups.

Given the mature state of the local Langlands correspondence and endoscopy for real groups, two natural questions arise: Given a real reductive group $G$, how does our set $H^1(u \rw W,Z \rw G)$ relate to the set of strong real forms of $G$ constructed in \cite{ABV92}, and how does our Diagram \eqref{eq:rb} and our statement of endoscopic character identities relate to the work of Langlands and Shelstad. In this paper we answer both questions completely. With regards to the first question, we were pleasantly surprised to find out that, while the notion of strong real forms and the notion of rigid inner twists (for real groups) are defined in very different ways, they are in fact equivalent. By this we mean that the category of strong real forms of a given real group is equivalent to the category of rigid inner twists. We alert the reader that \cite{Vog93} introduces the similar sounding notion of ``rigid rational form'', of which ``strong real form'' is a special case. Despite the similarity in names, our ``rigid inner twists'' in the case of real groups are equivalent to ``strong real forms'', and not to ``rigid rational forms''. Regarding the second question, it turns out that the constructions and arguments of Langlands and Shelstad can be put into our framework without much effort, after which their work implies the existence of Diagram \eqref{eq:rb} for any tempered Langlands parameter, as well as the validity of our statement of endoscopic character identities. A further natural question would be to compare the construction of Diagram \eqref{eq:rb} given in this paper, which is based on the cohomology sets $H^1(u \rw W,Z \rw G)$, with the analogous construction in \cite{ABV92}, which is based on the geometry of the dual group. We leave this more subtle question for a separate paper.

There is by now substantial evidence that the formulation of the refined local Langlands correspondence presented in this paper is the correct one. First, it is uniform for real and $p$-adic groups and is true for real groups. Second, we show in \cite{KalGRI} that this formulation fits seamlessly into the spectral side of the stabilized Arthur-Selberg trace formula. In particular, the canonical adelic transfer factor admits a decomposition as the product of the normalized local transfer factors introduced here, and the normalized local bijections of Diagram \eqref{eq:rb} fit together to a canonical pairing between the adelic $L$-packet and its global $S$-group, which in turn leads to the multiplicity formula for discrete automorphic representations of arbitrary connected reductive groups. With these facts at hand, a proof of our formulation of the local Langlands conjecture for classical groups is well within reach by the methods of \cite{Art11}. Third, it is shown in \cite[\S4.6]{KalGRI} that our formulation implies, and is in fact equivalent to, a stronger version of Arthur's local conjecture \cite[\S3]{Art06}. The strengthening comes from the fact that the results of this paper give explicit formulas for Arthur's conjectural spectral transfer factors and mediating functions. Fourth, it is shown in \cite[\S4,\S6]{KalRIBG} that for groups for which the formulation of the local conjecture based on $B(G)_\tx{bas}$, as presented in \cite[\S2.4]{Kal14}, is available, its validity is equivalent to the validity of the formulation presented here. In particular, this implies that the work of \cite{Kal14} and \cite{KalEpi} provides a proof of the validity of our formulation in the special case of depth-zero and epipelagic supercuspidal representations. And fifth, it is shown in \cite[\S5,\S6]{KalRIBG} that the validity of our formulation for all connected reductive groups is equivalent to the validity of the formulation of \cite[\S2.4]{Kal14} for all connected reductive groups with connected center. The latter formulation, besides having been proved in special cases in \cite{Kal14} and \cite{KalEpi}, is supported by a conjecture of Kottwitz that describes the cohomology of Rapoport-Zink spaces. Fargues has recently announced a conjectural program that aims at a resolution of Kottwitz's conjecture and at a proof of the formulation of \cite[\S2.4]{Kal14} for all connected reductive $p$-adic groups with connected center.

We will now describe the contents of this paper and sketch the construction of the set $H^1(u \rw W,Z \rw G)$ and its map to the dual of $\pi_0(Z(\hat{\bar G})^+)$. The construction and study of $H^1(u \rw W,Z \rw G)$ is the main topic of Section \ref{sec:defcoh}. It is based on Kottwitz's notion of \emph{algebraic 1-cocycles} introduced in \cite[\S8]{Kot97}. If $W$ is a topological group, which is an extension of $\Gamma$ by an algebraic group $X$ (a Galois gerb in the terminology of \cite{LR87}), then Kottwitz defines an algebraic 1-cocycle of $W$ into the $\ol{F}$-points of an algebraic group $Y$ to be a continuous 1-cocycle $W \rw Y(\ol{F})$ whose restriction to $X(\ol{F})$ is given by an algebraic homomorphism $X \rw Y$. In Section \ref{sec:u} we construct a certain pro-finite multiplicative algebraic group $u$ as a limit of certain finite multiplicative algebraic groups $u_{E/F,n}$. We then show that $H^1(\Gamma,u)$ vanishes and $H^2(\Gamma,u)$ is topologically cyclic. This implies that there is an (up to isomorphism) canonical Galois gerb $W$ bound by $u$ and the only automorphisms it has come from conjugation by elements of $u$. In other words, it is as canonical as the relative Weil group of a finite Galois extension of $F$. In Section \ref{sec:defcohsub} we then define $H^1(u \rw W,Z \rw G)$ to be the set of cohomology classes of those algebraic 1-cocycles of $W$ valued in $G$ whose restriction to $u$ has image contained in $Z$. An important feature of the group $u$ is that for any finite (necessarily multiplicative) algebraic group $Z$ defined over $F$ there is a natural surjection $\tx{Hom}(u,Z)^\Gamma \rw H^2(\Gamma,Z)$. This eventually leads to the surjectivity of $H^1(u \rw W,Z \rw G) \rw H^1(\Gamma,G/Z)$. The latter, together with the injective map $H^1(\Gamma,G) \rw H^1(u \rw W,Z \rw G)$, the finiteness of $H^1(u \rw W,Z \rw G)$, its functoriality in $Z \rw G$, an inflation-restriction sequence, as well as further properties, are discussed in Section \ref{sec:basic}. These properties make the set $H^1(u \rw W,Z \rw G)$ easily computable by reducing the computation to that of classical Galois cohomology groups. In Section \ref{sec:h1sc} we construct a quotient of $H^1(u \rw W,Z \rw G)$ by a certain equivalence relation. This quotient is called $H^1_\tx{sc}(u \rw W,Z \rw G)$ and is analogous to the first Galois-cohomology set of the crossed module $G_\tx{sc} \rw G$. When the ground field $F$ is $p$-adic, the equivalence relation is trivial and we obtain nothing new, but over the real numbers $H^1_\tx{sc}(u \rw W,Z \rw G)$ is usually a proper quotient of $H^1(u \rw W,Z \rw G)$.

Section \ref{sec:tn+iso} is concerned with the construction of the functorial map from $H^1(u \rw W,Z \rw G)$ to the Pontryagin dual of $\pi_0(Z(\hat{\bar G})^+)$ in the case where $G$ is a connected reductive group defined over $F$. This map is among the most important properties of the cohomology set $H^1(u \rw W,Z \rw G)$ and is crucial for its application to the local Langlands conjectures. Instead of using the language of the dual group, in this section we construct a finite abelian group $\bar Y_{+,\tx{tor}}(Z \rw G)$ from the root datum of $G$ which will later turn out to admit a functorial map to $\pi_0(Z(\hat{\bar G})^+)^*$ that is bijective when $F$ is $p$-adic or when $G$ is a torus and injective in general. The abelian group $\bar Y_{+,\tx{tor}}(Z \rw G)$ is constructed in Section \ref{sec:yfunc}. It is functorial in $Z \rw G$. In Section \ref{sec:uni} we show that there can exist at most one functorial isomorphism $\bar Y_{+,\tx{tor}}(Z \rw G) \rw H^1_\tx{sc}(u \rw W,Z \rw G)$ subject to two conditions, one of them being that it coincides with the Tate-Nakayama isomorphism when $Z=\{1\}$ and $G$ is a torus. The task then becomes to construct this isomorphism. For this, we introduce in Section \ref{sec:ucp} a device similar to the cup-product between Tate-cochains of positive and negative degrees, which we call an unbalanced cup-product. In section \ref{sec:arith} we review some arithmetic material from \cite{Lan83} concerning specific representatives of fundamental classes of finite Galois extensions. With these preparations in place we can give an explicit realization of the Galois gerb $W$ in Section \ref{sec:expw} and then use it to construct the isomorphism $\bar Y_{+,\tx{tor}}(Z \rw G) \rw H^1_\tx{sc}(u \rw W,Z \rw G)$, first when $G$ is a torus in Section \ref{sec:tn+s}, and then when $G$ is a connected reductive group in Section \ref{sec:tn+g}.

Section \ref{sec:llc} describes how the set $H^1(u \rw W,Z \rw G)$ can be applied to the study of the local Langlands correspondence and endoscopy. We introduce the concept of rigid inner twists in Section \ref{sec:rigid} and show how for a given maximal torus $S \subset G$ in a connected reductive group, the set $H^1(u \rw W,Z \rw S)$ parameterizes the rational classes inside the stable class of any given strongly-regular element of $S(F)$. This leads to a cohomological invariant $\tx{inv}(\delta,\delta')$ that will allow us to normalize the transfer factors later. This discussion follows the ideas already used in \cite[\S2.1]{Kal11}, but now adapted to the set $H^1(u \rw W,Z \rw G)$. In Section \ref{sec:strong} we establish the equivalence between the notion of rigid inner twists of a given real reductive group and that of strong real forms of it. In Section \ref{sec:refined} we establish the functorial injective map from the abelian group $\bar Y_{+,\tx{tor}}(Z \rw G)$ defined in Section \ref{sec:yfunc} to the Pontryagin dual of $\pi_0(Z(\hat{\bar G})^+)$. This map can be phrased (Corollary \ref{cor:tn+pair}) as a pairing between $H^1(u \rw W,Z \rw G)$ and $\pi_0(Z(\hat{\bar G})^+)$, and, besides $\tx{inv}(\delta,\delta')$, this pairing is the second ingredient in the normalization of the transfer factor. We then proceed to define the notions of a refined endoscopic datum and of an isomorphism between such data. This notion is the third ingredient in the normalization of the transfer factor, which we then are able to establish. We also show that this normalization is invariant under all automorphisms of the refined endoscopic datum, thereby resolving the issue noted by Arthur \cite[p. 208]{Art06} that an absolute transfer factor for a non-quasi-split group need not be invariant under all automorphisms of a (usual) endoscopic datum. In Section \ref{sec:lpack} we spell out the conjectural diagram \eqref{eq:rb} and the statements of the conjectural endoscopic character identities. In Section \ref{sec:real} we turn to the setting of real groups and show that the work of Langlands and Shelstad implies that the conjectures stated in Section \ref{sec:lpack} hold for real groups.

The author is grateful to Robert Kottwitz for introducing him to this problem and for sharing his intuition that algebraic cocycles of Galois gerbs could hold the key to its resolution. He further thanks Jeffrey Adams, Stephen DeBacker, Diana Shelstad, and Olivier Ta\"ibi, for their helpful comments, suggestions, and corrections. The support of the National Science Foundation via grant DMS-161489 is gratefully acknowledged.

\tableofcontents

\section{Some notation}

Let $F$ be a local field of characteristic zero. Fix an algebraic closure $\ol{F}$ of $F$ and write $\Gamma$ or $\Gamma_F$ for the Galois group of $\ol{F}/F$, and $W_F$ for the Weil group of $\ol{F}/F$. Finite extensions of $F$ will be taken to be subfields of $\ol{F}$. For a fixed finite Galois extension $E/F$ we will write $\Gamma_{E/F}$ and $W_{E/F}$ for the relative Galois and Weil groups and $N_{E/F}$ for the norm endomorphism of any $\Gamma_{E/F}$-module. We will reserve the letter $W$ for a different purpose. Given $\sigma \in \Gamma$ and $x \in \ol{F}$, we will denote the image of $x$ under $\sigma$ by $\sigma x$.

We will use the symbol $\N^\times$ to denote the set of positive integers with the partial order given by divisibility. By a co-final sequence in $\N^\times$ we mean a totally ordered subset $\{n_k\} \subset \N^\times$ so that every element of $\N^\times$ is dominated by some $n_k$.

If $D$ is a diagonalizable group, we will write $X^*(D)$ and $X_*(D)$ for its character and cocharacter modules. These will be written additively, and the canonical pairing between $\chi \in X^*(D)$ and $\lambda \in X_*(D)$ will be denoted by $\<\chi,\lambda\>$. For $x \in \ol{F}$, we will sometimes write $x^\lambda$ instead of $\lambda(x)$ for the image of $x$ under the map $\lambda : \mb{G}_m \rw D$.

If $G$ is a connected reductive group, we will write $G_\tx{der}$ for its derived subgroup, and $G_\tx{sc}$ and $G_\tx{ad}$ for the simply-connected cover and the adjoint quotient of $G_\tx{der}$. If $S \subset G$ is a maximal torus, we will write $S_\tx{der}$, $S_\tx{sc}$ and $S_\tx{ad}$ for the corresponding maximal tori of $G_\tx{der}$, $G_\tx{sc}$ and $G_\tx{ad}$ respectively. The subset of strongly regular semi-simple elements of $G$ will be denoted by $G_\tx{sr}$. When $G$ is defined over $F$, we will write $G(R)$ for the set of points of $G$ with values in an $F$-algebra $R$. The notation $g \in G$ will be shorthand for $g \in G(\ol{F})$. The action of $g$ on $G$ by conjugation will be denoted by $\tx{Ad}(g)$. Two elements $g_1,g_2 \in G_\tx{sr}(F)$ are called stably conjugate if they are conjugate under $G(\ol{F})$. For all elements $h \in G(\ol{F})$ with $\tx{Ad}(h)g_1=g_2$, conjugation by $h$ provides the same isomorphism from the centralizer of $g_1$ to the centralizer of $g_2$, and we will call this isomorphism $\phi_{g_1,g_2}$. In the slightly more general setting where an isomorphism $\xi : G \rw G'$ has been fixed and $g_1 \in G_\tx{sr}$, $g_2 \in G'_\tx{sr}$ are such that there exists $h \in G$ with $g_2=\xi(hg_1h^{-1})$, we will write $\phi_{g_1,g_2}$ for the isomorphism $\xi\circ\tx{Ad}(h)$.

We will write $\hat G$ for the (connected) complex Langlands dual group of $G$ and $^LG=\hat G \rtimes W_F$ for the (Weil-form) of its $L$-group.

Given a finite group $\Delta$ and a $\Delta$-module $M$, we will use besides the usual group cohomology $H^i(\Delta,M)$ also the modified, or Tate-cohomology, which we will denote by $H^i_\tx{Tate}(\Delta,M)$.

\section{The cohomology set $H^1(u \rw W,Z \rw G)$} \label{sec:defcoh}

\subsection{The multiplicative pro-algebraic group $u$} \label{sec:u}

For a finite Galois extension $E/F$ and a natural number $n$, we consider the algebraic group $R_{E/F}[n] := \tx{Res}_{E/F}\mu_n$. This is a multiplicative group with $X^*(R_{E/F}[n])=\Z/n\Z[\Gamma_{E/F}]$ with $\Gamma$ acting by multiplication on the left, and for any Galois extension $K/F$ we have $R_{E/F}[n](K)=\tx{Maps}(\Gamma_{E/F},\mu_n(\ol{F}))^{\Gamma_K}$, where $(\sigma f)(\tau) = \sigma(f(\sigma^{-1}\tau))$ for $f : \Gamma_{E/F} \rw \mu_n(\ol{F})$, $\sigma \in\Gamma_K$, and $\tau \in \Gamma_{E/F}$.
We have the diagonal embedding $\mu_n \rw R_{E/F}[n]$ which sends each $x \in \mu_n$ to the constant map with value $x$. We define the multiplicative group $u_{E/F,n}$ to be the cokernel of this embedding, so that we have the exact sequence
\begin{equation} \label{eq:defu} 1 \rw \mu_n \rw R_{E/F}[n] \rw u_{E/F,n} \rw 1. \end{equation}
If $K/F$ is a Galois extension containing $E$ and $m$ is a multiple of $n$, we define the map
\begin{equation} \label{eq:defp} p : R_{K/F}[m] \rw R_{E/F}[n],\qquad (pf)(a) = \prod_{\substack{b \in \Gamma_{K/F}\\ b \mapsto a}} f(b)^\frac{m}{n}. \end{equation}
This map is an epimorphism and descends to an epimorphism $u_{K/F,m} \rw u_{E/F,n}$. We define the pro-algebraic multiplicative group $u$ as the limit
\[ u := \varprojlim u_{E/F,n} \]
taken over the index category $\mc{I}_0$ whose objects are tuples $(E/F,n)$ and where there is at most one morphism $(K/F,m) \rw (E/F,n)$ and it exists if and only if $E \subset K$ and $n|m$. For every $(E/F,n)$ the canonical map $u \rw u_{E/F,n}$ is an epimorphism.

Given a finite algebraic group $Z$ defined over $F$, necessarily multiplicative due to assumption that $F$ has characteristic zero, we will write $\tx{Hom}(u,Z)$ for the set of algebraic homomorphisms. Such a homomorphism is given by the composition of an algebraic homomorphism $u_{E/F,n} \rw Z$ for some suitable $(E/F,n) \in \mc{I}_0$ with the natural projection $u \rw u_{E/F,n}$. It is straightforward to check that we have the isomorphism
\begin{equation} \label{eq:homgamma} \tx{Hom}(u_{E/F,n},Z)^\Gamma \rw \tx{Hom}(\mu_n,Z)^{N_{E/F}},\qquad f \mapsto f \circ \delta_e, \end{equation}
where the superscript $N_{E/F}$ denotes the kernel of the norm map and $\delta_e : \mu_n \rw u_{E/F,n}$ is the homomorphism dual to the ``evaluation at $e$ map'' $\tx{ev}_e : X^*(u_{E/F,n}) = \Z/n\Z[\Gamma_{E/F}]_0 \rw \Z/n\Z$, (thus $\delta_e(x)$ is the map $\Gamma_{E/F} \rw \mu_n$ supported at $e$ and having the value $x$ there).

The group of $\ol{F}$-points of the pro-algebraic group $u$ carries a natural pro-finite topology and a continuous action of $\Gamma$. The continuous cohomology groups $H^i(\Gamma,u)$ are therefore defined.

\begin{thm} \label{thm:cohcomp} We have $H^1(\Gamma,u)=0$ and $H^2(\Gamma,u) = \begin{cases} \hat \Z&,F\tx{ is non-arch.}\\ \Z/2\Z&,F=\R\end{cases}$. \end{thm}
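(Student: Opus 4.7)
The plan is to compute $H^i(\Gamma, u_{E/F,n})$ at each finite stage via the long exact sequence associated to $1 \to \mu_n \to R_{E/F}[n] \to u_{E/F,n} \to 1$, combined with Shapiro's lemma $H^i(\Gamma, R_{E/F}[n]) = H^i(\Gamma_E, \mu_n)$, and then pass to the inverse limit over $\mc{I}_0$. Since the transitions in the defining system are surjective on $\ol{F}$-points, Mittag-Leffler applies and yields $H^i(\Gamma, u) = \varprojlim H^i(\Gamma, u_{E/F,n})$. Kummer theory and local class field theory then reduce everything to concrete arithmetic.

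For $H^1(\Gamma, u) = 0$, the finite-level LES reads
\[ H^1(F,\mu_n) \rw H^1(E,\mu_n) \rw H^1(F, u_{E/F,n}) \rw H^2(F,\mu_n) \xrightarrow{\tx{res}} H^2(E,\mu_n), \]
so any class is specified by a residual Kummer datum in $H^1(E,\mu_n) = E^\times/(E^\times)^n$ modulo the image of $F^\times$, together with a Brauer obstruction in $\tx{ker}(\tx{res})$, which by local CFT has order $\gcd(n,[E:F])$. I would show that both pieces are killed under the transition to a sufficiently large $(K/F, m)$: a Kummer class represented by $x \in E^\times$ becomes trivial once $K$ contains $x^{1/m}$, since then $x$ is an $m$-th power in $K^\times$, and a residual Brauer obstruction is absorbed by enlarging $n$ to $m$ with $n \mid m$ so that the class lifts into the Kummer piece, where the previous argument disposes of it.

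For $H^2(\Gamma, u)$: in the non-archimedean case $\Gamma_F$ has cohomological dimension $2$ and hence $H^3(F,\mu_n) = 0$, so the LES identifies $H^2(F, u_{E/F,n})$ with $\tx{coker}(\tx{Br}(F)[n] \xrightarrow{[E:F]} \tx{Br}(E)[n])$. On the cofinal subcategory where $n \mid [E:F]$, this cokernel is $\Z/n\Z$; once the transition maps are identified as the natural reductions $\Z/m\Z \rw \Z/n\Z$, the inverse limit is $\hat{\Z}$. When $F = \R$, $\tx{Br}(\C) = 0$, and the LES collapses to $H^2(\R, u_{\C/\R,n}) \cong H^3(\R, \mu_n) = \mu_n/2\mu_n = \Z/\gcd(n,2)\Z$; restricting to the cofinal subset of even $n$ gives $\Z/2\Z$ at every stage with identity transitions, yielding the limit $\Z/2\Z$.

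The principal technical obstacle is tracking the transition maps in $\mc{I}_0$ carefully. The $(m/n)$-th power factor in the definition of $p : R_{K/F}[m] \rw R_{E/F}[n]$ must be propagated through Shapiro's lemma and the invariant isomorphism of local class field theory; the key point is that on $H^2$ it produces the natural reduction $\Z/m\Z \rw \Z/n\Z$ rather than multiplication by $m/n$, which would instead collapse the non-archimedean limit to $0$. The vanishing of $H^1$ is genuinely a limit phenomenon, since the finite-level groups are typically nontrivial; verifying that the Kummer-killing and Brauer-killing operations above can be performed consistently within $\mc{I}_0$ is the most delicate step of the argument.
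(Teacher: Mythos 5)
Your finite-level setup and your treatment of $H^2$ are sound and run parallel to the paper's argument: the paper computes $H^2(\Gamma,u_{E_k/F,n_k})\cong \Z/(n_k,[E_k:F])\Z$ via local duality with $H^0(\Gamma,X^*(u_k))^*$ rather than via the Brauer-group cokernel of your long exact sequence, but the content, the choice of a cofinal subsystem with $n\mid [E:F]$, and the identification of the transitions as the natural reductions $\Z/m\Z\rw\Z/n\Z$ are the same. The genuine gap is in the proof that $H^1(\Gamma,u)=0$, which is the heart of the theorem. You are computing an inverse limit: the maps of the system go from the large level $(K/F,m)$ down to the small level $(E/F,n)$, so (the terms being finite) what must be shown is that for every $(E/F,n)$ there is a larger $(K/F,m)$ for which the image of $H^1(\Gamma,u_{K/F,m})\rw H^1(\Gamma,u_{E/F,n})$ is trivial. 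Your two killing mechanisms instead push a class at level $(E/F,n)$ ``up'' to a larger level, a map that does not exist in the system, and the mechanism itself is the wrong one.

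Concretely: under Shapiro's lemma the transition on the $R$-parts is not restriction but the norm, $K^\times/(K^\times)^m\rw E^\times/(E^\times)^n$, $y\mapsto N_{K/E}(y)$; so the Kummer piece of the image is killed by choosing $K$ with $N_{K/E}(K^\times)\subset (E^\times)^n$, which local class field theory provides because $(E^\times)^n$ is open of finite index. The fact that a given $x\in E^\times$ becomes an $m$-th power after adjoining $x^{1/m}$ is a statement about restriction $H^1(E,\mu_n)\rw H^1(K,\mu_m)$ and has no bearing on the image of the norm map. Likewise, for the piece coming from $H^2(\Gamma,\mu)$, merely taking $n\mid m$ is not enough and ``lifting the class into the Kummer piece'' at the larger level is not an available move: one must observe that the restriction of $p$ to the diagonally embedded $\mu_m$ is a high power map, and choose the divisibility deep enough (the paper takes $n\mid m/n$) so that the induced map $H^2(\Gamma,\mu_m)\rw H^2(\Gamma,\mu_n)$ vanishes; then a diagram chase through the two exact sequences kills the whole transition on $H^1$ of the $u$'s. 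Finally, a smaller point: surjectivity of the transition maps on $\ol F$-points is not by itself what gives $H^i(\Gamma,u)=\varprojlim H^i(\Gamma,u_k)$; one needs the finiteness (compactness) of the $u_k$, equivalently the finiteness of the groups $H^{i-1}(\Gamma,u_k)$ to kill the relevant $\varprojlim^1$, which is what the paper's citation of \cite[Corollary 2.7.6]{NSW08} supplies.
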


\begin{proof}

We begin by noting that the limit defining $u$ may be taken over any co-final subcategory of $\mc{I}$. We fix such a subcategory $\{(E_k,n_k)\}$ by taking a tower $F=E_0 \subset E_1 \subset E_2 \subset \dots$ of finite Galois extensions of $F$ with the property $\bigcup E_k=\ol{F}$ and a co-final sequence $\{n_k\} \subset \N^\times$.
According to \cite[Corollary 2.7.6]{NSW08}, we have an isomorphism $H^i(\Gamma,u) \rw \varprojlim H^i(\Gamma,u_k)$,
where the limits are taken over the above co-final subcategory and we have abbreviated $u_{E_k/F,n_k}$ by $u_k$. We must compute $\varprojlim H^i(\Gamma,u_k)$ for $i=1,2$.

We begin with $i=2$ and use the functorial isomorphism
\[ H^2(\Gamma,u_k) \cong H^0(\Gamma,X^*(u_k))^* \cong \left[\frac{n_k}{(n_k,[E_k:F])}\Z/n_k\Z\right]^* \cong \Z/(n_k,[E_k:F])\Z, \]
where $*$ denotes the group of $\Q/\Z$-valued characters. Here the first isomorphism is given by Poitou-Tate duality \cite{Tat63}. Note that in the archimedean case one in general needs to use the quotient $H^0_\tx{Tate}$ of $H^0$, but for $X^*(u_k)$ one sees that the two groups coincide. For $k>l$, the transition map $H^2(p) : H^2(\Gamma,u_k) \rw H^2(\Gamma,u_l)$ is translated by this isomorphism to the natural projection map $\Z/(n_k,[E_k:F])\Z \rw \Z/(n_l,[E_l:F])\Z$. If $F=\R$, then for $k>>0$ we have $(n_k,[E_k:F])=2$. If $F$ is non-archimedean, we can clarify the situation by setting $n_k=[E_k:F]$.
Then $(n_k,[E_k:F])=n_k$. This completes the computation in the case $i=2$.

Now we turn to $i=1$. Our goal is to show that for any $l$ there is $k>l$ such that the transition map $H^1(p) : H^1(\Gamma,u_k) \rw H^1(\Gamma,u_l)$ is the zero map. This would follow if we could find $k>j>l$ so that the transition maps $H^1(p) : H^1(\Gamma,R_j) \rw H^1(\Gamma,R_l)$ and $H^2(p) : H^2(\Gamma,\mu_{n_k}) \rw H^2(\Gamma,\mu_{n_j})$ are zero, by chasing through the diagram arising from the exact sequence $H^1(\Gamma,R_k) \rw H^1(\Gamma,u_k) \rw H^2(\Gamma,\mu_{n_k})$. To treat the case of $H^1(\Gamma,R_j)$, we apply Shapiro's lemma to obtain an isomorphism $H^1(\Gamma,R_j) \rw H^1(\Gamma_{E_j},\mu_{n_j}) \rw E_j^\times/E_j^{\times,n_j}$. Under this isomorphism the transition map is translated to the norm map $N_{E_j/E_l} :  E_j^\times/E_j^{\times,n_j} \rw E_l^\times/E_l^{\times,n_l}$ and we select $j$ so that $N_{E_j/E_l}(E_j^\times) \subset E_l^{\times,n_l}$. For the case of $H^2(\Gamma,\mu_{n_k})$ we note that the restriction of the map $p$ to the diagonally embedded copy of $\mu_{n_k}$ into $R_k$ is the $(\frac{n_k}{n_j})^2$-power map. We then choose $k$ so that $n_k/n_j$ is a multiple of $n_j$. This completes the proof.
\end{proof}

From now on we will denote by $\xi \in H^2(\Gamma,u)$ the element corresponding to $-1 \in \Z$ resp. $-1 \in \Z/2\Z$. The reason we use $-1$ instead of $1$ is that we want the isomorphism of Section \ref{sec:tn+iso} to be compatible with the classical Tate-Nakayama isomorphism, rather than its negative.

For any algebraic group $Z$ defined over $F$, we obtain a map
\begin{equation} \label{eq:h2map} \xi^* : \tx{Hom}(u,Z)^\Gamma \rw H^2(\Gamma,Z)\qquad \phi \mapsto \phi(\xi). \end{equation}
\begin{pro} \label{pro:h2map} If $Z$ is a finite algebraic group defined over $F$, then $\xi^*$ is surjective. If $Z$ is also split, then $\xi^*$ is also injective.
\end{pro}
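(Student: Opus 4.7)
My strategy is to identify $\xi^*$, after passage to character modules, with the Pontryagin dual of a natural inclusion of finite abelian groups; both assertions then follow by elementary duality. Let $n$ annihilate $Z$ and let $E_0/F$ be a finite Galois extension over which $\Gamma_{E_0}$ acts trivially on $X^*(Z)$; write $N$ for the Tate norm on $X^*(Z)$ associated to the finite quotient $\Gamma_{E_0/F}$. When $F$ is non-archimedean this norm is effectively zero (enlarging $E_0$ so that $[E:E_0]$ is divisible by $n$ gives $N_{E/F}=[E:E_0]\cdot N_{E_0/F}=0$ on the $n$-torsion module $X^*(Z)$), while for $F=\R$ it equals $1+c$.

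I would first compute $\tx{Hom}(u,Z)^\Gamma$ and $H^2(\Gamma,Z)$ in terms of $X^*(Z)$. Any homomorphism $u\to Z$ factors through some $u_{E/F,m}$, and by \eqref{eq:homgamma} the $\Gamma$-invariant such homomorphisms are identified with the kernel of the action $f\mapsto f\circ N_{\Gamma_{E/F}}$ on $\tx{Hom}_\Z(X^*(Z),\Z/m\Z)$; passing to the colimit over $(E,m)$ yields $\tx{Hom}(u,Z)^\Gamma \cong (X^*(Z)/NX^*(Z))^*$ (Pontryagin dual). On the other side, local Tate-Poitou duality for the finite multiplicative $\Gamma$-module $Z$ gives $H^2(\Gamma,Z)\cong (X^*(Z)^\Gamma/NX^*(Z))^*$.

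Next I would verify that $\xi^*$ is, up to sign, the Pontryagin dual of the natural injection $X^*(Z)^\Gamma/NX^*(Z)\hookrightarrow X^*(Z)/NX^*(Z)$. The class $\xi$ projects in $H^2(\Gamma,u_{E/F,n})$ to the element determined, via the dimension-shifting exact sequence $0\to X^*(u_{E/F,n})\to \Z/n\Z[\Gamma_{E/F}]\to \Z/n\Z\to 0$ (the middle term being co-induced and hence Tate-acyclic), as the character pairing to $-1/n$ with the generator of $H^{-1}_\tx{Tate}(\Gamma_{E/F},\Z/n\Z)$. Tracing this through the functoriality of $\xi^*$ along $\phi\colon u\to Z$ and the normalization \eqref{eq:homgamma} based on $\delta_e$, I expect: if $\phi$ corresponds to $f\in\tx{Hom}_\Z(X^*(Z),\Z/n\Z)$, then $\xi^*(\phi)$ is the character $y\mapsto -f(y)/n$ of $X^*(Z)^\Gamma/NX^*(Z)$, i.e., minus the restriction of $f$.

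Both assertions of the proposition then follow immediately. Surjectivity of $\xi^*$ is that of the Pontryagin dual of an injection of finite abelian groups. When $Z$ is split, $\Gamma$ acts trivially on $X^*(Z)$, so $X^*(Z)^\Gamma=X^*(Z)$, the inclusion is an equality, and its dual $\xi^*$ is a bijection. The hardest part will be the identification of $\xi^*$ as minus the restriction map: the sign convention chosen for $\xi$, the dimension shift for $H^2(\Gamma,u_{E/F,n})$, and the normalization encoded in \eqref{eq:homgamma} via $\delta_e$ must be reconciled carefully to pin down both the sign and the precise form of the map.
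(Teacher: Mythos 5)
Your proposal is correct and takes essentially the same route as the paper: the paper likewise invokes Poitou--Tate duality and identifies the map dual to $\xi^*$ with the obvious inclusion $\varprojlim X^*(Z)^\Gamma/N_{E/F}X^*(Z) \hookrightarrow \varprojlim X^*(Z)/N_{E/F}X^*(Z)$, deducing surjectivity from the injectivity of this inclusion and injectivity of $\xi^*$ from its being an isomorphism when the $\Gamma$-action on $X^*(Z)$ is trivial. The only cosmetic differences are that the paper keeps the inverse limit over all $E$ rather than stabilizing the norm at a single $E_0$, and it likewise leaves the explicit trace-through of the identifications (including the sign) to the reader.
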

\begin{proof}
\vspace{-10pt}We appeal again to the perfect duality of Poitou-Tate, which can be written uniformly in the archimedean and non-archimedean cases as
\[ H^2(\Gamma,Z) \otimes (\varprojlim X^*(Z)^\Gamma/N_{E/F} X^*(Z)) \rw \Q/\Z,\qquad (z,\chi) \mapsto \tx{inv}_F(z \cup \chi). \]
Under this duality, the map dual to $\xi^*$ takes the form
\begin{eqnarray*}
\varprojlim X^*(Z)^\Gamma/N_{E/F} X^*(Z)&\rw&\tx{Hom}(X^*(Z),X^*(u))^{\Gamma,*}\\
\tx{by }\eqref{eq:homgamma}&\cong&(\varinjlim \tx{Hom}(X^*(Z),\Z/n\Z)^{N_{E/F}})^*\\
&\cong&\tx{Hom}(\varprojlim X^*(Z)/N_{E/F} X^*(Z),\Q/\Z)^*\\
&\cong&\varprojlim X^*(Z)/N_{E/F} X^*(Z).
\end{eqnarray*}
The somewhat unorthodox manipulation of limits is justified by the finiteness of the appropriate arguments of $\tx{Hom}$. Tracing through the identifications, one sees that the composite map works out to be the obvious inclusion. In addition, when $X^*(Z)$ carries a trivial $\Gamma$-action, this map is an isomorphism.
\end{proof}

\subsection{Definition of $H^1(u \rw W,Z \rw G)$} \label{sec:defcohsub}

According to \cite[Theorem 2.7.7]{NSW08}, the class $\xi \in H^2(\Gamma,u)$ corresponds to an isomorphism class of extensions of profinite groups
\[ 1 \rw u \rw W \rw \Gamma \rw 1. \]
Furthermore, $H^1(\Gamma,u)=0$ implies that the only automorphisms of an extension belonging to this isomorphism class, which induce the identity on both $\Gamma$ and $u$, are given by inner automorphisms by elements of $u$. We now fix one such extension.

Let $\mc{A}$ be the category of monomorphisms $Z \rw G$ defined over $F$, where $G$ is an affine
algebraic group, $Z$ is a finite group, and the morphism embeds $Z$ into the center of $G$. Later we will also be interested in the subcategories $\mc{T} \subset \mc{R} \subset \mc{A}$, where $[Z \rw G] \in \mc{A}$ belongs to $\mc{T}$ if $G$ is a torus, and belongs to $\mc{R}$ if $G$ is a connected reductive group. For two such objects $Z_1 \rw G_1$ and $Z_2 \rw G_2$ we define the set of morphisms $\mc{A}(Z_1 \rw G_1, Z_2 \rw G_2)$ to be the set of commutative diagrams
\[ \xymatrix{
Z_1\ar[d]\ar[r]&Z_2\ar[d]\\
G_1\ar[r]&G_2
} \]
where the horizontal maps are morphisms of algebraic groups defined over $F$. Since $F$ has characteristic zero, such a diagram is determined by its bottom horizontal arrow.

Given $[Z \rw G] \in \mc{A}$, the set $G(\ol{F})$ taken with the discrete topology carries a continuous $\Gamma$-action, which we inflate to a continuous $W$-action. We define $Z^1(u \rw W,Z \rw G)$ to be the set of those continuous cocycles of $W$ in $G(\ol{F})$ whose restriction to $u$ is an algebraic homomorphism $u \rw Z$. Clearly, this definition is functorial in $[Z \rw G]$. Define further $\bar Z^1(u \rw W,Z \rw G)=Z^1(u \rw W,Z \rw G)/B^1(W,Z)$ and $H^1(u \rw W,Z \rw G)=Z^1(u \rw W,Z \rw G)/B^1(W,G)$.

If $1 \rw u \rw W' \rw \Gamma \rw 1$ is an isomorphic extension, then there are canonical isomorphisms of functors
\[ \bar Z^1(u \rw W) \rw \bar Z^1(u \rw W')\qquad\tx{and}\qquad H^1(u \rw W) \rw H^1(u \rw W'). \]
On the other hand, while there is also an isomorphism $Z^1(u \rw W) \rw Z^1(u \rw W')$, it is not canonical.

\subsection{Basic properties of $H^1(u \rw W,Z \rw G)$} \label{sec:basic}

We continue with a fixed extension $W$ of $\Gamma$ by $u$ belonging to the canonical isomorphism class determined by $\xi \in H^2(\Gamma,u)$. Let $[Z \rw G] \in \mc{A}$. A simple remark of fundamental importance for us is that any $z \in Z^1(u \rw W,Z \rw G)$ gives rise to an inner form $G^z$ of $G$. Namely, the image of $z$ in $Z^1(W,G_\tx{ad})$ belongs to $Z^1(\Gamma,G_\tx{ad})$ and we can use it to twist the $\Gamma$-action on $G(\ol{F})$. This will be the prime topic of discussion in Section \ref{sec:rigid}, but it will be also useful now, as we shall see momentarily.

The inflation-restriction sequence associated to the homomorphism $W \rw \Gamma$ specializes to the exact sequence
\begin{equation} \label{eq:infres} 1 \rw H^1(\Gamma,G) \rw H^1(u \rw W,Z \rw G) \rw \tx{Hom}(u,Z)^\Gamma \rw H^2(\Gamma,G), \end{equation}
where the last term is to be ignored if $G$ is not abelian.

\begin{lem} \label{lem:tg} If $G$ is abelian, then the map $\tx{Hom}(u,Z)^\Gamma \rw H^2(\Gamma,G)$ in \eqref{eq:infres} can be taken to be the composition of \eqref{eq:h2map} with the natural map $H^2(\Gamma,Z) \rw H^2(\Gamma,G)$.
\end{lem}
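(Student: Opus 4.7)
The plan is to recognize the connecting map in \eqref{eq:infres} as the transgression of the Hochschild--Serre/inflation--restriction sequence for $1 \to u \to W \to \Gamma \to 1$, and then to identify this transgression explicitly using the standard cup-product-with-extension-class formula. Concretely, I would choose a continuous set-theoretic section $s : \Gamma \to W$ with $s(1)=1$; this yields a continuous 2-cocycle $c \in Z^2(\Gamma, u(\ol F))$ defined by $c(\gamma_1,\gamma_2)=s(\gamma_1)s(\gamma_2)s(\gamma_1\gamma_2)^{-1}$ whose class is precisely the extension class $\xi \in H^2(\Gamma,u)$ (up to the sign convention fixed after Theorem~\ref{thm:cohcomp}).

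Given $\phi \in \tx{Hom}(u,Z)^\Gamma$, the idea is to try to extend $\phi$ to a 1-cocycle $z : W \to G$ lying in $Z^1(u \rw W, Z \rw G)$ and measure the obstruction. Define a continuous 1-cochain $\tilde z : W \to Z \subset G$ by $\tilde z(u \cdot s(\gamma)) = \phi(u)$; this restricts to $\phi$ on $u$. Using that the $W$-action on $G(\ol F)$ factors through $\Gamma$ (so $u$ acts trivially on $G$) and that $\phi$ is $\Gamma$-equivariant, a direct computation with $w_i = u_i s(\gamma_i)$ gives
\[
(d\tilde z)(w_1,w_2) \;=\; w_1\cdot \tilde z(w_2) - \tilde z(w_1w_2) + \tilde z(w_1) \;=\; -\phi(c(\gamma_1,\gamma_2)),
\]
so $d\tilde z$ is inflated from the 2-cocycle $-\phi\circ c$ on $\Gamma$ taking values in $Z$.

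Modifying $\tilde z$ by an inflated 1-cochain $\Gamma \to G$ does not disturb the restriction to $u$, so $\tilde z$ can be corrected to a genuine element of $Z^1(u \rw W, Z \rw G)$ if and only if the class of $\phi \circ c$ vanishes in $H^2(\Gamma, G)$. This is precisely the definition of the connecting map in \eqref{eq:infres}: it sends $\phi$ to the class $[\phi\circ c] \in H^2(\Gamma,G)$, i.e. to the image of $\phi_*[c] = \phi_*(\xi) \in H^2(\Gamma,Z)$ under the natural map $H^2(\Gamma,Z) \rw H^2(\Gamma,G)$. By the very definition of $\xi^*$ in \eqref{eq:h2map}, $\phi_*(\xi) = \xi^*(\phi)$, which is the asserted factorization.

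I do not anticipate a real obstacle: the argument is the standard identification of transgression with the cup product against the extension class, and the ``algebraic'' condition on cocycles causes no trouble since any continuous homomorphism from the pro-finite multiplicative group $u$ to the finite multiplicative group $Z$ is automatically algebraic. The only mild care needed is to fix a sign convention for $\xi$ consistent with the one adopted after Theorem~\ref{thm:cohcomp}, so that the formula $\xi^*(\phi) = \phi_*(\xi)$ holds on the nose rather than up to a sign.
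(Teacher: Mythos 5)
Your argument is correct and is essentially the paper's proof: both identify the map as the transgression, choose a continuous section $s:\Gamma\rw W$, extend $\phi$ to a $1$-cochain on $W$, and compute that its differential is the inflation of $\phi$ applied to the extension $2$-cocycle $\dot\xi(\sigma,\tau)=s(\sigma)s(\tau)s(\sigma\tau)^{-1}$, up to a sign. The sign ambiguity you flag is handled in the paper the same way, by noting that replacing the transgression by its negative preserves exactness of \eqref{eq:infres}, which is why the lemma says the map ``can be taken to be'' the stated composition.
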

\begin{proof}
The map in question is usually taken to be the transgression map. Recall from \cite[Proposition 1.6.6]{NSW08} that the image of $\phi \in \tx{Hom}(u,Z)^\Gamma$ under the transgression map can be represented by choosing
a continuous section $s : \Gamma \rw W$ and taking the differential of the 1-cochain
\[ c : W \rw G,\qquad c(w) = \phi(w^{-1}s(w)). \]
By definition, $\dot\xi(\sigma,\tau) = s(\sigma)s(\tau)s(\sigma\tau)^{-1}$ represents the class $\xi$ and one computes that
$ dc(\sigma,\tau) = \phi(\dot\xi(\sigma,\tau))^{-1}$. Of course we may replace the transgression map by its negative and still keep the sequence \eqref{eq:infres} exact.
\end{proof}

\begin{fct} \label{fct:h1fin} The set $H^1(u \rw W,Z \rw G)$ is finite.
\end{fct}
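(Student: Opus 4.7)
The plan is to extract finiteness from the inflation--restriction sequence \eqref{eq:infres},
$$1 \rw H^1(\Gamma,G) \rw H^1(u \rw W,Z \rw G) \rw \tx{Hom}(u,Z)^\Gamma,$$
by showing that the two outer terms are finite and that each non-empty fibre of the rightmost map is finite. The finiteness of $H^1(\Gamma,G)$ for an affine algebraic group $G$ over a local field of characteristic zero is the classical theorem of Borel--Serre, so that end requires no new work.

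The substantive step is the finiteness of $\tx{Hom}(u,Z)^\Gamma$. Here I would unpack the Poitou--Tate computation carried out in the proof of Proposition \ref{pro:h2map}: the discrete abelian group $\tx{Hom}(u,Z)^\Gamma$ is Pontryagin dual to the profinite group $\varprojlim_{E/F} X^*(Z)/N_{E/F}X^*(Z)$. Since $Z$ is finite, $X^*(Z)$ is a finite abelian group, and for any Galois extension $E/F$ whose degree over the splitting field of $X^*(Z)$ is divisible by the exponent of $X^*(Z)$, the norm $N_{E/F}$ acts as zero on $X^*(Z)$. The inverse system therefore stabilizes at the finite group $X^*(Z)$, so its dual $\tx{Hom}(u,Z)^\Gamma$ is finite.

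It remains to bound the fibres of the restriction map. Fix $\phi$ in the image and pick a representative $z_0 \in Z^1(u \rw W, Z \rw G)$ with $z_0|_u = \phi$. For any other $z$ with $z|_u = \phi$, the centrality of $\phi$ forces the $W$-cochain $c(w) := z(w) z_0(w)^{-1}$ to be trivial on $u$. A standard twisting argument -- replacing the $W$-action on $G$ by its $z_0$-twist, which still restricts trivially to $u$ since $u$ lands in $Z \subset Z(G)$ -- makes $c$ a cocycle for this twisted action that factors through $\Gamma$, i.e.\ a class in $H^1(\Gamma, G^{z_0})$. Passing to cohomology exhibits the fibre over $\phi$ as a quotient of $H^1(\Gamma, G^{z_0})$, which is again finite by Borel--Serre. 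Combining the three finiteness statements yields the claim.

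The step I expect to require the most care is the identification of fibres by twisting, because one must juggle non-abelian cohomology of pointed sets rather than an exact sequence of groups; nevertheless this is standard and no serious obstacle arises.
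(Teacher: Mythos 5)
Your proposal is correct and follows essentially the same route as the paper: both arguments fibre $H^1(u \rw W,Z \rw G)$ over the finite set $\tx{Hom}(u,Z)^\Gamma$ via the restriction map from \eqref{eq:infres} and identify each fibre, by twisting by a cocycle $z_0$ lying over the given $\phi$, with (a quotient of) $H^1(\Gamma,G^{z_0})$, which is finite by the classical finiteness theorem for Galois cohomology over local fields of characteristic zero. The only cosmetic difference is that you establish the finiteness of $\tx{Hom}(u,Z)^\Gamma$ through the Poitou--Tate duality computation, whereas the paper deduces it directly from the finiteness of $Z$ via the description $\tx{Hom}(u,Z)^\Gamma=\varinjlim\tx{Hom}(u_{E/F,n},Z)^\Gamma$.
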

\begin{proof}
The finiteness of $Z$ implies that $\tx{Hom}(u,Z)^\Gamma=\varinjlim\tx{Hom}(u_{E/F,n},Z)^\Gamma$ is also finite. For any $z \in Z^1(u \rw W,Z \rw G)$, let $G^z$ be the inner form of $G$ obtained by twisting the $\Gamma$-structure by $z$. Then the fiber of $H^1(u \rw W,Z \rw G) \rw \tx{Hom}(u,Z)^\Gamma$ through the class of $z$ is identified under \eqref{eq:infres} with the set $H^1(\Gamma,G^z)$, which is also known to be finite \cite[Thm. 6.14]{PR94}.
\end{proof}

\begin{pro} \label{pro:spltor} Let $[Z \rw S] \in \mc{A}$ and assume that $S$ is a split torus. Then $H^1(u \rw W,Z \rw S)=0$.
\end{pro}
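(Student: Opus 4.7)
The plan is to feed the inflation--restriction sequence \eqref{eq:infres} for $[Z \rw S]$ and show that both the left and right ``error terms'' vanish, forcing $H^1(u \rw W, Z \rw S) = 0$.

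First I would record that for a split torus $S$, Hilbert~90 gives $H^1(\Gamma, S) = 0$, so the leftmost term of \eqref{eq:infres} drops out and it suffices to prove that the map $\tx{Hom}(u, Z)^\Gamma \rw H^2(\Gamma, S)$ in \eqref{eq:infres} is injective. By Lemma \ref{lem:tg}, this map factors (up to sign, which is irrelevant for injectivity) as
\[ \tx{Hom}(u, Z)^\Gamma \xrightarrow{\xi^*} H^2(\Gamma, Z) \rw H^2(\Gamma, S), \]
so I would show each of these two arrows is injective.

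For the first arrow, Proposition \ref{pro:h2map} asserts injectivity of $\xi^*$ whenever $Z$ is split. Here $Z \subset S$ is a finite multiplicative subgroup of a split torus; dualizing the exact sequence $1 \rw Z \rw S \rw S/Z \rw 1$ to characters yields a surjection $X^*(S) \srw X^*(Z)$ with trivial $\Gamma$-action on the source, hence trivial action on the target. Thus $Z$ is split and $\xi^*$ is injective.

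For the second arrow, the same character computation shows that $X^*(S/Z) \hookrightarrow X^*(S)$ is a sublattice of a trivial $\Gamma$-module, so $S/Z$ is again a split torus. By Hilbert~90, $H^1(\Gamma, S/Z) = 0$, and the long exact sequence attached to $1 \rw Z \rw S \rw S/Z \rw 1$ then shows $H^2(\Gamma, Z) \rw H^2(\Gamma, S)$ is injective. Combining these two injectivities and feeding the result back into \eqref{eq:infres} yields the conclusion. I do not foresee a serious obstacle: the argument is essentially bookkeeping with the two preceding results (Lemma \ref{lem:tg} and Proposition \ref{pro:h2map}), the only thing to verify carefully being that both $Z$ and $S/Z$ are split, which follows immediately from the triviality of the Galois action on $X^*(S)$.
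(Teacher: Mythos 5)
Your proof is correct and follows essentially the same route as the paper: vanish the left term by Hilbert 90, factor the transgression through $H^2(\Gamma,Z)$ via Lemma \ref{lem:tg}, and use Proposition \ref{pro:h2map} plus Hilbert 90 for $\bar S=S/Z$ to get injectivity of both factors. The only difference is that you spell out the (easy) verification that $Z$ and $S/Z$ are split, which the paper takes for granted.
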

\begin{proof}
\vspace{-10pt}By assumption, the groups $Z$ and $\bar S = S/Z$ are split, hence both maps $\tx{Hom}(u,Z)^\Gamma \rw H^2(\Gamma,Z) \rw H^2(\Gamma,S)$ are injective. The proposition follows from \eqref{eq:infres}.
\end{proof}

\begin{pro} \label{pro:bfd} Let $[Z \rw G] \in \mc{A}$. Put $\bar G=G/Z$. Then we have the commutative diagram with exact rows and columns
\begin{equation} \label{eq:bfd} \xymatrix{
&\bar G(F)\ar[d]\ar@{=}[r]&\bar G(F)\ar[d]\\
1\ar[r]&H^1(\Gamma,Z)\ar[r]^-{\tx{Inf}}\ar[d]&H^1(u \rw W,Z \rw Z)\ar[r]^-{\tx{Res}}\ar[d]&\tx{Hom}(u,Z)^\Gamma\ar@{=}[d]\\
1\ar[r]&H^1(\Gamma,G)\ar[r]^-{\tx{Inf}}\ar@{=}[d]&H^1(u\rw W,Z\rw G)\ar[r]^-{\tx{Res}}\ar[d]^a&\tx{Hom}(u,Z)^\Gamma\ar[d]^-{\eqref{eq:h2map}}\ar[r]&{*}\ar@{=}[d]\\
&H^1(\Gamma,G)\ar[r]&H^1(\Gamma,\bar G)\ar[r]\ar[d]&H^2(\Gamma,Z)\ar[d]\ar[r]&{*}\\
&&1&1
} \end{equation}
where $*$ is to be taken as $H^2(\Gamma,G)$ if $G$ is abelian and disregarded otherwise.
\end{pro}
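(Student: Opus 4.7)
The plan is to treat the three substantive rows as already-known exact sequences and then to establish the middle column as an enhanced long exact cohomology sequence, verifying commutativity along the way. Rows 2 and 3 are \eqref{eq:infres} applied to $[Z\rw Z]$ and $[Z\rw G]$ respectively, with the abelian extensions (when $G$ is abelian) furnished by Lemma \ref{lem:tg}; row 4 is the standard long exact cohomology sequence attached to $1\rw Z\rw G\rw \bar G\rw 1$.

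First I would pin down the vertical maps. The top map $\bar G(F)\rw H^1(u\rw W,Z\rw Z)$ is the usual connecting homomorphism: $\bar g\in\bar G(F)$ lifts to $g\in G(\ol{F})$, and we send it to the class of $w\mapsto g^{-1}w(g)$; since $u\subset W$ acts trivially on $G(\ol{F})$, this cocycle vanishes on $u$, so it actually lies in the image of the inflation map $H^1(\Gamma,Z)\rw H^1(u\rw W,Z\rw Z)$. The map $H^1(u\rw W,Z\rw Z)\rw H^1(u\rw W,Z\rw G)$ is functoriality in $[Z\rw G]$. The map $a$ sends the class of $z$ to the class of $z\bmod Z$; because $z|_u$ takes values in $Z$, the reduction mod $Z$ vanishes on $u$ and so factors through $\Gamma$, giving an element of $H^1(\Gamma,\bar G)$. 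In column 3, the map $\bar G(F)\rw \tx{Hom}(u,Z)^\Gamma$ is defined as the composition through column 2 and is therefore zero, while the lower map is $\xi^*$ from \eqref{eq:h2map}.

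With the maps identified, exactness of column 2 at $H^1(u\rw W,Z\rw Z)$ and $H^1(u\rw W,Z\rw G)$ is routine diagram chasing: a class trivial in $G$ is represented by some $w\mapsto g^{-1}w(g)$, whose $u$-restriction is automatically trivial so that $g\bmod Z$ lies in $\bar G(F)$; and a class whose image in $H^1(\Gamma,\bar G)$ is trivialized by $\bar h\in\bar G(\ol{F})$ can, after lifting $\bar h$ to $h\in G(\ol{F})$, be replaced by the cohomologous cocycle $w\mapsto hz(w)w(h)^{-1}$ with values in $Z$ (central, so its $u$-restriction is unchanged). The hard part will be surjectivity of $a$ onto $H^1(\Gamma,\bar G)$. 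Given $\bar z\in Z^1(\Gamma,\bar G)$, pick any set-theoretic lift $\tilde z:\Gamma\rw G(\ol{F})$; its defect $f(\sigma,\tau):=\tilde z(\sigma)\sigma(\tilde z(\tau))\tilde z(\sigma\tau)^{-1}$ is a 2-cocycle in $Z$, whose class $[f]\in H^2(\Gamma,Z)$ is the classical obstruction to upgrading $\tilde z$ to a $\Gamma$-cocycle. By Proposition \ref{pro:h2map} the map $\xi^*$ is surjective, so we can select $\phi\in\tx{Hom}(u,Z)^\Gamma$ with $\xi^*(\phi)=[f]$. Fix a continuous section $s:\Gamma\rw W$ and let $\dot\xi(\sigma,\tau):=s(\sigma\tau)^{-1}s(\sigma)s(\tau)\in u$ represent $\xi$; then $\phi\circ\dot\xi$ represents $\xi^*(\phi)=[f]$, and after modifying $\tilde z$ by an appropriate 1-cochain in $Z(\ol{F})$ we may arrange $\phi(\dot\xi)=f$ as cocycles. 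I would then set $z(s(\sigma)x):=\tilde z(\sigma)\phi(x)$ for $\sigma\in\Gamma,\ x\in u$; a direct computation of $z(w_1w_2)$ versus $z(w_1)\cdot w_1(z(w_2))$, using centrality of $Z$, $\Gamma$-equivariance of $\phi$, and the normalization $\phi(\dot\xi)=f$, confirms the cocycle identity. By construction $z|_u=\phi$ and $z$ reduces modulo $Z$ to $\bar z$, giving the required preimage.

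Finally, column 3 reduces to Proposition \ref{pro:h2map} (surjectivity of $\xi^*$) together with the vanishing of the composition through column 2 already noted. Commutativity of most squares is routine from the functoriality of inflation, restriction, and connecting maps. The only nonobvious compatibility is that the connecting $H^1(\Gamma,\bar G)\rw H^2(\Gamma,Z)$ of row 4 agrees with $\xi^*\circ\tx{Res}$ applied to any lift in $H^1(u\rw W,Z\rw G)$; this comes down to the identity $z(\dot\xi(\sigma,\tau))=f(\sigma,\tau)$ for any lift $z$ and any representative $\dot\xi$ of $\xi$, which follows in one line from the cocycle relation for $z$ on the product $s(\sigma)s(\tau)s(\sigma\tau)^{-1}$ combined with the fact that $u$ acts trivially on $G(\ol F)$.
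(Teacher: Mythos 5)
Your assembly of the diagram matches the paper's in outline: rows two and three are \eqref{eq:infres}, row four and the outer column come from the long exact $\Gamma$-cohomology sequence of $1 \rw Z \rw G \rw \bar G \rw 1$, the middle column is the corresponding $W$-cohomology sequence (which you verify by hand, correctly checking that the algebraicity condition on the $u$-restriction survives the chases), and the only non-obvious commutativities are the bottom-right square, i.e.\ Lemma \ref{lem:tg}, and the bottom-middle square, where your identity $z(\dot\xi(\sigma,\tau))=f(\sigma,\tau)$ is exactly the paper's computation $dc(\sigma,\tau)=z(\dot\xi(\sigma,\tau))$. Where you genuinely diverge is the surjectivity of $a$. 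The paper treats abelian $G$ by the four-lemma together with the surjectivity of \eqref{eq:h2map}, and reduces general $G$ to that case by choosing a Levi subgroup $R \subset G$ and a fundamental maximal torus $S \subset R$ containing $Z$, then invoking the surjectivity of $H^1(\Gamma,\bar S) \rw H^1(\Gamma,\bar G)$ from \cite[Prop. 9.2]{PR94} and \cite[Lem. 10.2]{Kot86}. You instead lift an arbitrary $\bar z \in Z^1(\Gamma,\bar G)$ directly: the defect $f$ of a set-theoretic lift is a $2$-cocycle valued in the central $Z$, you kill its class by choosing $\phi$ with $\xi^*(\phi)=[f]$ (Proposition \ref{pro:h2map}), normalize so that $f=\phi\circ\dot\xi$, and glue to a $W$-cocycle. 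This is correct, works uniformly for every object of $\mc{A}$, and uses no structure theory (no Levi subgroups, no fundamental tori, no appeal to Kottwitz's surjectivity result), so it is more elementary and self-contained; the paper's route buys economy by reusing the torus case, which is needed elsewhere anyway, and avoids explicit cocycle manipulations on $W$.

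One repair is needed in your gluing step. With the convention $\dot\xi(\sigma,\tau)=s(\sigma)s(\tau)s(\sigma\tau)^{-1}$ used throughout the paper (your expression $s(\sigma\tau)^{-1}s(\sigma)s(\tau)$ is not a $2$-cocycle for this action convention), the correct formula is $z(x\,s(\sigma))=\phi(x)\,\tilde z(\sigma)$ for $x \in u$, equivalently $z(s(\sigma)x)=\tilde z(\sigma)\,\sigma(\phi(x))$; your literal formula $z(s(\sigma)x)=\tilde z(\sigma)\phi(x)$ omits the twist coming from $s(\sigma)x=\sigma(x)s(\sigma)$ and does not satisfy the cocycle identity. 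With that adjustment the verification goes through exactly as you describe, using centrality of $Z$, the $\Gamma$-equivariance of $\phi$, and the normalization $f=\phi\circ\dot\xi$, and one also sees directly that $z$ is continuous, restricts to $\phi$ on $u$, and reduces to $\bar z$ modulo $Z$.
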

\begin{proof}
The second and third rows come from \eqref{eq:infres}, the fourth row and the left column come from the long exact sequence for $\Gamma$-cohomology of the short exact sequence $1 \rw Z \rw G \rw \bar G \rw 1$. The middle column comes from the long exact sequence for $W$-cohomology associated to the same short exact sequence. The commutativity of all squares is obvious, except for the bottom right one, which is the content of Lemma \ref{lem:tg}, and the bottom middle one, which we turn to now.

Choose again a continuous section $s : \Gamma \rw W$. Let $z \in Z^1(u \rw W,Z \rw G)$. Then $c(\sigma) = z(s(\sigma))$ is an element of $C^1(\Gamma,G)$ which lifts $a(z)$, so $dc \in Z^2(\Gamma,Z)$ is the image of $a(z)$ under the connecting homomorphism. Using the fact that $z$ is a cocycle we see that $dc(\sigma,\tau) = z(\dot\xi(\sigma,\tau))$, where $\dot\xi(\sigma,\tau)=s(\sigma)s(\tau)s(\sigma\tau)^{-1}$ represents the class $\xi$.

To complete the proof of the proposition, we need to establish the surjectivity of the map $a$. If $G$ is abelian, this surjectivity follows from the already established surjectivity of \eqref{eq:h2map} and the four-lemma. For a general $G$, let $R \subset G$ be a complement to the unipotent radical of $G$, i.e. a Levi subgroup \cite[Thm 2.3]{PR94}, and let $S \subset R$ be a fundamental maximal torus \cite[\S10]{Kot86}. We have $Z \subset S$
and $\bar R \subset \bar G$ is a Levi subgroup as well and $\bar S \subset \bar R$ is a fundamental maximal torus. Then we have the diagram
\[ \xymatrix{
H^1(u \rw W,Z \rw S)\ar[r]\ar[d]&H^1(u \rw W,Z \rw G)\ar[d]\\
H^1(\Gamma,\bar S)\ar[r]&H^1(\Gamma,\bar G)
} \]
We already know that the left vertical map is surjective, and according to \cite[Prop. 9.2]{PR94} and \cite[Lem. 10.2]{Kot86}, the bottom horizontal map is surjective. It follows that the right vertical map is also surjective.
\end{proof}

\begin{cor} \label{cor:ellsur} Let $[Z \rw G] \in \mc{R}$.
\begin{enumerate}
\item If $G$ possesses anisotropic maximal tori, then the map $H^1(u \rw W,Z \rw G) \rw \tx{Hom}(u,Z)^\Gamma$ in Proposition \ref{pro:bfd} is surjective.
\item If $S \subset G$ is a fundamental torus, then the map
\[ H^1(u \rw W,Z \rw S) \rw H^1(u \rw W, Z \rw G) \]
is surjective.
\end{enumerate}
\end{cor}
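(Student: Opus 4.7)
Since $Z$ is central in $G$, it lies in every maximal torus of $G$, in particular in the given anisotropic $S$. I would apply Proposition \ref{pro:bfd} to the object $[Z \rw S] \in \mc{A}$: by Lemma \ref{lem:tg} the restriction map $H^1(u \rw W, Z \rw S) \rw \tx{Hom}(u, Z)^\Gamma$ is surjective provided $H^2(\Gamma, S) = 0$. To verify this vanishing, I would invoke Tate-Nakayama duality: for $E/F$ a finite Galois extension splitting $S$, one has $H^2(\Gamma_{E/F}, S(E)) \cong \hat H^0(\Gamma_{E/F}, X_*(S)) = X_*(S)^\Gamma / N_{E/F} X_*(S)$, which vanishes because anisotropy of $S$ forces $X_*(S)^\Gamma = 0$; passing to the colimit over $E$ yields $H^2(\Gamma, S) = 0$. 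Composing with the functorial map $H^1(u \rw W, Z \rw S) \rw H^1(u \rw W, Z \rw G)$ now forces the surjectivity claimed in (1).

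\textbf{Plan for (2).} I would follow and refine the diagram chase used in Proposition \ref{pro:bfd} to prove surjectivity of $a_G$. Given $[z_G] \in H^1(u \rw W, Z \rw G)$, let $[\bar z_G] = a_G([z_G]) \in H^1(\Gamma, \bar G)$. Since $\bar S \subset \bar G$ is a fundamental maximal torus, the surjection $H^1(\Gamma, \bar S) \twoheadrightarrow H^1(\Gamma, \bar G)$ of \cite[Lem. 10.2]{Kot86} and \cite[Prop. 9.2]{PR94} produces a lift $[\bar z_S]$. For the abelian group $S$, the map $a_S : H^1(u \rw W, Z \rw S) \rw H^1(\Gamma, \bar S)$ is surjective (apply Proposition \ref{pro:bfd} to $[Z \rw S]$ and use the surjectivity of \eqref{eq:h2map} from Proposition \ref{pro:h2map}), yielding a lift $[z_S]$. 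Let $[z'_G]$ denote its image in $H^1(u \rw W, Z \rw G)$; by construction $a_G([z'_G]) = a_G([z_G])$.

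\textbf{Main obstacle.} The preceding only places $[z_G]$ and $[z'_G]$ in a common fiber of $a_G$, which a priori contains several classes, so the heart of the argument is to match them on the nose. After replacing $z_G$ by a cohomologous cocycle $z''_G$ (conjugating by a lift to $G(\ol F)$ of a suitable element of $\bar G(\ol F)$), one arranges that $z''_G$ and $z'_G$ have identical images in $Z^1(\Gamma, \bar G)$; this does not alter the restriction to $u$ since $u$ acts trivially on $G(\ol F)$ and the $z_G(u) \in Z$ are central. The cochain $c(w) := z'_G(w) z''_G(w)^{-1}$ then takes values in $Z$, and a direct computation using centrality of $Z$ together with algebraicity of $z'_G|_u$ and $z''_G|_u$ shows $c \in Z^1(u \rw W, Z \rw Z)$. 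I would then replace the chosen lift $z_S$ by $c^{-1} z_S \in Z^1(u \rw W, Z \rw S)$, which is still a lift of $\bar z_S$ (since $c$ is trivial in $\bar S$) and whose image in $G$ is $z''_G$, hence cohomologous to $z_G$. The main obstacle is exactly this matching step: the freedom to re-choose $z_S$ within its fiber over $\bar z_S$ without losing either the cocycle condition or the algebraicity on $u$ hinges on $Z$ sitting centrally in both $S$ and $G$, so that the abelian adjustment in $S$ is compatible with the non-abelian cocycle structure in $G$.
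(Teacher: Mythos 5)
Your proposal is correct and takes essentially the same approach as the paper: part (1) is exactly the paper's argument ($H^2(\Gamma,S)=0$ for an anisotropic torus $S$, exactness of \eqref{eq:infres}, then functoriality of the restriction map), while for part (2) the paper simply invokes the four-lemma on the diagram with exact rows $H^1(u \rw W,Z \rw Z) \rw H^1(u \rw W,Z \rw S) \rw H^1(\Gamma,\bar S) \rw 1$ and $H^1(u \rw W,Z \rw Z) \rw H^1(u \rw W,Z \rw G) \rw H^1(\Gamma,\bar G) \rw 1$, using the surjectivity of $H^1(\Gamma,\bar S) \rw H^1(\Gamma,\bar G)$ for the fundamental torus $\bar S$ from the same Kottwitz/Platonov--Rapinchuk references already cited in Proposition \ref{pro:bfd}. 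Your explicit cocycle-level chase, including the matching step via a $Z$-valued cocycle (legitimate because $Z$ is central and $u$ acts trivially on $G(\ol F)$), is precisely the diagram chase that the paper's four-lemma citation packages, carried out with the care the pointed-set setting requires, so the two arguments coincide in substance.
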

\begin{proof}
\vspace{-10pt}The first point follows from the fact that if $S$ is an anisotropic torus, then $H^2(\Gamma,S)$ vanishes, so by \eqref{eq:infres} the map $H^1(u \rw W,Z \rw S) \rw \tx{Hom}(u,Z)^\Gamma$ is surjective, and then so is $H^1(u \rw W,Z \rw G) \rw \tx{Hom}(u,Z)^\Gamma$.

The second point follows right away from the four-lemma applied to the diagram
\[ \xymatrix{
H^1(u \rw W,Z \rw Z)\ar[r]\ar@{=}[d]&H^1(u \rw W,Z \rw S)\ar[r]\ar[d]&H^1(\Gamma,\bar S)\ar[r]\ar[d]&1\\
H^1(u \rw W,Z \rw Z)\ar[r]&H^1(u \rw W,Z \rw G)\ar[r]&H^1(\Gamma,\bar G)\ar[r]&1\\
} \]
\end{proof}

\begin{cor} \label{cor:surjad}
Let $G$ be a connected reductive group defined over $F$, let $Z$ be the center of $G_\tx{der}$, and set $\bar G=G/Z$. Then both natural maps
\[ H^1(u \rw W,Z \rw G ) \rw H^1(\Gamma,\bar G) \rw H^1(\Gamma,G_\tx{ad}) \]
are surjective. If $F$ is $p$-adic and $G$ is split, then both maps are bijective. If $F=\R$ and $G$ is split, then the second map is bijective and the first map has trivial kernel (but possibly non-trivial fibers away from the neutral element).
\end{cor}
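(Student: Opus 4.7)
My strategy rests on a structural decomposition of $\bar G$. Set $T := Z(G)/Z$. Combining the standard equalities $G = Z(G) \cdot G_\tx{der}$ (both factors are $F$-subgroups and $Z(G)^0 \rw G/G_\tx{der}$ is surjective by dimension) and $Z(G) \cap G_\tx{der} = Z(G_\tx{der}) = Z$, the images of $Z(G)$ and $G_\tx{der}$ in $\bar G$ are $F$-subgroups isomorphic to $T$ and $G_\tx{ad}$ respectively, which generate $\bar G$ and intersect trivially. Since $T$ is central, multiplication yields an isomorphism of $F$-algebraic groups $T \times G_\tx{ad} \cong \bar G$, under which the natural projection $\bar G \rw G_\tx{ad}$ is projection onto the second factor.

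This gives $H^1(\Gamma, \bar G) = H^1(\Gamma, T) \times H^1(\Gamma, G_\tx{ad})$, so the second map in the corollary is the surjective second-factor projection, and it becomes bijective as soon as $H^1(\Gamma, T)$ vanishes. When $G$ is split, $T$ is also split, whence $H^1(\Gamma, T) = 0$ by Hilbert 90. Surjectivity of the first map is precisely Proposition \ref{pro:bfd}, so combined with the above this settles the surjectivity claims and the bijectivity of the second map in the split cases.

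To control the kernel of the first map when $G$ is split, I would use exactness of the middle column of diagram \eqref{eq:bfd}: the kernel of $a$ is the image of $H^1(u \rw W, Z \rw Z) \rw H^1(u \rw W, Z \rw G)$. Since $Z$ is split when $G$ is, Proposition \ref{pro:h2map} supplies an injection $\tx{Hom}(u, Z)^\Gamma \hookrightarrow H^2(\Gamma, Z)$, which via \eqref{eq:infres} identifies $H^1(u \rw W, Z \rw Z)$ with $H^1(\Gamma, Z)$ (inflation). Picking a split maximal torus $S \subset G$ containing $Z$, the map $H^1(\Gamma, Z) \rw H^1(\Gamma, G)$ factors through $H^1(\Gamma, S)$, which is zero by Hilbert 90; so the kernel of $a$ is trivial. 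This settles the $\R$ case.

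The main obstacle is upgrading ``trivial kernel'' to outright injectivity in the $p$-adic split case, which is genuinely stronger for maps of non-abelian pointed sets. My plan is the standard twisting argument: for any $c \in Z^1(u \rw W, Z \rw G)$ with corresponding inner form $G^c$, the fiber of $a$ over the class of the image of $c$ in $H^1(\Gamma, \bar G)$ is in bijection with the kernel of the analogous map $a^{(c)}$ for $G^c$, which by the same analysis is the image of $H^1(\Gamma, Z) \rw H^1(\Gamma, G^c)$. The subtle point is that $G^c$ need not be split, so the torus argument above does not directly apply. However, Kottwitz's theorem identifies $H^1(\Gamma, G) = H^1(\Gamma, G^c) = \pi_0(Z(\hat G)^\Gamma)^*$ (since $\hat G = \hat{G^c}$ for inner forms), and this identification is functorial for maps from central subgroups of $G$; hence the image of $H^1(\Gamma, Z)$ in $H^1(\Gamma, G^c)$ coincides with that in $H^1(\Gamma, G)$, which is trivial. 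Thus every fiber of $a$ is trivial and $a$ is a bijection.
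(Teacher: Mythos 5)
Most of your argument runs parallel to the paper's: the decomposition $\bar G \cong (Z(G)/Z)\times G_\tx{ad}$, Hilbert 90 for the split torus $Z(G)/Z$, surjectivity of the first map from Proposition \ref{pro:bfd}, and the identification $\tx{Inf}:H^1(\Gamma,Z)\cong H^1(u\rw W,Z\rw Z)$ deduced from injectivity of \eqref{eq:h2map} for split $Z$ are exactly the steps used there. Where you diverge is in how the relevant images are killed. For the kernel itself your route is simpler and correct: the image of $H^1(\Gamma,Z)$ in $H^1(\Gamma,G)$ dies because $Z$ lies in a split maximal torus, and this handles the trivial-kernel claim uniformly for $p$-adic and real $F$. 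The paper instead identifies the kernel with that of $H^1(\Gamma,G)\rw H^1(\Gamma,G_\tx{ad})$, rewrites it via Kottwitz duality as the image of $H^1(\Gamma,Z(G_\tx{sc}))\rw H^1(\Gamma,G)$, and kills it through $H^1(\Gamma,G_\tx{sc})$ by Kneser in the $p$-adic case; the payoff of that more roundabout route is that every ingredient is stable under inner twisting, so the triviality of all other fibers follows by twisting with no further input -- precisely where your split-torus argument, as you note, breaks down.

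Your substitute for that last step is the one place with a real gap. You assert that Kottwitz's identification $H^1(\Gamma,G)\cong H^1(\Gamma,G^c)\cong\pi_0(Z(\hat G)^\Gamma)^*$ ``is functorial for maps from central subgroups of $G$''. The functoriality in \cite[Thm 1.2]{Kot86} concerns morphisms of connected reductive groups, and $Z$ is finite, so the statement you need -- that the composites $H^1(\Gamma,Z)\rw H^1(\Gamma,G)\rw\pi_0(Z(\hat G)^\Gamma)^*$ and $H^1(\Gamma,Z)\rw H^1(\Gamma,G^c)\rw\pi_0(Z(\hat G)^\Gamma)^*$ agree -- is not contained in what you cite; and it sits at the crux of the bijectivity claim, since over $\R$ the analogous insensitivity to inner twisting is false (the $\tx{SU}_2$ example in the paper's remark has the image of $H^1(\Gamma,Z)$ trivial in $H^1(\Gamma,\tx{SL}_2)$ but of order $2$ in $H^1(\Gamma,\tx{SU}_2)$), so a genuinely $p$-adic input must enter. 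The needed fact is true and can be proved, for instance, as follows: factor $H^1(\Gamma,Z)\rw H^1(\Gamma,G^c)$ through a maximal $F$-torus $T'\subset G^c$ (note $Z\subset Z(G)=Z(G^c)\subset T'$); since $G$ is quasi-split, $T'$ admits an admissible embedding $j:T'\rw G$ defined over $F$ by \cite[Cor 2.2]{Kot82}, and $j$ restricts to the identity on $Z$ because both $\psi^{-1}$ and conjugation act trivially on the center; the inclusions $T'\subset G^c$ and $j:T'\rw G$ induce dually the same canonical map $Z(\hat G)\rw\hat T'$, so Kottwitz functoriality for these two morphisms of connected reductive groups shows the two composites out of $H^1(\Gamma,T')$ into $\pi_0(Z(\hat G)^\Gamma)^*$ coincide; since the image of $H^1(\Gamma,Z)$ in $H^1(\Gamma,G)$ is trivial and the Kottwitz map for $G^c$ is injective in the $p$-adic case, the image of $H^1(\Gamma,Z)$ in $H^1(\Gamma,G^c)$ is trivial. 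With this supplement (or by simply running the paper's twisting-stable chain of arguments on $G^c$) your proof is complete.
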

\begin{proof}
The surjectivity of the first map is already stated in Proposition \ref{pro:bfd}, while that of the second maps follows from the fact that
$\bar G$ is the direct product $G_\tx{ad} \times Z(G)/Z$. Assume now that $G$ is split. The group $Z(G)/Z$ is a split torus and has trivial first cohomology, which accounts for the injectivity of the second map.

We will now discuss the first map. The bijectivity of \eqref{eq:h2map} implies that the map $\tx{Res}$ in the second row of Diagram \eqref{eq:bfd} is trivial, and hence that $\tx{Inf} : H^1(\Gamma,Z) \rw H^1(u \rw W,Z \rw Z)$ is bijective. From this it follows that the kernel of the the first map in the corollary, which coincides with the kernel of the composition
\[ H^1(u \rw W,Z \rw G ) \rw H^1(\Gamma,\bar G) \rw H^1(\Gamma,G_\tx{ad}), \]
must be equal to the kernel of the map $H^1(\Gamma,G) \rw H^1(\Gamma, G_\tx{ad})$. Using \cite[Thm 1.2]{Kot86} and that fact that the map $\hat Z_\tx{sc} \rw \pi_0(\hat Z)$ is surjective, we see that this kernel is equal to the image of $H^1(\Gamma,Z(G_\tx{sc})) \rw H^1(\Gamma,G)$.
This image however is trivial. Indeed, the map passes through $H^1(\Gamma,G_\tx{sc})$. When $F$ is $p$-adic, Kneser's theorem \cite{Kne65} shows that this set is trivial and this implies that the kernel of the first map in the corollary is trivial. Applying the usual twisting argument and using the fact that all arguments remain valid under twisting, we see that all other fibers are trivial as well.

If $F$ is real we will instead show that the connecting homomorphism $G_\tx{ad}(F) \rw H^1(\Gamma,Z(G_\tx{sc}))$ is surjective. If $T_\tx{sc} \subset G_\tx{sc}$ is a split maximal torus, with image $T_\tx{ad}$ in $G_\tx{ad}$, then $T_\tx{ad}(F) \rw H^1(\Gamma,Z(G_\tx{sc}))$ is surjective because $H^1(\Gamma,T_\tx{sc})$ is trivial and this suffices. Again we conclude that the kernel of the first map in the corollary is trivial. However, since this argument is not invariant under twisting any more (as it used the existence of a split torus), we cannot transfer this conclusion to other fibers.
\end{proof}

\begin{rmk} The following is a simple example that shows that some fibers of the first map in Corollary \ref{cor:surjad} can be non-trivial for $F=\R$ and $G$ a split group. Indeed, let $G=\tx{SL}_2/\R$. Then $H^1(\R,G_\tx{ad})$ has two elements. The fiber of the surjection
$H^1(u \rw W,Z \rw G) \rw H^1(\Gamma,G_\tx{ad})$
over the non-trivial element can be identified via twisting with the kernel of
$H^1(u \rw W,Z \rw G') \rw H^1(\Gamma,G'_\tx{ad})$
where $G'$ is the non-trivial inner form of $\tx{SL}_2/\R$, namely the anisotropic group $\tx{SU}_2$. The arguments of Corollary \ref{cor:surjad} show that this kernel is equal to the image of $H^1(\Gamma,Z) \rw H^1(\Gamma,G')$. One computes using \cite[Thm. 6.17]{PR94} that this map is a bijection between two sets of order $2$.
\end{rmk}

\subsection{Definition of $H^1_\tx{sc}(u \rw W,Z \rw G)$} \label{sec:h1sc}
Let $[Z \rw G] \in \mc{R}$. We will now describe a quotient of $H^1(u \rw W,Z \rw G)$ which will be used in the next section. For this, we impose on $H^1(u \rw W,Z \rw G)$ the following equivalence relation: Let $z_1,z_2 \in Z^1(u \rw W,Z \rw G)$. Let $G^1$ be the twist of $G$ by the image of $z_1$ in $Z^1(\Gamma,G_\tx{ad})$. Tautologically $z_2\cdot z_1^{-1} \in Z^1(u \rw W,Z \rw G^1)$ and we say that the images of $z_1$ and $z_2$ in $H^1(u \rw W,Z \rw G)$ are equivalent if $z_2 \cdot z_1^{-1}$ belongs to the image of $Z^1(\Gamma,G^1_\tx{sc})$. One checks easily that this defines an equivalence relation on $H^1(u \rw W,Z \rw G)$ and we denote the quotient by $H^1_\tx{sc}(u \rw W,Z \rw G)$. Since every homomorphism of reductive groups lifts uniquely to a homomorphism of the simply-connected covers of their derived groups, we obtain a functor $H^1_\tx{sc}(u \rw W) : \mc{R} \rw \tx{Sets}$ and a surjective map
\[ H^1(u \rw W) \rw H^1_\tx{sc}(u \rw W) \]
which is an isomorphism whenever $H^1(\Gamma,G_\tx{sc})=1$. This condition holds when $F$ is $p$-adic by Kneser's theorem \cite{Kne65}, as well as when $G$ is a torus because then $G_\tx{sc}=1$.

We remark that in the same way we can define $H^1_\tx{sc}(\Gamma,G)$ by imposing the same equivalence relation on $H^1(\Gamma,G)$. In that situation, we have $H^1_\tx{sc}(\Gamma,G) \cong H^1(\Gamma,G_\tx{sc} \rw G)$, where $G_\tx{sc} \rw G$ is regarded as a crossed module placed in degrees $-1$ and $0$.

\section{The isomorphism $\bar Y_{+,\tx{tor}} \rw H^1_\textrm{sc}(u \rw W)$} \label{sec:tn+iso}

Recall that if $S$ is an algebraic torus defined over $F$ and split over a finite Galois extension $E/F$, there is an isomorphism $H^{-1}_\tx{Tate}(\Gamma_{E/F},X_*(S)) \rw H^1(\Gamma,S)$ \cite{Tat66}. The source of this isomorphism can be computed to be $X_*(S)_{\Gamma,\tx{tor}}$, the torsion submodule of the $\Gamma$-coinvariants of $X_*(S)$. This has the advantage of eliminating the dependence of this isomorphism on the finite extension $E$ and in this way one obtains an isomorphism $X_*(S)_{\Gamma,\tx{tor}} \rw H^1(\Gamma,S)$ which is functorial in $S$.

In this section we are going to define a functor
\[ \bar Y_{+,\tx{tor}} : \mc{R} \rw \tx{AbGrp}, \]
which extends the functor $S \mapsto X_*(S)_{\Gamma,\tx{tor}}$, as well as a morphism of functors from $\bar Y_{+,\tx{tor}}$ to the functor $[Z \rw G] \mapsto \tx{Hom}(u,Z)^\Gamma$. We will then prove that there exists a unique isomorphism
\[ \bar Y_{+,\tx{tor}} \rw H^1_\tx{sc}(u \rw W), \]
which for objects $[1 \rw S] \in \mc{T}$ coincides with the Tate-Nakayama isomorphism, and such that the composition $\bar Y_{+,\tx{tor}}(Z \rw G) \rw H^1_\tx{sc}(u \rw W,Z \rw G) \rw \tx{Hom}(u,Z)^\Gamma$ coincides with the morphism just alluded to.

\subsection{Definition of $\bar Y_{+,\tx{tor}}$} \label{sec:yfunc}

Let $[Z \rw S] \in \mc{T}$. As before we write $\bar S = S/Z$. Let $X=X^*(S)$, $\bar X = X^*(\bar S)$, $Y=X_*(S)$ and $\bar Y = X_*(\bar S)$. We have the exact sequences
\[ 0 \rw \bar X \rw X \rw X/\bar X \rw 0\qquad\tx{and}\qquad 0 \rw Y \rw \bar Y \rw \bar Y/Y \rw 0 \]
and we will identify $\bar X$ with its image in $X$ and $Y$ with its image in $\bar Y$. The abelian group $\bar Y/Y$ is finite and the $\Z$-pairing between $\bar Y$ and $\bar X$ provides a $\Q$-pairing between $\bar Y$ and $X$ which in turn provides a $\Gamma$-equivariant perfect pairing
\[ \bar Y/Y \otimes X/\bar X \rw \Q/\Z. \]
This perfect pairing can also be formulated as the isomorphism
\begin{equation} \label{eq:homz} \bar Y/Y \rw \tx{Hom}(\mu_n,Z),\qquad \bar\lambda \mapsto (x \mapsto x^{n\bar\lambda}), \qquad \tx{for } [\bar Y:Y]|n. \end{equation}
We will write $Y^N$ and $\bar Y^N$ for the kernel of the norm map for the action of the Galois group $\Gamma_{E/F}$ for any finite Galois extension of $E/F$ over which $S$ splits. If $I \subset \Z[\Gamma_{E/F}]$ is the augmentation ideal, we define $\bar Y_+ = \bar Y/IY$. The modules $Y^N$, $\bar Y^N$ and $IY$ are independent of the choice of $E$, and we have the exact sequence
\[ 0 \rw Y_\Gamma \rw \bar Y_+ \rw \bar Y/Y \rw 0, \]
where $Y_\Gamma = Y/IY$ is the module of $\Gamma$-coinvariants in $Y$. Write $Y^N_\Gamma$ and $\bar Y^N_+$ for the quotients of $Y^N$ and $\bar Y^N$ by $IY$. The following fact is easily observed.
\begin{fct} \label{fct:yseq} \label{fct:sbarytor} For any field extension $E/F$ splitting $S$, we have $\bar Y^N_+ = \bar Y_{+,\tx{tor}}$.
Moreover, we have the exact sequence
\[ 0 \rw Y_{\Gamma,\tx{tor}} \rw \bar Y_{+,\tx{tor}} \rw [\bar Y/Y]^N \stackrel{N}{\rw} Y^\Gamma/N(Y). \]
\end{fct}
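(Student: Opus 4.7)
The plan is to prove the two assertions separately. For $\bar Y^N_+ = \bar Y_{+,\tx{tor}}$, I would realize $\bar Y^N_+$ as the kernel of the norm map induced on the quotient,
\[ N : \bar Y_+ = \bar Y/IY \rw \bar Y^\Gamma, \]
which is well-defined because $N$ annihilates the augmentation ideal. Its image sits inside the free abelian group $\bar Y^\Gamma$ and is therefore torsion-free, so $\bar Y_{+,\tx{tor}} \subseteq \bar Y^N_+$. For the reverse inclusion I must show $\bar Y^N/IY$ is torsion, which I would do by squeezing it into the short exact sequence
\[ 0 \rw I\bar Y/IY \rw \bar Y^N/IY \rw \bar Y^N/I\bar Y \rw 0. \]
The right-hand term is $\hat H^{-1}(\Gamma_{E/F},\bar Y)$, finite because $\Gamma_{E/F}$ is a finite group and $\bar Y$ a finitely generated lattice (equivalently, tensoring with $\Q$ turns the norm into an isomorphism of coinvariants with invariants, so its integral kernel is torsion). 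The left-hand term is a quotient of the finite group $\bar Y/Y$. Hence $\bar Y^N/IY$ is finite, proving the first claim.

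For the four-term exact sequence, I would apply the snake lemma to the diagram
\[ \xymatrix{
0\ar[r]&Y\ar[r]\ar[d]_N&\bar Y\ar[r]\ar[d]_N&\bar Y/Y\ar[r]\ar[d]_N&0\\
0\ar[r]&Y^\Gamma\ar[r]&\bar Y^\Gamma\ar[r]&(\bar Y/Y)^\Gamma&{}
} \]
to produce the exact sequence
\[ 0 \rw Y^N \rw \bar Y^N \rw [\bar Y/Y]^N \stackrel{\delta}{\rw} Y^\Gamma/N(Y), \]
where $\delta$ sends the class of $\bar\mu \in \bar Y$ (with $N\bar\mu \in Y$) to the class of $N\bar\mu$ in $Y^\Gamma/N(Y)$. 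Since $IY$ lies in $Y^N$ (any $(\sigma-1)y$ is annihilated by $N$), I may quotient the first two terms by $IY$ while preserving exactness:
\[ 0 \rw Y^N/IY \rw \bar Y^N/IY \rw [\bar Y/Y]^N \stackrel{\delta}{\rw} Y^\Gamma/N(Y). \]
Invoking the first assertion once in the degenerate case $Z=1$ identifies $Y^N/IY$ with $Y_{\Gamma,\tx{tor}}$, and once in the present case identifies $\bar Y^N/IY$ with $\bar Y_{+,\tx{tor}}$, producing the sequence in the statement.

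The only non-formal ingredient is the finiteness of $\hat H^{-1}(\Gamma_{E/F},\bar Y)$, a standard consequence of the fact that Tate cohomology of a finite group with finitely generated coefficients is annihilated by the order of the group. Everything else is a diagram chase, and I would expect the main pitfall to be keeping careful track of the two augmentation-ideal quotients $IY$ (used throughout to define the plus-construction) and $I\bar Y$ (which appears naturally when treating $\bar Y$ as a $\Gamma_{E/F}$-module).
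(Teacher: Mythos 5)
Your proof is correct, and since the paper states this fact without proof (``easily observed''), your snake-lemma argument applied to $0 \rw Y \rw \bar Y \rw \bar Y/Y \rw 0$ with the norm as vertical map, followed by dividing the first two kernels by $IY$ and invoking the first assertion (once for $Z=1$, once in general), is exactly the intended standard argument; the connecting map you compute agrees with the map labelled $N$ in the statement, as the paper itself confirms later in the proof of Theorem \ref{thm:tn+s}. The only nitpick is in the first part: $I\bar Y/IY$ is naturally a quotient of $I \otimes_{\Z} (\bar Y/Y)$ (equivalently, of a finite direct sum of copies of $\bar Y/Y$, one for each generator $\sigma-1$ of $I$), not of $\bar Y/Y$ itself, but this group is still finite, which is all your squeeze requires.
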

Composing the map $\bar Y_{+,\tx{tor}} \rw [\bar Y/Y]^N$ with \eqref{eq:homz} and the inverse of \eqref{eq:homgamma} we obtain a homomorphism $\bar Y_{+,\tx{tor}} \rw \tx{Hom}(u_{E/F,n},Z)^\Gamma$. For varying $E/F$ and $n$ these homomorphisms are compatible and splice to a homomorphism
\begin{equation} \label{eq:yhomz} \bar Y_{+,\tx{tor}} \rw \tx{Hom}(u,Z)^\Gamma. \end{equation}
Given a morphism $[Z_1 \rw S_1] \rw [Z_2 \rw S_2]$ in $\mc{T}$, the induced morphism $\bar S_1 \rw \bar S_2$ gives rise to a morphism $X_*(\bar S_1)_{+,\tx{tor}} \rw X_*(\bar S_2)_{+,\tx{tor}}$. In other words, the assignment $[Z \rw S] \mapsto \bar Y_{+,\tx{tor}}$ is functorial, i.e. we obtain a functor
\[ \bar Y_{+,\tx{tor}} : \mc{T} \rw \tx{FinAbGrp}. \]
The homomorphism $\bar Y_{+,\tx{tor}} \rw \tx{Hom}(u,Z)^\Gamma$ is functorial in $[Z \rw S]$.

In order to extend the functor $\bar Y_{+,\tx{tor}}$ to $\mc{R}$, we need the following lemma.

\begin{lem} \label{lem:conj1} Let $[Z \rw G] \in \mc{R}$, and let $S_1,S_2 \subset G$ be maximal tori. Any $g \in G(\ol{F})$ with $S_2 = \tx{Ad}(g)S_1$ provides a $\Gamma$-equivariant isomorphism
\[ \tx{Ad}(g) : \bar Y_1/Q_1^\vee \rw \bar Y_2/Q_2^\vee, \]
where $\bar Y_i=X_*(S_i/Z)$, and $Q_i^\vee=X_*(S_{i,\tx{sc}})$. Moreover, this isomorphism is independent of the choice of $g$.
\end{lem}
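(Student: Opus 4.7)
The plan is to produce the map from the evident candidate, namely the differential of the isomorphism of tori induced by conjugation by $g$, and then to verify in turn that it lands in the stated quotient, is independent of $g$, and is Galois-equivariant. The independence statement and the Galois-equivariance statement are essentially equivalent, so one reduces to a Weyl-group calculation.

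First, I would set up the map. Conjugation by $g$ restricts to an $\ol F$-isomorphism $S_1 \rw S_2$, and, since $Z$ is central, it descends to an $\ol F$-isomorphism $S_1/Z \rw S_2/Z$, hence to an isomorphism $\bar Y_1 \rw \bar Y_2$ of cocharacter lattices. To see that $Q_1^\vee$ is carried to $Q_2^\vee$, I would lift $\tx{Ad}(g)$ uniquely to an automorphism of $G_\tx{sc}$ (using that $G_\tx{sc} \rw G_\tx{der}$ is the universal cover, so the automorphism of $G_\tx{der}$ given by restriction of $\tx{Ad}(g)$ lifts uniquely). This lift carries $S_{1,\tx{sc}}$ to $S_{2,\tx{sc}}$, and its differential is compatible with the maps $X_*(S_{i,\tx{sc}}) \rw X_*(S_i) \rw \bar Y_i$. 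Thus $\tx{Ad}(g)$ induces a well-defined homomorphism $\bar Y_1/Q_1^\vee \rw \bar Y_2/Q_2^\vee$, which is invertible because $\tx{Ad}(g^{-1})$ provides an inverse.

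The key point is independence of $g$. If $g'$ is another element with $\tx{Ad}(g')S_1 = S_2$, then $n := g^{-1}g' \in N_G(S_1)(\ol F)$ and $\tx{Ad}(g') = \tx{Ad}(g)\circ\tx{Ad}(n)$, so it suffices to show $\tx{Ad}(n)$ acts trivially on $\bar Y_1/Q_1^\vee$. The image of $n$ in the Weyl group $W(G,S_1) = N_G(S_1)/S_1$ is a product of reflections $s_\alpha$ along roots $\alpha$ of $S_1$ in $G$. Each such reflection acts on $X_*(S_1)$ by $\lambda \mapsto \lambda - \<\alpha,\lambda\>\alpha^\vee$, which is the identity modulo $Q_1^\vee$. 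Hence the whole Weyl group acts trivially on $X_*(S_1)/Q_1^\vee$, and a fortiori on the further quotient $\bar Y_1/Q_1^\vee$.

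Finally, Galois-equivariance follows formally from independence. For $\sigma \in \Gamma$, the element $\sigma g$ again satisfies $\tx{Ad}(\sigma g)S_1 = S_2$ because $S_1$ and $S_2$ are defined over $F$. The map induced by $\tx{Ad}(\sigma g)$ on $\bar Y_1/Q_1^\vee \rw \bar Y_2/Q_2^\vee$ is by construction the $\sigma$-conjugate of the one induced by $\tx{Ad}(g)$; by the independence just established the two coincide, which is precisely the assertion of $\Gamma$-equivariance. The main (and essentially only) obstacle is the Weyl-group computation in the independence step; everything else is bookkeeping with simply-connected covers and cocharacter lattices.
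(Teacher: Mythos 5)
Your overall strategy is the same as the paper's: reduce independence to an element $n=g^{-1}g'$ normalizing $S_1$, show that the induced Weyl group element acts trivially on $\bar Y_1/Q_1^\vee$, and deduce $\Gamma$-equivariance formally from independence. But the last step of your Weyl-group computation is where the actual content of the lemma lives, and as written it fails: $\bar Y_1/Q_1^\vee$ is \emph{not} a further quotient of $X_*(S_1)/Q_1^\vee$. Since $Q_1^\vee \subset X_*(S_1) \subset \bar Y_1$, the group $\bar Y_1/Q_1^\vee$ \emph{contains} $X_*(S_1)/Q_1^\vee$ (in general properly, e.g.\ $G=\tx{SL}_2$, $Z=\mu_2$, where $\bar Y/Q^\vee$ has order $2$ while $Y/Q^\vee$ is trivial), and triviality of an action on a subgroup says nothing about triviality on the ambient group. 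So your argument, which only checks the reflection formula on $X_*(S_1)$, proves the lemma for $Y_1/Q_1^\vee$ but not for $\bar Y_1/Q_1^\vee$, which is precisely the case the lemma is about.

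The gap is repairable, and the repair is the one extra observation the paper supplies in some form: one must know that the reflections move elements of the \emph{larger} lattice $\bar Y_1$ only by elements of $Q_1^\vee$. Since $Z$ is central, every root $\alpha$ of $S_1$ in $G$ annihilates $Z$, hence lies in $X^*(S_1/Z)$, so $\<\alpha,\lambda\> \in \Z$ for all $\lambda \in \bar Y_1 = X_*(S_1/Z)$; the formula $s_\alpha\lambda = \lambda - \<\alpha,\lambda\>\alpha^\vee$ then applies verbatim on $\bar Y_1$ and shows each reflection, hence every Weyl element, acts trivially on $\bar Y_1/Q_1^\vee$. The paper packages the same point differently: it embeds $\bar Y$ into $P^\vee \oplus X_*(G/Z\cdot G_\tx{der})$ via the isogeny $S/Z \rw S_\tx{ad} \times G/(Z\cdot G_\tx{der})$, notes that $w$ fixes the central component, and invokes Bourbaki's result that $wp-p \in Q^\vee$ for $p$ in the coweight lattice $P^\vee$. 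Either route closes the gap; without one of them your ``a fortiori'' is unjustified.
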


\begin{proof}
It is clear that $\tx{Ad}(g)$ provides an isomorphism between its source and target. If we show that this isomorphism is independent of the choice of $g$ the $\Gamma$-equivariance will follow. We may assume $S_1=S_2=S$ and $g \in N(S,G)$. Let $w$ be the image of $g$ in the Weyl group $\Omega(S,G)$. We want to show that $w$ acts trivially on $\bar Y/Q^\vee$. The isogeny $S/Z \rw S/(Z \cdot Z(G_\tx{der})) = S_\tx{ad} \times G/(Z\cdot G_\tx{der})$ gives an injection $\bar Y \rw P^\vee \oplus X_*(G/Z\cdot G_\tx{der})$, where $P^\vee=X_*(S_\tx{ad})$ is the coweight lattice. Let $\bar y \in \bar Y$ and decompose it as $\bar y=p+z$ with $p \in P^\vee$ and $z \in X_*(G/Z\cdot  G_\tx{der})$. Then $wz=z$ and $wp-p \in Q^\vee$ by \cite[Ch VI, \S1, no. 10, Prop 27]{Bou}.
\end{proof}

Let $[Z \rw G] \in \mc{R}$. For a maximal torus $S \subset G$, we consider the expression
\[ \varinjlim \frac{[X_*(S/Z)/X_*(S_\tx{sc})]^N}{I(X_*(S)/X_*(S_\tx{sc}))}, \]
where the colimit is taken over the set of Galois extensions $E/F$ splitting $S$. We define $\bar Y_{+,\tx{tor}}([Z \rw G])$ to be the limit of the system whose objects are these expressions and whose morphisms are given by Lemma \ref{lem:conj1}.

Given a morphism $f : [Z \rw G] \rw [C \rw H]$ in $\mc{R}$, the map $f: G \rw H$ lifts uniquely to a map $f_\tx{sc}: G_\tx{sc} \rw H_\tx{sc}$. Choose maximal tori $S \subset G$ and $T \subset H$ such that $f(S) \subset T$. Restricting $f$ to $S$ we obtain a morphism $f: [Z \rw S] \rw [C \rw T]$ in $\mc{T}$ and a compatible homomorphism $S_\tx{sc} \rw T_\tx{sc}$, and hence a map
\begin{equation} \label{eq:yr}
\varinjlim \frac{[X_*(S/Z)/X_*(S_\tx{sc})]^N}{I(X_*(S)/X_*(S_\tx{sc}))}  \stackrel{f_{S,T}}{\lrw} \varinjlim \frac{[X_*(T/C)/X_*(T_\tx{sc})]^N}{I(X_*(T)/X_*(T_\tx{sc}))} . \end{equation}
If $S'$ and $T'$ are other choices of maximal tori of $G$ and $H$ with $f(S') \subset T'$, then there exist $g \in G$ and $h \in \tx{Cent}(f(S),H)$ such that $S'=\tx{Ad}(g)S$ and $T'=\tx{Ad}(hf(g))T$. The commutativity of
\[ \xymatrix{
S\ar[rr]^{\tx{Ad}(g)}\ar[d]^f&&S'\ar[d]^f\\
T\ar[rr]^{\tx{Ad}(hf(g))}&&T'
}\]
implies that the maps $f_{S,T}$ for all possible choices of $S$ and $T$ splice together to a map
\[ \bar Y_{+,\tx{tor}}(f) : \bar Y_{+,\tx{tor}}(G) \rw \bar Y_{+,\tx{tor}}(H). \]
This completes the definition of the functor
\[ \bar Y_{+,\tx{tor}} : \mc{R} \rw \tx{FinAbGrp}. \]
One checks that \eqref{eq:yhomz} extends to a homomorphism
\begin{equation} \label{eq:yhomz2} \bar Y_{+,\tx{tor}} \rw \tx{Hom}(u,Z)^\Gamma \end{equation}
of functors $\mc{R} \rw \tx{FinAbGrp}$.

\subsection{Uniqueness of the isomorphism} \label{sec:uni}

Let $\iota^{(1)},\iota^{(2)} : \bar Y_{+,\tx{tor}} \rw H^1_\tx{sc}(u \rw W)$ be two isomorphisms, both of which coincide with the Tate-Nakayama isomorphism for objects $[1 \rw S] \in \mc{T}$ and lift the morphism $\bar Y_{+,\tx{tor}} \rw \tx{Hom}(u,Z)^\Gamma$ defined by \eqref{eq:yhomz}. We will show that $\iota^{(1)}=\iota^{(2)}$.

\ul{Step 1:} Let $[Z \rw S] \in \mc{T}$ with $S$ an anisotropic torus. Then we have the equations $\varinjlim (\bar Y/Y)^{N_{E/F}}= X_*(\bar S)/X_*(S)$ and $\bar Y_{+,\tx{tor}}=X_*(\bar S)/IX_*(S)$ and conclude that the composition $(\iota^{(1)}_{[Z \rw S]})^{-1}\iota^{(2)}_{[Z \rw S]}$ is therefore an automorphism of the extension
\[ 0 \rw X_*(S)/IX_*(S) \rw X_*(S/Z)/IX_*(S) \rw X_*(S/Z)/X_*(S) \rw 0. \]
This automorphism induces the identities on the first and third terms and thus differs from the identity by a homomorphism
\[ \delta_{[Z \rw S]} : X_*(S/Z)/X_*(S) \rw X_*(S)/IX_*(S). \]
As we fix $S$ and vary $Z$, this homomorphism is still functorial in $Z$ and hence determines a homomorphism
\[ \delta_S : \varinjlim\limits_Z X_*(S/Z)/X_*(S) \rw X_*(S)/IX_*(S). \]
This homomorphism has a divisible source and a finite target and is thus zero.
Each individual homomorphism $\delta_{[Z \rw S]}$ is a restriction of $\delta_S$ and thus also zero.

\ul{Step 2:} Now let $[Z \rw S] \in \mc{T}$ be arbitrary. Let $S_a \subset S$ be the maximal anisotropic subtorus and let $Z_a = Z \cap S_a$. Then we obtain the diagram with exact rows
\[ \xymatrix@-.7pc{
H^1(u \rw W,Z_a \rw S_a)\ar[r]&H^1(u \rw W,Z \rw S)\ar[r]&H^1(u \rw W,Z/Z_a \rw S/S_a)\\
\bar Y_{+,\tx{tor}}(Z_a \rw S_a)\ar[u]\ar[r]&\bar Y_{+,\tx{tor}}(Z \rw S)\ar[u]\ar[r]&\bar Y_{+,\tx{tor}}(Z/Z_a \rw S/S_a)\ar[u]
}\]
According to Proposition \ref{pro:spltor}, the top third term vanishes, and then so must the bottom third. Thus $\iota^{(k)}_{[Z \rw S]}$ is determined by $\iota^{(k)}_{[Z_a \rw S_a]}$ for $k=1,2$ and by Step 1 we have $\iota^{(1)}_{[Z \rw S]} = \iota^{(2)}_{[Z \rw S]}$.

\ul{Step 3:} Let $[Z \rw G] \in \mc{R}$ and let $S \subset G$ be a fundamental maximal torus. According to Corollary \ref{cor:ellsur}, $\iota^{(k)}_{Z \rw G}$ is determined by $\iota^{(k)}_{Z \rw S}$.

\subsection{Unbalanced cup-products} \label{sec:ucp}

The construction of the isomorphism $\bar Y_{+,\tx{tor}} \rw H^1_\tx{sc}(u \rw W)$ will be based on a modified version of the cup-product between a positive-degree cochain and a negative-degree cochain in Tate cohomology. It is defined as follows. Let $\Delta \rw \Theta$ be a surjection of finite groups, let $A$ be a $\Delta$-module, and let $B$ a $\Theta$-module. Recall that for any integer $i$, the set of homogenous $i$-cochains $C^{i,\tx{hom}}_\tx{Tate}(\Theta,B)$ is defined as $\tx{Hom}_{\Theta}(P^\Theta_i,B)$, where $(P^\Theta_i)_{i \in \Z}$ is the standard complete resolution of the trivial $\Theta$-module $\Z$. Analogously, we have $C^{i,\tx{hom}}_\tx{Tate}(\Delta,A)=\tx{Hom}_{\Delta}(P^\Delta_i,A)$. When $i \geq 0$ the set $C^{i,\tx{hom}}_\tx{Tate}(\Delta,A)$ can be identified with the set of $\Delta$-equivariant functions from $\Delta^{i+1}$ to $A$, where $\Delta$ acts by diagonal left multiplication on $\Delta^{i+1}$. Moreover, we may work in the more general situation where $\Delta$ is not finite, but rather a compact topological group and $\Delta \rw \Theta$ is continuous, as long as we take the functions from $\Delta^{i+1}$ to $A$ to be continuous with respect to the discrete topology on $A$. We will also occasionally drop the subscript ``Tate'' in that situation.

Let $i>j>0$. We will be interested in a subset
\[ C^{i,j,\tx{hom}}(\Delta,\Theta,A) \subset C^{i,\tx{hom}}(\Delta,A) \]
defined as follows: An $i$-cochain of $\Delta$ with values in $A$ belongs to this subset if and only if its values remain unchanged when we multiply any of its last $j$-many variables by an element in the kernel of $\Delta \rw \Theta$. This is equivalent to saying that this $i$-cochain is the composition of a (continuous) function $\Delta^{i+1-j} \times \Theta^j \rw A$ with the natural projection $\Delta^{i+1} \rw \Delta^{i+1-j} \times \Theta^j$. Notice that the differential $d : C^{i,\tx{hom}}(\Delta,A) \rw C^{i+1,\tx{hom}}(\Delta,A)$ carries $C^{i,j,\tx{hom}}(\Delta,\Theta,A)$ to $C^{i+1,j,\tx{hom}}(\Delta,\Theta,A)$.

We can consider $B$ as a $\Delta$ module as well and form the $\Delta$-module $A \otimes B$. Let now $i>j' \geq j>0$. Given two cochains $f \in C^{i,j',\tx{hom}}(\Delta,\Theta,A)$ and $g \in C^{-j,\tx{hom}}_\tx{Tate}(\Theta,B)$, we define
\[ f \sqcup g \in C^{i-j,j'-j,\tx{hom}}_\tx{Tate}(\Delta, \Theta, A \otimes B) \]
using the formula
\[ (f \sqcup g)(g_0,\dots,g_{i-j}) = \sum_{(s_1,\dots,s_j) \in \Theta^j}f(g_0,\dots,g_{i-j},s_1,\dots,s_j) \otimes g(s_1^*,\dots,s_j^*),  \]
The condition on $f$ ensures that this formula makes sense. In the degenerate case $\Delta=\Theta$ this is just the formula for $f \cup g$ given in \cite[Prop. 1.4.7]{NSW08}. We also define
\[ (f \sqcup g)(g_0,\dots,g_i) = (f \cup g)(g_0,\dots,g_i) = f(g_0,\dots,g_i)\otimes g(g_i) \]
in the case $j=0$. Then we observe that $\sqcup$ has the following feature in common with the usual cup-product.

\begin{fct} \label{fct:dsqcup}
\[ d(f \sqcup g) = df \sqcup g + (-1)^i f \sqcup dg. \]
\end{fct}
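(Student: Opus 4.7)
The plan is to prove this Leibniz identity by direct computation, closely following the standard argument for the mixed-degree Leibniz rule in Tate cohomology as in \cite[Prop.~1.4.7]{NSW08}. The only structural difference between the unbalanced product $\sqcup$ and the ordinary mixed-degree cup product is that the contracted slots of $f$ occupy the last $j$ positions of an $(i{+}1)$-variable cochain and that $f$ carries an additional $j'-j$ passive $\Theta$-coordinates which simply propagate through the computation; the factorization hypothesis on $f$ guarantees that every intermediate cochain again lies in the required subspace $C^{\bullet,j'-j,\tx{hom}}_\tx{Tate}(\Delta,\Theta,A\otimes B)$, so the subscript bookkeeping is automatic.

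First I would expand $d(f\sqcup g)$ directly from the definition of the coboundary on homogeneous cochains of degree $i-j$, obtaining
\[ d(f\sqcup g)(g_0,\dots,g_{i-j+1}) = \sum_{k=0}^{i-j+1}(-1)^k \sum_{(s_*)\in\Theta^j} f(g_0,\dots,\hat g_k,\dots,g_{i-j+1},s_1,\dots,s_j)\otimes g(s_1^*,\dots,s_j^*). \]
Next I would expand $df\sqcup g$. Here $df$ has degree $i+1$ and is evaluated on the $i+2$ arguments $(g_0,\dots,g_{i-j+1},s_1,\dots,s_j)$, so its coboundary splits into two pieces according to whether the omitted slot is one of the $g_k$ with $k\leq i-j+1$ or one of the $s_m$ with $m\in\{1,\dots,j\}$. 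The first piece reassembles into exactly the expression for $d(f\sqcup g)$ displayed above, and hence cancels it from the desired identity. The second piece equals
\[ \sum_{m=1}^{j}(-1)^{i-j+1+m}\sum_{(s_*)\in\Theta^j} f(g_0,\dots,g_{i-j+1},s_1,\dots,\hat s_m,\dots,s_j)\otimes g(s_1^*,\dots,s_j^*). \]

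The last step is to recognize this residual expression as $(-1)^{i+1} f\sqcup dg$ evaluated on $(g_0,\dots,g_{i-j+1})$. After renaming $(s_1,\dots,\hat s_m,\dots,s_j)$ as $(t_1,\dots,t_{j-1})\in\Theta^{j-1}$, the outer sum over the freed variable $s_m$ is absorbed into an application of the Tate differential on the negative-degree homogeneous cochain $g$, whose standard formula is an alternating sum over insertions of a new $\Theta$-variable into each slot. Rearranging produces precisely $(-1)^{i+1}f\sqcup dg$, which when moved to the right-hand side yields the claimed identity. The only real obstacle is sign reconciliation: one has to verify that the sign $(-1)^{i-j+1+m}$ coming from the coboundary of $f$ combines with the standard alternating sign $(-1)^{m-1}$ in the formula for $dg$ to produce the single factor $(-1)^{i+1}$ independent of $m$, which is a one-line check. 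The degenerate case $j=0$ follows the same computation using the second displayed definition of $f\sqcup g$ and reduces to the classical Leibniz rule.
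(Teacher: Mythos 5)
Your expand-and-regroup computation is precisely the straightforward verification the paper leaves to the reader: the paper's only recorded ingredient is that the positive-degree homogeneous differential is the same omission formula for $\Delta$ and $\Theta$, which you use implicitly when you evaluate $df$ on arguments whose last $j$ slots are only specified in $\Theta$ and identify the first block of $df \sqcup g$ with $d(f\sqcup g)$. Recasting the leftover terms as $(-1)^{i+1} f \sqcup dg$ via the negative-degree $\Theta$-differential, together with the sign reconciliation and the boundary case $j=1$ (where $f\sqcup dg$ is taken with the separate $j=0$ definition and $dg$ is the norm-type splice differential $C^{-1}\rightarrow C^{0}$), is exactly the omitted routine check, so the proposal is correct and follows the same route as the paper.
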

The proof relies on the fact that for $i \geq 0$ the differential for homogenous $i$-cochains is given by a formula which is independent of the group, i.e. it is the same for both $\Delta$ and $\Theta$ -- a fact that is not true for $i<0$. Once this has been observed, the proof is straightforward and we shall leave it to the reader.

It will be more convenient to work with inhomogenous cochains in the subsequent sections. For $i>j' \geq j > 0$, the set of inhomogenous $i$-cochains $C^i(\Delta,A)$ is then just the set of (continuous) functions $\Delta^i \rw A$, and the subset $C^{i,j'}(\Delta,\Theta,A)$ is defined by the same condition as for the homogenous case. For negative degrees, we will be particularly interested in the case $j=1$. An inhomogenous $(-1)$-cochain with values in $B$ is simply an element of $B$.  Given $f \in C^{i,1}(\Delta,\Theta,A)$ and $\lambda \in B$. We have
\[ (f \sqcup \lambda)(g_1,\dots,g_{i-1}) = \sum_{a \in \Theta} f(g_1,\dots,g_{i-1},a) \otimes g_1\dots g_{i-1}a \lambda. \]

In a situation where more than one pair of groups $\Delta \rw \Theta$ is involved, we will write
$f \underset{\Theta}{\sqcup} g$ to keep track of which group is being used. When $\Theta$ is the Galois group of some finite Galois extension $E/F$, we will also write $f \underset{E/F}{\sqcup} g$.

\subsection{Arithmetic preparations} \label{sec:arith}
We choose again an increasing tower $E_k$ of Galois extensions of $F$ with $\bigcup E_k=\ol{F}$. For each $k$, the relative Weil group $W_{E_k/F} = W_F/W_{E_k}^c$ fits into the exact sequence \cite{Tat79}
\[ \xymatrix{
1\ar[r]&E_k^\times\ar[r]^-{\tx{rec}_k}&W_{E_k/F}\ar[r]^-{p_k}&\Gamma_{E_k/F}\ar[r]&1.
} \]
For each $k$, choose an arbitrary (set-theoretic) section $\zeta_k$ for the natural projection $\pi^\Gamma_k: \Gamma_{E_{k+1}/F} \rw \Gamma_{E_k/F}$. Every element of $\Gamma_{E_{k+1}/F}$ can be written as a product $y\zeta_k(x)$ for unique $y \in \Gamma_{E_{k+1}/E_k}$ and $x \in \Gamma_{E_k/F}$. For each $k$, choose inductively a section $s_{k+1} : \Gamma_{E_{k+1}/F} \rw W_{E_{k+1}/F}$ of $p_{k+1}$ with the properties
\[ s_{k+1}(y\zeta_k(x)) = s_{k+1}(y)s_{k+1}(\zeta_k(x))\quad\tx{and}\quad s_k(x)=\pi^W_k(s_{k+1}(\zeta_k(x))), \]
where $\pi^W_k: W_{E_{k+1}/F} \rw W_{E_k/F}$ is the natural projection.

Define $c_k \in Z^2(\Gamma_{E_k/F},E_k^\times)$ by $c_k(\sigma,\tau) = \tx{rec}_k^{-1}(s_k(\sigma) s_k(\tau) s_k(\sigma\tau)^{-1})$. Then $c_k$ represents the canonical class of the extension $E_k/F$. The following lemma expresses the compatibility between the different $c_k$. Its proof is contained in the discussion found in \cite[VI.1]{Lan83} and we reproduce it here for the convenience of the reader.
\begin{lem} \label{lem:fcc}
For any $\sigma,\tau \in \Gamma_{E_{k+1}/F}$ we have
\[ c_k(\pi^\Gamma_k(\sigma),\pi^\Gamma_k(\tau)) =\sprod{v \in \Gamma_{E_{k+1}/E_k}} c_{k+1}(v\sigma,\zeta_k(\pi^\Gamma_k(\tau))) = \sprod{v \in \Gamma_{E_{k+1}/E_k}}c_{k+1}(\sigma,v\tau)c_{k+1}(\sigma,v)^{-1}. \]
\end{lem}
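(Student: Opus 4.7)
The plan is to manipulate both identities using the 2-cocycle relation for $c_{k+1}$ together with the compatibility properties of the sections. A key consequence of property (a) of $s_{k+1}$ is the vanishing $c_{k+1}(v,\zeta_k(x)) = 1$ for all $v \in \Gamma_{E_{k+1}/E_k}$ and $x \in \Gamma_{E_k/F}$, since then $s_{k+1}(v\zeta_k(x)) = s_{k+1}(v)s_{k+1}(\zeta_k(x))$ by definition. Writing $\bar\sigma = \pi^\Gamma_k\sigma$, $\bar\tau = \pi^\Gamma_k\tau$, I observe that it suffices to establish the equalities when $\sigma = \zeta_k(\bar\sigma)$ and $\tau = \zeta_k(\bar\tau)$: the general case follows from the invariance of the first middle expression under $\sigma \mapsto v\sigma$ and $\tau \mapsto v\tau$ for $v \in \Gamma_{E_{k+1}/E_k}$ (via reindexing), and a similar argument for the third expression. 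Put $y := \zeta_k(\bar\sigma)\zeta_k(\bar\tau)\zeta_k(\bar\sigma\bar\tau)^{-1} \in \Gamma_{E_{k+1}/E_k}$.

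The second equality follows from the 2-cocycle identity for $c_{k+1}$ applied to the triple $(\sigma, v, \zeta_k(\bar\tau))$: the vanishing $c_{k+1}(v,\zeta_k(\bar\tau))=1$ collapses it to $c_{k+1}(\sigma, v\zeta_k(\bar\tau))\,c_{k+1}(\sigma, v)^{-1} = c_{k+1}(\sigma v, \zeta_k(\bar\tau))$, and taking the product over $v$ together with the reindexing $\{\sigma v\} = \{v\sigma\}$ (by normality of $\Gamma_{E_{k+1}/E_k}$ in $\Gamma_{E_{k+1}/F}$) yields the claim. For the first equality, apply property (b) to rewrite $\tx{rec}_k(c_k(\bar\sigma,\bar\tau))$ as $\pi^W_k$ applied to $s_{k+1}(\zeta_k(\bar\sigma))\,s_{k+1}(\zeta_k(\bar\tau))\,s_{k+1}(\zeta_k(\bar\sigma\bar\tau))^{-1}$; expanding using $c_{k+1}$ and the factorization $s_{k+1}(y\zeta_k(\bar\sigma\bar\tau)) = s_{k+1}(y)\,s_{k+1}(\zeta_k(\bar\sigma\bar\tau))$ from property (a) reduces it to $\tx{rec}_{k+1}(c_{k+1}(\zeta_k(\bar\sigma),\zeta_k(\bar\tau)))\,s_{k+1}(y)$. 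Applying $\pi^W_k$ and invoking the local class field theory identity $\pi^W_k\circ\tx{rec}_{k+1} = \tx{rec}_k\circ N_{E_{k+1}/E_k}$ yields
\[ c_k(\bar\sigma,\bar\tau) = N_{E_{k+1}/E_k}\bigl(c_{k+1}(\zeta_k(\bar\sigma),\zeta_k(\bar\tau))\bigr)\cdot\beta(y), \]
where $\beta(y) := \tx{rec}_k^{-1}(\pi^W_k(s_{k+1}(y))) \in E_k^\times$. An analogous expansion of $\sprod{v} c_{k+1}(v\zeta_k(\bar\sigma), \zeta_k(\bar\tau))$ via the 2-cocycle identity on $(v,\zeta_k(\bar\sigma),\zeta_k(\bar\tau))$, combined with $c_{k+1}(vy,\zeta_k(\cdot))=1$, shows that it equals $N_{E_{k+1}/E_k}(c_{k+1}(\zeta_k(\bar\sigma),\zeta_k(\bar\tau)))\cdot\sprod{v}c_{k+1}(v,y)$.

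The first equality therefore reduces to the identity $\beta(y) = \sprod{v\in\Gamma_{E_{k+1}/E_k}} c_{k+1}(v,y)$, which is the main obstacle. This is the explicit formula for the local reciprocity isomorphism $W_{E_{k+1}/E_k}^{\tx{ab}} \cong E_k^\times$ evaluated on the sectional lift $s_{k+1}(y)$, and is precisely the content of the discussion in \cite[VI.1]{Lan83}. One verifies it by observing that $\beta$ and $B(y) := \sprod{v}c_{k+1}(v,y)$ are both 1-cochains $\Gamma_{E_{k+1}/E_k}\to E_k^\times$ with common coboundary $N_{E_{k+1}/E_k}(c_{k+1}|_{\Gamma^2_{E_{k+1}/E_k}})$, so their ratio is a homomorphism $\Gamma_{E_{k+1}/E_k} \to E_k^\times$; its vanishing is a consequence of the inductive construction of $s_{k+1}|_{\Gamma_{E_{k+1}/E_k}}$ (equivalently, of the explicit realization of the fundamental class via uniformizers) in Lang's treatment.
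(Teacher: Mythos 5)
Most of your argument tracks the paper's proof and is correct: the vanishing $c_{k+1}(v,\zeta_k(x))=1$, the proof of the second equality via the cocycle identity on $(\sigma,v,\zeta_k(\pi^\Gamma_k\tau))$ plus reindexing, and your derivation of $c_k(\bar\sigma,\bar\tau)=N_{E_{k+1}/E_k}\bigl(c_{k+1}(\zeta_k\bar\sigma,\zeta_k\bar\tau)\bigr)\cdot\beta(y)$ with $\beta(y)=\tx{rec}_k^{-1}(\pi^W_k(s_{k+1}(y)))$ is exactly the identity the paper records (and leaves to the reader). Your reduction of the first equality to $\beta(y)=\prod_{v}c_{k+1}(v,y)$ is also the crux of the paper's argument.

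The gap is in how you dispose of that crux. It is true that $\beta$ and $B(y)=\prod_v c_{k+1}(v,y)$ have the same coboundary, so $\beta B^{-1}$ is a homomorphism $\Gamma_{E_{k+1}/E_k}\rw E_k^\times$; but its vanishing is not "a consequence of the inductive construction of $s_{k+1}$". The sections are arbitrary set-theoretic sections subject only to the two displayed compatibility properties, and those properties have already been fully used in computing the common coboundary; no uniformizers enter the construction at all. Moreover a homomorphism from the finite group $\Gamma_{E_{k+1}/E_k}$ to $E_k^\times$ can perfectly well be nontrivial (its image is a group of roots of unity), so no formal argument of this shape can finish the proof. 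What is genuinely needed — and what the paper uses — is a theorem of local class field theory beyond the norm compatibility you already invoked, namely the compatibility of the reciprocity map with the transfer (Verlagerung): $\tx{rec}_{E_{k+1}}$ restricted to $E_k^\times$ equals $\tx{tr}\circ\tx{rec}_{E_k}$, where $\tx{tr}:W_{E_k}^\tx{ab}\rw W_{E_{k+1}}^\tx{ab}$ is the transfer. Since $\pi^W_k$ restricted to $W_{E_{k+1}/E_k}$ is the abelianization onto $W_{E_k}^\tx{ab}=W_{E_{k+1}/E_k}^\tx{ab}$, and the transfer to $W_{E_{k+1}/E_{k+1}}=E_{k+1}^\times$ computed with the coset representatives $s_{k+1}(v)$ gives $\tx{tr}(\pi^W_k(s_{k+1}(y)))=\prod_v s_{k+1}(v)s_{k+1}(y)s_{k+1}(vy)^{-1}=\prod_v\tx{rec}_{k+1}(c_{k+1}(v,y))$, this yields $\beta(y)=\prod_v c_{k+1}(v,y)$ at once. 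Replace your last sentence by this transfer computation and the proof closes, at which point it coincides with the paper's.
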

\begin{proof}
For any $x,y \in \Gamma_{E_k/F}$ and $z,w \in \Gamma_{E_{k+1}/E_k}$, our choice of the sections $s_k$ implies the following equations, whose derivation we leave to the reader:
\begin{eqnarray}
c_{k+1}(w,z)&=&c_{k+1}(w,z\zeta_k(x)). \label{eq:ck}\\
c_k(x,y)&=&N_{E_{k+1}/E_k}(c_{k+1}(\zeta_k(x),\zeta_k(y)))\cdot \tx{rec}_k^{-1}(\pi^W_k(s_{k+1}(c_\Gamma(x,y)))).\qquad \label{eq:ce}
\end{eqnarray}
Here $c_\Gamma :\Gamma_{E_k/F}^2 \rw \Gamma_{E_{k+1}/E_k}$ is defined by $c_\Gamma(x,y) = \zeta_k(x)\zeta_k(y)\zeta_k(xy)^{-1}$ and the second factor in the right hand side of \eqref{eq:ce} is defined because the function $\pi^W_k\circ s_{k+1}$ maps $\Gamma_{E_{k+1}/E_k}$ into the image of $\tx{rec}_k$. Given $z \in \Gamma_{E_{k+1}/E_k}$, we have
\[ \tx{rec}_k^{-1}(\pi^W_k(s_{k+1}(z))) = \tx{rec}_{k+1}^{-1}(\tx{tr}(\pi^W_k(s_{k+1}(z)))),  \]
where $\tx{tr} : W_{E_k}^\tx{ab} \rw W_{E_{k+1}}^\tx{ab}$ is the transfer map. Because $W_{E_k}^\tx{ab}=W_{E_{k+1}/E_k}^\tx{ab}$, this transfer map is equal to the transfer map $\tx{tr} : W_{E_{k+1}/E_k}^\tx{ab} \rw W_{E_{k+1}/E_{k+1}}^\tx{ab} = W_{E_{k+1}/E_{k+1}}$. In order to compute it, we need a section of the natural projection $W_{E_{k+1}/E_k} \rw W_{E_{k+1}/E_k}/W_{E_{k+1}/E_{k+1}} = \Gamma_{E_{k+1}/E_k}$, and for this we can take $s_{k+1}$. Then we have
\[ \tx{rec}_{k+1}^{-1}\!(\tx{tr}(\pi^W_k\!\!(s_{k+1}(z))))\!=\!\sprod{v \in \Gamma_{E_{k+1}/E_k}} \tx{rec}_{k+1}^{-1}\!(s_{k+1}(v)s_{k+1}(z) s_{k+1}(v z)^{-1}\!)\!=\!\sprod{v \in \Gamma_{E_{k+1}/E_k}} c_{k+1}(v,z). \]
Plugging this into equation \eqref{eq:ce} we obtain
\[ c_k(x,y) = \prod_{v \in \Gamma_{E_{k+1}/E_k}} vc_{k+1}(\zeta_k(x),\zeta_k(y))\cdot c_{k+1}(v,c_\Gamma(x,y)) \]
and according to equation \eqref{eq:ck} this leads to
\begin{eqnarray*}
c_k(x,y)&=&\prod_{v \in \Gamma_{E_{k+1}/E_k}} vc_{k+1}(\zeta_k(x),\zeta_k(y))\cdot c_{k+1}(v,c_\Gamma(x,y)\zeta_k(xy))\\
&=&\prod_{v \in \Gamma_{E_{k+1}/E_k}} vc_{k+1}(\zeta_k(x),\zeta_k(y))\cdot c_{k+1}(v,\zeta_k(x)\zeta_k(y))\\
&=&\prod_{v \in \Gamma_{E_{k+1}/E_k}}c_{k+1}(v\zeta_k(x),\zeta_k(y)) \cdot c_{k+1}(v,\zeta_k(x))\\
&=&\prod_{v \in \Gamma_{E_{k+1}/E_k}}c_{k+1}(v\zeta_k(x),\zeta_k(y)).
\end{eqnarray*}
This completes the proof of the first of the two equations that are stated in the lemma. The second equation follows from
\begin{eqnarray*}
&&c_k(\pi^\Gamma_k(\sigma),\pi^\Gamma_k(\tau))\\
&=&\prod_{v \in \Gamma_{E_{k+1}/E_k}}c_{k+1}(\sigma v,\zeta_k(\pi^\Gamma_k(\tau)))\\
&=&\prod_{v \in \Gamma_{E_{k+1}/E_k}}\sigma c_{k+1}(v,\zeta_k(\pi^\Gamma_k(\tau))) \cdot c_{k+1}(\sigma,v\zeta_k(\pi^\Gamma_k(\tau))) \cdot c_{k+1}(\sigma,v)^{-1} \\
\end{eqnarray*}
and the fact that $c_{k+1}(v,\zeta_k(\pi^\Gamma_k(\tau)))=1$ according to equation \eqref{eq:ck}.
\end{proof}

\subsection{An explicit realization of the extension of $\Gamma$ by $u$} \label{sec:expw}

In the previous section we chose the sequence of extensions $E_k$ as well as the maps $\zeta_k$ and $s_k$. We obtained 2-cocycles $c_k \in Z^2(\Gamma_{E_k/F},E_k^\times)$ representing the canonical classes. We choose in addition a co-final sequence $\{n_k\} \subset \N^\times$ and maps $l_k : \ol{F}^\times \rw \ol{F}^\times$ having the properties $l_k(x)^{n_k}=x$ and $l_{k+1}(x)^\frac{n_{k+1}}{n_k} = l_k(x)$ for all $x \in \ol{F}^\times$.

For each $k$ write again $u_k=u_{E_k/F,n_k}$. Recall the homomorphism $\delta_e : \mu_{n_k} \rw R_{E_k/F}[n_k]$ which sends $x$ to the map $\Gamma_{E_k/F} \rw \mu_{n_k}$ supported at $e$ and having value $x$ there. It induces a homomorphism $\delta_e : \mu_{n_k} \rw u_k$ which is easily seen to be killed by the norm map for the group $\Gamma_{E_k/F}$ acting on $\tx{Hom}(\mu_{n_k},u_k)$. Thus $\delta_e \in Z^{-1}_\tx{Tate}(\Gamma_{E_k/F},\tx{Hom}(\mu_{n_k},u_k))$. On the other hand we have the cochain $l_kc_k \in C^{2,2}(\Gamma_{F},\Gamma_{E_k/F},\ol{F}^\times)$ and its differential $dl_kc_k \in Z^{3,2}(\Gamma_F,\Gamma_{E_k/F},\mu_{n_k})$. We define
\begin{equation} \label{eq:xidef} \xi_k = dl_kc_k \underset{E_k/F}{\sqcup} \delta_e \in Z^2(\Gamma,u_k), \end{equation}
and let $W_k = u_k \boxtimes_{\xi_k} \Gamma$ be the extension of $\Gamma$ by $u_k$ determined by this 2-cocycle. We will now arrange the extensions $W_k$ into a projective system. In order to define the transition maps, it will be convenient to introduce the torus $R_{E_k/F}=\tx{Res}_{E_k/F}\mb{G}_m$ and let $S_{E_k/F}$ by the quotient of $R_{E_k/F}$ by the diagonally embedded copy of $\mb{G}_m$. Then $u_k$ is the subgroup of $n_k$-torsion points in $S_{E_k/F}$. Recall the map $p : u_{k+1} \rw u_k$ defined by \eqref{eq:defp}. Define
\begin{equation} \label{eq:alphake}
\alpha_k = (l_kc_k \underset{E_k/F}{\sqcup} \delta_e)^{-1} \cdot  p(l_{k+1}c_{k+1} \underset{E_{k+1}/F}{\sqcup} \delta_e) \in C^1(\Gamma,S_{E_k/F}), \end{equation}

\begin{lem} \begin{enumerate}
\item The cochain $\alpha_k$ takes values in $u_k$ and the equality $d\alpha_k = p(\xi_{k+1})\xi_k^{-1}$ holds in $C^2(\Gamma,u_k)$.
\item The element $([\xi_k])$ of $\varprojlim H^2(\Gamma,u_k)$ is equal to the canonical class $\xi$.
\end{enumerate}
\end{lem}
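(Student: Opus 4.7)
I plan to derive both parts formally from the Leibniz rule for the unbalanced cup product (Fact \ref{fct:dsqcup}) together with the compatibility of the canonical classes $c_k$ established in Lemma \ref{lem:fcc}.

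For part (1), I first verify that $\alpha_k$, which a priori is a $1$-cochain valued in $S_{E_k/F}$, in fact lands in the $n_k$-torsion subgroup $u_k$. This amounts to showing $\alpha_k^{n_k}=1$ in $S_{E_k/F}$. Raising $(l_kc_k \sqcup \delta_e)^{-1}$ to the $n_k$-th power replaces $l_kc_k$ by $c_k$, and raising $p(l_{k+1}c_{k+1} \sqcup \delta_e)$ uses the relation $l_{k+1}^{n_{k+1}/n_k}=l_k$ to express the result in terms of $l_kc_{k+1}$. The identity of Lemma \ref{lem:fcc}, which expresses $c_k$ as a product over $\Gamma_{E_{k+1}/E_k}$ of values of $c_{k+1}$, is precisely what is needed to match these two contributions modulo the diagonal $\mb{G}_m \hookrightarrow R_{E_k/F}$, i.e.\ in $S_{E_k/F}$. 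For the differential, Fact \ref{fct:dsqcup} applied to $f = l_kc_k$ (a $2$-cochain) and $g = \delta_e$ (a $(-1)$-Tate cochain) gives
\[ d(l_kc_k \sqcup \delta_e) \;=\; dl_kc_k \sqcup \delta_e \,+\, (-1)^2\, l_kc_k \sqcup d\delta_e. \]
Now $\delta_e$ is a Tate cocycle of degree $-1$, so $d\delta_e = N_{E_k/F}\delta_e = 0$, and the right-hand side collapses to $\xi_k$ by definition \eqref{eq:xidef}. The same reasoning yields $d(l_{k+1}c_{k+1} \sqcup \delta_e) = \xi_{k+1}$, and since $p$ is a morphism of algebraic groups it commutes with $d$. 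Combining, we obtain $d\alpha_k = -\xi_k + p(\xi_{k+1}) = p(\xi_{k+1})\xi_k^{-1}$.

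For part (2), the identity in part (1) says that the transition map $p:H^2(\Gamma,u_{k+1}) \rw H^2(\Gamma,u_k)$ sends $[\xi_{k+1}]$ to $[\xi_k]$, so the system assembles to an element $[\xi'] \in \varprojlim H^2(\Gamma,u_k) = H^2(\Gamma,u)$. To identify $[\xi']$ with the class $\xi$ fixed after Theorem \ref{thm:cohcomp} (corresponding to $-1$), I would trace $[\xi_k]$ through the Poitou--Tate duality isomorphism $H^2(\Gamma,u_k) \cong H^0(\Gamma,X^*(u_k))^*$ that appears in the proof of Theorem \ref{thm:cohcomp}. Since $dl_kc_k$ is the Bockstein of $c_k$ for the Kummer sequence $1 \rw \mu_{n_k} \rw \ol{F}^\times \rw \ol{F}^\times \rw 1$, and $c_k$ represents the canonical class of $E_k/F$, pairing $[\xi_k] = [dl_kc_k \sqcup \delta_e]$ against the $\Gamma$-invariant character of $u_k$ encoded by $\delta_e$ under \eqref{eq:homgamma} reduces, via Shapiro's lemma, to the Hasse invariant of $c_k$. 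A careful sign check shows this yields the invariant $-1/[E_k:F]$, matching $\xi$.

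The bulk of the argument is formal once Lemma \ref{lem:fcc} is in hand: the vanishing of $d\delta_e$, the Leibniz rule for $\sqcup$, and the commutation of $p$ with $d$ handle part (1) after checking the $u_k$-valuedness. The main obstacle is the final sign bookkeeping in part (2): one must combine the Leibniz sign, the sign in the Bockstein, and the sign in the Poitou--Tate pairing to obtain $-1$ rather than $+1$. This is precisely what justifies the choice of $-1$ in Section \ref{sec:u}, namely that the Tate--Nakayama isomorphism constructed in Section \ref{sec:tn+iso} comes out to be the classical one rather than its negative.
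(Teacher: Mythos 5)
Your part (1) is essentially the paper's argument. The differential identity is obtained exactly as you describe: Fact \ref{fct:dsqcup} together with $d\delta_e=N_{E_k/F}\delta_e=0$ gives $d(l_kc_k\sqcup\delta_e)=\xi_k$ and $d(l_{k+1}c_{k+1}\sqcup\delta_e)=\xi_{k+1}$, and $p$ commutes with $d$. For the $u_k$-valuedness the paper argues slightly differently, rewriting $\alpha_k=\eta_k\sqcup\delta_e$ for an explicit cochain $\eta_k$ that Lemma \ref{lem:fcc} shows to be $\mu_{n_k}$-valued; your criterion that $\alpha_k^{n_k}=1$ in $S_{E_k/F}$ (whose $n_k$-torsion is $u_k$) rests on the same two inputs, namely the second identity of Lemma \ref{lem:fcc} and the observation that the $\tau$-independent factor $\prod_v c_{k+1}(\sigma,v)$, once cupped with $\delta_e$, lands in the diagonally embedded $\mb{G}_m$ and hence is trivial in $S_{E_k/F}$. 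So part (1) is fine and close to the paper.

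Part (2) is where there is a genuine gap, located exactly at the step you defer to ``a careful sign check.'' The duality used in Theorem \ref{thm:cohcomp} pairs $H^2(\Gamma,u_k)$ with $H^0(\Gamma,X^*(u_k))$, and $\delta_e$ does not ``encode a $\Gamma$-invariant character of $u_k$'': it is a homomorphism $\mu_{n_k}\rw u_k$, and \eqref{eq:homgamma} identifies $\Gamma$-invariant homomorphisms of $u_k$ into a finite group $Z$ with norm-killed homomorphisms of $\mu_{n_k}$ -- a different duality from the one needed here. The invariant characters of $u_k$ are the multiples of the norm element, and the computation must be carried out against the generator $\frac{n_k}{(n_k,[E_k:F])}$ of $H^0(\Gamma,X^*(u_k))\cong\frac{n_k}{(n_k,[E_k:F])}\Z/n_k\Z$. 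Applying this character to $\xi_k$ yields the $\frac{n_k}{(n_k,[E_k:F])}$-th power of the $\mu_{n_k}$-valued $2$-cocycle $(\sigma,\tau)\mapsto\prod_{a\in\Gamma_{E_k/F}}dl_kc_k(\sigma,\tau,(\sigma\tau)^{-1}a)$, and one checks by a direct cochain manipulation (rather than Shapiro's lemma) that in $C^2(\Gamma,\ol{F}^\times)$ this is cohomologous to the $[E_k:F]/(n_k,[E_k:F])$-th power of $c_k^{-1}$, hence has invariant $-1/(n_k,[E_k:F])$. This is not $-1/[E_k:F]$ as you assert; the two agree only when $[E_k:F]$ divides $n_k$, whereas the lemma concerns an arbitrary co-final sequence $\{n_k\}$. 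The conclusion one needs is that $[\xi_k]$ is the character sending the generator $\frac{n_k}{(n_k,[E_k:F])}$ to $-\frac{1}{(n_k,[E_k:F])}\in\Q/\Z$, i.e.\ the element $-1\in\Z/(n_k,[E_k:F])\Z$, which together with the compatibility from part (1) identifies $([\xi_k])$ with $\xi$. As written, your sketch pairs against the wrong element and targets the wrong invariant, so the decisive identification $[\xi_k]\mapsto -1$ is not actually established.
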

\begin{proof}
In order to prove the first statement, we will rewrite $\alpha_k$ as follows. Define for $\sigma,\tau \in \Gamma_{E_{k+1}/F}$ the element
\[ \eta_k(\sigma,\tau) = l_kc_k(\pi^\Gamma_k(\sigma),\pi^\Gamma_k(\tau))^{-1}\cdot\prod_{v \in \Gamma_{E_{k+1}/E_k}}\left[ l_kc_{k+1}(\sigma,\tau v)\cdot l_kc_{k+1}(\sigma,v)^{-1}\right]. \]
According to Lemma \ref{lem:fcc} we have $\eta_k \in C^{2,1}(\Gamma,\Gamma_{E_k/F},\mu_{n_k})$ and we claim that
\[ \alpha_k = \eta_k \underset{E_k/F}{\sqcup} \delta_e \in C^1(\Gamma,u_k). \]
Indeed, in $C^2(\Gamma,S_{E_k/F})$ one computes that
\[ \left[(\sigma,\tau) \mapsto \prod_v l_kc_{k+1}(\sigma,\tau v)\right] \underset{E_k/F}{\sqcup} \delta_e = p(l_{k+1}c_{k+1} \underset{E_{k+1}/F}{\sqcup} \delta_e) = p(\xi_{k+1}), \]
while
\[ \left[(\sigma,\tau) \mapsto \prod_v l_kc_{k+1}(\sigma,v)\right] \underset{E_k/F}{\sqcup} \delta_e \]
represents an element of $R_{E_k/F}$ which lies in the image of the diagonal embedding of $\mb{G}_m$ and is thus trivial in $S_{E_k/F}$.

Turning to the second statement, we need to show that under the isomorphism $H^2(\Gamma,u_k) \rw H^0(\Gamma,X^*(u_k))^* \rw \Z/(n_k,[E_k:F])\Z$ used in the proof of Theorem \ref{thm:cohcomp} the class of $\xi_k$ maps to the element $-1$. For this we compute the cup-product of $\xi_k$ with the element $\frac{n_k}{(n_k,[E_k:F])} \in \frac{n_k}{(n_k,[E_k:F])}\Z/n_k\Z \cong H^0(\Gamma,X^*(u_k))$ and obtain the $\frac{n_k}{(n_k,[E_k:F])}$-th power of the element of $C^2(\Gamma,\mu_{n_k})$ given by
\[ (\sigma,\tau) \mapsto \prod_{a \in \Gamma_{E_k/F}} dl_kc_k(\sigma,\tau,(\sigma\tau)^{-1}a). \]
In $C^2(\Gamma,\ol{F}^\times)$ this power is cohomologous to the $[E_k:F]/(n_k,[E_k:F])$-th power of $c_k^{-1}$ and is thus a 2-cocycle of invariant $-1/(n_k,[E_k:F])$. It follows that the class $\xi_k$ corresponds to the character of $H^0(\Gamma,X^*(u_k))$ which sends the element $\frac{n_k}{(n_k,[E_k:F])}$ to $\frac{-1}{(n_k,[E_k:F])} \in \Q/\Z$.
\end{proof}

It follows from the first part of the above lemma that the map
\[ W_{k+1} \rw W_k,\qquad x \boxtimes \sigma \mapsto p(x)\alpha_k(\sigma) \boxtimes \sigma, \]
is a homomorphism of extensions. Since it is surjective, the limit $W = \varprojlim W_k$ is an extension of $\Gamma$ by $u$. The element of $H^2(\Gamma,u)=\varprojlim H^2(\Gamma,u_k)$ defined by this extension is given by the system $([\xi_k])$. Thus, by the second part of the above lemma, the extension $W$ belongs to the isomorphism class of extensions of $\Gamma$ by $u$ determined by $\xi$.

We also have the following explicit description of the homomorphism \eqref{eq:h2map}, which follows right away from the explicit formula \eqref{eq:xidef} for $\xi_k$.
\begin{fct} \label{fct:h2mapexp}
Let $\phi \in \tx{Hom}(u_k,Z)^\Gamma$ and let $\varphi = \phi \circ\delta_e \in \tx{Hom}(\mu_{n_k},Z)^{N_{E_k/F}}$. Then
\[ \phi(\xi_k) = dl_kc_k \underset{E_k/F}{\sqcup} \varphi. \]
\end{fct}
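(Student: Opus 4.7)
I read the identity as the evident naturality of the unbalanced cup-product $\sqcup$ with respect to a $\Gamma$-equivariant morphism between coefficient pairings. The plan is to unfold both sides via the inhomogeneous formula from the end of Section \ref{sec:ucp} and to check that applying $\phi$ term by term converts one into the other, using only the $\Gamma$-equivariance of $\phi$.

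First I would write out $\xi_k$ explicitly from \eqref{eq:xidef}. Taking $\Delta=\Gamma$, $\Theta=\Gamma_{E_k/F}$, $A=\mu_{n_k}$, and $B=\tx{Hom}(\mu_{n_k},u_k)$ with the evaluation pairing $A\otimes B\rw u_k$, the formula with $j=1$ at the end of Section \ref{sec:ucp} gives
\[ \xi_k(\sigma,\tau)=\sprod{a\in\Gamma_{E_k/F}}\bigl(\sigma\tau a\cdot\delta_e\bigr)\!\bigl(dl_kc_k(\sigma,\tau,a)\bigr)\in u_k(\ol F). \]
Applying $\phi$ factor by factor and using $\varphi=\phi\circ\delta_e$, the right-hand side of the statement reads, by the same formula with $B$ replaced by $\tx{Hom}(\mu_{n_k},Z)$,
\[ \bigl(dl_kc_k\underset{E_k/F}{\sqcup}\varphi\bigr)(\sigma,\tau)=\sprod{a\in\Gamma_{E_k/F}}\bigl(\sigma\tau a\cdot\varphi\bigr)\!\bigl(dl_kc_k(\sigma,\tau,a)\bigr). \]
So the claim reduces to the term-wise identity $\phi\bigl((\rho\cdot\delta_e)(y)\bigr)=(\rho\cdot\varphi)(y)$ for every $\rho\in\Gamma_{E_k/F}$ and $y\in\mu_{n_k}(\ol F)$, which I would verify as the final step.

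The only thing that requires any care, and thus the part I view as the main (if modest) obstacle, is the compatibility of the two notions of $\Gamma$-action at play: the $\Gamma$-equivariance of $\phi:u_k\rw Z$ as a morphism of $\Gamma$-modules, versus the natural $\Gamma$-action $(\rho\cdot f)(y)=\rho\,f(\rho^{-1}y)$ on the Hom-spaces that is built into the definition of $\sqcup$. Granting this action, the computation
\[ \phi\bigl((\rho\cdot\delta_e)(y)\bigr)=\phi\bigl(\rho\,\delta_e(\rho^{-1}y)\bigr)=\rho\,\phi\bigl(\delta_e(\rho^{-1}y)\bigr)=\rho\,\varphi(\rho^{-1}y)=(\rho\cdot\varphi)(y) \]
uses only the $\Gamma$-equivariance of $\phi$, and inserting it into the product representation of $\xi_k$ above yields $\phi(\xi_k)=dl_kc_k\underset{E_k/F}{\sqcup}\varphi$. (Note that this is also consistent with \eqref{eq:homgamma}: the fact that $\phi\in\tx{Hom}(u_k,Z)^{\Gamma}$ corresponds under \eqref{eq:homgamma} to $\varphi\in\tx{Hom}(\mu_{n_k},Z)^{N_{E_k/F}}$ is exactly what ensures that the right-hand side is a well-defined $2$-cocycle, so no additional checking is needed.)
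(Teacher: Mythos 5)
Your proposal is correct and is exactly the argument the paper intends: the paper offers no written proof beyond the remark that the Fact "follows right away from the explicit formula \eqref{eq:xidef} for $\xi_k$", and your unfolding of both sides via the inhomogeneous $j=1$ formula for $\sqcup$, reduced to the term-wise identity $\phi\bigl((\rho\cdot\delta_e)(y)\bigr)=(\rho\cdot\varphi)(y)$ via $\Gamma$-equivariance of $\phi$, is precisely that verification. Nothing is missing.
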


\subsection{Construction of the isomorphism in the case of tori} \label{sec:tn+s}

Let now $[Z \rw S] \in \mc{T}$. We write again $Y=X_*(S)$ and $\bar Y=X_*(S/Z)$. Let $k$ be large enough so that $E_k$ splits $S$ and $|Z|$ divides $n_k$. Let $\bar\lambda \in \bar Y^{N_{E_k/F}}$ and let $\phi_{\bar\lambda,k} \in \tx{Hom}(u_k,Z)^\Gamma$ be its image under the isomorphism
\[ [\bar Y/Y]^{N_{E_k/F}} \rw \tx{Hom}(\mu_{n_k},Z)^{N_{E_k/F}} \rw \tx{Hom}(u_k,Z)^\Gamma \]
given by the composition of \eqref{eq:homz} and \eqref{eq:homgamma}. For $x \boxtimes \sigma \in W_k = u_k \boxtimes \Gamma$, set
\[ z_{\bar\lambda,k}(x \boxtimes \sigma) = \phi_{\bar\lambda,k}(x) \cdot (l_kc_k \underset{E_k/F}{\sqcup} n_k\bar\lambda)(\sigma) \in S(\ol{F}). \]
Then $z_{\bar\lambda,k}$ is a map $W_k \rw S$ which we inflate to a map $W \rw S$.
\begin{lem} \label{lem:zdef} The map $z_{\bar\lambda,k} : W \rw S$ belongs to $Z^1(u \rw W,Z \rw S)$. If $l>k$, then $z_{\bar\lambda,l}$ and $z_{\bar\lambda,k}$ are equal in $Z^1(u \rw W,Z \rw S)$.
\end{lem}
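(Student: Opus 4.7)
My plan is to verify that $z_{\bar\lambda,k}$ satisfies the cocycle relation on $W$ and restricts to an algebraic homomorphism $u\rw Z$, and then compare $z_{\bar\lambda,k}$ and $z_{\bar\lambda,l}$ via the transition maps that splice the $W_k$ into $W$.

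First I would check the cocycle condition on $W_k = u_k\boxtimes_{\xi_k}\Gamma$. Writing $w_i = x_i \boxtimes \sigma_i$ and using the $\Gamma$-equivariance of $\phi_{\bar\lambda,k}$, the relation $z(w_1 w_2) = z(w_1)\cdot\sigma_1(z(w_2))$ reduces to the single identity
$$\phi_{\bar\lambda,k}(\xi_k(\sigma_1,\sigma_2)) \;=\; d\bigl(l_k c_k \underset{E_k/F}{\sqcup} n_k\bar\lambda\bigr)(\sigma_1,\sigma_2).$$
To prove it, I would apply Fact \ref{fct:dsqcup} with $i=2$ to expand the right-hand side as $(dl_kc_k \sqcup n_k\bar\lambda) + (l_kc_k \sqcup d(n_k\bar\lambda))$; the second term vanishes because the Tate differential on a $(-1)$-cochain is the norm map and $N_{E_k/F}(n_k\bar\lambda)=n_k N_{E_k/F}(\bar\lambda)=0$ by the hypothesis $\bar\lambda \in \bar Y^{N_{E_k/F}}$. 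Fact \ref{fct:h2mapexp} rewrites the left-hand side as $dl_kc_k \sqcup \varphi_{\bar\lambda,k}$ with $\varphi_{\bar\lambda,k}=\phi_{\bar\lambda,k}\circ\delta_e$, and by \eqref{eq:homz} this $\varphi_{\bar\lambda,k}$ is the homomorphism $x\mapsto x^{n_k\bar\lambda}$; identifying it with the cocharacter $n_k\bar\lambda \in Y$ (which lies in $Y$ since $|Z|$ divides $n_k$) yields the desired equality. The restriction of $z_{\bar\lambda,k}$ to $u$ is then visibly the algebraic homomorphism $\phi_{\bar\lambda,k}:u\rw Z$, after arranging $s_k(1)=1$ so that the 1-cochain $l_kc_k\sqcup n_k\bar\lambda$ is normalized.

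For the independence statement it suffices to treat $l=k+1$. Composing $z_{\bar\lambda,k}$ with the transition map $W_{k+1}\rw W_k$, $x\boxtimes\sigma\mapsto p(x)\alpha_k(\sigma)\boxtimes\sigma$, and using the compatibility $\phi_{\bar\lambda,k+1}=\phi_{\bar\lambda,k}\circ p$ (which one checks by composing with $\delta_e$ and observing that $p\circ\delta_e$ factors as $\delta_e$ composed with the $(n_{k+1}/n_k)$-th power map $\mu_{n_{k+1}}\rw \mu_{n_k}$), the desired equality of cocycles boils down to
$$\phi_{\bar\lambda,k}(\alpha_k(\sigma)) \;=\; \bigl[(l_kc_k \sqcup n_k\bar\lambda)^{-1}\cdot(l_{k+1}c_{k+1} \sqcup n_{k+1}\bar\lambda)\bigr](\sigma),$$
which is exactly the image under $\phi_{\bar\lambda,k}$ of the defining formula \eqref{eq:alphake} for $\alpha_k$, again via the identifications $\phi_{\bar\lambda,\cdot}\circ\delta_e\leftrightarrow n_\cdot\bar\lambda$ on each level.

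The main difficulty is purely bookkeeping: one must consistently identify a homomorphism $\mu_n\rw Z$ with its corresponding cocharacter in $Y$ under the unbalanced cup-product, invoke the Leibniz rule of Fact \ref{fct:dsqcup} in a codegree where the ordinary differential formula applies to the positive-degree factor, and maintain a normalization of $c_k$ so that no spurious boundary terms contaminate the restriction to $u$. Once these conventions are pinned down, both assertions of the lemma follow directly from Facts \ref{fct:dsqcup} and \ref{fct:h2mapexp} together with formula \eqref{eq:alphake}.
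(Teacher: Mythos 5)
Your proposal is correct and follows essentially the same route as the paper: the cocycle property is reduced, via the Leibniz rule of Fact \ref{fct:dsqcup} (with $d(n_k\bar\lambda)=N_{E_k/F}(n_k\bar\lambda)=0$) and Fact \ref{fct:h2mapexp}, to the identity $\phi_{\bar\lambda,k}(\xi_k)=d(l_kc_k\sqcup n_k\bar\lambda)$, and the independence of $k$ is checked for $l=k+1$ through the transition map, the compatibility $\phi_{\bar\lambda,k}\circ p=\phi_{\bar\lambda,k+1}$, and the image of formula \eqref{eq:alphake} under $\phi_{\bar\lambda,k}$. Your added verifications (the factorization $p\circ\delta_e=\delta_e\circ(\cdot)^{n_{k+1}/n_k}$ and the normalization remark) are exactly the points the paper leaves to the reader.
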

\begin{proof}
By definition $z_{\bar\lambda,k}$ is a continuous 1-cochain $W \rw S$ whose restriction to $u$ factors through an algebraic homomorphism $u_k \rw Z$. It remains to show that this 1-cochain is in fact a 1-cocycle. We compute
\begin{eqnarray*}
dz_{\bar\lambda,k}(x\boxtimes \sigma,y \boxtimes\tau)&=&\phi_{\bar\lambda,k}(\xi_k(\sigma,\tau))^{-1} \cdot d(l_kc_k \sqcup n_k\bar\lambda)(\sigma,\tau)\\
(\tx{Fact \ref{fct:dsqcup}})&=&\phi_{\bar\lambda,k}(\xi_k(\sigma,\tau))^{-1}\cdot (dl_kc_k \sqcup n_k\bar\lambda)(\sigma,\tau)\\
(\tx{Fact \ref{fct:h2mapexp}})&=&1.
\end{eqnarray*}
To compare $z_{\bar\lambda,l}$ and $z_{\bar\lambda,k}$, we will show that the inflation of $z_{\bar\lambda,k}$ to $W_l$ is equal to $z_{\bar\lambda,l}$. For this it is enough to consider $l=k+1$. According to the definition of the transition map $W_{k+1} \rw W_k$, the inflation of $z_{\bar\lambda,k}$ takes at $x \boxtimes \sigma \in W_{k+1}$ the value
\[ \phi_{\bar\lambda,k}(p(x)\alpha_k(\sigma)) \cdot (l_kc_k \underset{E_k/F}{\sqcup} \delta_e)(\sigma). \]
One checks that the equalities
\[ \phi_{\bar\lambda,k} \circ p = \phi_{\bar\lambda,k+1},\quad \phi_{\bar\lambda,k}(l_kc_k \underset{E_k/F}{\sqcup} \delta_e)=l_kc_k \underset{E_k/F}{\sqcup} n_k\bar\lambda \]
hold and the proof is complete.
\end{proof}
We will henceforth denote the inflation of $z_{\bar\lambda,k}$ to a map $W \rw S(\ol{F})$ by $z_{\bar\lambda}$, due to its independence of $k$.

\begin{thm} \label{thm:tn+s} The assignment $\bar\lambda \mapsto z_{\bar\lambda}$ induces an isomorphism
\[ \iota: \bar Y_{+,\tx{tor}} \rw H^1(u \rw W) \]
of functors $\mc{T} \rw \tx{AbGrp}$. This isomorphism coincides with the Tate-Nakayama isomorphism for objects $[1 \rw S] \in \mc{T}$ and lifts the morphism \eqref{eq:yhomz}.
\end{thm}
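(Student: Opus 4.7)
The plan is to verify four properties of the assignment $\bar\lambda \mapsto z_{\bar\lambda}$: (a) that it descends to a well-defined map on $\bar Y_{+,\tx{tor}} = \bar Y^N/IY$; (b) that its composition with the restriction map $H^1(u \rw W, Z \rw S) \rw \tx{Hom}(u,Z)^\Gamma$ is \eqref{eq:yhomz2}; (c) that it recovers the classical Tate-Nakayama isomorphism when $Z = \{1\}$; and (d) that the resulting morphism is a functorial isomorphism.

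For (a), Lemma \ref{lem:zdef} already gives a well-defined map $\bar Y^{N_{E_k/F}} \rw H^1(u \rw W, Z \rw S)$ independent of $k$, so it suffices to show that $z_{\bar\lambda}$ is a coboundary when $\bar\lambda \in IY$. For such $\bar\lambda$ the image in $\bar Y/Y$ vanishes, hence $\phi_{\bar\lambda,k} = 0$ and the formula collapses to $z_{\bar\lambda}(x \boxtimes \sigma) = (l_kc_k \underset{E_k/F}{\sqcup} n_k\bar\lambda)(\sigma)$. Since $\bar\lambda \in Y$, a direct computation using the identity $(l_kc_k)^{n_k} = c_k$ and the definition of $\sqcup$ on cocharacters of $S$ yields $z_{\bar\lambda} = c_k \underset{E_k/F}{\sqcup} \bar\lambda$, which is a coboundary by the classical theory. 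For (b), the restriction of $z_{\bar\lambda}$ to $u \subset W$ is by construction equal to $\phi_{\bar\lambda,k}$, which is exactly the image of $\bar\lambda$ under \eqref{eq:yhomz2}. For (c), when $Z = \{1\}$ we automatically have $\phi_{\bar\lambda,k} = 0$, so the same calculation shows $z_{\bar\lambda} = c_k \underset{E_k/F}{\sqcup} \bar\lambda$, which is the standard Tate-Nakayama cocycle representative.

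For (d), I would apply the five lemma to the diagram whose top row is the four-term exact sequence of Fact \ref{fct:sbarytor} and whose bottom row is the inflation-restriction sequence of Proposition \ref{pro:bfd}. The leftmost vertical arrow is the classical Tate-Nakayama isomorphism (via (c) applied to the maximal anisotropic subtorus, combined with Proposition \ref{pro:spltor} to reduce to that case, or more directly by observing that the restriction of $\iota$ to $Y_{\Gamma,\tx{tor}}$ is classical Tate-Nakayama by (c)); the third vertical arrow is the isomorphism \eqref{eq:yhomz2}, which is bijective via the composition of \eqref{eq:homz} and \eqref{eq:homgamma}; and the rightmost vertical arrow is the $H^2$-Tate-Nakayama isomorphism identifying $Y^\Gamma/NY \cong H^0_\tx{Tate}(\Gamma_{E_k/F},Y)$ with $H^2(\Gamma,S)$. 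Commutativity of the central square is (b) together with Lemma \ref{lem:tg}; commutativity of the right square reduces to Fact \ref{fct:h2mapexp} and the standard compatibility between cup products and Poitou-Tate duality. Once the diagram is in place, the five lemma yields bijectivity. Functoriality in $[Z \rw S] \in \mc{T}$ is then automatic because every ingredient in the formula for $z_{\bar\lambda}$, namely $\phi_{\bar\lambda,k}$ and the unbalanced cup product $l_kc_k \underset{E_k/F}{\sqcup} n_k\bar\lambda$, is manifestly functorial in the cocharacter and in the morphism $Z \rw S$.

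The hardest step will be the verification of commutativity of the right square in (d), because this requires matching the explicit 2-cocycle $\xi_k$ built from $l_kc_k$ via \eqref{eq:xidef} against the Poitou-Tate pairing that identifies $Y^\Gamma/NY$ with $H^2(\Gamma,S)$. The sign convention (namely that $\xi$ corresponds to $-1$, as emphasized after Theorem \ref{thm:cohcomp}) must be tracked carefully through Fact \ref{fct:h2mapexp} in order to land on the correct Tate-Nakayama normalization; this is precisely the reason the factor $n_k\bar\lambda$ (rather than $\bar\lambda$) appears in the definition of $z_{\bar\lambda}$, and any mismatch here would prevent the comparison with the classical isomorphism from being the identity.
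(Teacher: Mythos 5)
Your proposal is correct and follows essentially the same route as the paper: kill $IY$ by observing $z_{\bar\lambda}=c_k\cup\bar\lambda$ for $\bar\lambda\in Y$ (which also gives the comparison with classical Tate--Nakayama and the lift of \eqref{eq:yhomz}), then run the five lemma on the diagram comparing the inflation-restriction sequence \eqref{eq:infres} with the exact sequence of Fact \ref{fct:sbarytor}, with the right-hand square handled via Fact \ref{fct:h2mapexp} and the Leibniz rule of Fact \ref{fct:dsqcup}. The only small imprecision is that the rightmost vertical arrow $Y^\Gamma/N(Y)\rw H^2(\Gamma,S)$ (given by $\lambda\mapsto \lambda\cup c_k^{-1}$) is only injective, not an isomorphism onto $H^2(\Gamma,S)$, but injectivity is all the five-lemma argument requires.
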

\begin{proof}
The fact that $z_{\bar\lambda}$ is additive in $\bar\lambda$ is clear from its definition, so that we indeed obtain a homomorphism of groups from the subgroup of $\bar Y$ of elements killed by $N_{E_k/F}$ for some $k$, to the group $H^1(u \rw W,Z \rw S)$. The functoriality in $Z \rw S$ is also clear from the construction, as both factors in the product defining $z_{\bar\lambda,k}$ are functorial in $\bar\lambda$. If $\bar\lambda \in Y$, then $\phi_{\bar\lambda,k}=0$ and moreover $l_kc_k \sqcup n_k\bar\lambda=c_k \cup \bar\lambda$, thus $z_{\bar\lambda,k}$ represents the image in $H^1(\Gamma,S)$ of $\bar\lambda$ under the classical Tate-Nakayama isomorphism \cite{Tat66}. This shows that the group homomorphism $\bar\lambda \mapsto z_{\bar\lambda}$ annihilates $IY$ and thus descends to a homomorphism $\iota_{[Z \rw S]} : \bar Y_{+,\tx{tor}}(Z \rw S) \rw H^1(u \rw W,Z \rw S)$. It furthermore shows that the latter homomorphism is equal to the Tate-Nakayama isomorphism if $Z=1$. The fact that $\iota_{[Z \rw S]}$ lifts \eqref{eq:yhomz} is evident from the construction of $z_{\bar\lambda,k}$. What remains to be shown is that $\iota_{[Z \rw S]}$ is an isomorphism. For this, we consider the diagram
\[ \xymatrix@-.5pc{
1\ar[r]&H^1(\Gamma,S)\ar[r]&H^1(u \rw W,Z \rw S)\ar[r]&\tx{Hom}(u,Z)^\Gamma\ar[r]&H^2(\Gamma,S)\\
0\ar[r]&Y_{\Gamma,\tx{tor}}\ar[u]\ar[r]&\bar Y_{+,\tx{tor}}\ar[u]\ar[r]&\varinjlim[\bar Y/Y]^{N_k}\ar[u]\ar[r]&\varinjlim Y^\Gamma/N_k(Y)\ar[u]
} \]
where $N_k$ denotes the norm map for the action of $\Gamma_{E_k/F}$ and the limits are taken with respect to $k$, beginning with $k$ large enough. The transition maps for the first limit are the natural inclusions, and the transition map $k \rw k+1$ for the second limit is given by the norm map for the action of $\Gamma_{E_{k+1}/E_k}$ (in fact this action is trivial, so the map is just multiplication by $[E_{k+1}:E_k]$). The right-most bottom horizontal map is given by the system of maps $[\bar Y/Y]^{N_k} \rw Y^\Gamma/N_k(Y)$ sending $\bar\lambda +Y$ to $N_k(\bar\lambda)+N_k(Y)$. The right vertical map sends $\lambda \in Y^\Gamma$ to $\lambda \cup c_k^{-1}$, i.e. it is the composition of the negative of the Tate-Nakayama isomorphism $Y^\Gamma/N_k(Y) \rw H^2(\Gamma_{E_k/F},S(E_k))$ with the inclusion into $H^2(\Gamma,S)$.
The commutativity of the left and middle square has just been established. For the commutativity of the right square, we know that $\bar\lambda \in \bar Y$ maps up and across to $\phi_{\bar\lambda,k}(\xi_k) \in H^2(\Gamma,S)$, which by Fact \ref{fct:h2mapexp} equals $dl_kc_k \underset{E_k/F}{\sqcup} n_k\bar\lambda$. According to Fact \ref{fct:dsqcup}, we have in $C^2(\Gamma,S)$ the equality $d(l_kc_k \sqcup n_k\bar\lambda)=dl_kc_k\sqcup n_k\bar\lambda+l_kc_k\sqcup n_kd\bar\lambda$, which implies in $H^2(\Gamma,S)$ the equality $dl_kc_k\sqcup n_k\bar\lambda=-l_kc_k\sqcup n_kd\bar\lambda$. Since $\bar\lambda$ represents an element of $[\bar Y/Y]^{N_k}$ and the differential in degree $-1$ is the map $N_k$, we see that $d\bar\lambda \in Y^\Gamma$, so the right hand side of the last equation is equal to $-c_k \cup d\bar\lambda$, which coincides with the image of $\bar\lambda$ across and then up.

We have shown that the above diagram is commutative. The top row is exact by \eqref{eq:infres}, and the bottom row is exact, being derived from  the exact sequence of Fact \ref{fct:yseq}. We know that the first and third vertical maps are isomorphisms. We also know that the fourth vertical map is injective, being given by a system of compositions $Y_\Gamma/N_k(Y) \rw H^2(\Gamma_{E_k/F},S(E_k)) \rw H^2(\Gamma,S)$, of which the first is bijective and the second injective (recall that $k$ is large enough). We now appeal to the five-lemma and the proof is complete.
\end{proof}

\subsection{Construction of the isomorphism for reductive groups} \label{sec:tn+g}

We will now extend the isomorphism of Theorem \ref{thm:tn+s} to an isomorphism
\[ \iota: \bar Y_{+,\tx{tor}} \rw H^1_\tx{sc}(u \rw W) \]
of functors $\mc{R} \rw \tx{Sets}$. When $F$ is $p$-adic, this isomorphism will endow the set $H^1(u \rw W,Z \rw G)$ with the structure of an abelian group. Moreover, this group structure will be compatible with the group structure on $H^1(\Gamma,G)$ obtained by Kottwitz \cite[Thm 1.2]{Kot86} and with the natural group structure on $\tx{Hom}(u,Z)$, and the maps in diagram \eqref{eq:bfd} will all become group homomorphisms.

\begin{lem} \label{lem:ygq} Let $[Z \rw G] \in \mc{R}$ and let $S \subset G$ be a maximal torus. The fibers of the composition
\[ \bar Y_{+,\tx{tor}}(Z \rw S) \rw H^1(u \rw W,Z \rw S) \rw H^1_\tx{sc}(u \rw W,Z \rw G) \]
are torsors under the image of $X_*(S_\tx{sc})_{\Gamma,\tx{tor}}$ in $\bar Y_{+,\tx{tor}}(Z \rw S)$.
\end{lem}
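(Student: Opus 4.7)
The plan is to identify the fiber by unfolding the defining equivalence on $H^1_\tx{sc}$ and reducing to a classical Galois-cohomology question for the torus $S$ inside $G$. Fix $\bar\lambda_1$ and set $\mu=\bar\lambda_2-\bar\lambda_1$; by definition, $\bar\lambda_1$ and $\bar\lambda_2$ map to the same element of $H^1_\tx{sc}(u \rw W,Z \rw G)$ iff the class of $z_\mu$ in $H^1(u \rw W,Z \rw G^{\bar\lambda_1})$ lies in the image of $H^1(\Gamma,G^{\bar\lambda_1}_\tx{sc})$, where $G^{\bar\lambda_1}$ is the inner twist by $z_{\bar\lambda_1}$. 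Since $z_{\bar\lambda_1}$ takes values in the abelian group $S$, this twist is trivial on $S$ and on $S_\tx{sc}$, and it preserves the abelianized Galois cohomology, so the condition on $\mu$ is insensitive to the choice of $\bar\lambda_1$. We are thus reduced to showing that the kernel of the composition $\bar Y_{+,\tx{tor}}(Z \rw S)\rw H^1_\tx{sc}(u \rw W,Z \rw G)$ equals the image of $X_*(S_\tx{sc})_{\Gamma,\tx{tor}}$.

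One containment is straightforward from functoriality: for $\nu\in X_*(S_\tx{sc})_{\Gamma,\tx{tor}}$, Theorem \ref{thm:tn+s} applied to the morphism $[1 \rw S_\tx{sc}] \rw [Z \rw S]$ in $\mc{T}$ produces a cocycle representative for the image of $\nu$ in $H^1(u \rw W,Z \rw S)$ which lies in $Z^1(\Gamma,S_\tx{sc})\subset Z^1(\Gamma,G_\tx{sc})$, and is therefore trivial in $H^1_\tx{sc}(u \rw W,Z \rw G)$ by the defining equivalence. Conversely, take $\mu$ in the kernel, so that $z_\mu$ is cohomologous in $Z^1(u \rw W,Z \rw G)$ to the image of some $\tilde z\in Z^1(\Gamma,G_\tx{sc})$. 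Restricting the coboundary condition to $u\subset W$ and using that $u$ acts trivially on $G$ forces $z_\mu|_u=1$, so $\phi_\mu=0$. By Fact \ref{fct:sbarytor}, $\mu$ is the image of some $\nu\in X_*(S)_{\Gamma,\tx{tor}}$, and $z_\mu$ represents the Tate-Nakayama class of $\nu$ in $H^1(\Gamma,S)$, whose image in $H^1(\Gamma,G)$ lies in the image of $H^1(\Gamma,G_\tx{sc})$.

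The main remaining step is the torus-level statement: if $[\nu]\in H^1(\Gamma,S)$ has image in $H^1(\Gamma,G)$ lifting to $H^1(\Gamma,G_\tx{sc})$, then $[\nu]$ already lifts to $H^1(\Gamma,S_\tx{sc})$. The plan is to exploit the Cartesian square
\[ \xymatrix{S_\tx{sc}\ar[r]\ar[d]&G_\tx{sc}\ar[d]\\S\ar[r]&G} \]
(Cartesian because the preimage of $S\subset G$ in $G_\tx{sc}$ equals $S_\tx{sc}$, since the image of $S_\tx{sc}$ in $G$ is $S_\tx{der}=S\cap G_\tx{der}$): choose $\tilde z\in Z^1(\Gamma,G_\tx{sc})$ lifting $[\nu]$ and, after modifying by a coboundary in $G_\tx{sc}$ using conjugacy of maximal tori in $G_\tx{sc}$, arrange that $\tilde z$ takes values in $S_\tx{sc}$; this realizes $[\nu]$ as the image of $[\tilde z]\in H^1(\Gamma,S_\tx{sc})$, and under Tate-Nakayama it shows $\nu$ lies in the image of $X_*(S_\tx{sc})_{\Gamma,\tx{tor}}$. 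I expect the main obstacle to be carrying out this conjugation step $\Gamma$-equivariantly; a cleaner alternative is to appeal to Borovoi's abelianized Galois cohomology $H^1_\tx{ab}(\Gamma,G):=H^1(\Gamma,G_\tx{sc}\rw G)$, whose natural restriction map $H^1_\tx{ab}(\Gamma,S)\rw H^1_\tx{ab}(\Gamma,G)$ is known to be an isomorphism for any maximal torus, directly packaging the desired statement.
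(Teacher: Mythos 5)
Your overall reduction is the same as the paper's: reduce to the fiber through the trivial class, note that equivalence to a cocycle inflated from $Z^1(\Gamma,G_\tx{sc})$ forces the restriction to $u$ to vanish, use Fact \ref{fct:sbarytor} to land in $Y_{\Gamma,\tx{tor}}$, and handle the easy containment by functoriality of $\iota$ applied to $[1 \rw S_\tx{sc}] \rw [Z \rw S]$ — all of this is correct. (For the insensitivity to $\bar\lambda_1$, your justification ``it preserves the abelianized Galois cohomology'' is not by itself an argument; the clean statement is the paper's usual twisting argument: the trivial-fiber analysis applies verbatim to the twisted pair $(G^{\bar\lambda_1},S)$, whose coroot lattice $X_*(S_\tx{sc})$ and $\Gamma$-action are unchanged.) The genuine divergence is at the key step, where the paper shows that the preimage in $H^1(\Gamma,S)$ of $\tx{im}(H^1(\Gamma,G_\tx{sc}))$ equals $\tx{im}(H^1(\Gamma,S_\tx{sc}))$ by duality: via \cite[Thm 1.2]{Kot86} the preimage is dual to the cokernel of $\pi_0(Z(\hat G)^\Gamma) \rw \pi_0(\hat S^\Gamma)$, which injects into $\pi_0([\hat S/Z(\hat G)]^\Gamma)$.

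Your replacement for this step has a gap exactly where you anticipate it. ``Conjugacy of maximal tori in $G_\tx{sc}$'' is not the relevant mechanism, and, more seriously, even after producing an $S_\tx{sc}$-valued representative $\tilde z$ you only know that its image in $H^1(\Gamma,G)$ agrees with that of $[\nu]$; since $H^1(\Gamma,S) \rw H^1(\Gamma,G)$ is far from injective, this does not yet show that $[\nu]$ itself comes from $H^1(\Gamma,S_\tx{sc})$, which is what the lemma needs. The argument can, however, be completed, and then it gives an elementary alternative to the paper's duality argument: write $z_\nu(\sigma)=g^{-1}\tilde z_G(\sigma)\sigma(g)$ with $g \in G(\ol{F})$, lift the image $g_\tx{ad}$ of $g$ to $g_\tx{sc} \in G_\tx{sc}(\ol{F})$, and set $\tilde z'(\sigma)=g_\tx{sc}^{-1}\tilde z(\sigma)\sigma(g_\tx{sc})$. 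Its image in $G_\tx{ad}$ equals that of $z_\nu$, hence lies in $S_\tx{ad}$, so $\tilde z'$ takes values in $S_\tx{sc}$ (the preimage of $S_\tx{ad}$ in $G_\tx{sc}$); and the image of $g_\tx{sc}$ in $G$ differs from $g$ by an element $c \in Z(G)(\ol{F}) \subset S(\ol{F})$, so the image of $\tilde z'$ in $Z^1(\Gamma,S)$ equals $z_\nu$ times the coboundary of a central element and is therefore cohomologous to $z_\nu$ \emph{inside} $S$. This central-discrepancy observation is precisely what is missing from your sketch. Finally, your Borovoi fallback is misstated: the natural map $H^1(\Gamma,S)=H^1_\tx{ab}(\Gamma,S) \rw H^1_\tx{ab}(\Gamma,G)$ is not an isomorphism; what is true is that the complex $S_\tx{sc} \rw S$ computes $H^1_\tx{ab}(\Gamma,G)$, and one must in addition invoke the theorem that over local fields $\tx{im}(H^1(\Gamma,G_\tx{sc}) \rw H^1(\Gamma,G))$ is exactly the fiber of the abelianization map over the neutral class — which is essentially the same input as the paper's appeal to \cite[Thm 1.2]{Kot86}, so that route is a repackaging of the paper's argument rather than an independent one.
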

\begin{proof}
The usual twisting argument reduces the question to studying the fiber of the given map over the equivalence class containing the trivial element. This is the preimage under $\bar Y_{+,\tx{tor}}(Z \rw S) \rw H^1(u \rw W,Z \rw G)$ of the the image of $H^1(\Gamma,G_\tx{sc})$ in $H^1(u \rw W,Z \rw G)$. An element of $H^1(u \rw W,Z \rw S)$ will map to that image only if it belongs to $H^1(\Gamma,S)$. This reduces the problem studying the preimage under
\[ Y_{\Gamma,\tx{tor}} \rw H^1(\Gamma,S) \rw H^1(\Gamma,G) \]
of the image of $H^1(\Gamma,G_\tx{sc})$. According to \cite[Thm 1.2]{Kot86}, this preimage is dual to the cokernel of the map
\[ \pi_0(Z(\hat G)^\Gamma) \rw \pi_0(\hat S^\Gamma). \]
The cokernel of this map is equal to $\pi_0(\hat S^\Gamma/Z(\hat G)^\Gamma)$ and this is a subgroup of $\pi_0([\hat S/Z(\hat G)]^\Gamma)$. It follows that dually the map $H^1(\Gamma,S_\tx{sc}) \rw H^1(\Gamma,S) \rw H^1(\Gamma,G)$ surjects onto the image of $H^1(\Gamma,G_\tx{sc})$. The lemma follows.
\end{proof}

\begin{lem} \label{lem:conj2} Let $[Z \rw G] \in \mc{R}$ and let $S_1,S_2 \subset G$ be maximal tori. Let $g \in G(\ol{F})$ with $\tx{Ad}(g)S_1=S_2$. If $\bar\lambda_i \in \bar Y_i^N$ are such that $\bar\lambda_2 = \tx{Ad}(g)\bar\lambda_1$, then the images of $\iota_{[Z \rw S_1]}(\bar\lambda_1)$ and $\iota_{[Z \rw S_2]}(\bar\lambda_2)$ in $H^1_\tx{sc}(u \rw W,Z \rw G)$ are equal.
\end{lem}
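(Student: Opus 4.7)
My plan is to explicitly modify $z_1:=z_{\bar\lambda_1,k}$ by a coboundary and show that its ratio with $z_2:=z_{\bar\lambda_2,k}$ lies in the image of $G_\tx{sc}$, which by the definition of the equivalence relation in Section \ref{sec:h1sc} is exactly what is required. Since $G=Z(G)\cdot G_\tx{der}$ and the adjoint action is unaffected by the central factor, I first replace $g$ by its $G_\tx{der}$-component and lift through the central isogeny $G_\tx{sc}\rw G_\tx{der}$. I may therefore assume $g\in G_\tx{sc}(\ol F)$, and denote its image in $G(\ol F)$ by the same letter. Set $z_1'(w):=g\cdot z_1(w)\cdot\pi_W(w)(g^{-1})$, where $\pi_W\colon W\rw\Gamma$; this is a cocycle cohomologous to $z_1$ in $H^1(u\rw W,Z\rw G)$. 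Because $Z$ is central, $\tx{Ad}(g)$ acts trivially on $Z$, so the hypothesis $\bar\lambda_2=\tx{Ad}(g)\bar\lambda_1$ yields $\phi_{\bar\lambda_1,k}=\phi_{\bar\lambda_2,k}$ as maps $u\rw Z$. Consequently $z_1$, $z_1'$ and $z_2$ all agree on $u$, so $z_2\cdot z_1'^{-1}$ descends to a continuous $1$-cocycle of $\Gamma$.

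The main computation expands the cup-product formulas on a section $s\colon\Gamma\rw W$. Writing $(l_kc_k\sqcup n_k\bar\lambda_i)(\sigma)=\prod_{a\in\Gamma_{E_k/F}} (\sigma a)(n_k\bar\lambda_i)(l_kc_k(\sigma,a))$ in the abelian group $S_i(\ol F)$ and invoking the identity $\tau\circ\tx{Ad}(g)=\tx{Ad}(\tau g)\circ\tau$ between cocharacter lattices together with $\bar\lambda_2=\tx{Ad}(g)\bar\lambda_1$, I obtain
\[ (l_kc_k\sqcup n_k\bar\lambda_2)(\sigma)=\eta(\sigma)\cdot g\,(l_kc_k\sqcup n_k\bar\lambda_1)(\sigma)\,g^{-1}, \]
where $\eta(\sigma)=\prod_a \bar r_a(t_a)\cdot t_a^{-1}$, with $t_a=g\cdot(\sigma a)(n_k\bar\lambda_1)(l_kc_k(\sigma,a))\cdot g^{-1}\in S_2(\ol F)$ and $\bar r_a\in\Omega(S_2,G)$ the image of the normalizer element $r_a=(\sigma a)(g)g^{-1}\in N(S_2,G)$. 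By \cite[Ch VI, \S1, no. 10, Prop 27]{Bou}, any Weyl element acts trivially on $X_*(S_2)/Q_2^\vee$, so each factor $\bar r_a(t_a)t_a^{-1}$ lies in the image of $S_{2,\tx{sc}}(\ol F)$ in $S_2(\ol F)$, and hence so does $\eta(\sigma)$.

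Assembling the pieces and using commutativity inside $S_2$, a short rearrangement yields
\[ z_2(s(\sigma))\cdot z_1'(s(\sigma))^{-1}=\eta(\sigma)\cdot\tx{Ad}\bigl(g\,z_1(s(\sigma))\,g^{-1}\bigr)\bigl(\sigma(g)g^{-1}\bigr). \]
Because $g\in G_\tx{sc}(\ol F)$, the coboundary factor $\sigma(g)g^{-1}$ has a canonical lift to $G_\tx{sc}(\ol F)$, and conjugation by an element of $S_2\subset G$ preserves the subgroup $G_\tx{sc}$ through the central isogeny. Combined with $\eta(\sigma)\in S_{2,\tx{sc}}(\ol F)\subset G_\tx{sc}(\ol F)$, the whole right-hand side is the image of a well-defined element of $G_\tx{sc}(\ol F)$. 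Thus $z_2\cdot z_1'^{-1}$ lies in the image of $Z^1(\Gamma,G_\tx{sc}^{z_1'})$, which by the definition in Section \ref{sec:h1sc} means $[z_1]=[z_1']$ and $[z_2]$ coincide in $H^1_\tx{sc}(u\rw W,Z\rw G)$.

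The main obstacle is the cocharacter bookkeeping of the second paragraph: the failure of $\Gamma$-equivariance of $\tx{Ad}(g)$ produces Weyl-group correction terms indexed by $(\sigma,a)\in\Gamma\times\Gamma_{E_k/F}$, and these have to be isolated in the form $(w-1)\cdot\mu$ before the Bourbaki identity can be invoked to push the correction into $Q_2^\vee$.
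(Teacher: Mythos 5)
Your preliminary steps are sound: the reduction to $g \in G_\tx{sc}(\ol{F})$, the equality $\phi_{\bar\lambda_1,k}=\phi_{\bar\lambda_2,k}$ (since $\tx{Ad}(g)$ fixes $Z$ pointwise), and the identity $z_2(s(\sigma))\cdot z_1'(s(\sigma))^{-1}=\eta(\sigma)\cdot\tx{Ad}(g z_1(s(\sigma))g^{-1})(\sigma(g)g^{-1})$ with $\eta(\sigma)=\prod_a \bar r_a(t_a)t_a^{-1}$ all check out, and your route (modifying $z_1$ by the coboundary of $g$ and isolating Weyl-group correction terms) is genuinely different from the paper's, which never conjugates by $g$: it decomposes $\bar\lambda_i=p_i+z$ via the isogeny $S_i/Z \rw S_{i,\tx{ad}}\times G/(Z\cdot G_\tx{der})$ and identifies the ratio $z_2\cdot z_1^{-1}$ directly with $a_2\cdot a_1^{-1}$, where $a_i=l_kc_k\sqcup n_kp_i \in C^1(\Gamma,S_{i,\tx{sc}})$.

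The gap is in your last inference. Showing that each value of $z_2\cdot z_1'^{-1}$ lies in the image of $G_\tx{sc}(\ol{F})$, and that $z_2\cdot z_1'^{-1}$ is a twisted cocycle valued in $G$, does not establish that it ``belongs to the image of $Z^1(\Gamma,G^{1'}_\tx{sc})$'' in the sense of Section \ref{sec:h1sc}: the equivalence relation requires an honest $1$-cocycle of $\Gamma$ valued in the twisted form of $G_\tx{sc}$ mapping onto the ratio. A pointwise lift through the central isogeny $G_\tx{sc}\rw G_\tx{der}\subset G$ (lifting $\eta(\sigma)$ via the cocharacters $\bar r_a\mu_a-\mu_a\in Q_2^\vee=X_*(S_{2,\tx{sc}})$ and $\sigma(g)g^{-1}$ via your chosen $g_\tx{sc}$) is only a $1$-cochain; its twisted differential is a $2$-cocycle valued in $\ker(G_\tx{sc}\rw G)\subset Z(G_\tx{sc})$, and proving that this obstruction vanishes is exactly the nontrivial content of the lemma. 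This is the step the paper's proof carries out: it computes the coboundary of $a_2\cdot a_1^{-1}$ inside $G_\tx{sc}$, rearranges the central terms, and shows $da_1=da_2$ in $Z^2(\Gamma,Z(G_\tx{sc}))$ using Fact \ref{fct:dsqcup} ($da_i=dl_kc_k\sqcup n_kp_i$) together with the fact that $p_1$ and $p_2$ have the same image in $P_i^\vee/Q_i^\vee$. Your write-up asserts the conclusion without performing any analogue of this verification, so as it stands you have only shown that the ratio is valued in $G_\tx{der}$ and lifts as a cochain, not as a cocycle; completing your route would require fixing the explicit $S_{2,\tx{sc}}$- and $G_\tx{sc}$-valued lifts and checking the twisted cocycle condition, a computation of essentially the same nature as the one in the paper.
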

\begin{proof}
Consider the isogeny $S_i/Z \rw S_i/(Z\cdot Z(G_\tx{der}))$. It provides an injection $\bar Y_i \rw P_i^\vee \oplus X_*(G/Z\cdot G_\tx{der})$, where $P_i^\vee = X_*(S_{i,\tx{ad}})$. Write $\bar\lambda_1 = p_1 + z$ accordingly. Then $\bar\lambda_2 = p_2+z$ with $p_2=\tx{Ad}(g)p_1$. The map $Z(G)^\circ \rw G/Z\cdot G_\tx{der}$ is an isogeny and leads to an injection $X_*(Z(G)^\circ) \rw X_*(G/Z\cdot G_\tx{der})$ with finite cokernel. We choose $k$ large enough so that $n_kp_1 \in Q_1^\vee = X_*(S_{1,\tx{sc}})$ and $n_kz \in X_*(Z(G)^\circ)$. By construction (Lemma \ref{lem:zdef}), we have for $x \boxtimes \sigma \in W_k$
\[ z_{\bar\lambda_i,k}(x \boxtimes \sigma) = \phi_{\bar\lambda_i,k}(x) \cdot [l_kc_k \sqcup n_kp_i](\sigma) \cdot [l_kc_k \sqcup n_kz](\sigma). \]
We have
\[ \phi_{\bar\lambda_1,k}(x)=\phi_{\bar\lambda_2,k}(x) \in Z \quad \tx{and} \quad [l_kc_k \sqcup n_kz](\sigma) \in Z(G)^\circ \]
and conclude that
\[ z_{\bar\lambda_2,k}(x \boxtimes \sigma) \cdot z_{\bar\lambda_1,k}(x \boxtimes \sigma)^{-1} = a_2(\sigma) \cdot a_1(\sigma)^{-1} \]
where $a_i = l_kc_k \sqcup n_kp_i \in C^1(\Gamma,S_{i,\tx{sc}})$. The image of $a_1$ in $C^1(\Gamma,S_{1,\tx{ad}})$ is equal to $c_k \cup p_1$ and is thus a 1-cocycle, so we can twist the $\Gamma$-structure on $G_\tx{sc}$ using it. We call the twisted structure $G_\tx{sc}^1$. We need to show that
\[ a_2\cdot a_1^{-1} \in Z^1(\Gamma,G_\tx{sc}^1). \]
For this, we compute the coboundary at $(\sigma,\tau)$ and obtain
\begin{eqnarray*}
&&[a_2(\sigma) a_1(\sigma)^{-1}] \cdot a_1(\sigma)\sigma [a_2(\tau)a_1(\tau)^{-1}]a_1(\sigma)^{-1} \cdot [a_2(\sigma\tau)a_1(\sigma\tau)^{-1}]^{-1}\\
&=&a_2(\sigma)\sigma a_2(\tau) [\sigma a_1(\tau)^{-1}a_1(\sigma)^{-1}a_1(\sigma\tau)]a_2(\sigma\tau)^{-1}
\end{eqnarray*}
The three bracketed factors in the second line belong to $S_{1,\tx{sc}}$ and we can rearrange them, obtaining $da_1(\sigma,\tau)^{-1}$. This is an element of $Z^2(\Gamma,Z(G_\tx{sc}))$ and can be pulled in front of the other terms, which themselves produce $da_2(\sigma,\tau)$. However, by Fact \ref{fct:dsqcup} we have
\[ da_1 = dl_kc_k \sqcup n_kp_1 = dl_kc_k \sqcup n_kp_2 = da_2, \]
because the images of $p_1$ and $p_2$ under $P_i^\vee \rw P_i^\vee/Q_i^\vee \rw \tx{Hom}(\mu_n,Z(G_\tx{sc}))$ coincide. This proves the claim that $a_2 \cdot a_1^{-1} \in Z^1(\Gamma,G_\tx{sc}^1)$.
\end{proof}

\begin{thm} \label{thm:tn+g} The isomorphism $\iota$ of Theorem \ref{thm:tn+s} extends to an isomorphism
\[ \iota : \bar Y_{+,\tx{tor}} \rw H^1_\tx{sc}(u \rw W) \]
of functors $\mc{R} \rw \tx{Sets}$, which lifts \eqref{eq:yhomz2}.
\end{thm}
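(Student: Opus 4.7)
The plan is to construct $\iota_{[Z \rw G]}$ by reduction to a single maximal torus $S \subset G$, verify independence of that choice, and then bootstrap the bijectivity of $\iota_{[Z \rw S]}$ (supplied by Theorem \ref{thm:tn+s}) to the reductive setting. Concretely, for each maximal torus $S \subset G$ we form the composite
\[ \bar Y_{+,\tx{tor}}(Z \rw S) \xrightarrow{\iota_{[Z \rw S]}} H^1(u \rw W,Z \rw S) \rw H^1(u \rw W,Z \rw G) \rw H^1_\tx{sc}(u \rw W,Z \rw G), \]
using Theorem \ref{thm:tn+s} for the first arrow and the natural functoriality and quotient maps for the remaining two.

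By Lemma \ref{lem:ygq}, the fibers of this composite are torsors under the image of $X_*(S_\tx{sc})_{\Gamma,\tx{tor}}$, so the map descends to an injection out of $\bar Y_{+,\tx{tor}}(Z \rw S)/\tx{image}(X_*(S_\tx{sc})_{\Gamma,\tx{tor}})$. This quotient is precisely the expression $[X_*(S/Z)/X_*(S_\tx{sc})]^N/I(X_*(S)/X_*(S_\tx{sc}))$ appearing in the directed system defining $\bar Y_{+,\tx{tor}}(Z \rw G)$ in Section \ref{sec:yfunc}, indexed by the chosen torus $S$. Lemma \ref{lem:conj2} then verifies that for any two maximal tori $S_1,S_2 \subset G$ and elements $\bar\lambda_i \in \bar Y_i^N$ related by the transition isomorphism of Lemma \ref{lem:conj1}, the images in $H^1_\tx{sc}(u \rw W,Z \rw G)$ coincide. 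These two lemmas together show that the torus-level maps splice uniquely into a well-defined injection $\iota_{[Z \rw G]} : \bar Y_{+,\tx{tor}}(Z \rw G) \rw H^1_\tx{sc}(u \rw W,Z \rw G)$.

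For surjectivity, I would take $S \subset G$ to be a fundamental maximal torus; Corollary \ref{cor:ellsur}(ii) then gives surjectivity of $H^1(u \rw W,Z \rw S) \rw H^1(u \rw W,Z \rw G)$, and composing with the bijectivity of $\iota_{[Z \rw S]}$ and the tautological surjection to $H^1_\tx{sc}$ yields that $\iota_{[Z \rw G]}$ is surjective. Functoriality in a morphism $f : [Z \rw G] \rw [C \rw H]$ in $\mc{R}$ is checked by selecting compatible maximal tori $S \subset G$, $T \subset H$ with $f(S) \subset T$, and invoking the torus-case functoriality of $\iota$ against naturality of $H^1_\tx{sc}(u \rw W, -)$; independence of these choices is handled once more by Lemma \ref{lem:conj2}. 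That $\iota_{[Z \rw G]}$ lifts \eqref{eq:yhomz2} is inherited directly from Theorem \ref{thm:tn+s} together with functoriality in $[Z \rw S] \rw [Z \rw G]$. The genuine obstacles to descent and patching have already been absorbed into Lemmas \ref{lem:ygq} and \ref{lem:conj2}; with those in hand, the assembly of Theorem \ref{thm:tn+g} is essentially formal.
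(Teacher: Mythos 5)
Your overall strategy is the same as the paper's: push $\bar Y_{+,\tx{tor}}(Z \rw S)$ into $H^1_\tx{sc}(u \rw W,Z \rw G)$, use Lemma \ref{lem:ygq} to identify the fibers, Corollary \ref{cor:ellsur} for surjectivity via a fundamental torus, Lemma \ref{lem:conj2} for independence of the torus, and inherit functoriality and the lifting of \eqref{eq:yhomz2} from Theorem \ref{thm:tn+s}. However, there is one substantive step that you assert rather than prove, and as stated it is false: you claim that for an arbitrary maximal torus $S \subset G$ the quotient $\bar Y_{+,\tx{tor}}(Z \rw S)/\tx{im}\,X_*(S_\tx{sc})_{\Gamma,\tx{tor}}$ ``is precisely'' the term $\varinjlim [X_*(S/Z)/X_*(S_\tx{sc})]^N/I(X_*(S)/X_*(S_\tx{sc}))$ of the directed system defining $\bar Y_{+,\tx{tor}}(Z \rw G)$. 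In general one only has an inclusion: from the exact sequence
\[ \frac{X_*(S_\tx{sc})^N}{IX_*(S_\tx{sc})} \rw \frac{X_*(S/Z)^N}{IX_*(S)} \rw \frac{[X_*(S/Z)/X_*(S_\tx{sc})]^N}{I(X_*(S)/X_*(S_\tx{sc}))} \rw \frac{X_*(S_\tx{sc})^\Gamma}{N(X_*(S_\tx{sc}))} \]
the obstruction to equality lives in $H^0_\tx{Tate}(\Gamma_{E/F},X_*(S_\tx{sc}))$, which does not vanish for a general torus. Concretely, for $G=\tx{SL}_2$, $Z$ its center, and $S$ the split diagonal torus, the quotient on the left is trivial while the colimit on the right is $\Z/2\Z$, so your identification fails for non-fundamental tori.

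The identification does hold when $S$ is a fundamental maximal torus, precisely because a fundamental maximal torus of a simply connected semi-simple group has vanishing $H^0_\tx{Tate}$; this is the content of equation \eqref{eq:pizza} in the paper and it requires the exact-sequence argument above. Without it, the spliced map you construct is a priori only defined on a subgroup of $\bar Y_{+,\tx{tor}}(Z \rw G)$, so you cannot yet conclude that $\iota_{[Z \rw G]}$ is defined, let alone bijective, on all of it. The fix is to carry out your construction with fundamental maximal tori from the start (you already invoke one for surjectivity) and to insert the proof of \eqref{eq:pizza}; with that supplied, the rest of your argument coincides with the paper's proof.
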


\begin{proof} Let $[Z \rw G] \in \mc{R}$ and let $S \subset G$ be a fundamental maximal torus. According to Corollary \ref{cor:ellsur} the map
\[ \bar Y_{+,\tx{tor}}(Z \rw S) \rw H^1(u \rw W,Z \rw S) \rw H^1(u \rw W,Z \rw G) \]
is surjective and according to Lemma \ref{lem:ygq} it descends to a bijection
\[ \bar Y_{+,\tx{tor}}(Z \rw S)/X_*(S_\tx{sc})_{\Gamma,\tx{tor}} \rw H^1_\tx{sc}(u \rw W,Z \rw G). \]
We claim that
\begin{equation} \label{eq:pizza} \bar Y_{+,\tx{tor}}(Z \rw S)/X_*(S_\tx{sc})_{\Gamma,\tx{tor}} =  \varinjlim \frac{[X_*(S/Z)/X_*(S_\tx{sc})]^N}{I(X_*(S)/X_*(S_\tx{sc}))}, \end{equation}
where the colimit is again taken over all finite Galois extensions $E/F$ splitting $S$.
Indeed, for any maximal torus $S$ and finite Galois extension $E/F$ splitting it we have the exact sequence
\[ \frac{X_*(S_\tx{sc})^N}{IX_*(S_\tx{sc})} \rw \frac{X_*(S/Z)^N}{IX_*(S)} \rw \frac{[X_*(S/Z)/X_*(S_\tx{sc})]^N}{I(X_*(S)/X_*(S_\tx{sc}))} \rw \frac{X_*(S_\tx{sc})^\Gamma}{N(X_*(S_\tx{sc}))}, \]
in which the last map sends an element represented by $x \in X_*(S/Z)$ to $N(x)$. This gives the inclusion $\subset$ in \eqref{eq:pizza}. The reverse inclusion follows from the fact that a fundamental maximal torus of a simply connected semi-simple group has vanishing $H^0_\tx{Tate}$, so the fourth term in the above exact sequence vanishes.

If $S'$ is a second fundamental  maximal torus of $G$ and $g \in G(\ol{F})$ is such that $S'=\tx{Ad}(g)S$, then Lemmas \ref{lem:conj1} and \ref{lem:conj2} imply that we have a commutative diagram
\[ \xymatrix{
\varinjlim \frac{[X_*(S/Z)/X_*(S_\tx{sc})]^N}{I(X_*(S)/X_*(S_\tx{sc}))}\ar[rd]\ar[dd]^{\tx{Ad}(g)}\\
&H^1_\tx{sc}(u \rw W,Z \rw G)\\
\varinjlim \frac{[X_*(S'/Z)/X_*(S'_\tx{sc})]^N}{I(X_*(S')/X_*(S'_\tx{sc}))}\ar[ru]
} \]
By definition of $\bar Y_{+,\tx{tor}}(Z \rw G)$, the diagonal bijections in the above diagram splice to a bijection
\[ \iota_{[Z \rw G]}: \bar Y_{+,\tx{tor}}(Z \rw G) \rw H^1_\tx{sc}(u \rw W,Z \rw G). \]
The fact that $\iota_{[Z \rw S]}$ lifts \eqref{eq:yhomz} implies that $\iota_{[Z \rw G]}$ lifts \eqref{eq:yhomz2}.
Moreover, for any maximal torus $S \subset G$ (fundamental or not), the diagram
\[ \xymatrix{
\bar Y_{+,\tx{tor}}(Z \rw S)\ar^{\iota_{[Z \rw S]}}[d]\ar[r]&\bar Y_{+,\tx{tor}}(Z \rw G)\ar[d]^{\iota_{[Z \rw G]}}\\
H^1(u \rw W,Z \rw S)\ar[r]&H^1_\tx{sc}(u \rw W,Z \rw G)
} \]
commutes. This and the functoriality of $\iota_{[Z \rw S]}$ imply the functoriality of $\iota_{[Z \rw G]}$.
\end{proof}

\section{Applications to the Langlands conjectures} \label{sec:llc}

\subsection{Rigid inner twists} \label{sec:rigid}

We continue to work with a fixed extension $W$ of $\Gamma$ by $u$ belonging to the isomorphism class determined by $\xi$. For any connected reductive group $G$ and finite central subgroup $Z$, both defined over $F$, the sets $Z^1(u \rw W,Z \rw G)$ and $H^1(u \rw W,Z \rw G)$ are then defined. Set $\bar G=G/Z$. The natural projection $G \rw \bar G$ induces maps $Z^1(u \rw W,Z \rw G) \rw Z^1(\Gamma,\bar G) \rw Z^1(\Gamma,G_\tx{ad})$ and $H^1(u \rw W,Z \rw G) \rw H^1(\Gamma,\bar G) \rw H^1(\Gamma,G_\tx{ad})$. If $Z' \subset G$ is another finite central subgroup defined over $F$ and $Z \subset Z'$, we obtain an isogeny $G/Z \rw G/Z'$ as well as natural injective maps $Z^1(u \rw W,Z \rw G) \rw Z^1(u \rw W,Z' \rw G)$ and $H^1(u \rw W,Z \rw G) \rw H^1(u \rw W,Z' \rw G)$.

Recall that an inner twist of $G$ is an isomorphism $\psi : G \rw G'$ defined over $\ol{F}$ between $G$ and a connected reductive group $G'$ defined over $F$ such that for any $\sigma \in \Gamma$ the automorphism $\psi^{-1}\sigma(\psi)$ of $G$ is inner. An isomorphism between two inner twists $\psi_1 : G \rw G_1$ and $\psi_2 : G \rw G_2$ is an isomorphism $f : G_1 \rw G_2$ defined over $F$ and having the property that $\psi_2^{-1}\circ f\circ \psi_1$ is an inner automorphism of $G$. The set of isomorphism classes of inner twists is in bijection with $H^1(\Gamma,G_\tx{ad})$.

By a \emph{rigid inner twist} $(\psi,z) : G \rw G'$ we will mean an inner twist $\psi : G \rw G'$ and an element $z \in Z^1(u \rw W,Z \rw G)$, where $Z$ is any finite central subgroup, having the property that for all $\sigma \in \Gamma$ we have
\[ \psi^{-1}\sigma(\psi) = \tx{Ad}(\bar z(\sigma)), \]
where $\bar z \in Z^1(\Gamma,G_\tx{ad})$ is the image of $z$. We will say that $(\psi,z)$ is \emph{realized by} $Z$ if we want to keep track of $Z$. Given two rigid inner twists $(\psi_1,z_1) : G \rw G_1$ and $(\psi_2,z_2) : G \rw G_2$, with $z_i \in Z^1(u \rw W,Z \rw G)$, an isomorphism $(f,g) : (\psi_1,z_1) \rw (\psi_2,z_2)$ consists of an element $g \in G(\ol{F})$ and an isomorphism $f : G_1 \rw G_2$ defined over $F$, for which the equality $z_2(w)=gz_1(w)w(g^{-1})$ holds in $Z^1(u \rw W,Z \rw G)$ and the diagram
\[ \xymatrix{
G\ar[r]^{\psi_1}\ar[d]_{\tx{Ad}(g)}&G_1\ar[d]^f\\
G\ar[r]^{\psi_2}&G_2
} \]
is commutative. The following fact is obvious but very important.

\begin{fct} \label{fct:autinn} Every automorphism of a rigid inner twist $(\psi,z) : G \rw G'$ is given by an inner automorphism by an element of $G'(F)$.
\end{fct}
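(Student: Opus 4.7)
The plan is to unpack the two conditions defining an automorphism of $(\psi,z)$ and show that they force the component $g \in G(\ol F)$ to be such that $\psi(g)$ lies in $G'(F)$, at which point $f$ will be identified with $\tx{Ad}(\psi(g))$, an inner automorphism by an $F$-point of $G'$.

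First I would rewrite the commutativity of the diagram in the definition of isomorphism of rigid inner twists: $\psi \circ \tx{Ad}(g) = f \circ \psi$ says exactly that $f = \tx{Ad}(\psi(g))$, viewed a priori only as an automorphism of $G'$ over $\ol F$. The content of the statement is therefore that $\psi(g) \in G'(F)$, i.e.\ $\sigma(\psi(g))=\psi(g)$ for every $\sigma \in \Gamma$.

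Next I would exploit the cocycle-invariance condition $z(w)=g\,z(w)\,w(g^{-1})$ for $w \in W$. Rearranging gives $\tx{Ad}(z(w))^{-1}(g)=w(g)$ for every $w\in W$. Restricting to $w \in u$ this is automatic since $z(u) \subset Z$ is central and $u$ acts trivially on $G(\ol F)$. For $\sigma \in \Gamma$, lift to $\tilde\sigma \in W$ via any set-theoretic section; then $\tilde\sigma$ acts on $G(\ol F)$ through $\sigma$, so the identity becomes
\[ \tx{Ad}(z(\tilde\sigma))^{-1}(g) = \sigma(g), \qquad \textrm{equivalently} \qquad g = \tx{Ad}(z(\tilde\sigma))(\sigma(g)). \]

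Now I would combine this with the defining property $\psi^{-1}\sigma(\psi) = \tx{Ad}(\bar z(\sigma))$ of the rigid inner twist. Since conjugation only depends on the image in $G_\tx{ad}$, we have $\tx{Ad}(z(\tilde\sigma)) = \tx{Ad}(\bar z(\sigma))$ as automorphisms of $G$, and hence
\[ \sigma(\psi(g)) = \sigma(\psi)(\sigma(g)) = \psi\bigl(\tx{Ad}(\bar z(\sigma))(\sigma(g))\bigr) = \psi\bigl(\tx{Ad}(z(\tilde\sigma))(\sigma(g))\bigr) = \psi(g). \]
Thus $\psi(g)$ is $\Gamma$-fixed, so $\psi(g)\in G'(F)$, and $f=\tx{Ad}(\psi(g))$ is the desired inner automorphism by an $F$-point. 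Conversely any element of $G'(F)$ gives rise to such an automorphism by taking $g=\psi^{-1}(\cdot)$ and $f=\tx{Ad}(\cdot)$, verifying the cocycle-invariance because the relevant equality already holds in $G(\ol F)$.

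There is no real obstacle here; the only subtle point is keeping straight that the condition on $g$ reads off the full $W$-action rather than just the $\Gamma$-action, and that on $u$ it is automatic precisely because $z|_u$ takes values in the center $Z$. The argument crucially uses the fact that the inner form structure $\psi^{-1}\sigma(\psi)$ is recorded by the image $\bar z$ of $z$ in $Z^1(\Gamma,G_\tx{ad})$, so the central ambiguity in lifting $\bar z$ back to $z$ plays no role.
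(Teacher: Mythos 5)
Your proof is correct, and it is precisely the routine unpacking of the definitions that the paper leaves implicit (the Fact is stated there without proof, as "obvious"): the diagram forces $f=\tx{Ad}(\psi(g))$, and the condition $z(w)=g\,z(w)\,w(g^{-1})$ at lifts of $\sigma\in\Gamma$, combined with $\psi^{-1}\sigma(\psi)=\tx{Ad}(\bar z(\sigma))$, gives $\sigma(\psi(g))=\psi(g)$, so $\psi(g)\in G'(F)$. No gaps; the observation that the condition on $u$ is automatic because $z|_u$ is central and $u$ acts trivially is exactly the right point to record.
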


We will denote by $RI_Z(G)$ the category, whose objects are rigid inner twists of $G$ realized by $Z$, and whose morphisms are isomorphisms of rigid inner twists. For $Z \subset Z'$, the obvious functor $RI_Z(G) \rw RI_{Z'}(G)$ is fully faithful. We will denote by $RI(G) = \varinjlim RI_Z(G)$ the category of rigid inner twists of $G$. The set of isomorphism classes of $RI_Z(G)$ is $H^1(u \rw W,Z \rw G)$, and the set of isomorphism classes of $RI(G)$ is
\[ H^1(u \rw W,G) := \varinjlim H^1(u \rw W,Z \rw G), \]
the limit being taken over all finite central subgroups $Z$ defined over $F$. We point out that even though the set $H^1(u \rw W,G)$ may appear more natural than $H^1(u \rw W,Z \rw G)$, it is in fact less natural, as it is not functorial in $G$.

A remark is in order on the dependence of the category $RI_Z(G)$ on the extension $W$. Let us temporarily write $RI_Z^W(G)$ to emphasize this dependence. If $W'$ is another extension in the isomorphism class determined by $\xi$, then any isomorphism of extensions $W \rw W'$ determines an equivalence of categories $RI_Z^W(G) \rw RI_Z^{W'}(G)$, and in particular a bijection between their sets of isomorphism classes. Two different isomorphisms $W \rw W'$ will in general produce two different equivalences $RI_Z^W(G) \rw RI_Z^{W'}(G)$, but these equivalences will determine the same bijection on the level of isomorphism classes. Since it is only the isomorphism class of a rigid inner form that matters for applications to endoscopy, the dependence of $RI_Z^W(G)$ on the particular choice of $W$ is for us inessential and we will drop the superscript $W$.

According to Corollary \ref{cor:surjad}, for every inner twist $\psi : G \rw G'$ there exists $z \in Z^1(u \rw W,Z(G_\tx{der}) \rw G)$ such that $(\psi,z)$ is a rigid inner twist. If $G$ is split and $F$ is $p$-adic, then the map $(\psi,z) \mapsto \psi$ sets up a 1-1 correspondence between the set of isomorphism classes of $RI_{Z(G_\tx{der})}$ and and the set of isomorphism classes of (ordinary) inner twists. Thus it appears natural, at least in the $p$-adic case, to fix the finite central subgroup $Z$ to be equal to $Z(G_\tx{der})$. However, the additional flexibility that comes from not fixing $Z$ makes some arguments more transparent, in particular parabolic descent (we refer to the discussion surrounding equation \eqref{eq:tlpinj} for an example). Moreover, as we will see in the next subsection, taking the limit over all $Z$ allows us to reconcile our notion of rigid inner twists for $F=\R$ with the notion of strong real forms defined in \cite{ABV92}. For any fixed $Z$ the category $RI_Z(G)$ has only finitely many isomorphism classes by Fact \ref{fct:h1fin}, so there are only finitely many isomorphism classes of rigid inner twists realized by $Z$ that map to the same isomorphism class of a given (ordinary) inner twist. After taking the limit over all $Z$, this is no longer true and we obtain an infinite set of isomorphism classes of rigid inner twists mapping to a given (ordinary) inner twist. In practice it will often be enough to work with an arbitrary fixed $Z$ as long as one keeps track of how the constructions change upon enlarging $Z$.

A rigid inner twist $(\psi,z)$ is called \emph{pure} if it is realized by $\{1\}$. The pure rigid inner twists are of course just the pure inner twists studied in \cite[\S2]{Kal11} and originally formulated in \cite[Definition 2.6]{Vog93}. In order to accommodate the fact that for $F=\R$ the set $H^1_\tx{sc}(u \rw W,Z \rw G)$ might be a proper quotient of $H^1(u \rw W,Z \rw G)$, we introduce the notion of a \emph{$K$-group of rigid inner twists of $G$}, or simply a \emph{rigid $K$-group}, to mean a set of isomorphism classes of rigid inner twists which comprises one fiber of the map
\[ H^1(u \rw W,Z \rw G) \rw H^1_\tx{sc}(u \rw W,Z \rw G). \]
Note that this is very close to the notion of a $K$-group studied in \cite[\S1]{Art99} and \cite[\S4]{SheTE3}. In fact, if we choose a set of rigid inner twists that represents the isomorphism classes comprising a given rigid $K$-group, then the reductive groups we obtain comprise a $K$-group in the sense of these references. The difference is however that in these references the individual reductive groups are endowed with Galois 1-cocycles valued in their simply connected covers that measure the relative position of one group to another, while in our case the reductive groups are endowed with elements of $Z^1(u \rw W,Z \rw G)$ that measure the absolute position of each reductive group relative to the fixed group $G$. Moreover, it is possible that two non-equivalent rigid $K$-groups give rise to the same $K$-group, that is, the two rigid $K$-groups give disjoint sets of isomorphism classes of rigid inner twists, but give the same set of inner forms of $G$. An example of this phenomenon is the quasi-split unitary group in an odd number of variables with $Z=\{1\}$, where there are precisely two rigid $K$-groups, both of which contain the same inner forms.

Now let $Z \subset G$ be a finite central subgroup of $G$ defined over $F$ and let $(\psi_i,z_i) : G \rw G_i$, $i=1,2$, be two rigid inner twists realized by $Z$, and let $\delta_i \in G_{i,\tx{sr}}(F)$. We say that $(G_1,\psi_1,z_1,\delta_1)$ and $(G_2,\psi_2,z_2,\delta_2)$ are (rationally) conjugate if there exists an isomorphism $(f,g) : (\psi_1,z_1) \rw (\psi_2,z_2)$ with $f(\delta_1)=\delta_2$. We say that $(G_1,\psi_1,z_1,\delta_1)$ and $(G_2,\psi_2,z_2,\delta_2)$ are stably conjugate if $\psi_1^{-1}(\delta_1)$ and $\psi_2^{-1}(\delta_2)$ are $G(\ol{F})$-conjugate. If $G$ is quasi-split, then for any $(G_1,\psi_1,z_1,\delta_1)$ there exists $\delta \in G_\tx{sr}(F)$ such that $(G_1,\psi_1,z_1,\delta_1)$ is stably conjugate to $(G,\tx{id},1,\delta)$. This follows from \cite[Cor 2.2]{Kot82} by taking as $T$ the centralizer of $\delta_1$ in $G_1$ and taking as $i$ the restriction to $T$ of $\psi_1^{-1}$. Now fix $\delta \in G_\tx{sr}(F)$ and consider the category $\mc{C}_Z(\delta)$ whose objects are the tuples $(G_1,\psi_1,z_1,\delta_1)$, with $z_1 \in Z^1(u \rw W,Z \rw G)$, which are stably conjugate to $(G,\tx{id},1,\delta)$, and where the set of morphisms from $(G_1,\psi_1,z_1,\delta_1)$ to $(G_2,\psi_2,z_2,\delta_2)$ is the set of isomorphisms $(f,g) : (\psi_1,z_1) \rw (\psi_2,z_2)$ satisfying $f(\delta_1)=\delta_2$. The category $\mc{C}_Z(\delta)$ can be seen as a generalization of the concept of a stable conjugacy class, and the set of isomorphism classes of $\mc{C}_Z(\delta)$ corresponds to the set of rational classes inside the stable conjugacy class of $\delta$. Set $S=\tx{Cent}(\delta,G)$. For an object $\dot\delta_1=(G_1,\psi_1,z_1,\delta_1) \in \mc{C}_Z(\delta)$ choose $g \in G$ such that $\psi_1(g\delta g^{-1})=\delta_1$. One checks easily that
\[ [w \mapsto g^{-1}z_1(w)w(g)] \in Z^1(u \rw W,Z \rw S), \]
that the class of this element in $H^1(u \rw W,Z \rw S)$ is independent of the choice of $g$, and that it remains unchanged if we replace $(G_1,\psi_1,z_1,\delta_1)$ by an isomorphic object of $\mc{C}_Z(\delta)$. We call this class $\tx{inv}(\delta,\dot\delta_1)$. The usual argument (e.g. \cite[Lemma 2.1.5]{Kal11}) shows that the map
\[ \dot\delta_1 \mapsto \tx{inv}(\delta,\dot\delta_1) \]
sets up a bijection between the set of isomorphism classes in $\mc{C}_Z(\delta)$ and $H^1(u \rw W,Z \rw S)$. Moreover, if we fix a rigid inner twist $(G_1,\psi_1,z_1)$, then this map restricts to a bijection between the set of $G_1(F)$-conjugacy classes of elements $\delta_1 \in G_{1,\tx{sr}}(F)$ which are stably conjugate to $\delta$ and the fiber of $H^1(u \rw W,Z \rw S) \rw H^1(u \rw W,Z \rw G)$ over the class of $z_1$. If $Z' \subset G$ is a finite central subgroup defined over $F$ and containing $Z$, there is an obvious fully-faithful functor $\iota : C_Z(\delta) \rw C_{Z'}(\delta)$, as well as a natural embedding $H^1(u \rw W,Z \rw S) \rw H^1(u \rw W,Z' \rw S)$ and for any $\dot\delta_1 \in C_Z(\delta)$ the image of $\tx{inv}(\delta,\dot\delta_1)$ in $H^1(u \rw W,Z' \rw S)$ coincides with $\tx{inv}(\delta,\iota(\dot\delta_1))$. For this reason, we will not include $Z$ in the notation for $\tx{inv}$ and will also identify $\mc{C}_Z(\delta)$ with its image under $\iota$.

It is useful to rephrase this discussion in an equivalent but slightly different way. We continue to assume that $G$ is quasi-split and consider only rigid inner twists realized by the fixed finite central subgroup $Z$. If we are given a torus $S$ and a $\Gamma$-invariant $G(\ol{F})$-conjugacy class of embeddings $S \rw G$ defined over $\ol{F}$ and having maximal tori of $G$ as their images, we obtain for every rigid inner twist $(G',\psi,z)$ a $\Gamma$-invariant $G'(\ol{F})$-conjugacy class of embeddings $S \rw G'$. Kottwitz's result cited above shows that there exist embeddings $S \rw G$ defined over $F$ and belonging to the given $G(\ol{F})$-conjugacy class. Fix one such $\eta : S \rw G$. For a given rigid inner twist $(G',\psi,z)$ there may or may not exist embeddings of $S$ into $G'$ defined over $F$ and belonging to the given $G'(\ol{F})$-conjugacy class. Say $\eta' : S \rw G'$ is one such, then there exists $g \in G(\ol{F})$ such that $\eta' = \psi\circ\tx{Ad}(g)\circ\eta$. One checks in the same way as above that $g^{-1}z(w)w(g)$ provides a well-defined element $\tx{inv}(\eta,\eta') \in H^1(u \rw W,Z \rw S)$ and that $\eta' \mapsto \tx{inv}(\eta,\eta')$ is a bijection between equivalence classes of embeddings of $S$ into rigid inner twists of $G$ and the group $H^1(u \rw W,Z \rw S)$. Here two embeddings $\eta_1 : S \rw G_1$ and $\eta_2 : S \rw G_2$ of $S$ into the rigid inner twists $(G_i,\psi_i,z_i)$ are equivalent if there exists an isomorphism $(f,g) : (G_1,\psi_1,z_1) \rw (G_2,\psi_2,z_2)$ such that $\eta_2=f\circ\eta_1$. Moreover, for a fixed rigid inner twist $(G',\psi,z)$ the set of embeddings $S \rw G'$, taken up to $G'(F)$-conjugacy, is in bijection with the fiber over the class of $z$ of the map $H^1(u \rw W,Z \rw S) \rw H^1(u \rw W,Z \rw G)$ induced by $\eta$. In particular, such embeddings exist if and only if this fiber is non-empty.

By a representation of a rigid inner twist of $G$ we mean a tuple $(G_1,\psi_1,z_1,\pi_1)$, where $(\psi_1,z_1) : G \rw G_1$ is a rigid inner twist and $\pi_1$ is an admissible representation of $G_1(F)$. By an isomorphism $(f,g): (G_1,\psi_1,z_1,\pi_1) \rw (G_2,\psi_2,z_2,\pi_2)$ we mean an isomorphism $(f,g) : (\psi_1,z_1) \rw (\psi_2,z_2)$ such that the representations $\pi_2\circ f$ and $\pi_1$ are isomorphic (in the real case we take this to mean infinitesimally equivalent). According to Fact \ref{fct:autinn}, two representations $(G_1,\psi_1,z_1,\pi_1)$ and $(G_1,\psi_1,z_1,\pi_1')$ of the same rigid inner twist are isomorphic if and only if $\pi_1$ and $\pi_1'$ are isomorphic in the usual sense as representations of $G_1(F)$. That this is not true if one uses the classical notion of inner twists was observed by Vogan \cite[\S2]{Vog93}. We will write $\Pi^\tx{rig}(G)$ for the set of isomorphism classes of irreducible admissible representations of rigid inner twists of $G$. The subsets $\Pi^\tx{rig}_\tx{unit}(G)$, $\Pi^\tx{rig}_\tx{temp}(G)$ and $\Pi^\tx{rig}_2(G)$ will then be those consisting of unitary, tempered, and essentially square-integrable representations.

Given $\dot\pi \in \Pi^\tx{rig}(G)$, $\dot\pi = (G_1,\psi_1,z_1,\pi_1)$, the Harish-Chandra character $\Theta_{\pi_1}$ of $\pi_1$ is an invariant distribution on $G_1(F)$, represented by a locally integrable function. Any isomorphism of rigid inner twists $(G_1,\psi_1,z_1) \rw (G_2,\psi_2,z_2)$ allows us to transport this distribution to $G_2(F)$. According to Fact \ref{fct:autinn}, the resulting distribution on $G_2(F)$ is independent of the choice of isomorphism. Thus we get a well-defined distribution on the $F$-points of any rigid inner twist that is isomorphic to  $(G_1,\psi_1,z_1)$, and we will use the symbol $\Theta_{\dot\pi}$ to denote this distribution.

\subsection{Comparison between rigid inner twists of real groups and strong real forms} \label{sec:strong}

In this section we consider the ground field $F=\R$. Let $G$ be a connected reductive group defined over $\R$. In Chapter 2 of \cite{ABV92}, Adams, Barbasch, and Vogan, define the notion of a strong real form of $G(\C) \rtimes \Gamma$. This notion is a refinement of the notion of an inner twist of $G$ and the purpose of this section is to compare this notion with the notion of a rigid inner twist of $G$. While the two notions are defined by completely different methods, it turns out (quite surprisingly, as we find) that they are equivalent.

We begin by recalling from \cite[Definition 2.13]{ABV92} that a strong real form of $G(\C) \rtimes \Gamma$ is an element of the coset $G(\C) \rtimes \sigma$ of $G(\C) \rtimes \Gamma$ whose square is a central element of finite order in $G(\C)$. Here $\sigma \in \Gamma$ denotes complex conjugation. Such an element, usually denoted by $\delta$ in loc. cit., leads to the inner form of $G$ with $\R$-points given by
\[ G(\delta,\R) = \{ g \in G(\C)| \delta g \delta^{-1} = g \}. \]
Two strong real forms $\delta,\delta'$ are called equivalent if they are conjugate under the action of $G(\C)$ on the coset $G(\C) \rtimes \sigma$. The set of strong real forms can be given the structure of a small category by setting $\tx{Hom}(\delta,\delta') = \{g \in G(\C)| \delta' = g\delta g^{-1} \}$.

\begin{thm} The category of strong real forms of $G(\C)\rtimes \Gamma$ is equivalent to the category $RI(G)$.
\end{thm}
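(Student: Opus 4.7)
The plan is to construct mutually quasi-inverse functors between the two categories. The bridging observation is the following: fix a continuous set-theoretic section $s: \Gamma \rw W$ of the projection $W \rw \Gamma$ and set $\epsilon := s(\sigma)^2 \in u(\C)$. Then for any $z \in Z^1(u \rw W, Z \rw G)$ whose restriction to $u$ is an algebraic homomorphism $\phi: u \rw Z$, the cocycle identity applied to $s(\sigma)^2 = \epsilon$ yields
\[ g \cdot \sigma(g) = z(\epsilon) = \phi(\epsilon) \in Z \subset Z(G), \qquad g := z(s(\sigma)). \]
Consequently $\delta := g \rtimes \sigma \in G(\C) \rtimes \sigma$ satisfies $\delta^2 = \phi(\epsilon)$, a central element of finite order, so it is a strong real form in the sense of \cite{ABV92}.

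I would define a functor $F: RI(G) \rw \{\tx{strong real forms}\}$ by $(G', \psi, z) \mapsto z(s(\sigma)) \rtimes \sigma$, after using $\psi$ to identify $G'(\C)$ with $G(\C)$. Morphisms translate directly: a morphism $(f, h): (\psi_1, z_1) \rw (\psi_2, z_2)$ in $RI(G)$ satisfies $z_2(w) = h z_1(w) w(h^{-1})$, and evaluating at $w = s(\sigma)$ gives $g_2 = h g_1 \sigma(h)^{-1}$, which is precisely the ABV conjugation relation $\delta_2 = h \delta_1 h^{-1}$. For the quasi-inverse $H$, given a strong real form $\delta = g \rtimes \sigma$ with $\delta^2 = z_0 \in Z(G)^\sigma$ of finite order, take any finite $\Gamma$-stable central subgroup $Z \subset Z(G)$ containing $z_0$ (the essential use of $RI(G) = \varinjlim_Z RI_Z(G)$ rather than a fixed $RI_Z(G)$), select $\phi \in \tx{Hom}(u, Z)^\Gamma$ with $\phi(\epsilon) = z_0$, and assemble
\[ z(v \cdot s(\sigma)^i) := \phi(v) \cdot g^i \]
for $v \in u(\C)$ and $i \in \{0, 1\}$. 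One checks that this is a continuous cocycle and defines the rigid inner twist $(G^\delta, \psi, z)$ with $G^\delta(\R) := \{x \in G(\C) : \delta x \delta^{-1} = x\}$ and $\psi$ the identity on $\C$-points.

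The main obstacle is producing $\phi \in \tx{Hom}(u, Z)^\Gamma$ with $\phi(\epsilon) = z_0$ on the nose, not merely up to a coboundary in $H^2(\Gamma, Z)$. Proposition \ref{pro:h2map} gives surjectivity of $\xi^*: \tx{Hom}(u, Z)^\Gamma \rw H^2(\Gamma, Z)$, so the cohomology class of $z_0$ is always realized; the residual ambiguity, a coset in $Z^\sigma / N_\sigma(Z)$, can be absorbed by any combination of enlarging $Z$ (which expands the supply of available $\phi$), refining the section $s$ (which modifies $\epsilon$ by $N_\sigma(v)$ for $v \in u(\C)$, and hence $\phi(\epsilon)$ by $N_\sigma(\phi(v))$), or modifying $z$ within $B^1(W, G)$. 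Once the matching is secured, the identities $F \circ H \simeq \mathrm{id}$ and $H \circ F \simeq \mathrm{id}$ follow by direct inspection: $F(H(\delta)) = \delta$ literally by construction, and the cocycle reconstructed by $H \circ F$ from $(G', \psi, z)$ differs from the original only by a canonical $B^1(W, G)$-coboundary encoded in the choices of $\phi$ and $s$.
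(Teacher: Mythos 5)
Your overall strategy is the same as the paper's: send $(G',\psi,z)$ to $z(s(\sigma))\rtimes\sigma$, check $\delta_z^2=\phi(\epsilon)\in Z$, and for essential surjectivity reassemble a cocycle from the pair $(\phi,g)$; the full faithfulness argument is fine. The genuine gap is exactly the step you flag and then wave away: producing $\phi\in\tx{Hom}(u,Z)^\Gamma$ with $\phi(\epsilon)=\delta^2$ \emph{on the nose}, and none of your three proposed remedies repairs it. Modifying $z$ by $B^1(W,G)$ changes neither the restriction $z|_u$ nor $z(s(\sigma))\cdot\sigma(z(s(\sigma)))$ (the latter is $\delta_z^2$, which is conjugation-invariant), so it is irrelevant to the matching problem. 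Changing the section $s$ changes the functor $F$ itself (it multiplies $z(s(\sigma))$ by the central element $\phi(v)$, not by a conjugation), so it cannot be invoked object by object; you would need a single good choice of $s$, fixed once and for all, and you never show one exists. Finally, enlarging $Z$ is constrained by $Z\subset Z(G)$, and $Z(G)$ may already be exhausted. Note also that the problem is unavoidable for a fixed $s$: any rigid inner twist with $F(G',\psi,z)\cong\delta$ satisfies $z|_u(\epsilon)=z(s(\sigma))\sigma(z(s(\sigma)))=\delta^2$ exactly, so essential surjectivity of $F$ is \emph{equivalent} to hitting $\delta^2$ exactly, not just its class in $H^2(\Gamma,Z)$.

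To see that this can genuinely fail for a bad (but perfectly legal) choice of $s$: for $F=\R$ one computes that $u(\ol{F})=u(\R)\cong\hat\Z$ with trivial $\Gamma$-action (the inversion on roots of unity cancels the $-1$ on $X^*(u_{\C/\R,n})$), the norm is squaring, and $\tx{Hom}(u,Z)^\Gamma\cong Z(\R)$ via evaluation at a fixed topological generator. The element $\epsilon=s(\sigma)^2$ is only constrained to represent the canonical class in $H^2(\Gamma,u)=\hat\Z/2\hat\Z$, i.e.\ it is well defined only up to $2\hat\Z$; since $2$ is a unit in $\Z_3$, one may choose $s$ so that $\epsilon$ is divisible by $3$ in $\hat\Z$. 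Then $\phi\mapsto\phi(\epsilon)$ is the map $a\mapsto a^\alpha$ on $Z(\R)$ with $3\mid\alpha$, which kills all $3$-torsion. Take $G=\tx{SU}_3$, $Z=Z(G)$ (no larger central subgroup is available), $\delta=\zeta I\rtimes\sigma$ with $\zeta$ a primitive cube root of unity: this is a strong real form with $\delta^2=\zeta^2 I$ of order $3$ in $Z(G)(\R)\cong\Z/3$, and no $\phi$ with $\phi(\epsilon)=\delta^2$ exists, so $\delta$ is not in the essential image of your functor. This is precisely why the paper does not argue with an abstract section: it fixes the explicit realization of $W$ from Section \ref{sec:expw} (with $E_k=\C$, $n_k=k!$, $l_k=k_{n_k}$, and the presentation $W_{\C/\R}=\C^\times\boxtimes_c\Gamma$ with $c(\sigma,\sigma)=-1$), computes $\xi_k(\sigma,\sigma)$ explicitly, and shows that $\phi\mapsto\phi(\xi_k(\sigma,\sigma))$ realizes the isomorphism $\tx{Hom}(u,Z)^\Gamma\rw Z(\R)$, so that $\phi_\delta$ with $\phi_\delta(\epsilon)=\delta^2$ exists for every $\delta$. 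To salvage your write-up you must either import that explicit construction, or prove abstractly that the section can be chosen once and for all with $s(\sigma)^2$ a topological generator of $u(\R)$ — a statement that requires the computation of $u(\R)$, the norm map, and the canonical class, i.e.\ exactly the content your "absorption" sentence skips.
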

\begin{proof}

To construct an equivalence we will use the following objects traditionally associated with the fields $\R$ and $\C$, which are also used in \cite{ABV92}. First, we have a preferred primitive fourth root of unity $i \in \C$. This element leads, for any natural number $n$, to the function
\[ k_n(r\cdot e^{i\phi}) = \sqrt[n]{r} \cdot e^\frac{i\phi}{n},\qquad r \in \R_{>0}, \phi \in [0,2\pi). \]
We have $k_n(z)^n=z$ and $k_m(z)^\frac{m}{n}=k_n(z)$ for all $z \in \C^\times$ and $n|m$.
Second, the relative Weil-group $W_{\C/\R}$ has a traditional presentation \cite[Definition 5.2]{ABV92} as the group $\C^\times \boxtimes_c \Gamma$, where $c \in Z^2(\Gamma,\C^\times)$ is the 2-cocycle satisfying $c(\sigma,\sigma)=-1$. In particular, we have a section $s : \Gamma \rw W_{\C/\R}$. We set $E_k=\C$ for all $k$, $n_k = k!$, $l_k = k_{n_k}$, $\zeta_k=\tx{id}$, $s_k=s$. The construction of Section \ref{sec:expw} now gives us an inverse system $W_k=u_k \boxtimes_{\xi_k} \Gamma$ of extensions of $\Gamma$ by $u_k$ whose limit is an extension $W$ of $\Gamma$ by $u$ belonging to the isomorphism class given by $\xi$.

Let $(\psi,z) : G \rw G'$ be a rigid inner twist of $G$. Thus $z \in Z^1(u_k \rw W_k,Z \rw G)$ for some suitable finite subgroup $Z \subset G$ defined over $\R$ and some $k$. We define $\delta_z = z(1 \boxtimes \sigma) \cdot \sigma$ and claim that this is a strong real form. Indeed, we have $\delta_z^2 = z(1 \boxtimes \sigma) \cdot \sigma( z(1 \boxtimes \sigma)) = z(\xi_k(\sigma,\sigma) \boxtimes 1)$, which is an element of $Z$, thus a central element of finite order. Notice that the transition map $W_{k+1} \rw W_k$ is given by $x \boxtimes \sigma \mapsto p(x)\boxtimes\sigma$, because the 1-cochain $\alpha_k$ defined in \eqref{eq:alphake} is trivial in this case. This shows that $\delta_z$ is independent of the choice of $k$. It is clearly independent of the choice of $Z$. The assignment
$(\psi,z) \mapsto \delta_z$
is thus well-defined and will be the equivalence of categories that we seek on the level of objects. On the level of morphisms, this equivalence sends the isomorphism $(f,g) : (\psi,z) \rw (\psi',z')$ to the isomorphism $g \in \tx{Hom}(\delta_z,\delta_{z'})$. We will now show that the resulting functor is indeed an equivalence of categories. It is clear that it is fully faithful, with the inverse of $(f,g) \mapsto g$ being given by $g \mapsto (\psi' \circ\tx{Ad}(g)\circ\psi^{-1},g)$. To show essential surjectivity, let $\delta \in G(\C) \rtimes \sigma$ be a strong real form. Then $\delta^2 \in Z(G)(\C)$ is of finite order, and moreover $\sigma(\delta^2) = \delta \cdot \delta^2 \cdot \delta^{-1} = \delta^2$ shows that it actually belongs to $Z(G)(\R)$. Let $Z \subset G$ be the subgroup generated by $\delta^2$. This is a finite central subgroup of $G$ defined over $\R$. If $n$ is a multiple of $|Z|$, we have the isomorphisms
\[ \tx{Hom}(u_{\C/\R,n},Z)^\Gamma \rw \tx{Hom}_\tx{alg.grp}(\mu_n,Z)^{N_{\C/\R}} \rw Z(\R), \]
the first one being \eqref{eq:homgamma}, and the second one being $\phi \mapsto \phi(e^\frac{2\pi i}{n})$. The resulting system of isomorphisms is compatible with the maps $p: u_{\C/\R,m} \rw u_{\C/\R,n}$ defined by \eqref{eq:defp} for $n|m$ and induces an isomorphism
\[ \tx{Hom}(u,Z)^\Gamma \rw Z(\R). \]
Let $\phi_\delta \in \tx{Hom}(u,Z)^\Gamma$ be the preimage of $\delta^2$ under this isomorphism. We choose $k$ so that $n_k$ is a multiple of $|Z|$ and define $z_{\delta,k} : W_k = u_k \boxtimes_{\xi_k} \Gamma \rw G(\C)$
by $z_{\delta,k}(x \boxtimes 1) = \phi_\delta(x) \in Z$ and $z_{\delta,k}(1 \boxtimes \sigma) = \delta\sigma^{-1} \in G(\C)$. The inflation $z_\delta$ of $z_{\delta,k}$ to $W$ does not depend on $k$ and we claim that it belongs to $Z^1(u \rw W,Z \rw G)$. For this we compute
\[ z_{\delta,k}(1 \boxtimes \sigma) \cdot \sigma(z_{\delta,k}(1 \boxtimes \sigma)) = \delta^2 = \sigma(\delta^2) = \sigma(\phi_\delta(\delta_e(e^\frac{2\pi i}{n_k}))), \]
and recalling the definition \eqref{eq:xidef} of $\xi_k$ we see
\[ \xi_k(\sigma,\sigma) = [dl_kc \cup \delta_e](\sigma,\sigma)= dl_kc(\sigma,\sigma,\sigma)^{\sigma\delta_e}
= (e^\frac{-2\pi i}{n_k})^{\delta_\sigma}=\sigma( (e^\frac{2\pi i}{n_k})^{\delta_e} ). \]
We conclude that $z_{\delta,k}(1 \boxtimes \sigma) \cdot \sigma(z_{\delta,k}(1 \boxtimes \sigma)) = \phi_\delta(\xi_k(\sigma,\sigma))$ and this is enough to establish that $z_\delta \in Z^1(W,G)$. It is then evident from the construction that in fact $z_\delta \in Z^1(u \rw W,Z \rw G)$. Letting $G^\delta$ be the twist of $G$ by the image of $z_\delta$ in $Z^1(\Gamma,G_\tx{ad})$, we have that $(\tx{id},z_\delta) : G \rw G^\delta$ is a rigid inner twist and that our functor maps it to $\delta$.
\end{proof}

\subsection{Refined endoscopic data and transfer factors} \label{sec:refined}

Let $[Z \rw G] \in \mc{R}$ and let $\hat G$ be a complex Langlands dual group for $G$. The isogeny $G \rw \bar G$ dualizes to an isogeny $\hat{\bar G} \rw \hat G$.
Let $\hat Z$ be the kernel of the latter isogeny. We let $Z(\hat{\bar G})^+$ denote the preimage of $Z(\hat G)^\Gamma$ in $Z(\hat{\bar G})$. In this way one obtains a functor
\[ \mc{R} \rw \tx{FinAbGrp},\qquad [Z \rw G] \mapsto \pi_0(Z(\hat{\bar G})^+)^*, \]
where $*$ denotes Pontryagin-dual.
\begin{pro} There is a functorial embedding
\[ \bar Y_{+,\tx{tor}}(Z \rw G) \rw \pi_0(Z(\hat{\bar G})^+)^*. \]
When $G$ is a torus, it is an isomorphism. For general $G$, this embedding is an isomorphism when $F$ is $p$-adic and when $F=\R$ its image consists of those characters of $\pi_0(Z(\hat{\bar G})^+)$ which kill the image of the norm map $N_{\C/R} : Z(\hat{\bar G}) \rw Z(\hat{\bar G})$.
\end{pro}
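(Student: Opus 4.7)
The plan is to identify both sides of the embedding explicitly via cocharacter lattices. Fix a maximal torus $S \subseteq G$ and set $Y = X_*(S)$, $\bar Y = X_*(S/Z)$, $Q^\vee = X_*(S_\tx{sc})$. By the proof of Theorem \ref{thm:tn+g} (equation \eqref{eq:pizza}), $\bar Y_{+,\tx{tor}}(Z \rw G) = [\bar Y/Q^\vee]^N/I(Y/Q^\vee)$. On the dual side, using $Z(\hat{\bar G}) = \tx{Hom}(\bar Y/Q^\vee,\C^\times)$ and $Z(\hat G)^\Gamma = \tx{Hom}((Y/Q^\vee)_\Gamma,\C^\times)$, a direct computation of the pullback $Z(\hat{\bar G})^+ = Z(\hat{\bar G}) \times_{Z(\hat G)} Z(\hat G)^\Gamma$ on character lattices shows that $X^*(Z(\hat{\bar G})^+) = \bar Y/(Q^\vee + IY)$. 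Consequently $\pi_0(Z(\hat{\bar G})^+)^* = [\bar Y/(Q^\vee + IY)]_\tx{tor}$.

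With both sides identified, the embedding is the map induced by the quotient $\bar Y/Q^\vee \twoheadrightarrow \bar Y/(Q^\vee + IY)$, whose kernel is $I(Y/Q^\vee)$; its restriction to $[\bar Y/Q^\vee]^N$ yields a well-defined, injective map $\bar Y_{+,\tx{tor}}(Z \rw G) \rw \bar Y/(Q^\vee + IY)$. The image lies in the torsion subgroup because $m \cdot I\bar Y \subseteq IY$ for $m = |\bar Y/Y|$ (since $m\bar y \in Y$ for all $\bar y \in \bar Y$), so the identity $|\Gamma_{E/F}|\bar\lambda - N\bar\lambda \in I\bar Y$ gives $m|\Gamma_{E/F}|\bar\lambda \in Q^\vee + IY$ whenever $N\bar\lambda \in Q^\vee$. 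Functoriality and independence of $S$ are immediate from naturality and Lemma \ref{lem:conj1}. The torus case is then trivial: when $G = S$, $Q^\vee = 0$ forces both sides to be $[\bar Y/IY]_\tx{tor} = \bar Y_{+,\tx{tor}}$ via Fact \ref{fct:sbarytor}, with the map being the identity.

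For the characterization of the image in the real case, the key observation is that $\iota_G(\bar Y_{+,\tx{tor}}(Z \rw G))$ consists exactly of those torsion classes $[\bar\lambda] \in [\bar Y/(Q^\vee + IY)]_\tx{tor}$ for which the norm $N\bar\lambda$ vanishes in $\bar Y/Q^\vee$. For $z \in Z(\hat{\bar G}) = \tx{Hom}(\bar Y/Q^\vee,\C^\times)$ and $\sigma$ the non-trivial element of $\Gamma_{\C/\R}$, the Galois action dualizes that on $\bar Y/Q^\vee$, so $N_{\C/\R}(z)(m) = z(m) \cdot (\sigma z)(m) = z(m) \cdot z(\sigma m) = z(Nm)$. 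Under the Pontryagin pairing between $[\bar Y/(Q^\vee + IY)]_\tx{tor}$ and $\pi_0(Z(\hat{\bar G})^+)$, the pairing of $\bar\lambda$ with $N_{\C/\R}(z)$ equals $z(N\bar\lambda)$, which vanishes for every $z$ precisely when $N\bar\lambda \in Q^\vee$. Thus the image of $\iota_G$ is exactly the annihilator of $\tx{image}(N_{\C/\R})$ in $\pi_0(Z(\hat{\bar G})^+)$.

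The hardest step will be showing surjectivity in the $p$-adic case, that is, that every torsion class in $[\bar Y/(Q^\vee + IY)]_\tx{tor}$ admits a representative $\bar\lambda \in \bar Y$ with $N\bar\lambda \in Q^\vee$. I would proceed by combining the surjection $H^1(u \rw W, Z \rw G) \twoheadrightarrow H^1(\Gamma, \bar G)$ from Proposition \ref{pro:bfd}, the identification $H^1(u \rw W, Z \rw G) = \bar Y_{+,\tx{tor}}(Z \rw G)$ (via Kneser's theorem \cite{Kne65} and Theorem \ref{thm:tn+g}), and Kottwitz's computation of $H^1(\Gamma, \bar G)$ in terms of $\pi_0(Z(\hat{\bar G})^\Gamma)^*$, to pin down the image by a diagram chase that reduces to the already-established torus case. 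The main technical obstacle is matching $\pi_0(Z(\hat{\bar G})^+)^*$ with $\pi_0(Z(\hat{\bar G})^\Gamma)^*$, which I would handle via the short exact sequence $1 \rw \hat Z \rw Z(\hat{\bar G})^+ \rw Z(\hat G)^\Gamma \rw 1$ and its induced effect on $\pi_0$.
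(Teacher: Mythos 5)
Your identification of both sides on cocharacter lattices, the construction and injectivity of the embedding, the torsion check, the torus case via Fact \ref{fct:sbarytor}, and the identification of the image with the annihilator of the norm image are all correct and agree with the paper's argument (your pairing computation, which in fact works verbatim for any finite Galois $E/F$ splitting $S$, not just for $\C/\R$, supplies a detail the paper leaves implicit). The genuine gap is the $p$-adic surjectivity, which you yourself flag as the hardest step but leave as a plan, and the plan as stated would not go through easily. It needs the compatibility of the lattice-level embedding with the isomorphism of Theorem \ref{thm:tn+g} and with Kottwitz's isomorphism; in the paper this compatibility is exactly Corollary \ref{cor:tn+pair}, which is proved \emph{after} and \emph{using} the present proposition, so it cannot be quoted here and would have to be established independently. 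Moreover, Kottwitz computes $H^1(\Gamma,\bar G)$ as $\pi_0(Z(\hat{\bar G})^\Gamma)^*$, whereas your target is $\pi_0(Z(\hat{\bar G})^+)^*$: the natural map $\pi_0(Z(\hat{\bar G})^+)^* \rw \pi_0(Z(\hat{\bar G})^\Gamma)^*$ can have a large kernel (e.g.\ for $G$ a quasi-split unitary group in $n\geq 3$ variables and $Z=Z(G)$, where $Z(\hat{\bar G})^\Gamma$ has at most two elements while $Z(\hat{\bar G})^+=\mu_n$), so surjectivity onto $\pi_0(Z(\hat{\bar G})^\Gamma)^*$ does not give surjectivity onto $\pi_0(Z(\hat{\bar G})^+)^*$; and the sequence $1 \rw \hat Z \rw Z(\hat{\bar G})^+ \rw Z(\hat G)^\Gamma \rw 1$ you invoke compares $Z(\hat{\bar G})^+$ with $Z(\hat G)^\Gamma$ rather than with $Z(\hat{\bar G})^\Gamma$, so it does not by itself close this mismatch. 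One would in addition have to control the fibers of $H^1(u \rw W,Z \rw G) \rw H^1(\Gamma,\bar G)$ and their interaction with the embedding, none of which is carried out.

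The missing idea is much simpler and requires no cohomology: your norm-image characterization, applied for a variable splitting extension $E/F$, already finishes the proof. Once $E$ contains a field $E_0$ through which the $\Gamma$-action on $Z(\hat{\bar G})$ factors, one has $N_{E/F}(x)=N_{E_0/F}(x)^{[E:E_0]}$; since $N_{E_0/F}(Z(\hat{\bar G}))$ is a diagonalizable group with finitely many components, choosing $[E:E_0]$ divisible by the order of its component group --- possible over a $p$-adic field, in contrast to $F=\R$ --- forces $N_{E/F}(Z(\hat{\bar G})) \subset Z(\hat{\bar G})^{\Gamma,\circ}=Z(\hat{\bar G})^{+,\circ}$. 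Every torsion class in $\bar Y/(Q^\vee+IY)$ kills this identity component, hence kills the image of $N_{E/F}$, hence lies in the image of the embedding for this $E$ by the very characterization you proved; surjectivity of the colimit over $E$ follows. This is the paper's argument, and it also makes transparent why over $\R$, where $E=\C$ is the only choice, the image is exactly cut out by the norm condition rather than everything.
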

\begin{proof}
For every maximal torus $S \subset G$ there exists a canonical embedding $Z(\hat G) \rw \hat S$ and analogously a canonical embedding $Z(\hat{\bar G}) \rw \hat{\bar S}$. These embeddings provide the identifications
\[ X^*(Z(\hat G))=\frac{X_*(S)}{X_*(S_\tx{sc})} \qquad\tx{and}\qquad X^*(Z(\hat{\bar G}))=\frac{X_*(\bar S)}{X_*(S_\tx{sc})}. \]
Since $Z(\hat{\bar G})^+$ is the fiber product of $Z(\hat{\bar G})$ with $Z(\hat G)^\Gamma$ over $Z(\hat G)$, we see that
\[ X^*(Z(\hat{\bar G})^+) = \frac{X_*(\bar S)}{IX_*(S)+X_*(S_\tx{sc})} \]
and the elements on the left which kill the connected component of the identity are precisely the torsion elements on the right. On the other hand, for every finite Galois extension $E/F$ splitting $S$ we have the obvious embedding
\[ \frac{[X_*(\bar S)/X_*(S_\tx{sc})]^N}{I(X_*(S)/X_*(S_\tx{sc}))} \subset \left[\frac{X_*(\bar S)}{IX_*(S)+X_*(S_\tx{sc})}\right]_\tx{tor}. \]
The image of this embedding consists of those characters of $Z(\hat{\bar G})^+$ which kill the image of $N : Z(\hat{\bar G}) \rw Z(\hat{\bar G})$. Assume that $F$ is $p$-adic. For varying $E/F$ the above displayed embeddings are compatible and lead to an embedding of the colimit over all $E/F$ of the left hand side into the right hand side. This embedding is in fact surjective, because for a large enough extension $E/F$ the image of $N : Z(\hat{\bar G}) \rw Z(\hat{\bar G})$ is contained in $Z(\hat{\bar G})^{\Gamma,\circ}=Z(\hat{\bar G})^{+,\circ}$. If we drop the assumption that $F$ is $p$-adic and instead assume that $G$ is a torus, then the surjectivity of same embedding is stated in Fact \ref{fct:sbarytor}.
\end{proof}

\begin{cor} \label{cor:tn+pair} There is a pairing
\[ H^1(u \rw W,Z \rw G) \otimes \pi_0(Z(\hat{\bar G})^+) \rw \Q/\Z, \]
which is functorial in $[Z \rw G] \in \mc{R}$. Its left kernel is trivial. If $F$ is $p$-adic or $G$ is a torus, then its right kernel is also trivial, i.e. it is perfect. If $Z=\{1\}$ this pairing coincides with the one defined by Kottwitz in \cite{Kot86}.
\end{cor}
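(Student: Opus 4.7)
The plan is to assemble the pairing from three functorial maps that have already been constructed. Specifically, I would define the composite
\[ H^1(u \rw W, Z \rw G) \xrightarrow{\pi} H^1_\tx{sc}(u \rw W, Z \rw G) \xrightarrow{\iota^{-1}} \bar Y_{+,\tx{tor}}(Z \rw G) \hookrightarrow \pi_0(Z(\hat{\bar G})^+)^*, \]
where $\pi$ is the canonical quotient of Section~\ref{sec:h1sc}, $\iota$ is the isomorphism of Theorem~\ref{thm:tn+g}, and the final arrow is the embedding supplied by the proposition immediately preceding this corollary. Pontryagin duality converts this into the claimed $\Q/\Z$-valued pairing. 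Functoriality in $[Z \rw G]\in \mc{R}$ is inherited from the functoriality of each constituent.

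The kernel assertions hinge on the observation that $\iota^{-1}$ is bijective and its target embeds in $\pi_0(Z(\hat{\bar G})^+)^*$, so the tail of the composition $H^1_\tx{sc}(u \rw W, Z \rw G) \hookrightarrow \pi_0(Z(\hat{\bar G})^+)^*$ is already an injection. Triviality of the left kernel of the pairing is therefore equivalent to the basepoint fiber of $\pi$ being a single point, which by the very definition of $\pi$ in Section~\ref{sec:h1sc} is the image of $H^1(\Gamma, G_\tx{sc})$ in $H^1(u \rw W, Z \rw G)$. Via the injection~\eqref{eq:infres} this is just the image of $H^1(\Gamma, G_\tx{sc}) \rw H^1(\Gamma, G)$, which is trivial when $F$ is $p$-adic by Kneser's theorem~\cite{Kne65} and vacuously so when $G$ is a torus. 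In these same two cases the preceding proposition tells us that the embedding $\bar Y_{+,\tx{tor}}(Z \rw G) \rw \pi_0(Z(\hat{\bar G})^+)^*$ is in fact an isomorphism, so the composite $H^1_\tx{sc} \rw \pi_0(Z(\hat{\bar G})^+)^*$ is bijective; this is the perfectness statement.

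For the compatibility with Kottwitz's pairing when $Z = \{1\}$, one unwinds the definitions. We have $\bar G = G$, so $Z(\hat{\bar G})^+ = Z(\hat G)^\Gamma$; the inflation-restriction sequence~\eqref{eq:infres} identifies $H^1(u \rw W, 1 \rw G)$ with $H^1(\Gamma, G)$; and Theorems~\ref{thm:tn+s} and~\ref{thm:tn+g} were set up so that $\iota$ restricts, in this case and after passing to a fundamental maximal torus $S \subset G$, to the classical Tate-Nakayama isomorphism on $X_*(S)_{\Gamma,\tx{tor}}$. Dually, the preceding proposition realizes the embedding $\bar Y_{+,\tx{tor}}(1 \rw G) \rw \pi_0(Z(\hat G)^\Gamma)^*$ as adjoint to the canonical map $\pi_0(Z(\hat G)^\Gamma) \rw \pi_0(\hat S^\Gamma)$ used by Kottwitz in \cite{Kot86}.

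The only genuine effort required is the sign check at the last step: the whole point of the somewhat unusual choice $\xi = -1 \in H^2(\Gamma, u)$ made at the end of Section~\ref{sec:u} is precisely to secure agreement with Kottwitz on the nose rather than up to inversion, and I would simply cite Fact~\ref{fct:h2mapexp} to verify this. Apart from that sign, the entire proof is a bookkeeping exercise through constructions already in place.
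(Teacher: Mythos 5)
Your construction of the pairing and your treatment of the Kottwitz compatibility are exactly the paper's route: the pairing is the composite $H^1(u \rw W,Z \rw G) \rw H^1_\tx{sc}(u \rw W,Z \rw G) \stackrel{\iota^{-1}}{\cong} \bar Y_{+,\tx{tor}}(Z \rw G) \hookrightarrow \pi_0(Z(\hat{\bar G})^+)^*$, and the identification with Kottwitz's pairing for $Z=\{1\}$ is obtained by checking agreement on tori and then using functoriality for the inclusion of a fundamental maximal torus $S \subset G$. To make that reduction conclusive you should explicitly invoke Corollary \ref{cor:ellsur}(2) (surjectivity of $H^1(u\rw W,Z\rw S) \rw H^1(u\rw W,Z\rw G)$, i.e.\ of $H^1(\Gamma,S)\rw H^1(\Gamma,G)$ when $Z=\{1\}$), which is precisely what the paper's one-line proof leans on. Also, the sign check you defer to Fact \ref{fct:h2mapexp} is not needed there: it has already been absorbed into Theorem \ref{thm:tn+s}, whose statement includes that $\iota$ \emph{is} the classical Tate--Nakayama isomorphism for objects $[1 \rw S]$, and that is all the comparison with \cite{Kot86} requires.

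The one genuine discrepancy is your handling of the left kernel. You prove its triviality only when $F$ is $p$-adic or $G$ is a torus, by identifying the base-point fiber of $H^1 \rw H^1_\tx{sc}$ with the image of $H^1(\Gamma,G_\tx{sc})$ and quoting Kneser, whereas the corollary asserts it without hypothesis; you silently proved a differently shaped statement. Note, however, that your own identification shows the literal left kernel on $H^1(u\rw W,Z\rw G)$ equals the image of $H^1(\Gamma,G_\tx{sc})$, which over $\R$ need not vanish; and since Kottwitz's map kills the image of $H^1(\Gamma,G_\tx{sc})$ (its composite with $H^1(\Gamma,G_\tx{sc})\rw H^1(\Gamma,G)$ factors through $\pi_0(Z(\hat G_\tx{ad})^\Gamma)^*=1$), the claimed compatibility with \cite{Kot86} forces a nontrivial literal left kernel for suitable real groups. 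So the unconditional assertion has to be read as injectivity of the induced map $H^1_\tx{sc}(u\rw W,Z\rw G) \rw \pi_0(Z(\hat{\bar G})^+)^*$, equivalently of $\bar Y_{+,\tx{tor}}(Z\rw G)\rw \pi_0(Z(\hat{\bar G})^+)^*$ from the preceding proposition --- which your observation that ``the tail of the composition is already an injection'' delivers with no case distinction. Say this explicitly instead of restricting to the two special cases; your perfectness argument in the $p$-adic and torus cases is correct as written.
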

\begin{proof}
Only the last statement requires proof. For this, use that both pairings coincide for tori and are functorial with respect to the inclusion $S \subset G$ of a fundamental maximal torus.
\end{proof}

In the situation that $Z=Z(G_\tx{der})$, we have $\bar G = G_\tx{ad} \times G/G_\tx{der}$ and consequently $\hat{\bar G}=[\hat G]_\tx{sc} \times Z(\hat G)^\circ$. Note that if $G$ is split, this implies $\pi_0(Z(\hat{\bar G})^+)=Z([\hat G]_\tx{sc})$, which is in accordance with Corollary \ref{cor:surjad}.

Our next task is to show how any fixed normalization of the endoscopic transfer factor of Langlands and Shelstad \cite{LS87} for the group $G$ extends to a normalization of the transfer factor for any rigid inner twist of $G$. Before we can address this issue, we need a slight refinement of the notion of endoscopic datum. We begin by recalling that notion, following \cite[\S1.2]{LS87} and \cite[\S2.1]{KS99}. An endoscopic datum for $G$ is a tuple $(H,\mc{H},s,\eta)$, where $H$ is a quasi-split connected reductive group defined over $F$; $\mc{H}$ is a split extension of $W_F$ by $\hat H$ such that the homomorphism $W_F \rw \tx{Out}(\hat H)$ provided by this extension is identified with the homomorphism $W_F \rw \tx{Out}(H)$ provided by the rational structure of $H$ via the natural isomorphism $\tx{Out}(\hat H) \cong \tx{Out}(H)$; $s \in Z(\hat H)$; $\eta : \mc{H} \rw {^LG}$ is an $L$-embedding mapping $\hat H$ isomorphically to $\tx{Cent}(\eta(s),\hat G)^\circ$ and $s \in Z(\hat H)^\Gamma\cdot\eta^{-1}(Z(\hat G))$. An isomorphism from $(H,\mc{H},s,\eta)$ to another such tuple $(H',\mc{H'},s',\eta')$ is an element $g \in \hat G$ satisfying the following two conditions. First, $g\eta(\mc{H})g^{-1}=\eta'(\mc{H'})$. Write $\beta : \hat H \rw \hat H'$ for the isomorphism determined by $\tx{Ad}(g)$. The second condition is that $\beta(s)$ and $s'$ become equal modulo $Z(\hat H')^{\Gamma,\circ}\cdot \eta'^{-1}(Z(\hat G))$.

Given an endoscopic datum $(H,\mc{H},s,\eta)$ of $G$, we may replace it by an equivalent one and assume that $s \in Z(\hat H)^\Gamma$. Furthermore, there is a canonical embedding $Z(G) \rw H$ and we may form $\bar H=H/Z$. The isogeny $H \rw \bar H$ dualizes to an isogeny $\hat{\bar H} \rw \hat H$ and we obtain as before $Z(\hat{\bar H})^+$ which is the preimage of $Z(\hat H)^\Gamma$ under that isogeny.

We now propose the following refinement of the notions of endoscopic data and of an isomorphism of endoscopic data. A \emph{refined endoscopic datum} is a tuple $(H,\mc{H},\dot s,\eta)$ where $H$ and $\mc{H}$ are as before; $\dot s$ is an element of $Z(\hat{\bar H})^+$, whose image in $Z(\hat H)^\Gamma$ we denote by $s$; and $\eta$ is again as before. It is obvious that a refined endoscopic datum $(H,\mc{H},\dot s,\eta)$ gives rise to an (ordinary) endoscopic datum $(H,\mc{H},s,\eta)$ and we argued that up to equivalence every (ordinary) endoscopic datum comes from a refined one. An \emph{isomorphism of refined endoscopic data} between $(H,\mc{H},\dot s,\eta)$ and $(H',\mc{H'},\dot s',\eta')$ is an element $g \in \hat G$ satisfying the following two conditions. First, $g\eta(\mc{H})g^{-1}=\eta'(\mc{H'})$. Write $\beta : \hat H \rw \hat H'$ for the isomorphism determined by $\tx{Ad}(g)$. This isomorphism lifts uniquely to an isomorphism $\bar\beta : \hat{\bar H} \rw \hat{\bar H'}$.
The second condition is that $\bar\beta(\dot s)$ and $\dot s'$ become equal in $\pi_0(Z(\hat{\bar H'})^+)$.

Clearly, an isomorphism of refined endoscopic data induces an isomorphism between the corresponding ordinary endoscopic data. However, the converse is not true -- two non-isomorphic refined endoscopic data may give isomorphic ordinary endoscopic data. Indeed, the requirement imposed on an isomorphism of ordinary endoscopic data is that $\beta(s)$ and $s'$ be equal in a quotient of $\pi_0(Z(\hat H)^\Gamma)$, while the requirement for an isomorphism of refined endoscopic data is that $\bar\beta(\dot s)$ and $\dot s'$ are equal in $\pi_0(Z(\hat{\bar H})^+)$, and the latter surjects onto $\pi_0(Z(\hat H)^\Gamma)$ with finite kernel. It follows that every isomorphism class of ordinary endoscopic data can be refined in only finitely many ways up to isomorphism. We will see that this new notion of an isomorphism is necessary in order for the value of the endoscopic transfer factor we are about to define to be invariant under isomorphisms of endoscopic data. This is related to the problem discovered by Arthur \cite[\S3]{Art06} that an absolute transfer factor for a non-quasi-split connected reductive group will not be invariant under all automorphisms of the (ordinary) endoscopic datum. Our stricter notion of an isomorphism resolves this problem.

We now proceed to show how rigid inner twists provide the means to extend normalizations of transfer factors. We let $G$ be a connected reductive group defined and quasi-split over $F$, $Z$ a finite central subgroup defined over $F$, and $(\psi,z) : G \rw G'$ a rigid inner twist of $G$ realized by $Z$. Let $\mf{\dot e} = (H,\mc{H},\dot s,\eta)$ be a refined endoscopic datum for $G$ and let $\mf{e}=(H,\mc{H},s,\eta)$ be the corresponding ordinary endoscopic datum. Let $\mf{z} = (H_\mf{z},\eta_\mf{z})$ be a z-pair for $\mf{e}$. We recall \cite[\S2.2]{KS99} that $H_\mf{z}$ is an extension of $H$ by an induced torus, and $\eta_\mf{z} : \mc{H} \rw {^LH_\mf{z}}$ is an $L$-embedding extending the embedding $\hat H \rw \hat H_\mf{z}$ dual to the surjection $H_\mf{z} \rw H$. Then this data (without the element $z$) gives rise to relative transfer factors for both $G$ and $G'$, which are functions
\[ \Delta[\mf{e},\mf{z}] : H_{\mf{z},G\tx{-sr}}(F) \times G_\tx{sr}(F) \times H_{\mf{z},G\tx{-sr}}(F) \times G_\tx{sr}(F) \rw \C. \]
\[ \Delta[\mf{e},\mf{z},\psi] : H_{\mf{z},G\tx{-sr}}(F) \times G'_\tx{sr}(F) \times H_{\mf{z},G\tx{-sr}}(F) \times G'_\tx{sr}(F) \rw \C. \]
These functions are constructed in \cite{LS87}. See in particular \cite[\S3.7]{LS87}, where their values are denoted by $\Delta(\gamma_H,\gamma_G;\bar\gamma_H,\bar\gamma_G)$. We have added the notation in brackets to indicate the additional data they depend on. Recall that $G_\tx{sr}$ is the set of strongly-regular semi-simple elements. To explain $H_{\mf{z},G\tx{-sr}}$, recall that to any maximal torus $S \subset G$ corresponds a canonical conjugacy class of embeddings $\hat S \rw \hat G$, called admissible. An isomorphism $i:S^H \rw S$ from a maximal torus of $H$ to a maximal torus of $G$ is called admissible if it is the composition of an admissible embedding $\hat{S^H} \rw \hat H$ with the datum $\eta$ and the inverse of an admissible embedding $\hat S \rw \hat G$. For $h \in S^H$, the element $i(h)$ is called an image of $h$ and the elements $h$ and $i(h)$ are said to be related. The subset $H_{\mf{z},G\tx{-sr}}$ consists of the preimages of those elements of $H$ that are related to strongly-regular semi-simple elements of $G$.

In fact, as pointed out in \cite{KS12}, there are four different ways to normalize the relative transfer factor, and these are called $\Delta, \Delta', \Delta_D, \Delta_D'$. We will work here with $\Delta$, but our discussion applies equally well with trivial modifications to the other three versions. In Section \ref{sec:lpack}, we will use $\Delta'$.

An absolute transfer factor is a function
\[ \Delta[\mf{e},\mf{z}]_\tx{abs} : H_{\mf{z},G\tx{-sr}}(F) \times G_\tx{sr}(F) \rw \C, \]
which is non-zero for any pair $(\gamma_\mf{z},\delta)$ of related elements, and has the property
\[ \Delta[\mf{e},\mf{z}]_\tx{abs}(\gamma_{\mf{z},1},\delta_1)\cdot \Delta[\mf{e},\mf{z}]_\tx{abs}(\gamma_{\mf{z},2},\delta_2)^{-1} = \Delta[\mf{e},\mf{z}](\gamma_{\mf{z},1},\delta_1,\gamma_{\mf{z},2},\delta_2) \]
for any two pairs $(\gamma_{\mf{z},1},\delta_1)$ and $(\gamma_{\mf{z},2},\delta_2)$ of related elements. In contrast to the case of $\Delta[\mf{e},\mf{z}]$, a function $\Delta[\mf{e},\mf{z}]_\tx{abs}$ as above is not unique. The condition imposed on it specifies it only up to multiplication by a complex scalar in the unit circle. A normalization of $\Delta[\mf{e},\mf{z}]_\tx{abs}$ can be specified by either choosing a splitting for $G$ \cite[\S3.7]{LS87}, or by choosing a Whittaker datum for $G$ \cite[\S5.3]{KS99}. Since $G'$ is not quasi-split, these normalizations are not available for the absolute transfer factor $\Delta[\mf{e},\mf{z},\psi]_\tx{abs}$ corresponding to the relative transfer factor $\Delta[\mf{e},\mf{z},\psi]$. However, using the rigid inner twist $(\psi,z)$ and the refinement $\mf{\dot e}$ of $\mf{e}$, we can obtain from any fixed normalization of $\Delta[\mf{e},\mf{z}]_\tx{abs}$ a corresponding normalization of $\Delta[\mf{\dot e},\mf{z},\psi]_\tx{abs}$. We do this following the recipe of \cite[\S2.2]{Kal11} but using the rigid inner twists developed here.

Let $\delta' \in G'_\tx{sr}(F)$ and $\gamma_\mf{z} \in H_\mf{z}(F)$ be related elements. Let $\gamma \in H(F)$ be the image of $\gamma_\mf{z}$ under the map $H_\mf{z} \rw H$. As remarked in Section \ref{sec:rigid}, there exists $\delta \in G_\tx{sr}(F)$ such that $\dot\delta' = (G',\psi,z,\delta') \in \mc{C}_Z(\delta)$. Let $S^H$ be the centralizer of $\gamma$ and $S$ be the centralizer of $\delta$. There exists a unique admissible isomorphism $\phi_{\gamma,\delta} : S^H \rw S$ carrying $\gamma$ to $\delta$. This isomorphism identifies the embedded copies of $Z$ into both tori and induces an isomorphism $\bar\phi_{\gamma,\delta} : \bar S^H \rw \bar S$. Composing the dual of the inverse of this isomorphism with the embedding $Z(\hat{\bar H})^+ \rw [\hat{\bar S^H}]^+$ we obtain from $\dot s$ an element $\dot s_{\gamma,\delta} \in [\hat{\bar S}]^+$.
We put
\begin{equation} \label{eq:tf} \Delta[\mf{\dot e},\mf{z},\psi,z]_\tx{abs}(\gamma_\mf{z},\delta') = \Delta[\mf{e},\mf{z}]_\tx{abs}(\gamma_\mf{z},\delta) \cdot \<\tx{inv}(\delta,\dot\delta'),\dot s_{\gamma,\delta}\>^{-1}. \end{equation}
where the pairing $\<-,-\>$ is the one from Corollary \ref{cor:tn+pair} applied to $G=S$.

Before we prove that this object is a transfer factor, it would be useful to know how its definition depends on the finite central subgroup $Z$. For this, let $Z'$ be a second finite central subgroup of $G$ defined over $F$ and assume $Z \subset Z'$. For the sake of bookkeeping, let us write $z' \in Z^1(u \rw W,Z' \rw G)$ for the image of $z$ under the natural inclusion. We then have the rigid inner twist $(\psi,z')$ The refined endoscopic datum $\mf{\dot e}$ doesn't serve $(\psi,z')$ any more, because the set $Z(\hat{\bar H})^+$ to which $\dot s$ belongs was constructed with respect to $Z$. Let us write, for a lack of better notation, $Z(\hat{\bar{\bar H}})^+$ for the corresponding set defined with respect to $Z'$. By definition this set surjects onto $Z(\hat{\bar H})^+$ and we may choose a preimage $\ddot s \in Z(\hat{\bar{\bar H}})^+$ of $\dot s$. Now $\mf{\ddot e} = (H,\mc{H},\ddot s,\eta)$ is a refined endoscopic datum that serves $(\psi,z')$. We now define $\Delta[\mf{\ddot e},\mf{z},\psi,z']_\tx{abs}(\gamma_\mf{z},\iota(\delta'))$ by the same formula as above, but using the element $\ddot s$ instead of $\dot s$ and $\iota(\dot\delta') \in \mc{C}_{Z'}(\delta)$ instead of $\dot\delta' \in \mc{C}_Z(\delta)$. We had remarked in Section \ref{sec:rigid} that $\tx{inv}(\delta,\dot\delta') \in H^1(u \rw W,Z \rw G)$ maps to $\tx{inv}(\delta,\iota(\dot\delta')) \in H^1(u \rw W,Z' \rw G)$. The functoriality of the pairing from Corollary \ref{cor:tn+pair} now implies the following fact.

\begin{fct} \label{fct:tfcz} We have the equality
\[ \Delta[\mf{\dot e},\mf{z},\psi,z]_\tx{abs}(\gamma_\mf{z},\delta') = \Delta[\mf{\ddot e},\mf{z},\psi,z']_\tx{abs}(\gamma_\mf{z},\iota(\delta')). \]
\end{fct}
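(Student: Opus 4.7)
The plan is to reduce the claimed equality to an invocation of the functoriality of the pairing in Corollary \ref{cor:tn+pair}. Inspecting the definition \eqref{eq:tf}, the first factor $\Delta[\mf{e},\mf{z}]_\tx{abs}(\gamma_\mf{z},\delta)$ depends only on $G$ and $\mf{e}$, and in particular is the same on both sides of the equality, so it suffices to establish
\[ \<\tx{inv}(\delta,\dot\delta'),\dot s_{\gamma,\delta}\> = \<\tx{inv}(\delta,\iota(\dot\delta')),\ddot s_{\gamma,\delta}\>, \]
where $\ddot s_{\gamma,\delta} \in [\hat{\bar{\bar S}}]^+$ is constructed from $\ddot s$ via the admissible isomorphism $\phi_{\gamma,\delta}$ in exactly the same manner as $\dot s_{\gamma,\delta}$ was constructed from $\dot s$, with $\bar{\bar S} := S/Z'$.

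Next I would identify the morphism in $\mc{R}$ that makes functoriality applicable. The inclusion $Z \hookrightarrow Z'$ gives a morphism $[Z \rw S] \rw [Z' \rw S]$ in $\mc{R}$, with induced isogeny $\bar{\bar S} \twoheadleftarrow S/Z = \bar S$ dualized into an isogeny $\hat{\bar{\bar S}} \rw \hat{\bar S}$ that restricts to a map $\pi_0(Z(\hat{\bar{\bar S}})^+) \rw \pi_0(Z(\hat{\bar S})^+)$. On cohomology the same morphism yields $H^1(u \rw W, Z \rw S) \rw H^1(u \rw W, Z' \rw S)$. As noted in Section \ref{sec:rigid}, this cohomology map carries $\tx{inv}(\delta,\dot\delta')$ to $\tx{inv}(\delta,\iota(\dot\delta'))$ by the very construction of $\tx{inv}$, since both are represented by the same cocycle $w \mapsto g^{-1} z(w) w(g)$ viewed in the two different cohomology sets.

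The remaining point is to verify that $\dot s_{\gamma,\delta}$ is the image of $\ddot s_{\gamma,\delta}$ under the dual map $\pi_0(Z(\hat{\bar{\bar S}})^+) \rw \pi_0(Z(\hat{\bar S})^+)$. This is a diagram-chase: the admissible isomorphism $\phi_{\gamma,\delta} : S^H \rw S$ is, by construction, compatible with the embedded copies of $Z$ and $Z'$ in source and target, so it induces a commutative diagram of isogenies of tori and, dually, a commutative diagram
\[ \xymatrix@-.5pc{
Z(\hat{\bar{\bar H}})^+ \ar[r]\ar[d] & [\hat{\bar{\bar S^H}}]^+ \ar[r]^-{\bar{\bar\phi}_{\gamma,\delta}^*} \ar[d] & [\hat{\bar{\bar S}}]^+ \ar[d] \\
Z(\hat{\bar H})^+ \ar[r] & [\hat{\bar{S^H}}]^+ \ar[r]^-{\bar\phi_{\gamma,\delta}^*} & [\hat{\bar S}]^+
} \]
in which $\ddot s$ is sent to $\dot s$ by the left vertical arrow. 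Chasing $\ddot s$ along the top and then down yields $\dot s_{\gamma,\delta}$, which is the required compatibility.

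With these identifications in place, the equality of pairings follows immediately from the functoriality statement in Corollary \ref{cor:tn+pair} applied to the morphism $[Z \rw S] \rw [Z' \rw S]$. There is no real obstacle here beyond careful bookkeeping; the only step requiring any attention is verifying the commutativity of the diagram above, which is a straightforward consequence of the naturality of the canonical embeddings $Z(\hat{\bar T})^+ \hookrightarrow [\hat{\bar T}]^+$ attached to tori $T$ equipped with a central subgroup.
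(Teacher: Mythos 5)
Your proposal is correct and follows the same route as the paper: the paper likewise observes that $\tx{inv}(\delta,\dot\delta')$ maps to $\tx{inv}(\delta,\iota(\dot\delta'))$ under $H^1(u \rw W,Z \rw S) \rw H^1(u \rw W,Z' \rw S)$ and then cites the functoriality of the pairing of Corollary \ref{cor:tn+pair}, the compatibility of $\dot s_{\gamma,\delta}$ with $\ddot s_{\gamma,\delta}$ being the implicit bookkeeping you spell out. Nothing further is needed.
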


Having handled passage to a larger $Z$, we can now focus again on a particular fixed $Z$.

\begin{pro} \label{pro:tf} The value of $\Delta[\mf{\dot e},\mf{z},\psi,z]_\tx{abs}(\gamma_\mf{z},\delta')$ does not depend on the choice of $\delta$, and the function $\Delta[\mf{\dot e},\mf{z},\psi,z]_\tx{abs}$ is an absolute transfer factor. Moreover, the function $\Delta[\mf{\dot e},\mf{z},\psi,z]_\tx{abs}$ does not change if we replace $\mf{\dot e}$ by an isomorphic refined endoscopic datum, or if we replace $(\psi,z)$ by an isomorphic rigid inner twist realized by $Z$.
\end{pro}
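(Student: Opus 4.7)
The plan is to model the argument on the proof of the analogous statement in \cite[\S2.2]{Kal11} for pure inner twists, with $H^1(u \rw W,Z \rw G)$ and the pairing of Corollary \ref{cor:tn+pair} replacing the roles of $H^1(\Gamma,G)$ and Kottwitz's classical duality.

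First I would verify independence of $\delta$. Let $\delta_1 \in G_\tx{sr}(F)$ be a second choice, so $\delta$ and $\delta_1$ are stably conjugate in $G$ and both determine objects of $\mc{C}_Z(\delta)$ with the same image $\dot\delta'$. Let $S=\tx{Cent}(\delta,G)$ and transport via $\phi_{\delta,\delta_1}$. The cocycle identity for the relative transfer factor gives
\[ \Delta[\mf{e},\mf{z}]_\tx{abs}(\gamma_\mf{z},\delta_1) = \Delta[\mf{e},\mf{z}]_\tx{abs}(\gamma_\mf{z},\delta)\cdot \Delta[\mf{e},\mf{z}](\gamma_\mf{z},\delta_1,\gamma_\mf{z},\delta), \]
and the standard Langlands--Shelstad computation of this relative factor between stably conjugate elements identifies it with $\<\tx{inv}(\delta,\delta_1),\dot s_{\gamma,\delta}\>$, where $\tx{inv}(\delta,\delta_1) \in H^1(\Gamma,S)$ is inflated to $H^1(u \rw W,Z \rw S)$. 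On the invariant side, the cocycle picture of $\tx{inv}$ gives $\tx{inv}(\delta_1,\dot\delta') = \phi_{\delta,\delta_1}\bigl(\tx{inv}(\delta,\dot\delta')\cdot \tx{inv}(\delta,\delta_1)^{-1}\bigr)$, and functoriality of the pairing in Corollary \ref{cor:tn+pair} under $\phi_{\delta,\delta_1}$ cancels the two corrections exactly.

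Next I would prove the transfer factor identity. Given two pairs $(\gamma_{\mf{z},i},\delta'_i)$ of related elements, I would pick $\delta_i \in G_\tx{sr}(F)$ with $\dot\delta'_i \in \mc{C}_Z(\delta_i)$ and use the already established relative factor identity
\[ \Delta[\mf{e},\mf{z}](\gamma_{\mf{z},1},\delta_1,\gamma_{\mf{z},2},\delta_2) = \frac{\Delta[\mf{e},\mf{z}]_\tx{abs}(\gamma_{\mf{z},1},\delta_1)}{\Delta[\mf{e},\mf{z}]_\tx{abs}(\gamma_{\mf{z},2},\delta_2)} \]
combined with the descent formula of \cite{LS87} relating $\Delta[\mf{e},\mf{z},\psi](\gamma_{\mf{z},1},\delta'_1,\gamma_{\mf{z},2},\delta'_2)$ to $\Delta[\mf{e},\mf{z}](\gamma_{\mf{z},1},\delta_1,\gamma_{\mf{z},2},\delta_2)$ together with a $\Delta_{III}$-type cohomological correction that, in our setup, is precisely $\<\tx{inv}(\delta_1,\dot\delta'_1),\dot s_{\gamma_1,\delta_1}\>\cdot\<\tx{inv}(\delta_2,\dot\delta'_2),\dot s_{\gamma_2,\delta_2}\>^{-1}$. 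Dividing, everything reassembles into the quotient $\Delta[\mf{\dot e},\mf{z},\psi,z]_\tx{abs}(\gamma_{\mf{z},1},\delta'_1)/\Delta[\mf{\dot e},\mf{z},\psi,z]_\tx{abs}(\gamma_{\mf{z},2},\delta'_2)$, and non-vanishing on related pairs is inherited from the corresponding property for $G$.

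Finally, for invariance under isomorphisms, I treat the two cases separately. For $(f,g):(\psi_1,z_1)\rw(\psi_2,z_2)$, the relative factor on the $G'$-side is insensitive to replacing $(\psi_1,z_1)$ by $(\psi_2,z_2)$ because $f$ is defined over $F$, while $\tx{inv}(\delta,\dot\delta')$ is manifestly an isomorphism invariant of the quadruple by construction. For the refined endoscopic datum, an isomorphism $g\in\hat G$ from $\mf{\dot e}$ to $\mf{\dot e}'$ leaves the relative factor $\Delta[\mf{e},\mf{z}]$ invariant (this is Langlands--Shelstad for ordinary endoscopic data, since the ordinary underlying datum isomorphism class is the same). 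Under the induced identification, $\bar\beta(\dot s)=\dot s'$ in $\pi_0(Z(\hat{\bar H'})^+)$, so the transported element $\dot s_{\gamma,\delta}$ from $\mf{\dot e}$ agrees with the corresponding $\dot s'_{\gamma,\delta}$ from $\mf{\dot e}'$ in $\pi_0([\hat{\bar S}]^+)$, and Corollary \ref{cor:tn+pair} then guarantees equality of the pairings with the finite-order element $\tx{inv}(\delta,\dot\delta')$.

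The main obstacle I anticipate is the bookkeeping in the second step: one must verify that the precise form of $\Delta_{III}$ appearing in the Langlands--Shelstad descent from $G$ to a non-quasi-split inner form agrees, term by term, with the pairing $\<\tx{inv}(\delta,\dot\delta'),\dot s_{\gamma,\delta}\>$ constructed from $H^1(u \rw W, Z \rw S)$ and $[\hat{\bar S}]^+$. This is where the compatibility of our pairing with Kottwitz's pairing (the last assertion of Corollary \ref{cor:tn+pair}), together with functoriality in admissible isomorphisms of maximal tori, does all the real work; once that identification is set up cleanly, the three assertions fall out together.
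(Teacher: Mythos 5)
Your outline of the independence of $\delta$ and of invariance under isomorphisms of rigid inner twists matches the paper's argument, but the central claim --- that $\Delta[\mf{\dot e},\mf{z},\psi,z]_\tx{abs}$ is actually an absolute transfer factor --- contains a genuine gap. You assert that the Langlands--Shelstad comparison between $\Delta[\mf{e},\mf{z},\psi](\gamma_{\mf{z},1},\delta'_1,\gamma_{\mf{z},2},\delta'_2)$ and $\Delta[\mf{e},\mf{z}](\gamma_{\mf{z},1},\delta_1,\gamma_{\mf{z},2},\delta_2)$ produces a cohomological correction that ``is precisely'' $\<\tx{inv}(\delta_1,\dot\delta'_1),\dot s_{\gamma_1,\delta_1}\>\cdot\<\tx{inv}(\delta_2,\dot\delta'_2),\dot s_{\gamma_2,\delta_2}\>^{-1}$, and you propose that the compatibility with Kottwitz's pairing in Corollary \ref{cor:tn+pair} does the real work. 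That identification is exactly what has to be proved, and the Kottwitz compatibility cannot deliver it: it only applies when $Z=\{1\}$, whereas $\tx{inv}(\delta_i,\dot\delta'_i)$ lives in $H^1(u\rw W,Z\rw S_i)$ and $\dot s_{\gamma_i,\delta_i}$ in $[\hat{\bar S_i}]^+$, while the term produced by \cite[Lemma 4.2.A]{LS87} is a classical pairing of a class in $H^1(\Gamma,U)$, $U=S_{1,\tx{sc}}\times S_{2,\tx{sc}}/Z(G_\tx{sc})$, against an element $s_U\in\hat U^\Gamma$ built from lifts to the simply connected covers. The individual factors $\<\tx{inv}(\delta_i,\dot\delta'_i),\dot s_{\gamma_i,\delta_i}\>$ are not separately identifiable with anything in the Langlands--Shelstad formula; only their ratio is, and proving that requires new input: first enlarging $Z$ so that $Z(G_\tx{der})\subset Z$ (legitimized by Fact \ref{fct:tfcz}, which you never invoke), then introducing the auxiliary torus $V=S_1\times S_2/Z(G)$, checking via the sequence \eqref{eq:infres} that the pair $(\tx{inv}(\delta_1,\dot\delta'_1)^{-1},\tx{inv}(\delta_2,\dot\delta'_2))$ becomes a classical class in $H^1(\Gamma,V)$ (because both restrictions to $u$ equal that of $z$), matching it by an explicit cocycle computation with the image of the Langlands--Shelstad invariant from $U$, and finally constructing an element $\dot s_V\in[\hat{\bar V}]^+$ mapping simultaneously to $(\dot s_{\gamma_1,\delta_1},\dot s_{\gamma_2,\delta_2})$ and to $s_U$, using the twisting isomorphism $f$ of \cite{LS90}, \cite{KS99} to compute $\hat V$ and $\hat{\bar V}$. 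None of this is in your proposal, and without it the second step does not close.

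There is also a smaller but real gap in your treatment of isomorphisms of refined endoscopic data: you claim the quasi-split factor is unchanged ``since the ordinary underlying datum isomorphism class is the same,'' but the equality $\Delta[\mf{e'},\mf{z'}]_\tx{abs}(\gamma_\mf{z},\delta)=\Delta[\mf{e},\mf{z}]_\tx{abs}(\gamma_\mf{z},\delta)$ is not formal --- recall Arthur's observation that absolute factors need not be invariant under automorphisms of ordinary endoscopic data. One must transport the z-pair to $\mf{z'}=(H'_\mf{z},\eta_\mf{z}\circ\eta^{-1}\circ\tx{Ad}(g^{-1})\circ\eta')$, replace the admissible isomorphism by $\phi_{\gamma',\delta}=\phi_{\gamma,\delta}\circ\beta$, and verify term by term (for a specific normalization such as $\Delta_0$ with fixed $a$-data and $\chi$-data) that $\Delta_I$, $\Delta_{II}$, $\Delta_{III_2}$, $\Delta_{IV}$ are unchanged; only then does the remaining check $\dot s'_{\gamma',\delta}=\dot s_{\gamma,\delta}$ in $\pi_0([\hat{\bar S}]^+)$, which you do state, finish the argument.
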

\begin{proof}
We begin by showing independence of $\delta$. Let $\delta_0 \in G_\tx{sr}(F)$ be another element stably conjugate to $\delta'$. Then $\delta$ and $\delta_0$ are also stably-conjugate, and if $g_0 \in G(\ol{F})$ is such that $\tx{Ad}(g_0)\delta_0=\delta$, then we have \cite[\S3.4]{LS87}
\[ \Delta[\mf{e},\mf{z}]_\tx{abs}(\gamma_\mf{z},\delta) = \Delta[\mf{e},\mf{z}]_\tx{abs}(\gamma_\mf{z},\delta_0) \cdot \<\tx{inv}(\delta_0,\delta),s_{\gamma,\delta_0}\>^{-1}. \]
Here $\tx{inv}(\delta_0,\delta)$ is the class of $\sigma \mapsto g_0^{-1}\sigma(g_0)$ in $H^1(\Gamma,S_0)$, with $S_0$ the centralizer of $\delta_0$, and $s_{\gamma,\delta_0} \in \hat S_0^\Gamma$ is the image of $s$ under the composition of the inclusion $Z(\hat H)^\Gamma \rw [\hat S^H]^\Gamma$ and the inverse of the dual of the admissible isomorphism $\phi_{\gamma,\delta_0} : S^H \rw S_0$ carrying $\gamma$ to $\delta_0$. The pairing $\<\>$ is the usual Tate-Nakayama pairing. By Theorem \ref{thm:tn+s}, we have
\[ \<\tx{inv}(\delta_0,\delta),s_{\gamma,\delta_0}\> = \<\tx{inv}(\delta_0,\delta),\dot s_{\gamma,\delta_0}\>, \]
where on the right we are using the pairing of Corollary \ref{cor:tn+pair}. Now consider
$\<\tx{inv}(\delta,\dot\delta'),\dot s_{\gamma,\delta}\>$.
The functoriality of the pairing implies that if we transport $\tx{inv}(\delta,\dot\delta')$ to $H^1(u \rw W,Z \rw S_0)$ and $\dot s_{\gamma,\delta}$ to $[\hat{\bar S_0}]^+$ using the isomorphism $\tx{Ad}(g_0) : S_0 \rw S$ and pair them afterwards, we will obtain the same result. The image of $\dot s_{\gamma,\delta}$ in $[\hat{\bar S_0}]^+$ is equal to $\dot s_{\gamma,\delta_0}$. If $g \in G(\ol{F})$ is the element with $\delta'=\psi(g\delta g^{-1})$, then $\tx{inv}(\delta,\dot\delta')$ is represented by the 1-cocycle $w \mapsto g^{-1}z(w)w(g)$ and its image in $H^1(u \rw W,Z \rw S_0)$ is represented by the 1-cocycle $w \mapsto g_0^{-1}g^{-1}z(w)w(g)g_0$. It follows that the product
$\<\tx{inv}(\delta_0,\delta),s_{\gamma,\delta_0}\> \cdot \<\tx{inv}(\delta,\dot\delta'),\dot s_{\gamma,\delta}\>$
is equal to the pairing of $\dot s_{\gamma,\delta_0}$ with the product
\[ g_0^{-1}g^{-1}z(w)w(g)g_0 \cdot g_0^{-1}w(g_0) \]
and this represents $\tx{inv}(\delta_0,\dot\delta')$. We conclude
\[ \<\tx{inv}(\delta_0,\delta),s_{\gamma,\delta_0}\> \cdot \<\tx{inv}(\delta,\dot\delta'),\dot s_{\gamma,\delta}\> = \<\tx{inv}(\delta_0,\dot\delta'),\dot s_{\gamma,\delta_0} \> \]
and this completes the proof of independence of the choice of $\delta$.

The invariance under isomorphisms of rigid inner twists follows immediately from the fact that $\tx{inv}(\delta,\dot\delta')$ depends only on the isomorphism class of $\dot\delta'$ in $\mc{C}_Z(\delta)$. For the invariance under isomorphisms of the refined endoscopic datum, say $g \in \hat G$ is an isomorphism from $\mf{\dot e}=(H,\mc{H},\dot s,\eta)$ to $\mf{\dot e'}=(H',\mc{H'},\dot s',\eta')$. Write $\hat\beta : \hat H \rw \hat H'$ for the isomorphism induced by $\tx{Ad}(g)$, and write $\beta : H' \rw H$ for an isomorphism defined over $F$ and dual to $\hat\beta$ ($\beta$ is determined up to $H(F)$-conjugacy, and the particular choice is irrelevant). We let $H_\mf{z}'$ be the extension of $H'$ obtained by composing the map $H_\mf{z} \rw H$ with with $\beta^{-1}$, and set
\[ \mf{z'}=(H_\mf{z}',\eta_\mf{z}\circ\eta^{-1}\circ\tx{Ad}(g^{-1})\circ\eta'). \]
The second component of $\mf{z'}$ make sense, because $\eta$ is injective and its image is equal to the image of $\tx{Ad}(g^{-1})\circ\eta'$. Now $\mf{z'}$ is a $z$-pair for $\mf{e'}$ and we want to show that for any $\gamma_\mf{z} \in H_\mf{z}'(F)=H_\mf{z}(F)$ and $\delta' \in G'_\tx{sr}(F)$ we have
\[ \Delta_\tx{abs}[\mf{\dot e'},\mf{z'},\psi,z](\gamma_\mf{z},\delta') = \Delta[\mf{\dot e},\mf{z},\psi,z]_\tx{abs}(\gamma_\mf{z},\delta'). \]
For this we choose $\delta \in G_\tx{sr}(F)$ stably-conjugate to $\delta'$. We first claim that we have the equality
\begin{equation} \label{eq:temp1} \Delta[\mf{e'},\mf{z'}]_\tx{abs}(\gamma_\mf{z},\delta) = \Delta[\mf{e},\mf{z}]_\tx{abs}(\gamma_\mf{z},\delta). \end{equation}
Since all absolute transfer factors for $\mf{e}$ and $\mf{z}$ differ from each other by a scalar, it is enough to check this equation for a specific normalization. We choose a pinning of $G$ and discuss the normalization $\Delta_0$ of \cite{LS87}. After fixing $a$-data and $\chi$-data for $S=\tx{Cent}(\delta,G)$ and using the admissible isomorphism $\phi_{\gamma,\delta} : S^H \rw S$, the factor $\Delta_0[\mf{e},\mf{z}]$ can be written as a product of terms $\Delta_I$, $\Delta_{II}$, $\Delta_{III_2}$, $\Delta_{IV}$, with $\Delta_{III_1}=1$ due to our choice of $\phi_{\gamma,\delta}$. We use the same $a$-data and $\chi$-data to form $\Delta_0[\mf{e'},\mf{z'}]$. We must however use a different admissible isomorphism, because the endoscopic group $H'$ underlying $\mf{e'}$ is different from $H$. Let $\gamma' \in H'(F)$ be the image of $\gamma_\mf{z}$ under the projection $H_\mf{z}(F) \rw H'(F)$. By construction we have $\beta(\gamma')=\gamma$ and hence $\phi_{\gamma',\delta}=\phi_{\gamma,\delta}\circ\beta$. This is the admissible isomorphism we will use, and now we can compare the individual terms. The terms $\Delta_{II}$ and $\Delta_{IV}$ depend only on $\delta$ and the chosen $a$-data and $\chi$-data for $S$, so they do not notice the passage from $\mf{e},\mf{z}$ to $\mf{e'},\mf{z'}$. The term $\Delta_I[\mf{e},\mf{z}]$ depends on the $a$-data, the pinning of $G$, as well as $s_{\gamma,\delta}$. But one observes immediately that $s'_{\gamma',\delta}=s_{\gamma,\delta}$ as elements of $\pi_0(\hat S^\Gamma)$ and thus passing to $\mf{e'},\mf{z'}$ preserves $\Delta_I$. For the term $\Delta_{III_2}$ one considers the diagram
\[
\xymatrix{
^LH_\mf{z}&^LS^{H_\mf{z}}\ar@{_(->}[l]&^LS^{H_\mf{z}}\ar@{..>}[l]&^LS^H\ar@{_(->}[l]\\
\mc{H}\ar@{^(->}[u]^{\eta_\mf{z}}\ar@{^(->}[r]^{\eta}&^LG&&^LS\ar@{_(->}[ll]\ar[u]_{^L\phi_{\gamma,\delta}}
} \]
Here $^LS \rw {^LG}$ comes from the chosen $\chi$-data. This $\chi$-data is transported to $S^H$ via $\phi_{\gamma,\delta}$ and then to $S^{H_\mf{z}}$ via the projection $S^{H_\mf{z}} \rw S^H$ and leads to ${^LS^{H_\mf{z}}} \rw {^LH_\mf{z}}$. The dotted arrow is the unique $L$-automorphism extending the identity on $\hat S^{H_\mf{z}}$ and making the diagram commute. Its restriction to $W_F$ is a Langlands parameter $a : W_F \rw {^LS^{H_\mf{z}}}$ and $\Delta_{III_2}=\<a,\gamma_\mf{z}\>$, where $\<-,-\>$ is the Langlands duality pairing for tori. Passing to $\mf{e'},\mf{z'}$ only the two corners $\mc{H}$ and $^LS^H$ of this diagram change. In both cases, the change is balanced out by passing from $\eta_\mf{z}$ to $\eta_\mf{z'}=\eta_\mf{z}\circ\eta^{-1}\circ\tx{Ad}(g^{-1}\circ\eta'$ and from $^L\phi_{\gamma,\delta}$ to $^L\phi_{\gamma',\delta}={^L\beta^{-1}}\circ{^L\phi_{\gamma,\delta}}$. The dotted arrow in the diagram, as well as $\Delta_{III_2}$, remain unchanged. This completes the proof of \eqref{eq:temp1}. It remains to check that
\[ \<\tx{inv}(\delta,\dot\delta'),\dot s'_{\gamma',\delta}\> = \<\tx{inv}(\delta,\dot\delta'),\dot s_{\gamma,\delta} \>, \]
which follows immediately from the equation $\dot s'_{\gamma',\delta}=\dot s_{\gamma,\delta} \in \pi_0([\hat{\bar S}]^+)$.

We now proceed to show that $\Delta_\tx{abs}[\mf{\dot e},\mf{z},\psi,z]$ is an absolute transfer factor. This amounts to showing that for two pairs $(\gamma_{\mf{z},1},\delta_1')$ and $(\gamma_{\mf{z},2},\delta_2')$ of related elements we have
\[ \Delta[\mf{\dot e},\mf{z},\psi,z]_\tx{abs}(\gamma_{\mf{z},1},\delta_1') \cdot \Delta[\mf{\dot e},\mf{z},\psi,z]_\tx{abs}(\gamma_{\mf{z},2},\delta_2')^{-1} = \Delta[\mf{e},\mf{z},\psi](\gamma_{\mf{z},1},\delta_1',\gamma_{\mf{z},2},\delta_2'). \]
At this point it will be convenient to enlarge $Z$. Fact \ref{fct:tfcz} allows us to do so, after replacing $\mf{\dot e}$ by an appropriate refined endoscopic datum for the new $Z$ in the same way we constructed $\mf{\ddot e}$. Thus we assume that $Z$ contains $Z(G_\tx{der})$.

We return to the proof of above equation. Following the construction of the left hand side, we choose $\delta_1,\delta_2 \in G(F)$ such that $\delta_1$ is stably conjugate to $\delta_1'$ and $\delta_2$ is stably conjugate to $\delta_2'$ and have to show that
\[ \frac{\<\tx{inv}(\delta_1,\dot\delta_1'),\dot s_{\gamma_1,\delta_1}\>^{-1}}{\<\tx{inv}(\delta_2,\dot\delta_2'),\dot s_{\gamma_2,\delta_2}\>^{-1}} =
\frac{\Delta[\mf{e},\mf{z},\psi](\gamma_{\mf{z},1},\delta_1',\gamma_{\mf{z},2},\delta_2')}{\Delta[\mf{e},\mf{z}](\gamma_{\mf{z},1},\delta_1,\gamma_{\mf{z},2},\delta_2)},
\]
where $\gamma_i$ is the image in $H(F)$ of $\gamma_{\mf{z},i}$.
Applying \cite[Lemma 4.2.A]{LS87}, we see that the right hand side is equal to
\[ \left\< \tx{inv}\left(\frac{\gamma_1,\delta_1'}{\gamma_2,\delta_2'}\right),s_U\right\>. \]
This object is constructed in \cite[\S3.4]{LS87} and we will now recall its construction. We let $S_i^H$ be the centralizer of $\gamma_i$ in $H$, $S_i$ the centralizer of $\delta_i$ in $G$, and $S_i'$ the centralizer of $\delta_i'$ in $G'$. We fix $g_i \in G_\tx{sc}$ with $\psi(g_i\delta_i g_i^{-1})=\delta_i'$. We fix an arbitrary 1-cochain $u : \Gamma \rw G_\tx{sc}$ with the property $\psi^{-1}\sigma(\psi)=\tx{Ad}(u_\sigma)$, and form the 1-cochains $v_i(\sigma)=g_i^{-1}u(\sigma)\sigma(g_i)$. Then $v_i$ takes values in $S_{i,\tx{sc}}$ and its differential takes values in $Z(G_\tx{sc})$. Moreover, $dv_1=dv_2$, so if we form the torus $U=S_{1,\tx{sc}} \times S_{2,\tx{sc}} /Z(G_\tx{sc})$, where $Z(G_\tx{sc})$ is embedded into the product by $z \mapsto (z,z^{-1})$, the 1-cochain $\Gamma \rw U, \sigma \mapsto (v_1(\sigma)^{-1},v_2(\sigma))$ is in fact a 1-cocycle and is independent of the choice of $u$. We denote its class by
\[ \tx{inv}\left(\frac{\gamma_1,\delta_1'}{\gamma_2,\delta_2'}\right). \]
To explain $s_U$, consider the admissible isomorphisms $\phi_{\gamma_i,\delta_i} : S_i^H \rw S_i$. Let $s_{\gamma_i,\delta_i} \in \hat S_i^\Gamma$ be the image of $s$ under the composition of the natural inclusion $Z(\hat H)^\Gamma \rw \hat S_i^H$ with $\hat\phi_{\gamma_i,\delta_i}^{-1}$. For any $h \in H(\ol{F})$ with $\tx{Ad}(h)S^H_2 = S^H_1$, the (not necessarily $\Gamma$-equivariant) admissible isomorphism
\[ \hat\phi = \hat\phi_{\gamma_2,\delta_2}^{-1}\circ\hat{\tx{Ad}}(h)\circ\hat\phi_{\gamma_1,\delta_1} : \hat S_1 \rw \hat S_2 \]
intertwines the embeddings of $Z(\hat H)$ into both tori. The isomorphism $\hat\phi$ lifts to an isomorphism $\hat\phi : [\hat S_1]_\tx{sc} \rw [\hat S_2]_\tx{sc}$ and induces an isomorphism
\begin{equation} \label{eq:isoszh} [\hat S_1]_\tx{sc} \times_{\hat S_1} Z(\hat H) \rw [\hat S_2]_\tx{sc} \times_{\hat S_2} Z(\hat H) \end{equation}
which is independent of the choice of $h$, and hence $\Gamma$-equivariant. We choose an arbitrary lift $\tilde s_{\gamma_1,\delta_1} \in [\hat S_1]_\tx{sc} \times_{\hat S_1} Z(\hat H) \subset [\hat S_1]_\tx{sc}$ of the image of $s_{\gamma_1,\delta_1}$ in $[\hat S_1]_\tx{ad}^\Gamma$ and set $\tilde s_{\gamma_2,\delta_2} = \hat\phi(\tilde s_{\gamma_1,\delta_1}) \in [\hat S_2]_\tx{sc}$.
Since $\hat\phi$ is $\Gamma$-equivariant on $[\hat S_1]_\tx{sc} \times_{\hat S_1} Z(\hat H)$, it identifies the differentials of $\tilde s_{\gamma_i,\delta_i} \in C^0(\Gamma,[\hat S_i]_\tx{sc})$. These differentials however belong to $Z(\hat G_\tx{sc})$ and we therefore conclude that they are equal. It follows that the image $s_U$ of $(\tilde s_{\gamma_1,\delta_1},\tilde s_{\gamma_2,\delta_2})$ in the quotient $\hat U$ of $[\hat S_1]_\tx{sc} \times [\hat S_2]_\tx{sc}$ by the image of $Z(\hat G_\tx{sc})$ under the diagonal embedding belongs to $\hat U^\Gamma$. One checks that $\hat U$ is the dual torus to $U$ and thus the element $s_U$ can be paired with the element $\tx{inv}(\gamma_1,\delta_1'/\gamma_2,\delta_2')$ using Tate-Nakayama duality.

Let us now prove the equality
\begin{equation} \label{eq:d1} \frac{\<\tx{inv}(\delta_1,\delta_1'),\dot s_{\gamma_1,\delta_1}\>^{-1}}{\<\tx{inv}(\delta_2,\delta_2'),\dot s_{\gamma_2,\delta_2}\>^{-1}} =
\left\< \tx{inv}\left(\frac{\gamma_1,\delta_1'}{\gamma_2,\delta_2'}\right),s_U\right\>.
\end{equation}
We begin by forming the torus $V=S_1 \times S_2/Z(G)$, where $Z(G)$ is embedded into $S_1 \times S_2$ via $z \mapsto (z,z^{-1})$. The homomorphism $S_1 \times S_2 \rw V$ defines the morphism $[Z \times Z \rw S_1 \times S_2] \rw [Z \times Z/Z \rw V]$ in the category $\mc{T}$. We claim that the image in $H^1(u \rw W,Z \times Z/Z \rw V)$ of the element
\[ (\tx{inv}(\delta_1,\delta_1')^{-1},\tx{inv}(\delta_2,\delta_2')) \in H^1(u \rw W,Z\times Z \rw S_1 \times S_2)\]
under that morphism belongs to $H^1(\Gamma,V)$. Indeed, the restriction maps $H^1(u \rw W,Z \rw S_i) \rw \tx{Hom}(u,Z)^\Gamma$ factor as the composition of $H^1(u \rw W,Z \rw S_i) \rw H^1(u \rw W,Z \rw G)$ and the restriction $H^1(u \rw W,Z \rw G) \rw \tx{Hom}(u,Z)^\Gamma$, and the images of $\tx{inv}(\delta_i,\delta_i')$ in $H^1(u \rw W,Z \rw G)$ are both equal to the class of the rigidifying element $z$, so the claim follows from the exact sequence \eqref{eq:infres}.

We also have a homomorphism $U \rw V$ and we claim that the image of the element $\tx{inv}(\gamma_1,\delta_1'/\gamma_2,\delta_2') \in H^1(\Gamma,U)$ in $H^1(\Gamma,V)$ is equal to the image of the element $(\tx{inv}(\delta_1,\delta_1')^{-1},\tx{inv}(\delta_2,\delta_2'))$.
To see this, fix a section $s: \Gamma \rw W$. From the rigidifying element $z \in Z^1(u \rw W,Z \rw G)$ we obtain the 1-cochain $z\circ s : \Gamma \rw G$ and decompose it as $z(s(\sigma)) = \bar u(\sigma) \cdot x(\sigma)$ with $\bar u(\sigma) \in G_\tx{der}$ and $x(\sigma) \in Z(G)$. Fix a lift $u(\sigma) \in G_\tx{sc}$ of $\bar u(\sigma)$. By construction then
\[ \tx{inv}\left(\frac{\gamma_1,\delta_1'}{\gamma_2,\delta_2'}\right)=((g_1^{-1}u(\sigma)\sigma(g))^{-1},(g_2^{-1}u(\sigma)\sigma(g))). \]
The image in $V$ of this 1-cochain equals $((g_1^{-1}z(s(\sigma))\sigma(g))^{-1},(g_2^{-1}z(s(\sigma))\sigma(g)))$, which is indeed the image of
$(\tx{inv}(\delta_1,\delta_1')^{-1},\tx{inv}(\delta_2,\delta_2'))$, as claimed.

Since the pairing of Corollary \ref{cor:tn+pair} is functorial and extends the Tate-Nakayama pairing for tori, the equality \eqref{eq:d1} will be established once we produce an element of $[\hat{\bar V}]^+$ whose image in $[\hat{\bar S_1}]^+ \times [\hat{\bar S_2}]^+$ is equal to $(\dot s_{\gamma_1,\delta_1},\dot s_{\gamma_2,\delta_2})$ and whose image in $[\hat{\bar U}]^+$ maps to $s_U$ under the isogeny $[\hat{\bar U}]^+ \rw \hat U^\Gamma$. Here we have formed $\bar U$ from the object $[Z(G_\tx{sc}) \rw U] \in \mc{T}$.

We must first understand the tori $\hat V$ and $\hat{\bar V}$. For this we apply a very useful technique from the papers \cite{LS90} and \cite{KS99}: The admissible isomorphism $\phi : S_2 \rw S_1$ dual to $\hat\phi$ is defined over $\ol{F}$ and using it we can form the map
\[ f: S_1 \times S_2 \rw S_1 \times S_2,\qquad (s_1,s_2) \mapsto (s_1\phi(s_2),s_2). \]
This is an isomorphism of algebraic tori defined over $\ol{F}$. The transport of the $\Gamma$-action from its source to its target is given by
\[ f\circ\sigma\circ f^{-1}(t_1,t_2) = (\sigma t_1,\sigma t_2)\cdot \left(\frac{\phi(\sigma t_2)}{\sigma\phi(t_2)},1\right). \]
We obtain the commutative diagram
\begin{equation} \label{eq:fdiag} \xymatrix{
U\ar[r]\ar[d]&V\ar[d]&\ar[l]\ar[d]S_1 \times S_2\\
S_{1,\tx{sc}} \times S_{2,\tx{ad}}\ar[r]&S_1 \times S_{2,\tx{ad}}&\ar[l]S_1 \times S_2
} \end{equation}
where all vertical arrows are isomorphisms induced by $f$, and all horizontal arrows are the natural componentwise arrows. The dual diagram is then
\begin{equation} \label{eq:hatfdiag} \xymatrix{
\hat U&\ar[l]\hat V\ar[r]&\hat S_1 \times \hat S_2\\
[\hat S_1]_\tx{ad} \times [\hat S_2]_\tx{sc}\ar[u]&\ar[l]\hat S_1 \times [\hat S_2]_\tx{sc}\ar[u]\ar[r]&\hat S_1 \times \hat S_2\ar[u]_{\hat f}
} \end{equation}
The left vertical isomorphism sends $(\bar s_1,s_2) \in [\hat S_1]_\tx{ad} \times [\hat S_2]_\tx{sc}$ to $(s_1,\hat\phi(s_1)s_2) \in [\hat S_1]_\tx{sc} \times [\hat S_2]_\tx{sc}/Z(\hat G_\tx{sc})=\hat U$, where $s_1 \in [\hat S_1]_\tx{sc}$ is any lift of $\bar s_1$. The right isomorphism is given by $(s_1,s_2) \mapsto (s_1,\hat\phi(s_1)s_2)$ and transports the $\Gamma$-action on its target to the twisted $\Gamma$-action on its source given by $\hat f^{-1}\circ\sigma\circ\hat f(s_1,s_2) = (\sigma s_1,\frac{\sigma\hat\phi(s_1)}{\hat\phi(\sigma s_1 )}\sigma s_2 )$.

Under the middle vertical isomorphism of \eqref{eq:fdiag}, $Z \times Z/Z \rw V$ is identified with $Z \rw S_1 \times S_{2,\tx{ad}}$ where $Z$ is embedded into $S_1$. Our assumption $Z(G_\tx{der}) \subset Z$ implies that $\bar S_1 = S_1/Z = S_{1,\tx{ad}} \times [Z(G)/Z]$. Dually we have $\hat{\bar S_1} = [\hat S_1]_\tx{sc} \times \hat C$, where $\hat C$ is an algebraic torus that is a finite cover of $Z(\hat G)^\circ$. Thus we obtain the commutative diagram
\[ \xymatrix{
\hat{\bar U}&\ar[l]\hat{\bar V}\ar[r]&\hat{\bar S_1} \times \hat{\bar S_2}\\
[\hat S_1]_\tx{sc} \times [\hat S_2]_\tx{sc}\ar[u]&\ar[l][\hat S_1]_\tx{sc} \times \hat C \times [\hat S_2]_\tx{sc}\ar[r]\ar[u]&[\hat S_1]_\tx{sc} \times \hat C \times [\hat S_2]_\tx{sc} \times \hat C\ar[u]
} \]
where the vertical arrows are isomorphisms and the bottom arrows are given by $(s_1,s_2) \mapsfrom (s_1,z,s_2)$ and $(s_1,z,s_2) \mapsto (s_1,z,s_2,1)$.

We will now show that there exists $\dot s_V \in [\hat{\bar V}]^+$ which maps to $(\dot s_{\gamma_1,\delta_1},\dot s_{\gamma_2,\delta_2}) \in [\hat{\bar S_1}]^+ \times [\hat{\bar S_2}]^+$ and to $s_U$ in $\hat U^\Gamma$. The right vertical isomorphism sends $(s_1,z_1,s_2,z_2)$ to $(s_1z_1,\hat\phi(s_1z_1)s_2z_2)$. By construction, we have $\dot s_{\gamma_2,\delta_2} = \hat\phi(\dot s_{\gamma_1,\delta_1})$ and hence the preimage of $(\dot s_{\gamma_1,\delta_1},\dot s_{\gamma_2,\delta_2})$ under that isomorphism is equal to $(s_1,z_1,1,1)$, with $s_1z_1 = \dot s_{\gamma_1,\delta_1}$ being the decomposition according to $\hat{\bar S_1} = [\hat S_1]_\tx{sc} \times \hat C$. Thus if we let $\dot s_V$ be the image of $(s_1,z_1,1)$ under the middle vertical isomorphism, then $\dot s_V$ does indeed map to $(\dot s_{\gamma_1,\delta_1},\dot s_{\gamma_2,\delta_2})$. To show that $\dot s_V \in [\hat{\bar V}]^+$, we must argue that $(s_{\gamma_1,\delta_1},1) \in \hat S_1 \times [\hat S_2]_\tx{sc}$ is fixed under the twisted action of $\Gamma$ for which the middle vertical isomorphism of \eqref{eq:hatfdiag} is $\Gamma$-equivariant. This follows from the equality $\sigma\hat\phi(s_{\gamma_1,\delta_1}) = \sigma s_{\gamma_2,\delta_2} = s_{\gamma_2,\delta_2} = \hat\phi(s_{\gamma_1,\delta_1}) = \hat\phi(\sigma s_{\gamma_1,\delta_1})$ for all $\sigma \in \Gamma$. Finally, since  $s_1 \in [\hat S_1]_\tx{sc}$ is a preimage of the image of $s_{\gamma_1,\delta_1}$ in $[\hat S_1]_\tx{ad}$, we see that $\dot s_V$ maps to $s_U \in \hat U^\Gamma$.
\end{proof}

\subsection{Structure of tempered $L$-packets and endoscopic transfer} \label{sec:lpack}

The discussion of the previous section suggests a way to phrase the conjectural local Langlands correspondence for an arbitrary connected reductive group defined over a local field $F$ of characteristic zero in an unambiguous way. Let $G$ again be a connected reductive algebraic group defined and quasi-split over $F$, and let $Z$ be a finite central subgroup defined over $F$. Fix a Whittaker datum $\mf{w}$ for $G$. Given a tempered Langlands parameter $\varphi : W_F' \rw {^LG}$, we write $S_\varphi = \tx{Cent}(\varphi,\hat G)$ and let $S_\varphi^+$ be the preimage of $S_\varphi$ in $\hat{\bar G}$. Then $Z(\hat{\bar G})^+ \subset S_\varphi^+$. We expect that there is a finite subset $\Pi_\varphi \subset \Pi^\tx{rig}_\tx{temp}(G)$ and a commutative diagram
\begin{equation} \label{eq:llc+} \xymatrix{
\Pi_\varphi\ar[rr]^-{\iota_\mf{w}}\ar[d]&&\tx{Irr}(\pi_0(S_\varphi^+))\ar[d]\\
H^1(u \rw W,Z \rw G)\ar[rr]&&\pi_0(Z(\hat{\bar G})^+)^*
} \end{equation}
in which the top arrow is a bijection, the bottom arrow is given by the pairing of Corollary \ref{cor:tn+pair}, the right arrow assigns to each irreducible representation (the restriction of) its central character, and the left arrow sends a quadruple $(G',\psi,z,\pi')$ to the class of $z$. In accordance with Shahidi's tempered $L$-packet conjecture \cite[\S9]{Sh90} we expect that $\Pi_\varphi$ contains a unique element $(G,\tx{id},1,\pi)$ such that $\pi$ is $\mf{w}$-generic and that the top arrow identifies the trivial representation of $\pi_0(S_\varphi^+)$ with that constituent.

We alert the reader familiar with \cite{Art06} that the group $\pi_0(S_\phi^+)$ is in general different from Arthur's group $\mc{\tilde S}_\phi$ appearing in \cite[\S3]{Art06}. While there are some situations in which the two groups coincide, for example when $G$ is simply-connected and $Z=Z(G)$, they are in general different. Nonetheless, there is a direct relationship between the conjecture proposed here and Arthur's local conjecture of \cite{Art06}, as discussed in \cite[\S4.6]{KalGRI}.

If we fix a rigid inner twist $(\psi,z) : G \rw G'$ realized by $Z$, the elements of $\Pi_\varphi$ over the class of $z$ constitute the $L$-packet on $G'(F)$ corresponding to $\varphi$. This set should be empty precisely when $\varphi$ is not $G'$-relevant. When $G$ is split, $F$ is $p$-adic, and $Z=Z(G_\tx{der})$, Corollary \ref{cor:surjad} implies that every inner twist $\psi : G \rw G'$ can be rigidified in exactly one way. Thus, the $L$-packet of each inner twist of $G$ appears exactly once in the compound $L$-packet $\Pi_\varphi$. In general there will be multiple ways to rigidify an inner twist $\psi : G \rw G'$. The conjecture stated here implies that the corresponding subsets of $\Pi_\varphi$ contain the same representations and can be explicitly identified, see \cite[\S6]{KalRIBG}.

We expect that the diagram \eqref{eq:llc+} behaves naturally with respect to the finite central subgroup $Z$. By this we mean that, if $Z \subset Z'$ is a larger finite central subgroup of $G$ defined over $F$ and if we consider the analogous diagram for $Z'$, the two left corners and the bottom right corner in the diagram for $Z$ embed naturally into the corresponding corners of the diagram for $Z'$, and we expect that the upper right corner also embeds and that the four embeddings commute with all the arrows in the diagrams.

Given $\dot\pi=(G_1,\psi_1,z_1,\pi_1) \in \Pi_\varphi$, write $\<-,\dot\pi\>$ for the conjugation-invariant function on $\pi_0(S_\varphi^+)$ given by the character of the irreducible representation $\iota_\mf{w}(\dot\pi)$. We expect that for a fixed rigid inner twist $(\psi,z) : G \rw G'$, the virtual character
\begin{equation} \label{eq:stchar} S\Theta_{\varphi,\psi,z} = e(G')\sum_{\substack{\dot\pi \in \Pi_\varphi\\ \dot\pi \mapsto [z]}} \<1,\dot\pi\>\Theta_{\dot\pi} \end{equation}
is a stable function on $G'(F)$ and is independent of $\mf{w}$, where $e(G')$ is the sign defined in \cite{Kot83}. Furthermore, for any semi-simple element $\dot s \in S_\varphi^+$, put
\begin{equation} \label{eq:ustchar} \Theta^{\dot s}_{\varphi,\mf{w},\psi,z} = e(G')\sum_{\substack{\dot\pi \in \Pi_\varphi\\ \dot\pi \mapsto [z]}} \<\dot s,\dot\pi\>\Theta_{\dot\pi}. \end{equation}
The element $\dot s$ determines a refined endoscopic datum $\mf{\dot e} = (H,\mc{H},\dot s,\eta)$ just as an element $s \in S_\varphi$ determines a usual endoscopic datum $\mf{e}=(H,\mc{H},s,\eta)$: We let $s$ be the image in $S_\varphi$ of $\dot s$, take $\hat H=\hat G_s^\circ$ and $\mc{H}=\hat H \cdot \varphi(W_F)$, and let $\eta : \mc{H} \rw {^LG}$ be the natural inclusion. By construction the image of $\varphi$ is contained in $\mc{H}$ and taking a z-pair $\mf{z}=(H_\mf{z},\eta_\mf{z})$ for $\mf{e}$ we obtain the tempered Langlands-parameter $\varphi_\mf{z} = \eta_\mf{z} \circ \varphi$. Let $\Delta'[\mf{e},\mf{z},\mf{w}] : H_{\mf{z},G\tx{-sr}}(F) \times G_\tx{sr}(F) \rw \C$ be the Whittaker normalization of the absolute transfer factor \cite[\S5.5]{KS12} corresponding to the Whittaker datum $\mf{w}$ (as defined by equation (5.5.2) in \cite{KS12}, where it is denoted by $\Delta_\lambda'$). Using the formula \eqref{eq:tf} and Proposition \ref{pro:tf} we obtain a corresponding normalization, let's call it $\Delta'[\mf{\dot e},\mf{z},\mf{w},\psi,z] : H_{\mf{z},G\tx{-sr}}(F) \times G'_\tx{sr}(F) \rw \C$, of the transfer factor for the group $G'$. Note that we must add an inverse to \eqref{eq:tf} since we are dealing with $\Delta'$ rather than $\Delta$ here. Thus we now have
\begin{equation} \label{eq:tf'} \Delta'[\mf{\dot e},\mf{z},\mf{w},\psi,z]_\tx{abs}(\gamma_\mf{z},\delta') = \Delta'[\mf{e},\mf{z},\mf{w}](\gamma_\mf{z},\delta) \cdot \<\tx{inv}(\delta,\dot\delta'),\dot s_{\gamma,\delta}\>. \end{equation}

We then expect that if $f^\mf{\dot e}$ and $f$ are smooth compactly supported functions on $H_\mf{z}(F)$ and $G'(F)$ respectively, whose orbital integrals are $\Delta'[\mf{\dot e},\mf{z},\mf{w},\psi,z]$-matching \cite[\S5.5]{KS99}, then we have
\begin{equation} \label{eq:charid} S\Theta_{\varphi_\mf{z},\tx{id},1}(f^\mf{\dot e}) = \Theta^{\dot s}_{\varphi,\mf{w},\psi,z}(f). \end{equation}

\tb{Remark:} We conclude this section with some remarks on the group $\pi_0(S_\varphi^+)$ and an example involving $\tx{SL}_2$. We will see in Section \ref{sec:real} that when the ground field $F$ is the field of real numbers, the finite group $\pi_0(S_\varphi^+)$ is always abelian. This statement follows from Shelstad's result \cite[(5.4.5)]{She82} and is part of Proposition \ref{pro:sphireal}. However, contrary to the finite abelian group $\mb{S}_\varphi^\tx{ad}=\pi_0(S_\varphi/Z(\hat G)^\Gamma)$ commonly used to parameterize $L$-packets for real groups, the group $\pi_0(S_\varphi^+)$ will in general not be an elementary 2-group. This happens already for discrete series parameters for the group $\tx{SU}_2/\R$, in which case the group $\pi_0(S_\varphi^+)$ coincides with the group $\mb{S}_\varphi^\tx{sc}$ introduced by Shelstad and is a cyclic group of order $4$ \cite[\S11]{SheTE3}.

On the other hand, when the ground field $F$ is $p$-adic, the finite group $\pi_0(S_\varphi^+)$ need not be abelian. While this phenomenon was already present in the usual finite group $\mb{S}_\varphi^\tx{ad}$, it is much more prevalent with $\pi_0(S_\varphi^+)$ and already occurs for $G=\tx{SL}_2/F$, as the following example shows.

\tb{Example:} We will now discuss the case of a particular parameter $\varphi$ for the group $G=\tx{SL}_2/F$ over a $p$-adic field $F$ and see that Diagram \eqref{eq:llc+} and Equations \eqref{eq:stchar},\eqref{eq:ustchar},\eqref{eq:charid} are in accordance with the known behaviour of endoscopy in that case. This example is also discussed in \cite{She79c} and \cite{Art06}. We let $Z$ denote the center of $G$. According to Corollary \ref{cor:surjad}, $H^1(u \rw W,Z \rw G) \cong H^1(\Gamma,G_\tx{ad}) \cong \Z/2\Z$, so there are two equivalence classes of rigid inner twists of $G$, one corresponding to $G=\tx{SL}_2$ and one corresponding to the unique inner form $G'$ of $G$, i.e. $G'(F)=\tx{SL}_1(D)$, where $D$ is the quaternion algebra over $F$.

We have $\hat G=\tx{PGL}_2(\C)$. Let $E/F$ be a quadratic extension, and $\theta : E^\times \rw \C^\times$ a character having the property that $\theta^{-1} \cdot (\theta\circ\sigma)$ is a character of order $2$, where $\sigma \in \Gamma_{E/F}$ is the non-trivial element. Fix an element $\sigma^\circ \in W_{E/F}$ that maps to $\sigma$. Then, as discussed in \cite[\S11]{She79c}, we obtain a tempered Langlands parameter $\varphi : W_{E/F} \rw \hat G$ by the rule
\[ \varphi(e) = \begin{bmatrix} \theta(e)&0\\ 0&\theta(\sigma(e)) \end{bmatrix}, e \in E^\times,\qquad \varphi(\sigma^\circ) = \begin{bmatrix} 0&1\\ 1&0 \end{bmatrix}. \]
As argued there, the subgroup $S_\varphi \subset \hat G=\tx{PGL}_2(\C)$ is given by
\[ S_\varphi = \left\{ \begin{bmatrix} 1&0 \\ 0&1 \end{bmatrix},\begin{bmatrix} -1&0 \\ 0&1 \end{bmatrix},\begin{bmatrix} 0&1 \\ 1&0 \end{bmatrix}, \begin{bmatrix} 0&1 \\ -1&0 \end{bmatrix} \right\}. \]
It follows that the subgroup $S_\varphi^+ \subset \hat{\bar G} = \tx{SL}_2(\C)$ is given by
\[ \left\{\!\begin{bmatrix} 1&0 \\ 0&1 \end{bmatrix}\!,\!\begin{bmatrix} -1&0 \\ 0&-1 \end{bmatrix}\!,\!\begin{bmatrix} -i&0 \\ 0&i \end{bmatrix}\!,\!\begin{bmatrix} i&0 \\ 0&-i \end{bmatrix}\!,\!\begin{bmatrix} 0&i \\ i&0 \end{bmatrix}\!,\!\begin{bmatrix} 0&-i \\ -i&0 \end{bmatrix}\!,\!\begin{bmatrix} 0&1 \\ -1&0 \end{bmatrix}\!,\!\begin{bmatrix} 0&-1 \\ 1&0 \end{bmatrix}\!\right\}. \]
Thus $S_\varphi^+$ is the quaternion group. The group $Z(\hat{\bar G})^+$ is equal to the center of $\tx{SL}_2(\C)$ and is of order two, with non-trivial element $-\tb{1}_2$. The group $S_\varphi^+$ has exactly five irreducible representations, four of which $\rho_1,\dots,\rho_4$ are of dimension $1$ and kill the central element $-\tb{1}_2$, while the fifth $\rho_5$ is of dimension $2$ and its central character sends $-\tb{1}_2$ to $-1$.

Diagram \eqref{eq:llc+} now implies that the compound $L$-packet $\Pi_\varphi$ should have five elements, four of which should correspond to irreducible representations $\pi_1$, $\pi_2$, $\pi_3$, $\pi_4$ of $G(F)=\tx{SL}_2(F)$, and the fifth should correspond to an irreducible representation $\pi_5$ of $G'(F)=\tx{SL}_1(D)$. Indeed, it is argued in \cite[\S12]{She79c} that the $L$-packet $\Pi_\varphi^G$ contains four members, and the $L$-packet $\Pi^{G'}_\varphi$ contains a single element. It is furthermore argued that $\sum_{i=1}^4 \Theta_{\pi_i}$ is a stable distribution on $\tx{SL}_2(F)$ and that $\Theta_{\pi_5}$, hence also $2\Theta_{\pi_5}$, is a stable distribution on $\tx{SL}_1(D)$, in accordance with equation \eqref{eq:stchar}. With regards to endoscopic transfer, it is argued in \cite[\S12]{She79c} that if we take the endoscopic element $1=s \in S_\varphi$, so that the associated endoscopic group $H$ is equal to $G$, then the stable distribution $\sum_{i=1}^4 \Theta_{\pi_i}$ on $H(F)$ transfers to the stable distribution $2\Theta_{\pi_5}=\pm e(G')\tx{tr}\rho_5(s)\Theta_{\pi_5}$ on $G'(F)$. The sign-ambiguity comes from the fact that the normalizations of the transfer factors in \cite{She79c} are left somewhat arbitrary, and this corresponds precisely to the fact that the character values of $\tx{tr}\rho_5$ on the two lifts $\tb{1}_2,-\tb{1}_2 \in S_\varphi^+$ of $1 \in S_\varphi$ differ by a sign. On the other hand, if we take an endoscopic element $1 \neq s \in S_\varphi$, so that the associated endoscopic group $H$ is an anisotropic torus, then the corresponding stable distribution on $H(F)$, which is just given by a character, transfers to the distribution $\sum_{i=1}^4 \tx{tr}\rho_i(s)\Theta_{\pi_i}$ on $G(F)$ (which makes sense since the characters $\rho_1,\dots,\rho_4$ descend to $S_\varphi$), and transfers to the zero distribution on $G'(F)$. For any lift $\dot s \in S_\varphi^+$ of $s$, we have $\tx{tr}(\rho_5)(\dot s)=0$. Thus the results of \cite[\S12]{She79c} are consistent with Equation \eqref{eq:charid} in this case.

\subsection{A note on the relevance of parameters} \label{sec:rel}

An examination of Diagram \eqref{eq:llc+} reveals that part of our claim is the following: If a tempered Langlands parameter $\varphi$ is relevant for an inner form $G'$ of $G$ which is part of a rigid inner twist $(G',\psi,z)$, then the character of $\pi_0(Z(\hat{\bar G})^+)$ that corresponds to the class of $z$ must annihilate the kernel of the map
\begin{equation} \label{eq:centinj} \pi_0(Z(\hat{\bar G})^+) \rw \pi_0(S_\varphi^+). \end{equation}
In fact, something more precise is true. Given a $K$-group of rigid inner twists of $G$, we will say that $\varphi$ is relevant for that $K$-group if $\varphi$ is relevant for some inner form of $G$ that occurs in that $K$-group. Recall that the isomorphism classes of $K$-groups of rigid inner twists of $G$ that are realized by $Z$ are in bijection with $H^1_\tx{sc}(u \rw W,Z \rw G)$ and thus by Corollary \ref{cor:tn+pair} in bijection with certain characters of $\pi_0(Z(\hat{\bar G})^+)$.

\begin{lem} \label{lem:centinj} The characters of $\pi_0(Z(\hat{\bar G})^+)$ which annihilate the kernel of \eqref{eq:centinj} are precisely those which correspond to $K$-groups of rigid inner twists of $G$ for which $\varphi$ is relevant.
\end{lem}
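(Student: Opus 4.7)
The plan is to reduce the relevance of $\varphi$ to a descent condition through a minimal Levi, and then dualize via Corollary \ref{cor:tn+pair}.

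First, I would fix a Levi subgroup $M \subset G$, defined over $F$ and containing $Z$, such that $\hat M$ is a Levi subgroup of $\hat G$ minimal among those through which $\varphi$ factors. Borel's relevance criterion \cite[\S3.5]{Bo77} then states that $\varphi$ is relevant for an inner form $G'$ of $G$ if and only if $M$ transfers to $G'$, which is equivalent to the class of $G'$ in $H^1(\Gamma, G_\tx{ad})$ lying in the image of $H^1(\Gamma, M_\tx{ad}) \rw H^1(\Gamma, G_\tx{ad})$.

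Second, I would lift this relevance condition to the rigid setting. Since $Z \subset M$, there is a morphism $[Z \rw M] \rw [Z \rw G]$ in $\mc{R}$ which induces a map $H^1_\tx{sc}(u \rw W, Z \rw M) \rw H^1_\tx{sc}(u \rw W, Z \rw G)$. I claim that, for a $K$-group of rigid inner twists corresponding to a class $[z] \in H^1_\tx{sc}(u \rw W, Z \rw G)$, relevance of $\varphi$ for the $K$-group (equivalently, for at least one inner twist in it) is equivalent to $[z]$ lying in the image of this map. This reduces, via Proposition \ref{pro:bfd} applied to both $M$ and $G$, to the classical assertion of the first paragraph, once one tracks the relationship between classes in $H^1(\Gamma, G_\tx{ad})$ and classes in $H^1(u \rw W, Z \rw G)$ modulo the $H^1_\tx{sc}$-equivalence and uses the natural lift $M_\tx{sc} \rw G_\tx{sc}$ to show that relevance is constant on $K$-groups.

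Third, by functoriality of the pairing of Corollary \ref{cor:tn+pair} in $[Z \rw M] \rw [Z \rw G]$, the image of $H^1_\tx{sc}(u \rw W, Z \rw M) \rw H^1_\tx{sc}(u \rw W, Z \rw G)$ corresponds to the annihilator of the kernel of the induced map
\[ \pi_0(Z(\hat{\bar G})^+) \rw \pi_0(Z(\hat{\bar M})^+), \]
where $\hat{\bar M}$ denotes the preimage of $\hat M$ in $\hat{\bar G}$. Hence the character attached to the $K$-group annihilates this kernel if and only if $\varphi$ is relevant for the $K$-group.

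The final step, which I expect to be the main obstacle, is to identify the kernel of $\pi_0(Z(\hat{\bar G})^+) \rw \pi_0(Z(\hat{\bar M})^+)$ with $\ker\bigl(\pi_0(Z(\hat{\bar G})^+) \rw \pi_0(S_\varphi^+)\bigr)$. This rests on the classical identification $S_\varphi^\circ = Z(\hat M)^{\Gamma,\circ}$ for a tempered parameter discrete in $^LM$. Taking preimages in $\hat{\bar G}$ yields $(S_\varphi^+)^\circ = (Z(\hat{\bar M})^+)^\circ$ as subgroups of $\hat{\bar G}$, and since $Z(\hat{\bar G})^+$ lies inside both $Z(\hat{\bar M})^+$ and $S_\varphi^+$, a short $\pi_0$-chase on the inclusions $Z(\hat{\bar G})^+ \subset Z(\hat{\bar M})^+ \subset S_\varphi^+$ shows that the two kernels coincide, completing the proof.
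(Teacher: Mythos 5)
Your overall architecture --- reduce relevance of $\varphi$ to transfer of the minimal Levi $M$, recast this as lying in the image of $H^1_\tx{sc}(u \rw W, Z \rw M) \rw H^1_\tx{sc}(u \rw W, Z \rw G)$, dualize, and identify the resulting kernel with that of \eqref{eq:centinj} --- is close in spirit to the paper's proof, which likewise hinges on the minimal Levi and on translating between characters and cohomology. But your final step, which you rightly flag as the crux, rests on a false identity: for a tempered parameter with minimal Levi $\hat M$ one does \emph{not} have $S_\varphi^\circ = Z(\hat M)^{\Gamma,\circ}$; what is true is only that $Z(\hat M)^{\Gamma,\circ}$ is a \emph{maximal torus} of $S_\varphi^\circ$. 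For instance, for split $\tx{SL}_2$ and the tempered parameter $w \mapsto 1 \rtimes w$ the minimal Levi is $\hat T$, yet $S_\varphi = \tx{PGL}_2(\C)$ is connected and nonabelian. Consequently $S_\varphi^{+,\circ}$ need not equal $Z(\hat{\bar M})^{+,\circ}$ (the former can be nonabelian, the latter is a torus), and your $\pi_0$-chase collapses. The missing idea is the one the paper uses: $Z(\hat{\bar G})^+ \cap S_\varphi^{+,\circ}$ is \emph{central} in the connected reductive group $S_\varphi^{+,\circ}$, hence contained in every maximal torus of it, in particular in $Z(\hat{\bar M})^{+,\circ}$ once one knows the latter is such a maximal torus; the paper carries this out after reducing to $Z(G_\tx{der}) \subset Z$, working with $S_\varphi^{\tx{sc},\circ}$ and $Z(\hat M_\tx{sc})^{\Gamma,\circ}$. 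With that argument the equality of the two kernels does hold, but not for the reason you give.

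There is a second gap, in your third step, visible over $\R$. Functoriality of the pairing of Corollary \ref{cor:tn+pair} gives only one inclusion: classes coming from $H^1_\tx{sc}(u \rw W, Z \rw M)$ give characters annihilating $\ker\bigl(\pi_0(Z(\hat{\bar G})^+) \rw \pi_0(Z(\hat{\bar M})^+)\bigr)$. For the converse you need that a character of $\pi_0(Z(\hat{\bar G})^+)$ lying in the image of $H^1_\tx{sc}(u \rw W, Z \rw G)$ (i.e.\ killing $N_{\C/\R}(Z(\hat{\bar G}))$) and killing that kernel is the pullback of a character of $\pi_0(Z(\hat{\bar M})^+)$ which again kills $N_{\C/\R}(Z(\hat{\bar M}))$. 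This is automatic $p$-adically, where both maps $H^1_\tx{sc} \rw \pi_0(\cdot)^*$ are bijective, but over $\R$ it is an extension problem that functoriality alone does not solve; in the paper this is exactly the role of \cite[Lemma 1.1]{Art99} together with \cite[Thm 1.2]{Kot86}, and the real case is the one needed for Proposition \ref{pro:sphireal}. Two smaller points: there is no natural map $M_\tx{ad} \rw G_\tx{ad}$, so your classical transfer criterion should be phrased with $M/Z(G)$ (or directly in the rigid setting, as you in fact do); and relevance is \emph{not} constant on $K$-groups --- for $\tx{SL}_2/\R$ with $Z=\{1\}$ the quasi-split form and $\tx{SU}_2$ lie in a single $K$-group, while a principal series parameter is relevant only for the former --- though this is harmless, since relevance of a $K$-group only requires relevance for some member.
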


Before we prove the lemma, it would be useful to illuminate some of the structure of $\pi_0(S_\varphi^+)$ and $\pi_0(Z(\hat{\bar G})^+)$ in the case where $Z(G_\tx{der}) \subset Z$. In that case we have $\hat{\bar G} = [\hat G]_\tx{sc} \times \hat C$, where $\hat C$ is a complex torus with an isogeny to $Z(\hat G)^\circ$. If we let $\hat C^+$ be the preimage of $Z(\hat G)^{\circ,\Gamma}$ under that isogeny, then $Z(\hat{\bar G})^\circ = \hat C$ implies $Z(\hat{\bar G})^{+,\circ} = \hat C^{+,\circ}$. We conclude that we have an injection
\[ Z([\hat G]_\tx{sc})^+ \times \pi_0(\hat C^+) \irw \pi_0(Z(\hat{\bar G})^+), \]
where $Z([\hat G]_\tx{sc})^+$ denotes the preimage in $[\hat G]_\tx{sc}$ of $Z(\hat G)^\Gamma$. Turning to $S_\varphi^+$, we have
\[ S_\varphi^+ = \{ g \times c| g \in [\hat G]_\tx{sc}, c \in \hat C, \forall w \in W_F': \varphi(w)\bar g\varphi(w)^{-1}\bar g^{-1} = \bar c\sigma(\bar c^{-1}) \}, \]
where $\bar g$ and $\bar c$ denote the images of $g$ and $c$ in $\hat G$ and $\sigma \in \Gamma$ denotes the image of $w$. Thus we obtain a continuous map
\[ S_\varphi^+ \rw Z^1(\Gamma,Z([\hat G]_\tx{der})),\qquad g\times c \mapsto (\sigma \mapsto \bar c\sigma(\bar c^{-1})), \]
whose kernel is equal to $S_\varphi^{\tx{sc},+} \times \hat C^+$, where $S_\varphi^{\tx{sc},+}$ is the preimage in $[\hat G]_\tx{sc}$ of $S_\varphi$. The group $Z^1(\Gamma,Z([\hat G]_\tx{der}))$ is finite and this tells us that this kernel must contain the neutral component of $S_\varphi^+$. Applying a similar argument to the group $S_\varphi^{\tx{sc},+}$
we find that its neutral component is contained in the group $S_\varphi^\tx{sc}$ of elements of $[\hat G]_\tx{sc}$ which are fixed under $\tx{Ad}(\varphi(W'))$. Thus $S_\varphi^{+,\circ} = S_\varphi^{\tx{sc},\circ} \times \hat C^{+,\circ}$ and we conclude that we have an injection
\[ \pi_0(S_\varphi^\tx{sc}) \times \pi_0(\hat C^+) \irw \pi_0(S_\varphi^+). \]
We now come to the proof of the lemma.
\begin{proof}
First, we will reduce to the case $Z(G_\tx{der}) \subset Z$ so that we can use the above discussion. Set $_1Z = Z(G_\tx{der})\cdot Z$ and let us temporarily use the subscript $1$ on the left for objects built with respect to $_1Z$ rather than $Z$. Then it is not hard to see that the surjection $\pi_0(Z(_1\hat{\bar G})^+) \rw \pi_0(Z(\hat{\bar G})^+)$ restricts to a surjection from the kernel $_1K$ of the version of \eqref{eq:centinj} taken with respect to $_1Z$ to the kernel $K$ of the version of \eqref{eq:centinj} taken with respect to $Z$. Thus a character of $\pi_0(Z(\hat{\bar G})^+)$ kills $K$ if and only if its pullback to $\pi_0(_1Z(\hat{\bar G})^+)$ kills $_1K$. This reduces the proof to the case $Z(G_\tx{der}) \subset Z$, which we from now on assume.

Fix a Borel pair $(\hat T,\hat B)$ of $\hat G$. Let $\hat M$ be a standard Levi subgroup of $\hat G$ with the property that $\hat M \rtimes W_F$ contains the image of some $\hat G$-conjugate $\varphi$ and assume that $\hat M$ is minimal with this property. Replace $\varphi$ by a conjugate whose image lies in $\hat M \rtimes W_F$. This does not change the kernel of \eqref{eq:centinj}. Now $(S_\varphi \cap \hat M)^\circ$ is a maximal torus of $S_\varphi^\circ$ and equals $Z(\hat M)^{\Gamma,\circ}$. The cokernel of $S_\varphi^{\tx{sc},\circ} \rw S_\varphi^\circ$ is a torus and we conclude that the connected component of the preimage in $S_\varphi^{\tx{sc},\circ}$ of the maximal torus $Z(\hat M)^{\Gamma,\circ}$ is a maximal torus. This maximal torus is thus equal to $Z(\hat M_\tx{sc})^{\Gamma,\circ}$, where $\hat M_\tx{sc}$ is the Levi subgroup of $[\hat G]_\tx{sc}$ corresponding to $\hat M$ (and not the simply connected cover of $\hat M$).

Now consider the kernel of \eqref{eq:centinj}. It is represented by elements of
\[ Z(\hat{\bar G})^+ \cap S_\varphi^{+,\circ} = (Z([\hat G]_\tx{sc})^+ \cap S_\varphi^{\tx{sc},\circ}) \times \hat C^{+,\circ} \]
and hence equals to $Z([\hat G]_\tx{sc})^+ \cap S_\varphi^{\tx{sc},\circ}$. This group is central in $S_\varphi^{\tx{sc},\circ}$ and hence is contained in every maximal torus of that connected reductive group. We conclude that the kernel of \eqref{eq:centinj} is given by
\[ Z([\hat G]_\tx{sc})^+ \cap Z(\hat M_\tx{sc})^{\Gamma,\circ} = Z([\hat G]_\tx{sc})^\Gamma \cap Z(\hat M_\tx{sc})^{\Gamma,\circ}. \]
Applying \cite[Lemma 1.1]{Art99} to the group $[\hat G]_\tx{sc}$ and its Levi $\hat M_\tx{sc}$ we conclude that the characters of $\pi_0(Z(\hat{\bar G})^+)$ which annihilate the kernel of \eqref{eq:centinj} are precisely those whose restriction to $Z([\hat G]_\tx{sc})^\Gamma$ is the inflation of a character of $\pi_0(Z(\hat M_\tx{sc})^\Gamma)$. An application of \cite[Thm. 1.2]{Kot86} finishes the proof.
\end{proof}

\subsection{$L$-packets and endoscopic transfer for real groups} \label{sec:real}
When the ground field $F$ is the field of real numbers, the local Langlands correspondence, including the internal structure of $L$-packets and the transfer of distributions, is very well understood by the work of many mathematicians, including Adams, Barbasch, Johnson, Langlands, Shelstad, and Vogan. The purpose of this section is to show how these results imply the validity of the expectations formulated in Section \ref{sec:lpack}. Given the mature state of the theory there will be little more for us to do then to combine well-known arguments and give appropriate references. Our focus in this section will be on tempered $L$-packets and we will primarily use the references \cite{Lan73}, \cite{She79a}, \cite{She79b}, \cite{She81}, \cite{She82}, \cite{SheTE1}, \cite{SheTE2}, \cite{SheTE3}. Given the comparison between strong real forms and rigid inner twists of Section \ref{sec:strong}, one could ask further how the exposition of this section relates to the geometrically constructed parameterization of $L$-packets of \cite{ABV92}, how the $L$-packets on certain covering groups introduced in \cite{AV92} fit into the picture, and how non-tempered $L$-packets on real groups, in particular the cohomological ones from \cite{AJ87}, can be incorporated. We leave the discussion of these questions to a separate paper.

Let $G$ be a connected reductive group defined and quasi-split over $\R$, let $Z \subset G$ be a finite central subgroup,  and let $\varphi : W_\R \rw {^LG}$ be a tempered Langlands parameter. We first construct $\Pi_\varphi$. Let $(G',\psi,z)$ be a rigid inner twist of $G$ realized by $Z$. When $\varphi$ is relevant for $G'$, Langlands has constructed in \cite{Lan73} the $L$-packet $\Pi_\varphi^{G'}$ for the group $G'$ corresponding to $\varphi$. When $\varphi$ is not relevant, we take $\Pi_\varphi^{G'}$ to be the empty set. We obtain a map
\[ \Pi_\varphi^{G'} \rw \Pi^\tx{rig}_\tx{temp}(G),\qquad \pi' \mapsto (G',\psi,z,\pi'). \]
According to Fact \ref{fct:autinn}, this map is an injection. We define $\Pi_\varphi$ to be the union of the images of these maps for $(G',\psi,z)$ varying over the (isomorphism classes of) rigid inner twists of $G$ realized by $Z$.

Next we turn to the bijection $\iota_\mf{w}$ of Diagram \eqref{eq:llc+}. For each $\dot\pi = (G',\psi,z,\pi') \in \Pi_\varphi$ and $\dot s \in \pi_0(S_\varphi^+)$ we will define a complex number $\<\dot s,\dot \pi\> \in \C^\times$. We will then argue that
\begin{enumerate}
\item The function $\dot s \mapsto \<\dot s,\dot \pi\>$ is a character of $\pi_0(S_\varphi^+)$.
\item The character $\<-,\dot\pi\>$ is trivial if and only if $\dot\pi=(G,\tx{id},1,\pi)$ and $\pi$ is $\mf{w}$-generic.
\item The characters $\<-,\dot\pi_1\>$ and $\<-,\dot\pi_2\>$ are equal if and only if $\dot\pi_1$ and $\dot\pi_2$ are isomorphic as defined in Section \ref{sec:lpack}.
\item All characters of $\pi_0(S_\varphi^+)$ are obtained in this way.
\end{enumerate}
The bijection $\iota_\mf{w}$ will then be determined by $\iota_\mf{w}(\dot\pi)(\dot s) = \<\dot s,\dot\pi\>$. Before we go into the definition of the pairing $\<-,-\>$, let us recall the relationship between $G$ and $\hat G$ (see e.g. \cite[\S3]{Vog93}). For a $\Gamma$-invariant Borel pair $(T,B)$ of $G$ we have the based root datum $\tx{brd}(T,B)=(X,\Delta,Y,\Delta^\vee)$, where $X=X^*(T)$, $\Delta \subset X$ is the set of $B$-simple roots, $Y=X_*(T)$, and $\Delta^\vee \subset Y$ is the set of $B$-simple coroots. Any two $\Gamma$-invariant Borel pairs $(T_1,B_1)$ and $(T_2,B_2)$ are conjugate under $G(F)$, and any element $g \in G(F)$ with $\tx{Ad}(g)(T_1,B_1)=(T_2,B_2)$ induces the same $\Gamma$-equivariant isomorphism $\tx{brd}(T_1,B_1) \rw \tx{brd}(T_2,B_2)$. We define $\tx{brd}(G)$ to be the limit of the system $\{\tx{brd}(T,B)\})$ where $(T,B)$ runs over all Borel pairs of $G$. By construction, $\hat G$ is a complex reductive group, endowed with an algebraic action of $\Gamma$ subject to the condition that there exists a $\Gamma$-invariant splitting of $\hat G$. It carries the following additional datum that determines its relationship with $G$: We construct $\tx{brd}(\hat G)$ in the same way as for $G$. Here the existence of an element of $\hat G^\Gamma$ that conjugates two $\Gamma$-invariant Borel pairs follows from \cite[Cor 1.7]{Kot84}. The additional datum is then an isomorphism $\tx{brd}(G)^\vee \cong \tx{brd}(\hat G)$. We will write $\tx{brd}(G)=(X,\Delta,Y,\Delta^\vee)$, and consequently $\tx{brd}(\hat G)=(Y,\Delta^\vee,X,\Delta)$.

We now come to the construction of the pairing $\<-,-\>$. It will be useful and instructive to first consider the case of an equivalence class of discrete parameters $\{\varphi\} : W_\R \rw {^LG}$. Fix a $\Gamma$-invariant Borel pair $(\hat T,\hat B)$ of $\hat G$. We may choose a representative $\varphi$ of its equivalence class so that $\varphi(W_\R)$ normalizes $\hat T$. There exists $\mu \in X_*(\hat T) \otimes \C$ such that for $z \in \C^\times \subset W_\R$ and $\chi \in X^*(\hat T)$ we have
\[ \chi(\varphi(z)) = z^{\<\chi,\mu\>} \cdot \bar z^{\<\chi,\bar\mu\>}. \]
Here $\bar\mu=\varphi(\sigma)\mu$, with $\sigma \in \Gamma$ being complex conjugation. The element $\mu$ is regular and we may choose $\varphi$ so that it is $\hat B$-dominant. Then $\mu$ is uniquely specified by the equivalence class of $\varphi$, and the representative $\varphi$ is uniquely determined up to conjugation by $\hat T$. Now $\varphi(\sigma) = n \rtimes \sigma$, with $n \in N(\hat T)$. Choose $\lambda \in X_*(\hat T) \otimes \C$ with the property that for all $\chi \in X^*(\hat G) \subset X^*(\hat T)$ the equality $\chi(n)=\exp(2\pi i \<\chi,\lambda\>)$ holds. Then $\lambda$ is well defined up to an element of $X_*(\hat T) + \{x-\varphi(\sigma)x|x \in X_*(\hat T)\otimes \C\}$. If we chose a different Borel pair $(\hat T',\hat B')$ and performed these operation based on it, the resulting $\mu',\lambda'$ will coincide with the image of $\mu,\lambda$ under the canonical $\Gamma$-invariant isomorphism $(\hat T,\hat B) \rw (\hat T',\hat B')$. Thus we obtain a canonical pair of elements $\mu,\lambda \in X \otimes \C$, with $\lambda$ well-defined up to the ambiguity stated above. We furthermore obtain a new $\Gamma$-action on $X$, where $\sigma$ acts as $\tx{Ad}(\varphi(\sigma))$. There is a unique real torus $S$ with $X^*(S)=X$ with the new $\Gamma$-action. It is not not a-priori a maximal torus of $G$ defined over $\R$, but it does come with a set of simple roots, namely $\Delta$. It moreover comes with a set of embeddings into any inner form of $G$ as a maximal torus. To see this, fix any $\Gamma$-invariant Borel pair $(T,B)$ of $G$ and identify $\tx{brd}(G)$ with $\tx{brd}(T,B)$. This identifies $S$ with $T$ over $\C$ and thus gives an embedding $i : S \rw G$ defined over $\C$ whose $G(\C)$-conjugacy class is $\Gamma$-invariant. According to \cite[Cor 2.2]{Kot82}, there exists a $G(\C)$-conjugate of this embedding defined over $\R$. The set of embeddings $S \rw G$ defined over $\R$ and conjugate to $i$ over $G(\C)$ is independent of the choice of $(T,B)$. Moreover, for any inner twist $\psi : G \rw G'$ the embedding $\psi\circ i : S \rw G'$ defined over $\C$ also has a $G'(\C)$-conjugate defined over $\R$, \cite[Lemma 2.8]{She79b}, and the set of embeddings $S \rw G'$ satisfying this condition is also independent of the choice of $(T,B)$. We will call the embeddings $S \rw G'$ obtained in this way \emph{admissible}.

Now fix an inner twist $\psi : G \rw G'$. For each embedding $\eta : S \rw G'$ defined over $\R$ we obtain an essentially discrete series representation $\Theta(\varphi,\eta)$. Namely, the embedding $\eta$ provides us with the images of $\mu$, $\lambda$, and the positive Weyl chamber $\Psi$ corresponding to the set $\Delta$ of simple roots, and we take $\Theta(\varphi,\eta)$ to be the unique essentially discrete series representation whose character restricted to $\eta(S)(\R)_\tx{reg}$ is given by the function $\Theta(\eta(\mu),\eta(\lambda),\eta(\Psi))$ given in \cite[\S4.3]{She82}. The representations $\Theta(\varphi,\eta_1)$ and $\Theta(\varphi,\eta_2)$ are equivalent if and only if $\eta_1,\eta_2 : S \rw G'$ are conjugate under $G'(\R)$. Thus $\eta \mapsto \Theta(\varphi,\eta)$ is a bijection from the set of $G'(\R)$-conjugacy classes of admissible embeddings $\eta : S \rw G'$ to the $L$-packet $\Pi_\varphi^{G'}$.

We can now define $\<-,-\>$ in the case where $\varphi$ is a discrete parameter. By results of Kostant \cite{Kos78} and Vogan \cite{Vog78} there exists a unique $\mf{w}$-generic representation $\pi_\mf{w}$ in the $L$-packet $\Pi_\varphi^G$. Let $\dot\pi_\mf{w}=(G,\tx{id},1,\pi_\mf{w}) \in \Pi_\varphi$ and let $\dot\pi = (G',\psi,z,\pi')$ be a constituent of $\Pi_\varphi$. The representation $\pi_\mf{w}$  corresponds to a $G(\R)$-conjugacy class of admissible embeddings $\eta_\mf{w} : S \rw G$, and the representation $\pi'$ corresponds to a $G'(\R)$-conjugacy class of admissible embeddings $\eta_{\dot\pi} : S \rw G'$. Choose $g \in G(\C)$ such that $\eta_{\dot\pi} = \psi\circ\tx{Ad}(g)\circ\eta_\mf{w}$ and set $\tx{inv}(\dot\pi_\mf{w},\dot\pi)=\tx{inv}(\eta_\mf{w},\eta_{\dot\pi})$ to be the class of $w \mapsto g^{-1}z(w)w(g)$ in $H^1(u \rw W,Z \rw S)$. Then we obtain a bijection
\[ \Pi_\varphi \rw H^1(u \rw W,Z \rw S),\qquad \dot\pi \mapsto \tx{inv}(\dot\pi_\mf{w},\dot\pi). \]
On the other hand, the regularity of $\mu$ implies that $S_\varphi \subset \hat T$ and we obtain $S_\varphi = \hat S^\Gamma$, where $\hat S$ is the dual torus to $S$, which we identify with the complex torus $\hat T$ equipped with the Galois action given by $\tx{Ad}(\varphi(\sigma))$. For an element $\dot s \in S_\varphi^+ = [\hat{\bar S}]^+$ we set
\begin{equation} \label{eq:dspair} \<\dot s,\dot\pi\> = \<\dot s,\tx{inv}(\dot\pi_\mf{w},\dot\pi)\>, \end{equation}
where the pairing on the left is the one we are defining, and the pairing on the right is the perfect pairing of Corollary \ref{cor:tn+pair}. The validity of points 1-4 above is visible from the construction: Point 1 is obvious, point 2 is the uniqueness of the $\mf{w}$-generic constituent of $\Pi_\varphi^G$, and points 3-4 come from the bijection between $\Pi_\varphi$ and $H^1(u \rw W,Z \rw S)$, as well as the fact that the pairing of Corollary \ref{cor:tn+pair} is perfect.

We have thus constructed Diagram \eqref{eq:llc+} for a discrete parameter $\varphi$. If $Z \subset {_1Z} \subset G$ is a larger finite central subgroup, then, using the subscript $1$ for objects built with respect to $_1Z$ rather than $Z$, we have a diagram
\[ \xymatrix{
\Pi_\varphi\ar[r]\ar@{^(->}[d]&H^1(u \rw W,Z \rw S)\ar@{^(->}[d]\ar[r]&\tx{Irr}(S_\varphi^+)\ar@{^(->}[d]\\
_1\Pi_\varphi\ar[r]&H^1(u \rw W,{_1Z} \rw S)\ar[r]&\tx{Irr}({_1S_\varphi^+})
} \]
whose commutativity follows from the compatibility of $\tx{inv}$ with the inclusion $H^1(u \rw W,Z \rw S) \rw H^1(u \rw W,{_1Z} \rw S)$ discussed in Section \ref{sec:rigid}, and the functoriality of the pairing of Corollary \ref{cor:tn+pair}. We conclude that Diagram \eqref{eq:llc+} for $Z$ embeds into Diagram \eqref{eq:llc+} for $_1Z$, as expected.

We now turn to an equivalence class $\{\varphi\} : W_\R \rw {^LG}$ of tempered (but not necessarily discrete) parameters. We fix again $\Gamma$-invariant Borel pairs $(\hat T,\hat B)$ of $\hat G$ and $(T,B)$ of $G$. We have a 1-1 correspondence between standard parabolic subgroups of $G$ and $\hat G$, as well as between standard Levi subgroups. Fix a standard Levi subgroup $\hat M \subset \hat G$ such that some representative $\varphi$ within its equivalence class has image belonging to $^LM = \hat M \rtimes W_\R$ and such that $\hat M$ is minimal with this property. The corresponding standard Levi subgroup $M \subset G$ is cuspidal and $\varphi$ is a discrete parameter for it. We have
\[ R(\hat T,\hat M) = \{ \alpha^\vee \in R(\hat T,\hat G)| \varphi(\sigma)\alpha^\vee = -\alpha^\vee \} \]
Let $S_\varphi = \tx{Cent}(\varphi(W_\R),\hat G)$. This is an algebraic subgroup of $\hat G$ whose connected component is reductive. Using Langlands' formulation of the Knapp-Stein $R$-group, Shelstad argues in \cite[\S5.3]{She82}  as follows: $\hat B_\varphi := (\hat B \cap S_\varphi)^\circ$ is a Borel subgroup of $S_\varphi^\circ$ and $\hat T_\varphi := (\hat T \cap S_\varphi)^\circ$ is a maximal torus of $S_\varphi^\circ$ whose Lie-algebra $\mf{t}_\varphi$ is the set of fixed points of $\tx{Ad}(\varphi(W_\R))$ in the Lie algebra
$\mf{t} = X_*(\hat T)\otimes \C$ of $\hat T$. The subgroup
\[ \Omega_\varphi(\hat T,\hat G) = \frac{N(\hat T,\hat G) \cap S_\varphi}{\hat T \cap S_\varphi} \]
of the Weyl group $\Omega(\hat T,\hat G)$ acts on $\hat T_\varphi$ and decomposes as a semi-direct product
\[ \Omega_\varphi(\hat T,\hat G) = \Omega(\hat T_\varphi,S_\varphi^\circ) \cdot R_\varphi, \]
where $R_\varphi$ is the subgroup of $\Omega_\varphi(\hat T,\hat G)$ whose action on $\hat T_\varphi$ preserves the $\hat B_\varphi$-positive chamber.

The group $R_\varphi$ governs the reducibility of the representations induced parabolically from the constituents of the $L$-packet of discrete series representations of inner forms of $M$ corresponding to $\varphi$. To facilitate the study of this reducibility, Shelstad introduces a second Levi subgroup of $G$, which contains $M$, and a second representative of the equivalence class of $\varphi$. This construction starts with the root system
\[ \Delta_\varphi^\vee = \{ \alpha^\vee \in R(\hat T,\hat G)| \<\mu,\alpha^\vee\> = 0, \sum_{r \in R_\varphi} r\alpha^\vee = 0 \}. \]
Shelstad argues in \cite[Lemma 5.3.13]{She82} that this is a root system of type $A_1 \times \dots \times A_1$, that $\tx{Ad}(\varphi(\sigma))$ fixes each root in it and acts by $-1$ on the corresponding root space, and that $R_\varphi$ is contained in the Weyl group of that root system. For each $\alpha^\vee \in \Delta_\varphi^\vee$, Shelstad chooses root vectors $X_{\alpha^\vee},X_{-\alpha^\vee}$ with $[X_{\alpha^\vee},X_{-\alpha^\vee}]=H_{\alpha^\vee}$ and sets $s_{\alpha^\vee} = \exp(\frac{i\pi}{4}(X_{\alpha^\vee}+X_{-\alpha^\vee})) \in \hat G$. Then $s_{\alpha^\vee}\cdot[\tx{Ad}(\varphi(\sigma))s_{\alpha^\vee}]=s_{\alpha^\vee}^2=w_{\alpha^\vee}$, where $w_{\alpha^\vee}$ is the reflection with respect to $\alpha^\vee$. Since any two non-proportional roots $\alpha^\vee,\beta^\vee \in \Delta_\varphi^\vee$ are strongly orthogonal, the elements $s_{\alpha^\vee}$ and $s_{\beta^\vee}$ commute. Set $s=\prod_{\alpha^\vee\in \Delta_\varphi^\vee}s_{\alpha^\vee}$ and $w=s^2$. Then Shelstad defines a new Levi subgroup $\hat M_1$ of $\hat G$  (denoted by $^L\tilde M^0$ in loc. cit.) by demanding that its root system be given by
\[ \{ \alpha^\vee \in R(\hat T,\hat G)| \varphi(\sigma)\alpha^\vee = -w\alpha^\vee \}. \]
Since $R(\hat T,\hat M) \perp \Delta_\varphi^\vee$ we see that $\hat M \subset \hat M_1$. Moreover, she defines a new representative $\varphi_1$ of $\{\varphi\}$ (denoted by $\tilde\varphi$ in loc. cit.) determined by $\varphi_1 = \tx{Ad}(s)\varphi$. Its image belongs to $N(\hat T,\hat M_1)$ and $\varphi_1(\sigma)$ acts on all roots of $\hat M_1$ by $-1$ \cite[Prop. 5.4.3]{She82}. The parameter $\varphi_1$ is a limit of discrete series parameter for $M_1$.

With these constructions, the $L$-packet associated to $\{\varphi\}$ can be described as follows. Let $\psi : G \rw G'$ be an inner twist. Then $\{\varphi\}$ is relevant to $G'$ if and only if the Levi subgroup $M$ transfers to $G'$, i.e. if there exists an equivalent inner twist $\psi' : G \rw G'$ which restricts to an inner twist $\psi : M \rw M'$, where $M' \subset G'$ is a Levi subgroup. Assume that this is the case, for otherwise $\Pi_\varphi^{G'}=\emptyset$. Putting $M_1' = \psi(M_1)$, we obtain the inner twist $\psi : M_1 \rw M_1'$. Let $\Pi_\varphi^{M'}$ be the $L$-packet of essentially discrete series representations corresponding to $\varphi$. The $L$-packet $\Pi_\varphi^{M_1'}$, which by construction consists of the irreducible subrepresentations of the parabolic induction to $M_1'$ of each element of $\Pi_\varphi^{M'}$, has the following alternative description: We apply the arguments for the discrete case to the limit of discrete series parameter $\varphi_1$ for $M_1$ and obtain $\mu,\lambda \in X$. The element $\mu$ is still $\Psi$-dominant, where $\Psi$ is the positive chamber determined by the set $\Delta$ of simple roots, but $\mu$ is no longer necessarily regular. We also obtain the torus $S$ as before, together with a set of admissible embeddings into all inner forms of $M_1$. In addition to this data, we obtain further a subset $\Delta_\varphi$ of the root system contained in $X=X^*(S)$, namely the set $\{\alpha| \alpha^\vee \in \Delta_\varphi^\vee\}$. For every admissible embedding $\eta : S \rw M_1'$ there is a distribution $\Theta(\varphi_1,\eta)$ defined by coherent continuation, as explained in \cite[\S4.3]{She82}: Take a sufficiently regular and $\Psi$-positive $\nu \in X^*(S)$,  transport $\mu,\lambda,\nu,\Psi$ to the image of $\eta$ and consider the character $\Theta(\eta(\mu)+\eta(\nu),\eta(\lambda),\eta(\Psi))$ of the corresponding essentially discrete series representation of $M_1'(\R)$. By coherent continuation we obtain from this a distribution on $M_1'(\R)$, which we call $\Theta(\varphi,\eta)$. Shelstad shows \cite[Theorem 4.3.2]{She82} that this distribution is either zero or the character of an irreducible representation, and furthermore that the set of non-zero distributions $\Theta(\varphi,\eta)$ for all admissible embeddings $\eta : S \rw M_1'$ forms the $L$-packet on $M_1'$ corresponding to $\varphi$ (note that the group $G$ in the notation of that theorem is the group $M_1'$ here). The discussion after the statement of that theorem shows that if a distribution $\Theta(\varphi,\eta)$ is non-zero, then it equals $\Theta(\varphi,\eta')$ if and only if the embeddings $\eta, \eta' : S \rw M_1'$ are conjugate under $M_1'(\R)$. Using a counting argument, Shelstad shows in the discussion following \cite[Cor 5.4.13]{She82} that the parabolic induction from $M_1'(\R)$ to $G'(\R)$ of each non-zero distribution $\Theta(\varphi,\eta)$ is the character of an irreducible representation, and that the $L$-packet $\Pi_\varphi^{G'}$ consists of the set of these distributions.
Note that, since $S/Z(M_1')$ is anisotropic, the set of $G'(\R)$-conjugacy classes of admissible embeddings $S \rw G'$ coincides with the set of $M_1'(\R)$-conjugacy classes of admissible embeddings $S \rw M_1'$. Thus $\eta \mapsto \Theta(\varphi,\eta)$ is a bijection between the set of $G'(\R)$-conjugacy classes of admissible embeddings $\eta : S \rw G'$ for which $\Theta(\varphi,\eta) \neq 0$ and the $L$-packet $\Pi_\varphi^{G'}$. This can be made more precise by using results of Knapp-Zuckerman and Vogan, e.g. \cite[Lemma 7.3]{Vog79}. The root system $\Delta_\varphi^\vee$ coincides with the set of roots of $\hat M_1$ which annihilate $\mu$ \cite[Prop. 5.4.2]{She82}, and moreover the positive roots inside $\Delta_\varphi^\vee$ are $\Psi$-simple, from which it follows that $\Theta(\varphi,\eta)$ is zero if and only if for some $\alpha^\vee \in \Delta_\varphi^\vee$ the image of $\alpha$ under $\eta$, which is an imaginary root of $\eta(S)$ in $G'$, is compact. We can summarize the results of this discussion as follows:

\begin{thm}[Shelstad] For every inner twist $\psi : G \rw G'$, the map
\[ \eta \mapsto \Theta(\varphi,\eta) \]
sets up a bijection from the set of $G'(\R)$-conjugacy classes of admissible embeddings $\eta : S \rw G'$ having the property that $\eta(\Delta_\varphi)$ consists entirely of non-compact roots, to the $L$-packet $\Pi_\varphi^{G'}$.
\end{thm}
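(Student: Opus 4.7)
The idea is to deduce the theorem by combining Shelstad's parametrization of the limit-of-discrete $L$-packet on $M_1'$ with the parabolic induction procedure passing from $M_1'$ to $G'$. First I would set up the data: choose the representative $\varphi_1$ of $\{\varphi\}$ in $^LM_1$ as in the preceding discussion, extract $\mu,\lambda\in X$ and the distinguished root subsystem $\Delta_\varphi^\vee\subset X^*(\hat T)$, and form the torus $S$ with its $\Gamma$-action together with its canonical $G'(\mathbb R)$-conjugacy class of admissible embeddings into each inner form of $M_1$ (and hence into each inner form of $G$ that accommodates $M_1$).

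Next, I would apply Shelstad's Theorem 4.3.2 to the limit-of-discrete series parameter $\varphi_1$ of the cuspidal Levi $M_1'$. This yields the distributions $\Theta(\varphi,\eta)$ on $M_1'(\mathbb R)$ for each admissible $\eta:S\to M_1'$, and says that each non-zero $\Theta(\varphi,\eta)$ is the character of an irreducible (essentially) discrete series constituent of $\Pi_{\varphi_1}^{M_1'}$ and that $\Theta(\varphi,\eta)=\Theta(\varphi,\eta')$ iff $\eta,\eta'$ are $M_1'(\mathbb R)$-conjugate. To pinpoint the vanishing condition I would invoke Proposition 5.4.2 of \cite{She82} to identify $\Delta_\varphi^\vee$ with the simple coroots of $\hat M_1$ annihilating $\mu$, and then invoke the Knapp--Zuckerman/Vogan criterion (in the form of \cite[Lemma 7.3]{Vog79}) which says that the coherently continued character attached to the data $(\eta(\mu),\eta(\lambda),\eta(\Psi))$ vanishes precisely when some $\alpha\in\Delta_\varphi$ is sent by $\eta$ to a compact imaginary root. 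This gives the ``non-compact'' condition in the statement.

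Now I would lift from $M_1'$ to $G'$ by parabolic induction. By Shelstad's counting argument after \cite[Cor.\ 5.4.13]{She82}, parabolic induction from $M_1'(\mathbb R)$ to $G'(\mathbb R)$ of each non-zero $\Theta(\varphi,\eta)$ is the character of an irreducible tempered representation, and these exhaust $\Pi_\varphi^{G'}$. Passing conjugacy classes through this step is essentially free: since $S/Z(M_1')$ is anisotropic, the normalizer of $M_1'$ in $G'$ does not move $\eta(S)$ further, so the natural map from $M_1'(\mathbb R)$-conjugacy classes of admissible embeddings $S\to M_1'$ to $G'(\mathbb R)$-conjugacy classes of admissible embeddings $S\to G'$ is a bijection. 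Combining this with the bijection at the level of $M_1'$ gives the asserted bijection for $G'$.

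The main obstacle, I expect, is not any one of these ingredients in isolation but rather the bookkeeping needed to ensure compatibility between the choices of $\varphi_1$, $\hat M_1$, $S$ and the admissible class of embeddings on the one hand, and Shelstad's coherent-continuation construction on the other: specifically, that the coherent continuation from a regular $\mu+\nu$ back down to $\mu$ commutes with transport along an admissible embedding $\eta$, and that the resulting character is intrinsically attached to the $G'(\mathbb R)$-conjugacy class of $\eta$ rather than to auxiliary choices. Once this is verified, the remaining work is assembling the cited results of \cite{She82, Vog79}.
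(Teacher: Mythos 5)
Your proposal follows essentially the same route as the paper's own argument, which is simply the discussion preceding the theorem: pass to the cuspidal Levi $M_1$ and the representative $\varphi_1$, apply Shelstad's Theorem 4.3.2 of \cite{She82} to parameterize the packet on $M_1'$ by admissible embeddings $\eta : S \rw M_1'$ up to $M_1'(\R)$-conjugacy, use \cite[Prop. 5.4.2]{She82} together with the Knapp--Zuckerman/Vogan criterion \cite[Lemma 7.3]{Vog79} to identify the vanishing of $\Theta(\varphi,\eta)$ with $\eta$ sending some root of $\Delta_\varphi$ to a compact imaginary root, invoke the counting argument following \cite[Cor. 5.4.13]{She82} to see that parabolic induction to $G'(\R)$ of each non-zero $\Theta(\varphi,\eta)$ is irreducible and that these exhaust $\Pi_\varphi^{G'}$, and finally use the anisotropy of $S/Z(M_1')$ to identify $M_1'(\R)$-conjugacy classes with $G'(\R)$-conjugacy classes of admissible embeddings.

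The one point your plan does not cover is that the theorem is asserted for \emph{every} inner twist $\psi : G \rw G'$, including those to which the Levi $M_1$ does not transfer, i.e.\ for which $\varphi$ is not relevant and $\Pi_\varphi^{G'} = \emptyset$. Your argument is entirely phrased in terms of the transferred Levi $M_1'$ and so implicitly assumes relevance; but in the non-relevant case one still has admissible embeddings $\eta : S \rw G'$, and the bijection claim amounts to the assertion that none of them carries all of $\Delta_\varphi$ to non-compact roots. This is exactly what Shelstad proves in \cite[Lemma 4.3.5]{She82}, and the paper flags this case explicitly right after the statement. Adding that citation closes the gap; the rest of your outline, including the compatibility of coherent continuation with transport along $\eta$, is handled by the same references you cite.
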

Note that this theorem also covers the case when $\varphi$ is not relevant for $G'$, because Shelstad shows \cite[Lemma 4.3.5]{She82} that in this case every admissible embedding $\eta$ carries some root of $\Delta_\varphi$ to a compact root.

Having recalled Shelstad's alternative description of the $L$-packets $\Pi_\varphi^{G'}$, we can now describe the pairing $\<-,-\>$. For this, choose again a sufficiently regular $\Psi$-dominant $\nu \in X$ and let $\Pi_{\varphi_1+\nu}^{M_1'}$ be the $L$-packet consisting of the essentially discrete series characters $\Theta(\eta(\mu)+\eta(\nu),\eta(\lambda),\eta(\Psi))$ for all admissible embeddings $\eta : S \rw M_1'$. By construction we have an injection
\[ \Pi_\varphi^{G'} \rw \Pi_{\varphi+\nu}^{M_1'}, \]
whose image consists of those essentially discrete series characters which give non-zero distributions by coherent continuation. These injections for different $G'$ can be put together to an injection
\[ \Pi_\varphi \rw \Pi_{\varphi_1+\nu}^{(M_1)}, \]
where the right hand side consists of the equivalence classes of tuples of the form $(G',\psi,z,\pi_{M_1'})$, where $(G',\psi,z)$ is a rigid inner twist of $G$ to which $M_1$ transfers, say with image $M_1'$, and $\pi_{M_1'} \in \Pi_{\varphi+\nu}^{M_1'}$.
Using the inflation-restriction sequence \eqref{eq:infres} one sees that the map
\[ H^1(u \rw W,Z \rw M_1) \rw H^1(u \rw W,Z \rw G) \]
is injective, which means that the equivalence classes of rigid inner twists of $G$ realized by $Z$ to which $M_1$ transfers are in bijection with the equivalence classes of rigid inner twists of $M_1$ realized by $Z$. Thus $\Pi_{\varphi_1+\nu}^{(M_1)}$ is simply the enlarged discrete series $L$-packet for $M_1$ constructed above and we have the bijection
\[ \Pi_{\varphi_1+\nu}^{(M_1)} \rw H^1(u \rw W,Z \rw S) \]
obtained from the already discussed parameterization of such $L$-packets.
This provides an injection
\begin{equation} \label{eq:tlpinj} \Pi_\varphi \rw H^1(u \rw W,Z \rw S) \end{equation}
whose image we can characterize as follows. If $\pi_{M_1,\mf{w}}$ is the unique $\mf{w}$-generic constituent of $\Pi_\varphi^{M_1}$, then its character is obtained by coherent continuation from the character of the unique $\mf{w}$-generic constituent of $\Pi_{\varphi_1+\nu}^{M_1}$ \cite[\S13]{SheTE3}
and hence $\dot\pi_\mf{w}=(G,\tx{id},1,\tx{Ind}_{P_1}^G(\pi_{M_1,\mf{w}}))$, which is the unique $\mf{w}$-generic constituent of $\Pi_\varphi^G$, maps to the trivial element of $H^1(u \rw W,Z \rw S)$. Moreover, the corresponding embedding $\eta_\mf{w} : S \rw G$ maps $\Delta_\varphi$ to a set of non-compact imaginary roots. The other elements $\dot\pi=(G',\psi,z,\pi') \in \Pi_\varphi$ correspond precisely to the embeddings $\eta : S \rw G'$ that have the property that $\eta(\Delta_\varphi)$ also consists of non-compact imaginary roots. Using \cite[Prop. 3.2.2]{KalEpi} we see that the image of the injection \eqref{eq:tlpinj} consists precisely of those elements $\tx{inv}(\dot\pi_\mf{w},\dot\pi) \in H^1(u \rw W,Z \rw S)$ whose images in $H^1(\R,S_\tx{ad})$ correspond under the Tate-Nakayama isomorphism to elements $x \in H^{-1}_\tx{Tate}(\R,X_*(S_\tx{ad}))$ with the property $\<x,\alpha\> \in 2\Z$ for all $\alpha \in \Delta_\varphi$.

Turning to the dual side, for each $\alpha \in \Delta_\varphi$ we consider the element $\alpha(-1) \in \hat T_\tx{sc}$. We obtain a homomorphism of abelian groups
\[ \Omega(\Delta_\varphi) \rw \hat T_\tx{sc},\qquad s_\alpha \mapsto \alpha(-1). \]
Recall that the complex dual $\hat S$ of the torus $S$ is identified with the complex torus $\hat T$ equipped with the $\Gamma$-action given by $\tx{Ad}(\varphi(\sigma))$. Recall also from the above discussion that $\varphi(\sigma)$ sends $\alpha$ to $-\alpha$. Thus $\alpha(-1) \in \hat S_\tx{sc}^\Gamma \rw [\hat{\bar S}]^+$. The group $\pi_0([\hat{\bar S}]^+)$ is in perfect duality with $H^1(u \rw W,Z \rw S)$, and the image of $\Omega(\Delta_\varphi)$ in $\pi_0([\hat{\bar S}]^+)$ is precisely the annihilator of the image of $\Pi_\varphi$ in $H^1(u \rw W, Z \rw S)$. We conclude that the pairing between $\pi_0([\hat{\bar S}]^+)$ and $H^1(u \rw W,Z \rw S)$ descends to a perfect pairing
\[ \<-,-\> : \Pi_\varphi \times \tx{coker}\left(\Omega(\Delta_\varphi) \rw \pi_0([\hat{\bar S}]^+)\right) \rw \C^\times. \]
This is the pairing we wanted to construct, as the following proposition shows.

\begin{pro} \label{pro:sphireal} We have the exact sequence
\[ \Omega(\Delta_\varphi) \rw \pi_0([\hat{\bar S}]^+) \rw \pi_0(S_{\varphi_1}^+) \rw 1. \]
\end{pro}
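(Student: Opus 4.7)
The plan is to prove the exact sequence in three conceptual stages: identify the natural map, verify that the composition vanishes, and then reduce surjectivity and the kernel computation to Shelstad's structural analysis of $S_{\varphi_1}$. Since the torus $S$ is constructed by applying the discrete-series recipe of the previous subsection to the limit of discrete parameter $\varphi_1$ for $M_1$, the identification $\hat S \cong \hat T$ carries the $\Gamma$-action $\tx{Ad}(\varphi_1(\sigma))$. In particular $\hat S^\Gamma = \hat T \cap S_{\varphi_1}$, and the inclusion $\hat{\bar S} \hookrightarrow \hat{\bar G}$ restricts to an inclusion $[\hat{\bar S}]^+ \hookrightarrow S_{\varphi_1}^+$ which supplies the middle map on $\pi_0$.

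Next I would show that the composition $\Omega(\Delta_\varphi) \to \pi_0(S_{\varphi_1}^+)$ is trivial. For $\alpha \in \Delta_\varphi$, let $\tx{SL}_2^{\alpha^\vee} \subset \hat G$ be the three-dimensional subgroup attached to $X_{\pm \alpha^\vee}$. By \cite[Prop. 5.4.2]{She82} we have $\<\mu,\alpha^\vee\>=0$, so $\<\bar\mu,\alpha^\vee\>=0$ as well, and $\varphi_1(\C^\times)$ centralizes $\tx{SL}_2^{\alpha^\vee}$. From \cite[Lemma 5.3.13]{She82} and \cite[Prop. 5.4.3]{She82}, $\varphi_1(\sigma)\alpha^\vee = -\alpha^\vee$, so $\varphi_1(\sigma)$ normalizes $\tx{SL}_2^{\alpha^\vee}$ acting by an involutive automorphism. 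The element $\alpha(-1)$ is the central involution of $\tx{SL}_2^{\alpha^\vee}$, hence $\varphi_1(W_\R)$-fixed and contained in the identity component of the centralizer of $\varphi_1(W_\R)$ in $\tx{SL}_2^{\alpha^\vee}$ (which is connected by Steinberg's theorem applied to the involution on simply connected $\tx{SL}_2$). This identity component lies in $S_{\varphi_1}^\circ$, and lifting along $\hat{\bar G} \to \hat G$ places $\alpha(-1) \in S_{\varphi_1}^{+,\circ}$.

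For surjectivity and exactness, a diagram chase using that the finite central kernel $\hat Z$ of $\hat{\bar G} \to \hat G$ lies in both $[\hat{\bar S}]^+$ and $S_{\varphi_1}^{+,\circ}$ reduces the claim to the analogous exact sequence
\[ \Omega(\Delta_\varphi) \rw \pi_0(\hat T \cap S_{\varphi_1}) \rw \pi_0(S_{\varphi_1}) \rw 1 \]
on the $\hat G$-side. Surjectivity there is the assertion that every connected component of $S_{\varphi_1}$ meets $\hat T$, which follows from the fact that $\hat T$ is a $\varphi_1(W_\R)$-stable maximal torus of $\hat G$ together with Shelstad's structural analysis (\cite[\S 5.4]{She82}, formula (5.4.5)). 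The kernel of this map is the image of $\hat T \cap S_{\varphi_1}^\circ$, which equals $\pi_0(\hat T \cap S_{\varphi_1}^\circ) = (\hat T \cap S_{\varphi_1}^\circ)/(\hat T \cap S_{\varphi_1})^\circ$, since $(\hat T \cap S_{\varphi_1})^\circ$ is the largest subtorus of $\hat T$ lying in $S_{\varphi_1}^\circ$.

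The hardest step, and where the bulk of the work lies, is matching this kernel with the image of $\Omega(\Delta_\varphi)$. Shelstad's description of $S_{\varphi_1}^\circ$ identifies its Lie algebra as the $\varphi_1(W_\R)$-invariants in $\hat{\mf g}$, which decomposes as the sum of $\tx{Lie}(\hat T)^{\varphi_1(\sigma)}$ together with, for each $\varphi_1(\sigma)$-swapped pair $\{\hat{\mf g}_{\alpha^\vee},\hat{\mf g}_{-\alpha^\vee}\}$ with $\alpha^\vee \in \Delta_\varphi^\vee$, a one-dimensional diagonal contribution. Exponentiating each such pair yields a one-parameter subgroup of $S_{\varphi_1}^\circ$ whose intersection with $\hat T$ is generated modulo $(\hat T \cap S_{\varphi_1})^\circ$ by the central involution $\alpha(-1)$ of $\tx{SL}_2^{\alpha^\vee}$. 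Collecting these contributions as $\alpha$ ranges over $\Delta_\varphi$ exhibits $\hat T \cap S_{\varphi_1}^\circ$ as generated by $(\hat T \cap S_{\varphi_1})^\circ$ together with $\{\alpha(-1) : \alpha \in \Delta_\varphi\}$, completing the identification of the kernel and hence the proposition.
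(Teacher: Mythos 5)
Your first two stages are fine: the identification of the middle map is correct, and your proof that $\alpha(-1) \in S_{\varphi_1}^{+,\circ}$ via the $\tx{SL}_2$ attached to $\alpha^\vee$ and Steinberg's connectedness theorem is a legitimate alternative to the paper's explicit one-parameter curve in $\hat T_\tx{sc} \cap S_\varphi^+$. The gap is in your third stage. The inclusion $\hat Z \subset S_{\varphi_1}^{+,\circ}$ on which your reduction rests is false in general: already for a discrete series parameter of $\tx{SL}_2/\R$ with $Z=Z(G)$, the group $S_\varphi^+$ is the cyclic group of order $4$ inside $\tx{SL}_2(\C)$ (as recalled in Section \ref{sec:lpack}), so $S_\varphi^{+,\circ}=\{1\}$ does not contain $\hat Z = \{\pm\tb{1}_2\}$, and $\pi_0(S_\varphi^+) \cong \Z/4\Z$ is genuinely larger than $\pi_0(S_\varphi)\cong\Z/2\Z$. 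So the sequence for $[\hat{\bar S}]^+ \rw S_{\varphi_1}^+$ is not equivalent to the one you propose on the $\hat G$-side. (Surjectivity can be repaired without this premise, exactly as in the paper: Shelstad's (5.4.5), the surjectivity of $S_{\varphi_1}^{+,\circ} \rw S_{\varphi_1}^\circ$, and the correct inclusion $\hat Z \subset \hat{\bar T} \cap S_{\varphi_1}^+$.)

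More seriously, even granting the $\hat G$-side sequence, it carries no information about $\Omega(\Delta_\varphi)$: since $\hat T_{\varphi_1}=(\hat T\cap S_{\varphi_1})^\circ$ is a maximal torus of $S_{\varphi_1}^\circ$ and maximal tori of connected reductive groups are self-centralizing, one has $\hat T \cap S_{\varphi_1}^\circ = (\hat T\cap S_{\varphi_1})^\circ$ automatically, so the kernel you compute downstairs is always trivial and $\alpha(-1)$ (which you have just placed in $S_{\varphi_1}^\circ$, hence in this torus) maps trivially there. For instance, for the parameter of $\tx{SL}_2/\R$ trivial on $\C^\times$ with $\varphi(\sigma)$ the image of $\tx{diag}(1,-1)$ one has $\Delta_\varphi^\vee=\{\pm\alpha^\vee\}$, $\pi_0([\hat{\bar S}]^+)\cong\Z/4\Z$, $\pi_0(S_{\varphi_1}^+)\cong\Z/2\Z$, and the kernel is generated by $\alpha(-1)=-\tb{1}_2$, an element that is invisible in $\hat G=\tx{PGL}_2(\C)$; no diagram chase from the quotient can recover this. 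In addition, the Lie algebra decomposition of $S_{\varphi_1}^\circ$ asserted in your last paragraph is false whenever $S_\varphi^\circ$ has roots outside $\Delta_\varphi^\vee$ (e.g.\ parameters with nonabelian $S_\varphi^\circ$). Thus the hard half of exactness at the middle -- that the kernel of $\pi_0([\hat{\bar S}]^+) \rw \pi_0(S_{\varphi_1}^+)$ is no larger than the image of $\Omega(\Delta_\varphi)$ -- is not established by your strategy. The paper proves it by a counting argument: $|\tx{coker}(\Omega(\Delta_\varphi)\rw\pi_0([\hat{\bar S}]^+))| = |\Pi_\varphi|$ from the parameterization of the packet, compared with $|\pi_0(S_{\varphi_1}^+)|$ via Shelstad's Theorem 7.5 (perfectness of her pairing with $\mb{S}_{\varphi_1}^\tx{ad}$ over a $K$-group) together with Lemma \ref{lem:centinj} on relevance; some input of this kind is indispensable and is absent from your proposal.
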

\begin{proof}
For exactness at the third spot we will use the result \cite[(5.4.5)]{She82} of Shelstad that $\hat T \cap S_{\varphi_1}$ meets each connected component of $S_{\varphi_1}$. Consider the diagram
\[ \xymatrix{
\hat{\bar T} \cap S_{\varphi_1}^+\ar@{^(->}[r]\ar[d]&S_{\varphi_1}^+\ar[d]\\
\hat T \cap S_{\varphi_1}\ar@{^(->}[r]&S_{\varphi_1}
}\]
Shelstad's result implies that for any $x \in S_{\varphi_1}^+$ there exists $y \in \hat{\bar T} \cap S_{\varphi_1}^+$ such that the images of $x$ and $y$ in $S_{\varphi_1}$ belong to the same connected component. The right vertical arrow is an isogeny of algebraic groups and thus restricts to a surjection on their neutral connected components. It follows that we may modify $x$ within its connected component to achieve that the images of $x$ and $y$ in $S_{\varphi_1}$ are equal. Thus $y$ differs from $x$ by an element in the kernel of $\hat {\bar G} \rw \hat G$. This kernel is a subgroup of $\hat{\bar T} \cap S_{\varphi_1}^+$ and the exactness at the third spot is proved.

Now we turn to exactness at the second spot. First, we must show that for any $\alpha^\vee \in \Delta_\varphi^\vee$ we have $\alpha(-1) \in S_{\varphi_1}^{+,\circ}$. Recalling that $\varphi_1 = \tx{Ad}(s)\varphi$, with $s=\prod_{\beta^\vee \in \Delta_\varphi^\vee} s_{\beta^\vee}$, this is equivalent to showing $\tx{Ad}(s)^{-1}\alpha(-1) \in S_{\varphi}^{+,\circ}$. Now all $s_{\beta^\vee}$ commute with $\alpha(-1)$, because any $\beta^\vee$ not equal to $\alpha^\vee$ is strongly orthogonal to it. So we need to show $\alpha(-1) \in S_{\varphi}^{+,\circ}$. For this we observe that $\alpha \in X_*(\hat T_\tx{sc})$ is an element fixed by $\varphi(\sigma)$ and thus $\alpha \otimes (-r) \in X_*(\hat T_\tx{sc}) \otimes \C$ belongs to $\mf{t}_\varphi$ for all $r \geq 0$. The map $r \mapsto \exp(\alpha \otimes (-r))$ is a curve in $\hat T_\tx{sc} \cap S_{\varphi}^+$ connecting $1$ to $\alpha(-1)$ and this implies $\alpha(-1) \in S_\varphi^{+,\circ}$.

At this point, we know that the cokernel of the first map surjects onto the image of the second. To show that this surjection is in fact injective, we will compare the cardinalities of these two finite groups. Our discussion above shows that $\tx{coker}(\Omega(\Delta_\varphi) \rw \pi_0([\hat{\bar S}]^+))$ has cardinality equal to that of the packet $\Pi_\varphi$. On the other hand, recall the map \eqref{eq:centinj}
\[ \pi_0(Z(\hat{\bar G})^+) \rw \pi_0(S_{\varphi_1}^+). \]
The cokernel of this map is equal to Shelstad's group $\mb{S}_{\varphi_1}^\tx{ad}$. In \cite[\S7]{SheTE3}, Shelstad defines a (non-canonical) pairing between this group and the disjoint union of the $L$-packets $\Pi_\varphi^{G'}$ where $G'$ runs over a set of representatives for any given $K$-group. One of her main results, Theorem 7.5, shows that this pairing is perfect. This disjoint union of $L$-packets can be non-canonically identified with a non-trivial fiber of the composition
\begin{equation} \label{eq:lpproj} \Pi_\varphi \rw H^1_\tx{sc}(u \rw W,Z\rw G) \rw \pi_0(Z(\hat{\bar G})^+)^* \end{equation}
where the first map is $(G',\psi,z,\pi') \mapsto z$ and the second map comes from Corollary \ref{cor:tn+pair}. We conclude that
\[ |\tx{coker}(\Omega(\Delta_\varphi) \rw \pi_0([\hat{\bar S}]^+))|=|\Pi_\varphi|=|\tx{coker}(\pi_0(Z(\hat{\bar G})^+) \rw \pi_0(S_{\varphi_1}^+))|\cdot|\tx{im}(\eqref{eq:lpproj})|. \]
The proof will be complete once we show that the image of \eqref{eq:lpproj} has the same cardinality as the image of \eqref{eq:centinj}. By construction, the image of \eqref{eq:lpproj} parameterizes the isomorphism classes of those rigid $K$-groups which contain at least one rigid inner twists of $G$ for which $\varphi$ is relevant. By Lemma \ref{lem:centinj} this image coincides with the group of characters of $\pi_0(Z(\hat{\bar G})^+)$ which are trivial on the kernel of \eqref{eq:centinj} and this shows that the cardinalities of the images of \eqref{eq:centinj} and \eqref{eq:lpproj} coincide.
\end{proof}

This finishes the construction of Diagram \eqref{eq:llc+}. The compatibility of this diagram with enlargement of $Z$ follows from the already discussed case of discrete series.

We now turn to the remaining statements in Section \ref{sec:lpack}. The stability of the virtual character \eqref{eq:stchar} is one of the main results of Shelstad's work, see \cite[\S5]{She79b} and (for the setting of $K$-groups) \cite[\S5]{SheTE3}. The endoscopic character identities for tempered $L$-packets are another fundamental result of Shelstad. In order to extract them in their current formulation \eqref{eq:charid} from Shelstad's work, we will need to review Shelstad's theory of spectral transfer factors, following \cite{SheTE2} and \cite{SheTE3}. Say $(G',\psi)$ is an inner twist of $G$, $\mf{e}=(H,\mc{H},s,\eta)$ is an endoscopic datum for $G$ and $\mf{z}=(H_\mf{z},\eta_\mf{z})$ is a z-pair for it. Let $\Delta_\tx{geom}$ be an arbitrary normalization of the absolute transfer factor for the data $(G',\psi),\mf{e},\mf{z}$. Shelstad calls such a transfer factor geometric and constructs in \cite{SheTE2} a dual notion, called a spectral transfer factor. It is a function $\Delta_\tx{spec}$ that assigns a complex number to a pair $(\varphi_\mf{z},\pi')$, where $\varphi_\mf{z} : W_\R \rw {^LH_\mf{z}}$ is a tempered Langlands parameter and $\pi'$ is a tempered representation of $G'(F)$. The pair $(\varphi_\mf{z},\pi')$ is called related if $\varphi_\mf{z}=\eta_\mf{z} \circ \varphi\circ\eta^{-1}$ for a tempered Langlands parameter $\varphi : W_\R \rw {^LG}$ whose image belongs to $\eta(\mc{H})$, and furthermore $\pi' \in \Pi_\varphi^{G'}$. The value $\Delta_\tx{spec}(\varphi_\mf{z},\pi)$ is zero unless the pair $(\varphi_\mf{z},\pi)$ is related. We remark that Shelstad uses the notation $\Delta_\tx{spec}(\pi_1,\pi')$, where $\pi_1$ is an element of the $L$-packet $\Pi_{\varphi_\mf{z}}^{H_\mf{z}}$. Since the value of $\Delta_\tx{spec}(\pi_1,\pi')$ doesn't change when we vary $\pi_1$ within its packet, we have chosen the slightly different notation $\Delta_\tx{spec}(\varphi_\mf{z},\pi')$. In general both functions $\Delta_\tx{geom}$ and $\Delta_\tx{spec}$ are defined up to constant multiples, but there is a way to normalize them compatibly. For this, recall that the pair $(\varphi_\mf{z},\pi')$ is called $G$-regular if $\tx{Cent}(\varphi(\C^\times),\hat G)$ is torus. If $(\varphi_\mf{z},\pi')$ is a $G$-regular related pair, and $(\gamma_\mf{z},\delta')$ is a strongly $G$-regular related pair of elements $\gamma_\mf{z} \in H_\mf{z}(F)$ and $\delta' \in G'(F)$, then Shelstad constructs in \cite[\S12]{SheTE2} a canonical compatibility factor
\[ \Delta_\tx{comp}(\varphi_\mf{z},\pi';\gamma_\mf{z},\delta') \]
and defines that $\Delta_\tx{geom}$ and $\Delta_\tx{spec}$ are normalized compatibly if
\begin{equation} \label{eq:tfcomp} \Delta_\tx{spec}(\varphi_\mf{z},\pi') = \Delta_\tx{comp}(\varphi_\mf{z},\pi';\gamma_\mf{z},\delta')\Delta_\tx{geom}(\gamma_\mf{z},\delta') \end{equation}
for one, hence any choice of related pairs $(\varphi_\mf{z},\pi')$ and $(\gamma_\mf{z},\delta')$ as above. Thus if $\Delta_\tx{geom}$ is chosen, this determines $\Delta_\tx{spec}$ -- a-priori for all $G$-regular related pairs $(\varphi_\mf{z},\pi')$, but then in fact for all related pairs since the values of $\Delta_\tx{spec}$ on general related pairs are determined from its values on $G$-regular related pairs by means of coherent continuation of characters. Shelstad's theorem on spectral transfer \cite[Thm 5.1]{SheTE2} states that if $f^\mf{e}$ is a function on $H_\mf{z}(\R)$ and $f$ is a function on $G'(\R)$ such that $f^\mf{e}$ and $f$ have $\Delta_\tx{geom}$-matching orbital integrals, then
\[ S\Theta_{\varphi_\mf{z},\tx{id},1}(f^\mf{e}) = \sum_{\pi \in \Pi_\varphi^{G'}} \Delta_\tx{spec}(\varphi_\mf{z},\pi) \Theta_\pi(f). \]
In order to obtain equation \eqref{eq:charid} from this theorem we need to prove the following proposition.

\begin{pro} If $\Delta_\tx{geom}$ is the geometric transfer factor $\Delta'[\mf{\dot e},\mf{z},\mf{w},\psi,z]$, then the compatibly normalized spectral transfer factor $\Delta_\tx{spec}$ satisfies
\[ \Delta_\tx{spec}(\varphi_\mf{z},\pi') = e(G')\<\dot s,\dot\pi\> \]
for any $\pi' \in \Pi_\varphi^{G'}$, where $\dot\pi = (G',\psi,z,\pi')$.
\end{pro}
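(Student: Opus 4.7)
The plan is to combine the compatibility relation \eqref{eq:tfcomp} with the explicit formula \eqref{eq:tf'} for the geometric transfer factor and reduce the computation to the quasi-split case, where it is covered by Shelstad's work on Whittaker-normalized spectral transfer. First I would reduce to the case where $\varphi$ is a discrete series parameter and the pair $(\varphi_\mf{z},\pi')$ is $G$-regular, so that all characters involved are essentially discrete series characters with explicit Harish-Chandra formulas on their compact tori. Both sides of the claimed equality are compatible with coherent continuation through the Levi $M_1$ constructed above (for the right-hand side this uses the description of $\Pi_\varphi^{G'}$ as the non-zero parabolic inductions from the limit-of-discrete-series packet $\Pi_{\varphi_1}^{M_1'}$ together with the explicit pairing \eqref{eq:dspair} and the injection \eqref{eq:tlpinj}), so this reduction is harmless.

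Next I would pick a strongly $G$-regular related pair $(\gamma_\mf{z},\delta')$ and a rational representative $\delta\in G_\tx{sr}(\R)$ of the stable class of $\delta'$. The compatibility relation combined with \eqref{eq:tf'} then gives
\[ \Delta_\tx{spec}(\varphi_\mf{z},\pi') = \Delta_\tx{comp}(\varphi_\mf{z},\pi';\gamma_\mf{z},\delta')\cdot\Delta'[\mf{e},\mf{z},\mf{w}](\gamma_\mf{z},\delta)\cdot\<\tx{inv}(\delta,\dot\delta'),\dot s_{\gamma,\delta}\>. \]
Pick an admissible embedding $\eta:S\rw G$ with $\eta(\eta'^{-1}(\delta'))=\delta$, and let $\pi = \Theta(\varphi,\eta)\in\Pi_\varphi^G$ when this is non-zero, writing $\dot\pi_G=(G,\tx{id},1,\pi)$. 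The quasi-split case of the proposition -- which is essentially Shelstad's Whittaker-normalized spectral transfer theorem \cite[\S11]{SheTE3} -- asserts
\[ \Delta_\tx{comp}(\varphi_\mf{z},\pi;\gamma_\mf{z},\delta)\cdot\Delta'[\mf{e},\mf{z},\mf{w}](\gamma_\mf{z},\delta) = \Delta_\tx{spec}(\varphi_\mf{z},\pi) = e(G)\<\dot s,\dot\pi_G\>, \]
where one checks directly (from \eqref{eq:dspair} and the fact that $\<\dot s,\dot\pi_\mf{w}\>=1$) that the right-hand side coincides with the Whittaker-normalized quantity computed by Shelstad.

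Combining these, the proposition reduces to the identity
\[ \frac{\Delta_\tx{comp}(\varphi_\mf{z},\pi';\gamma_\mf{z},\delta')}{\Delta_\tx{comp}(\varphi_\mf{z},\pi;\gamma_\mf{z},\delta)}\cdot\<\tx{inv}(\delta,\dot\delta'),\dot s_{\gamma,\delta}\> = \frac{e(G')}{e(G)}\cdot\frac{\<\dot s,\dot\pi\>}{\<\dot s,\dot\pi_G\>}, \]
and verifying this is the main obstacle. The compatibility factor of \cite[\S12]{SheTE2} is built from character values; using Harish-Chandra's explicit formula for essentially discrete characters on the appropriate maximal tori, the ratio on the left decomposes as the product of the sign $e(G')/e(G)$ (arising from the sign in the character formula for discrete series on inner forms, cf.~\cite{Kot83}) and a cohomological comparison of the embeddings $\eta:S\rw G$ and $\eta':S\rw G'$. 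Writing $\eta'=\psi\circ\tx{Ad}(g)\circ\eta$ with $g\in G(\ol F)$ satisfying $\psi(g\delta g^{-1})=\delta'$, the cocycle $w\mapsto g^{-1}z(w)w(g)$ simultaneously represents $\tx{inv}(\delta,\dot\delta')$ and $\tx{inv}(\dot\pi_G,\dot\pi)$ in $H^1(u\rw W,Z\rw S)$. By the functoriality of the pairing of Corollary \ref{cor:tn+pair}, by Theorem \ref{thm:tn+s} identifying this pairing with Tate-Nakayama duality for tori, and by the explicit formula \eqref{eq:dspair} expressing $\<\dot s,\dot\pi\>/\<\dot s,\dot\pi_G\>=\<\dot s,\tx{inv}(\dot\pi_G,\dot\pi)\>$, the two sides match.
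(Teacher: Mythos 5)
Your overall strategy (use \eqref{eq:tfcomp} and \eqref{eq:tf'}, anchor at the Whittaker-normalized quasi-split situation, and reduce to a ratio identity for compatibility factors) runs parallel to the paper's argument, but two of your pivotal steps are asserted where the real work lies. First, the ``quasi-split case'' you quote, namely $\Delta_\tx{comp}(\varphi_\mf{z},\pi;\gamma_\mf{z},\delta)\Delta'[\mf{e},\mf{z},\mf{w}](\gamma_\mf{z},\delta)=\<\dot s,\dot\pi_G\>$ for \emph{every} $\pi\in\Pi_\varphi^G$, is not something you can extract from \cite{SheTE3} by a ``direct check'': Shelstad's Theorem 11.5 only normalizes the $\mf{w}$-generic member (its compatibly normalized spectral factor is $1$), and her pairing on the rest of the packet is phrased through her own (non-canonical) $\mb{S}$-group constructions. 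Identifying it with the pairing $\<\dot s,\tx{inv}(\dot\pi_\mf{w},\dot\pi)\>$ of \eqref{eq:dspair} is essentially the $z=1$ instance of the very proposition you are proving, and verifying it requires unwinding Shelstad's $\Delta_\tx{comp,III}$ term and its $s_U$-pairing --- the same unwinding you also need in your relative step. So your decomposition into ``known absolute case plus relative comparison'' is circular as written.

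Second, the ratio identity you isolate as the main obstacle is then also left unproved at its crucial point. The compatibility factor of \cite[\S12]{SheTE2} does not split naturally into data attached to $(\gamma_\mf{z},\delta)$ and data attached to $\pi$ separately: its $\Delta_{III}$ part pairs a \emph{relative} invariant, living in a torus built simultaneously from the centralizers of $\delta,\delta'$ and from the tori attached to the admissible embeddings parameterizing $\pi,\pi'$, against an element $s_U$ of the corresponding quotient dual torus. In particular your claim that one cocycle $w\mapsto g^{-1}z(w)w(g)$ ``simultaneously represents $\tx{inv}(\delta,\dot\delta')$ and $\tx{inv}(\dot\pi_G,\dot\pi)$'' conflates classes on different tori: the element $g$ conjugating $\delta$ to $\delta'$ has no reason to also intertwine the embeddings attached to $\pi$ and $\pi'$. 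Converting the $s_U$-pairing of the mixed invariant into the product of the two separate pairings (and extracting the sign $e(G')$, which arises from comparing the spectral $\Delta_{II}$ terms of $M_1$ and $M_1'$ via \cite{Kot83}, together with a judicious choice of Shelstad's toral data) is exactly the content of the paper's computation, carried out by the torus argument of Proposition \ref{pro:tf} (the analogue of Equation \eqref{eq:d1}); citing Harish-Chandra's character formula and the functoriality of Corollary \ref{cor:tn+pair} does not supply it. Note also that the paper does not reduce to discrete parameters for $G$: it uses Shelstad's definitional descent of the spectral factor through the Levi $M_1$, and the descent of the Whittaker-normalized geometric factor (equation \eqref{eq:tflevid}, resting on \cite[Lemma 14.1]{SheTE1}) is an input your ``coherent continuation is harmless'' reduction silently presupposes.
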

The appearance of the factor $e(G')$ in the right hand side of this equation comes from the fact that this factor also appears in the definition \eqref{eq:ustchar} of the right hand side of Equation \eqref{eq:charid}, while it does not appear in right hand side of Shelstad's character identity recalled above.
\begin{proof}
We need a slight change in notation in order to quote Shelstad's results more easily. Recall from \cite{KS12} that there are four different normalizations of the relative transfer factor, denoted by $\Delta$, $\Delta'$, $\Delta_D$, $\Delta_D'$. In Section \ref{sec:lpack} we use $\Delta'$, while Shelstad's results use $\Delta$. The passage from one to the other is very simple: If $\mf{\dot e} = (H,\mc{H},\dot s,\eta)$ is our refined endoscopic datum, and we set $\mf{\dot e'}=(H,\mc{H},\dot s^{-1},\eta)$, then $\mf{\dot e'}$ is another refined endoscopic datum, and we have $\Delta'[\mf{\dot e},\mf{z},\mf{w},\psi,z]=\Delta[\mf{\dot e'},\mf{z},\mf{w},\psi,z]$. In this proof we will work with $\Delta$ and $\mf{\dot e'}$.

The structure of $\Delta_\tx{spec}(\varphi_\mf{z},\pi)$ is interwoven with Shelstad's arguments that were recalled during the construction of the pairing $\<-,-\>$. Returning to these arguments, we take again the Levi subgroup $M_1$ of $G$ constructed from the parameter $\varphi$. We are assuming that $\Pi_\varphi^{G'}$ is non-empty and thus that $\varphi$ is relevant for $G'$, which implies that $M_1$ transfers to $M_1'$. Shelstad explains in \cite[\S7a]{SheTE2} that the data $\mf{e'}$ and $\mf{z}$ descend to data $\mf{e'}_{M_1}$ and $\mf{z}_{M_1}$ for the group $M_1'$. Moreover, she explains in \cite[\S12]{SheTE2} that the transfer factor $\Delta[\mf{\dot e'},\mf{z},\mf{w},\psi,z]$ gives rise to a transfer factor $\Delta_{M_1}[\mf{\dot e'},\mf{z},\mf{w},\psi,z]$ for the reductive group $M_1'$ and its endoscopic datum $\mf{e'}_{M_1}$ and $z$-pair $\mf{z}_{M_1}$ by the rule
\[ \Delta_{M_1'}[\mf{\dot e'},\mf{z},\mf{w},\psi,z](\gamma_\mf{z},\delta') = \Delta[\mf{\dot e'},\mf{z},\mf{w},\psi,z](\gamma_\mf{z},\delta') \cdot |\tx{det}(\tx{Ad}(\delta')-I)_{\mf{g'}/\mf{m_1'}}|. \]
We can apply the same construction to the Levi subgroup $M_1$ of $G$ and obtain
\[ \Delta_{M_1}[\mf{e'},\mf{z},\mf{w}](\gamma_\mf{z},\delta) = \Delta[\mf{e'},\mf{z},\mf{w}](\gamma_\mf{z},\delta) \cdot |\tx{det}(\tx{Ad}(\delta)-I)_{\mf{g}/\mf{m_1}}|. \]
Noting that the second factors on the right of these two equations coincide when $\delta \in M_1(\R)$ and $\delta' \in M_1'(\R)$ are stably conjugate, and recalling the definition \eqref{eq:tf} of $\Delta[\mf{\dot e'},\mf{z},\mf{w},\psi,z]$, we see that
\[ \Delta_{M_1'}[\mf{\dot e'},\mf{z},\mf{w},\psi,z](\gamma_\mf{z},\delta') = \Delta_{M_1}[\mf{e'},\mf{z},\mf{w}](\gamma_\mf{z},\delta) \cdot \<\tx{inv}(\delta,\dot\delta'),\dot s^{-1}_{\gamma,\delta}\>^{-1}. \]
The Whittaker datum $\mf{w}$ for $G$ serves $M_1$ as well, and a short argument, contained in the proof of \cite[Lemma 14.1]{SheTE1}, reveals that the factor $\Delta_{M_1}[\mf{e'},\mf{z},\mf{w}]$ is equal to $\Delta[\mf{e'}_{M_1},\mf{z}_{M_1},\mf{w}]$ -- the Whittaker normalization corresponding to $\mf{w}$ of the absolute transfer factor for $M_1$ and its endoscopic datum $\mf{e'}_{M_1}$ and $z$-pair $\mf{z}_{M_1}$. We conclude
\begin{equation} \label{eq:tflevid} \Delta_{M_1'}[\mf{\dot e'},\mf{z},\mf{w},\psi,z](\gamma_\mf{z},\delta') = \Delta[\mf{e'}_{M_1},\mf{z}_{M_1},\mf{w}](\gamma_\mf{z},\delta) \cdot \<\tx{inv}(\delta,\dot\delta'),\dot s_{\gamma,\delta}^{-1}\>^{-1}. \end{equation}

Recall the essentially discrete series $L$-packet $\Pi_{\varphi_1+\nu}^{M_1'}$ and the injection
\[ \Pi_\varphi^{G'} \rw \Pi_{\varphi_1+\nu}^{M_1'}. \]
If $\pi' \in \Pi_\varphi^{G'}$ maps to $\pi^{M_1'} \in \Pi_{\varphi_1+\nu}^{M_1'}$ then Shelstad defines in \cite[\S14]{SheTE2}
\[ \Delta_\tx{spec}(\varphi_\mf{z},\pi') = \Delta_\tx{spec}^{M_1'}(\varphi_{1,\mf{z}}+\nu,\pi^{M_1'}) \]
where the spectral transfer factor on the right is for the reductive group $M_1'$ and its endoscopic datum $\mf{e}_{M_1}$ and $z$-pair $\mf{z}_{M_1}$ and is compatibly normalized with the geometric transfer factor $\Delta_{M_1'}[\mf{\dot e},\mf{z},\mf{w},\psi,z]$. Let $\pi_\mf{w} \in \Pi_\varphi^G$ be the unique $\mf{w}$-generic constituent and recall that its image $\pi_\mf{w}^{M_1} \in \Pi_{\varphi_1+\nu}^{M_1}$ is the unique $\mf{w}$-generic constituent there. If we denote by $\Delta_\tx{spec}^{M_1}$ the spectral transfer factor for $M_1$ and its endoscopic datum $\mf{e}_{M_1}$ and $z$-pair $\mf{z}_{M_1}$ that is compatibly normalized with the geometric transfer factor $\Delta[\mf{e}_{M_1},\mf{z}_{M_1},\mf{w}]$, then Theorem 11.5 of \cite{SheTE3} asserts that
\[ 1 = \Delta_\tx{spec}^{M_1}(\varphi_\mf{1,z}+\nu,\pi_\mf{w}^{M_1}) = \Delta_\tx{comp}^{M_1}(\varphi_\mf{z},\pi_\mf{w}^{M_1},\gamma_\mf{z},\delta)\Delta[\mf{e'}_{M_1},\mf{z}_{M_1},\mf{w}](\gamma_\mf{z},\delta). \]
Combining this with \eqref{eq:tfcomp} and \eqref{eq:tflevid} we arrive at
\[ e(G')\Delta_\tx{spec}(\varphi_\mf{z},\pi')  =
\<\tx{inv}(\delta,\dot\delta'),\dot s_{\gamma,\delta}^{-1}\>^{-1}\frac{e(G')\Delta_\tx{comp}^{M_1'}(\varphi_{1,\mf{z}}+\nu,\pi^{M_1'};\gamma_\mf{z},\delta')}{ \Delta_\tx{comp}^{M_1}(\varphi_{1,\mf{z}}+\nu,\pi_\mf{w}^{M_1};\gamma_\mf{z},\delta)}. \]
Recalling the internal structure of $\Delta_\tx{comp}$ from \cite[\S12]{SheTE2} and noting that the spectral $\Delta_{II}$-terms for $M_1$ and $M_1'$ differ only by the sign $e(M_1')$, which by \cite{Kot83} is equal to $e(G')$, we see that the right hand side of the above expression reduces to
\[ \<\tx{inv}(\delta,\dot\delta'),\dot s_{\gamma,\delta}^{-1}\>^{-1}\frac{\Delta_\tx{comp,III}^{M_1'}(\varphi_{1,\mf{z}}+\nu,\pi^{M_1'};\gamma_\mf{z},\delta')}{ \Delta_\tx{comp,III}^{M_1}(\varphi_{1,\mf{z}}+\nu,\pi_\mf{w}^{M_1};\gamma_\mf{z},\delta)}. \]
By choosing Shelstad's ``toral data'' in the construction of $\Delta_\tx{comp,III}$ appropriately we may arrange the denominator to be equal to $1$. Doing so makes the above expression equal to
\[ \<\tx{inv}(\delta,\dot\delta'),\dot s_{\gamma,\delta}^{-1}\>^{-1}\left\<s_U^{-1},\tx{inv}\left(\frac{\pi_\mf{w}^{M_1},\pi^{M_1'}}{\delta,\delta'}\right)\right\> \]
where the second factor is constructed in \cite[\S12]{SheTE2}. It's construction is very similar to that of the analogous factor of \cite[\S3.4]{LS87} that was recalled in the proof of Proposition \ref{pro:tf}. The argument in the proof of that proposition that was used to prove Equation \eqref{eq:d1} can be applied to prove that
\[ \left\<s_U^{-1},\tx{inv}\left(\frac{\pi_\mf{w}^{M_1},\pi^{M_1'}}{\delta,\delta'}\right)\right\> = \frac{\<\dot s^{-1},\tx{inv}(\pi_\mf{w}^{M_1},\pi^{M_1'})\>^{-1}}{\<\dot s^{-1}_{\gamma,\delta},\tx{inv}(\delta,\delta')\>^{-1}}.\]
Recall now that $\<\dot s,\dot\pi\>$ was defined to be the pairing of $\dot s$ with the element of $H^1(u \rw W,Z \rw S)$ determined by $\dot\pi$ under the injection \eqref{eq:tlpinj}, and that this element is precisely $\tx{inv}(\pi_\mf{w}^{M_1},\pi^{M_1'})$.
\end{proof}

tkaletha@math.princeton.edu\\
Princeton University, Princeton, NJ 08544


\begin{thebibliography}{999999}

\bibitem[AV92]{AV92} J. Adams, D. A. Vogan, L-groups, projective representations, and the Langlands classification. Amer. J. Math. 114 (1992), no. 1, 45--138.
\bibitem[ABV92]{ABV92} J. Adams, D. Barbasch, D. A. Vogan, The Langlands classification and irreducible characters for real reductive groups. Progress in Mathematics, 104. Birkh\"auser Boston, Inc., Boston, MA, 1992.
\bibitem[AJ87]{AJ87} J. Adams, J. F. Johnson, Endoscopic groups and packets of nontempered representations. Compositio Math. 64 (1987), no. 3, 271--309.
\bibitem[Art99]{Art99}  J. Arthur, On the transfer of distributions: weighted orbital integrals, Duke Math. J., 99 (1999), no. 2, 209--283.
\bibitem[Art06]{Art06}      J. Arthur, A note on L-packets. Pure Appl. Math. Q. 2 (2006), no. 1, pt. 1, 199--217.
\bibitem[Art11]{Art11}  J. Arthur, The Endoscopic Classiffication of Representations: Orthogonal and Symplectic Groups, preprint.
%
\bibitem[Bo77]{Bo77} A. Borel, Automorphic L-functions, Automorphic forms, representations and L-functions, Proc. AMS, vol. 33, no. 2, (1977), 27-61
\bibitem[Bou]{Bou} N. Bourbaki - Lie groups and Lie algebras. Springer, 2002, ISBN 3-540-42650-7.
\bibitem[DR09]{DR09}    S. DeBacker, M. Reeder, Depth-zero supercuspidal $L$-packets and their stability.  Ann. of Math. (2)  169  (2009),  no. 3, 795--901.
\bibitem[HS12]{HS12} K. Hiraga, H. Saito, On L-packets for inner forms of $\tx{SL}_n$. Mem. Amer. Math. Soc. 215 (2012), no. 1013, vi+97 pp.
\bibitem[Kal11]{Kal11}    T. Kaletha, Endoscopic character identities for depth-zero supercuspidal L-packets, Duke Math. J., 158 (2011), no. 2, 161--224.
\bibitem[Kal14]{Kal14}  T. Kaletha, Supercuspidal L-packets via isocrystals, Amer. J. Math 136 (2014), no. 1, 203--239
\bibitem[KalEpi]{KalEpi}  T. Kaletha, Epipelagic L-packets and rectifying characters, Invent. Math., to appear
\bibitem[KalGRI]{KalGRI}	T. Kaletha, Global rigid inner forms and multiplicities of discrete automorphic representations, arXiv:1501.01667.
\bibitem[KalRIBG]{KalRIBG}  T. Kaletha, Rigid inner forms vs isocrystals, arXiv:1502.00650
\bibitem[Kne65]{Kne65}  M. Kneser, Galois-Kohomologie halbeinfacher algebraischer Gruppen \"uber p-adischen K\"orpern. I, Math. Z. 88 (1965), 40--47, II ibid 89 (1965), 250--272.
\bibitem[Kos78]{Kos78}  B. Kostant, On Whittaker vectors and representation theory, Invent. Math., vol. 48 (1978), 101--184.
\bibitem[Kot82]{Kot82}  R. E. Kottwitz, Rational conjugacy classes in reductive groups. Duke Math. J. 49 (1982), no. 4, 785--806.
\bibitem[Kot83]{Kot83}  R. E. Kottwitz, Sign changes in harmonic analysis on reductive groups.  Trans. Amer. Math. Soc.  278  (1983), no. 1, 289--297.
\bibitem[Kot84]{Kot84}  R. E. Kottwitz, Stable trace formula: cuspidal tempered terms. Duke Math. J. 51 (1984), no. 3, 611--650.
\bibitem[Kot85]{Kot85}  R. E. Kottwitz, Isocrystals with additional structure.  Compositio Math.  56  (1985),  no. 2, 201--220.
\bibitem[Kot86]{Kot86}      R. E. Kottwitz, Stable trace formula: Elliptic singular terms, Math. Ann. 275 (1986), 365--399.
\bibitem[Kot97]{Kot97}    R. E. Kottwitz, Isocrystals with additional structure. II. Compositio Math. 109 (1997), no. 3, 255--339.
%
\bibitem[KS99]{KS99}    R. E. Kottwitz, D. Shelstad, Foundations of twisted endoscopy.  Ast\'erisque  No. 255  (1999).
\bibitem[KS12]{KS12}    R. E. Kottwitz, D. Shelstad, On splitting invariants and sign conventions in endoscopic transfer, arXiv:1201.5658.
\bibitem[Lan73]{Lan73}  R. P. Langlands, On the classification of irreducible representations of real algebraic groups, Preprint, Institute for Advanced Study, 1973. Appeared in Math. Surveys and Monographs, no. 31, AMS (1988).
\bibitem[Lan83]{Lan83} R. P. Langlands, Les d\'ebuts d'une formule des traces stable. Pub. Math. de l'Univ. Paris VII, 13., Paris, 1983. v+188 pp.
\bibitem[LR87]{LR87}    R. P. Langlands, M. Rapoport, Shimuravariet\"aten und Gerben. J. Reine Angew. Math. 378 (1987), 113--220.
\bibitem[LS87]{LS87}    R. P. Langlands, D. Shelstad, On the definition of transfer factors, Math. Ann., vol. 278 (1987), 219--271.
\bibitem[LS90]{LS90} R.P. Langlands, D. Shelstad, Descent for transfer factors.  The Grothendieck Festschrift, Vol. II,  485--563, Progr. Math., 87, Birkh\"auser, Boston, MA, 1990.
%
\bibitem[NSW08]{NSW08}          J. Neukirch, A. Schmidt and K. Wingberg, Cohomology of Number Fields, 2nd edition, Springer-Verlag, vol. 323 (2008).
\bibitem[PR94]{PR94}     V. P. Platonov, A. S. Rapinchuk, Algebraic groups and number theory. Pure and Applied Mathematics, 139. Academic Press, Inc., Boston, MA, 1994.
\bibitem[RY14]{RY14}       M. Reeder, J.-K. Yu, Epipelagic representations and invariant theory. J. Amer. Math. Soc. 27 (2014), no. 2, 437--477.
\bibitem[Sh90]{Sh90}    F. Shahidi, A proof of Langlands' conjecture on Plancherel measures; complementary series for p-adic groups. Ann. of Math. (2) 132 (1990), no. 2, 273--330.
%
\bibitem[She79a]{She79a} D. Shelstad, Orbital integrals and a family of groups attached to a real reductive group.  Ann. Sci. ENS. Sup. (4)  12  (1979), no. 1, 1--31.
\bibitem[She79b]{She79b} D. Shelstad - Characters and inner forms of a quasi-split group over $\R$.  Compositio Math.  39  (1979), no. 1, 11--45.
\bibitem[She79c]{She79c} D. Shelstad - Notes on L-indistinguishability (based on a lecture of R. P. Langlands), in Proc. Sympos. Pure Math., XXXIII, Part 2, Amer. Math. Soc., Providence, R.I., 1979, pp. 193--203.
\bibitem[She81]{She81} D. Shelstad, Embeddings of $L$-groups.  Canad. J. Math.  33  (1981), no. 3, 513--558.
\bibitem[She82]{She82} D. Shelstad, $L$-indistinguishability for real groups.  Math. Ann.  259  (1982), no. 3, 385--430.
\bibitem[SheTE1]{SheTE1} D. Shelstad - Tempered endoscopy for real groups. I. Geometric transfer with canonical factors.  Representation theory of real reductive Lie groups,  215--246, Contemp. Math., 472, Amer. Math. Soc., Providence, RI, 2008.
\bibitem[SheTE2]{SheTE2} D. Shelstad - Tempered endoscopy for real groups. II. Spectral transfer factors.  Automorphic forms and the Langlands program,  236--276, Adv. Lect. Math. (ALM), 9, Int. Press, Somerville, MA, 2010.
\bibitem[SheTE3]{SheTE3} D. Shelstad, Tempered endoscopy for real groups. III. Inversion of transfer and $L$-packet structure.  Represent. Theory  12  (2008), 369--402.
\bibitem[Tat63]{Tat63} J. Tate, Duality theorems in Galois cohomology over number fields. 1963 Proc. Internat. Congr. Mathematicians (Stockholm, 1962) pp. 288--295 Inst. Mittag-Leffler, Djursholm
\bibitem[Tat66]{Tat66}  J. Tate, The cohomology groups of tori in finite Galois extensions of number fields. Nagoya Math. J. 27 1966 709--719.
\bibitem[Tat79]{Tat79} J. Tate, Number theoretic background. Automorphic forms, representations and L-functions, Proc. Sympos. Pure Math., XXXIII, Part 2, Amer. Math. Soc., Providence, R.I., 1979, pp. 3--26.
\bibitem[Vog78]{Vog78} D. A. Vogan, Gelfand-Kirillov dimension for Harish Chandra modules, Invent. Math., vol. 48 (1978), 75-98.
\bibitem[Vog79]{Vog79} D. A. Vogan, The algebraic structure of the representation of semisimple Lie groups. I. Ann. of Math. (2) 109 (1979), no. 1, 1--60.
\bibitem[Vog93]{Vog93}  D. A. Vogan, The local Langlands conjecture.  Representation theory of groups and algebras,  305--379, Contemp. Math., 145, Amer. Math. Soc., Providence, RI, 1993

\end{thebibliography}
\end{document}